\newtheorem{thm}{Theorem}[section]
\newtheorem{lem}[thm]{Lemma}
\newtheorem{prop}[thm]{Proposition}
\newtheorem{rem}[thm]{Remark}
\numberwithin{equation}{section}
\title[Three-phase Muskat problem: uniform lifespan]{Three-phase Muskat problem: uniform lifespan with respect to the width of the strip between interfaces}
\author{\'Angel Castro and Liangchen Zou}
\address[\'Angel Castro]{ICMAT-CSIC. C\ Nicol\'as Cabrera 13-15, 28049, Madrid}
\email{angel\_castro@icmat.es}
\address[Liangchen Zou]{School of Mathematical Sciences, University of Science and Technology of China, Hefei, Anhui, 230026, PR China}
\email{zlc0601@mail.ustc.edu.cn }
\date{}
\begin{document}

\begin{abstract}
We consider the three-phase Muskat problem with different densities and the same viscosities.  The lifespan of the solutions with respect to the width of the strip between interfaces is studied. Indeed, the  interfaces are parameterized by the graph of two functions $f(x,t)$ and $g(x,t)$ and we impose that $||f(\cdot,0)-g(\cdot,0)||_{L^\infty}\leq C\sigma$ and $\inf_x |f(\cdot,0)-g(\cdot,0)|\geq c\sigma.$ It is shown, under stronger assumption on $f(x,0)$ and $g(x,0)$, local existence independent of the parameter $\sigma$ (with $\sigma$ small enough). In order to prove such a result, we need to work in analytic spaces.
\end{abstract}
\maketitle

\tableofcontents

\section{Introduction}

We consider the Incompressible Porous Media (IPM) system in two-dimensional space, which describes the dynamics of incompressible fluids in a porous media. For simplicity, we take the gravity $g=1$, the permeability of the media $\kappa=1$ and the viscosity $\nu=1$ so that the fluids satisfy the following equations on $(x,y)\in\mathbb{R}^2$ and $t\geq0$:
\begin{equation}
\left\{\begin{aligned} &\partial_t\rho+\nabla\cdot(\rho u)=0,\quad\\
&u+\nabla p+\left(0,\rho\right)=0 \\
&\nabla\cdot u=0
\end{aligned}\right.,\label{1.1}\end{equation}
which take into account mass conservation, Darcy's law \cite{Darcy1856} and incompressibility. Here $\rho=\rho(x,y,t)$ is the density, $p=p(x,y,t)$ is the pressure, and $u=(u_1(x,y,t),u_2(x,y,t))$ is the velocity field. This system is mathematically equivalent to the motion of an incompressible fluid in a Hele-Shaw cell \cite{SaffmanTaylor1958}.

In particular, we shall study the three-phase Muskat problem \cite{Muskat34} which considers that the density is a piecewise constant function given by
\begin{align*}
    \rho(x,y,t)=\left\{\begin{array}{cc}\rho_0 & y>\sigma+f(x,t)\\ \rho_1 & -\sigma+g(x,t)<y<\sigma+f(x,t) \\
    \rho_2 & y<-\sigma+g(x,t)\end{array}\right.
\end{align*}
where $\sigma>0$, $\rho_0<\rho_1<\rho_2$ and $f$ and $g$ are smooth functions that decay at infinity and satisfy $2\sigma+f(x,t)-g(x,t)>0$.  

From \eqref{1.1} a system of coupled evolution equations can be obtained for $f$ and $g$. Indeed, 
\begin{align}
&\partial_t f = \frac{\rho_1-\rho_0}{2\pi}\int_{ \mathbb{R}}\frac{y(\partial_x f(x)-\partial_x f(x-y))}{y^2+(f(x,t)-f(x-y,t))^2}dy
+ \frac{\rho_2-\rho_1}{2\pi}\int_{ \mathbb{R}}\frac{y(\partial_x g(x)-\partial_x g(x-y))}{y^2+(2\sigma+f(x)-g(x-y))^2}dy,\label{f}\\
&f(x,0)=f_0(x)\nonumber,\\
&\partial_t g = \frac{\rho_2-\rho_1}{2\pi}\int_{ \mathbb{R}}\frac{y(\partial_x g(x)-\partial_x g(x-y))}{y^2+(g(x,t)-g(x-y,t))^2}dy
+ \frac{\rho_1-\rho_0}{2\pi}\int_{ \mathbb{R}}\frac{y(\partial_x f(x)-\partial_x f(x-y))}{y^2+(-2\sigma+g(x)-f(x-y))^2}dy\label{g},\\
&g(x,0)=g_0(x)\nonumber.
\end{align}
System \eqref{f}-\eqref{g} was obtained in \cite{CordobaGancedo2010}, where C\'ordoba and Gancedo also showed 
local existence of solutions in Sobolev spaces $H^k(\mathbb{\mathbb{R}})$, $k\geq 4$,  and a criterion for the absence of squirt singularities. Another interesting result related to our paper is the criterion for the lack of splash singularities in  \cite{GancedoStrain2014}. In that paper, Gancedo and Strain prove that there are no splash singularities for the system \eqref{f}-\eqref{g} as far as  $
\|f\|_{C^2}$ and $\|g\|_{C^2}$ remain bounded. Roughly speaking,  a splash singularity in this setting is a collision of the interfaces in a point and a squirt (or splat) singularity in a segment.

In this paper we would like to study how system \eqref{f}-\eqref{g} behaves with respect to width between the interfaces $\sigma+f(x,t)$ and $-\sigma+g(x,t)$. More specifically, let us assume that $X$ is some Banach space to be defined later, that  $f_0$, $g_0\in X$, $f_0-g_0=\theta_0$, $\|\theta_0\|_{X}<C\sigma$, for a universal small constant $C$ independent of $\sigma$. Also $\inf_{x\in\mathbb{R}}(\theta_0(x))/\sigma>-1$. In this situation, is the system \eqref{f}-\eqref{g} locally well-posed in $X$ with time of existence uniform in $\sigma$?  Notice that, if the embedding $\|\cdot\|_{L^\infty}\lesssim\|\cdot\|_{X}$ holds, then $$||2\sigma +f_0-g_0||_{L^\infty}\leq C\sigma$$ and the distance $d_0$ between the initial interfaces
\begin{align*}
d_0\equiv 2\sigma+\inf_{x\in \mathbb{R}}(f_0(x)-g_0(x))
\end{align*}
satisfies
\begin{align*}
     d_0\geq  \sigma.
\end{align*}

\begin{rem}
    Notice that here we do not ask the interfaces to make flat when $\sigma$ goes to zero. Furthermore, although our main result, Theorem \ref{main}, needs some smallness condition on $f_0$ and $g_0$, these conditions do not depend on the parameter $\sigma$.  
\end{rem}

Here it is  important to remark that, at least formally, if one takes $\sigma=0$, thus $f=g$, \eqref{f}-\eqref{g} becomes
\begin{align}
    \partial_t f = \frac{\rho_2-\rho_0}{2\pi}\int_{ \mathbb{R}}\frac{y(\partial_x f(x)-\partial_x f(x-y))}{y^2+(f(x,t)-f(x-y,t))^2}dy,\label{1pmuskat}
\end{align}
which is the well-known Muskat equation in the two-phase case.

The main result of this paper is Theorem \ref{main}. Next, we present a softer and friendly version:
\begin{thm}Let $f_0$ and $g_0$ be analytic functions in $H^k_{\gamma_0}$ (see Section \ref{preliminaries}), $\rho_0<\rho_1<\rho_2$, $0<\sigma<1$, and $\mu_1$, $\mu_2$ be given by (\ref{mu}). Then there exist small constants $\epsilon_0$, $\epsilon_1>0$, independent of $\sigma$, such that if 
\begin{align*}
    \mu_2||f_0||^2_{H^k_\gamma}+\mu_1||g_0||^2_{H^k_\gamma}\leq \epsilon_0^2,\quad \text{and}\quad
    ||f_0-g_0||_{H^{k-3}_{\gamma}}\leq \epsilon_1\sigma,
\end{align*}
there exists a unique analytic (in space) solution $(f(x,t), g(x,t))$ to \eqref{f}-\eqref{g} in an interval $t\in [0, T]$, with $T>0$  independent of $\sigma$.
\end{thm}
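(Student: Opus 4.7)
The plan is to expose the $\sigma$-dependence by working with the mean and scaled difference of the interfaces,
$$h=\frac{f+g}{2},\qquad \theta=\frac{f-g}{\sigma}.$$
The hypotheses translate into $\|h_0\|_{H^k_\gamma}\lesssim\epsilon_0$ and $\|\theta_0\|_{H^k_\gamma}\lesssim\epsilon_1$, both uniform in $\sigma$; the embedding $H^k_\gamma\hookrightarrow L^\infty$ then gives $|\theta_0|\leq C\epsilon_1$, so $\inf\theta_0>-1$ for $\epsilon_1$ small, ensuring the denominators $y^2+(2\sigma+f-g)^2$ are initially bounded below. Adding and subtracting \eqref{f} and \eqref{g} yields coupled equations for $(h,\theta)$. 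The crucial algebraic observation is that at $\sigma=0$ with $f=g=h$ the right-hand sides of \eqref{f} and \eqref{g} each reduce to the one-phase Muskat operator \eqref{1pmuskat} with effective jump $\rho_2-\rho_0$: hence $\partial_t f-\partial_t g$ vanishes to leading order in $\sigma$ and dividing by $\sigma$ produces a well-defined $\theta$-equation.

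\textbf{Analytic scale.} I would set up an abstract Cauchy--Kowalevski argument in the decreasing scale $H^k_{\gamma(t)}$ with $\gamma(t)=\gamma_0-\beta t$, following the Nirenberg--Nishida scheme and its adaptations to non-local equations. The good term $-\dot\gamma\,\|\partial_x(\cdot)\|_{H^k_{\gamma(t)}}$ generated by the shrinking analyticity radius absorbs the one-derivative loss inherent in the Muskat-type operators. The burden of the proof is to establish an energy inequality for $(h,\theta)$ in this scale with all constants independent of $\sigma$.

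\textbf{The $\sigma$-uniform energy estimate.} For the $h$-equation, the self and cross integrals combine to give $(\rho_2-\rho_0)$ times the one-phase Muskat operator of $h$, plus $O(\sigma)$ perturbations that depend nicely on $(h,\theta)$; the cross-integral denominators obey $y^2+(2\sigma+f(x)-g(x-y))^2\geq c(2\sigma+|y|)^2$ thanks to $\inf\theta>-1$ and smallness of $\|\partial_x h\|_{L^\infty}$. For the $\theta$-equation, although each cross integral is individually of size $O(1/\sigma)$, one exploits the decomposition
$$\partial_x f(x)-\partial_x g(x-y)=\sigma\,\partial_x\theta(x)+\bigl(\partial_x g(x)-\partial_x g(x-y)\bigr)$$
and its symmetric counterpart: the explicit factor $\sigma$ in the first piece cancels the $1/\sigma$ prefactor in $(\partial_t f-\partial_t g)/\sigma$, while the second piece and the kernels of the two cross integrals differ, after a Taylor expansion in $f-g=\sigma\theta$, by quantities of size $O(\sigma)$, producing again an $O(1)$ contribution to $\partial_t\theta$. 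Combining these cancellations with standard product, commutator and Kato--Ponce-type estimates in the analytic scale yields
$$\frac{d}{dt}\bigl(\|h\|_{H^k_{\gamma}}+\|\theta\|_{H^k_{\gamma}}\bigr)+(-\dot\gamma)\,\|\partial_x(h,\theta)\|_{H^k_{\gamma}}\leq C\bigl(\|h\|_{H^k_{\gamma}}+\|\theta\|_{H^k_{\gamma}}\bigr)^2,$$
with $C$ independent of $\sigma$. A standard continuation argument then produces a life span $T=T(\epsilon_0,\epsilon_1,\gamma_0)>0$ independent of $\sigma$, and a regularization plus compactness argument upgrades the a priori bound into existence and uniqueness of an analytic solution $(h,\theta)$, hence $(f,g)$, on $[0,T]$.

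\textbf{Main obstacle.} The hardest step is tracking the $\sigma$-cancellations at each derivative level: each time a derivative falls on $1/(y^2+(2\sigma+f-g)^2)$ through the chain rule it can, a priori, generate an additional $1/\sigma$ factor, and these must in every case be matched by a factor $\sigma$ supplied either by $\partial_x^\alpha(f-g)=\sigma\partial_x^\alpha\theta$ or by the symmetric structure of the two cross integrals. Propagating the lower bound $\inf_x\theta(x,t)>-1$ is needed to keep the denominators controlled throughout, and it follows from the a priori bound above once $\epsilon_1$ is chosen sufficiently small.
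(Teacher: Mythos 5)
Your overall framework (analytic norms on a shrinking strip, a $\sigma$-uniform energy estimate for the mean and the rescaled difference, smallness to keep $2\sigma+f-g$ bounded below) is the right general philosophy, but the proposal assumes precisely the point that constitutes the proof: that every $1/\sigma$ produced by the estimates is matched by a factor $\sigma$. Two of your specific claims do not hold as stated. First, the cross kernels are \emph{not} $O(\sigma)$-perturbations of the one-phase kernel uniformly in the integration variable: for $|\alpha|\lesssim\sigma$ the kernels $P_{12},P_{21}$ are comparable in size to $P_{11},P_{22}$ (indeed $K_{12}$ changes sign there), so "self $+$ cross $=$ one-phase operator $+O(\sigma)$" fails; what is actually true, and what the paper proves (Lemma \ref{lem2.1}, Proposition \ref{prop2.3}), is that after symmetrization the combined quadratic form is positive definite (or at least bounded by $\|\Lambda^{1/2}\partial_x^k\cdot\|^2$) uniformly in $\sigma$. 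Second, and more seriously, the antisymmetric terms involving $P_{12}-P_{21}$ in \eqref{2.9}--\eqref{2.12} are exactly the place where the factor $\sigma$ coming from $f-g=\sigma\theta$ does \emph{not} save you: the naive estimate \eqref{2.44}--\eqref{2.45} carries a factor $\sigma^{-1}$ originating from $\int 2\sigma|\alpha|\,((\alpha^2+(2\sigma)^2)^{-2})\,d\alpha\simeq\sigma^{-1}$, i.e. from the kernel itself, not from a missing factor of $f-g$. Making this term uniform requires the translation/symmetrization argument \eqref{2.46}--\eqref{2.53}, which is the single reason the paper is forced into analytic spaces; your appeal to "the symmetric structure of the two cross integrals" does not identify or supply this argument.

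There is also a quantitative gap in the regularity bookkeeping. You propose to close the estimate for $\theta_1=(f-g)/\sigma$ at the same level $H^k_\gamma$ as $h$, with only the one-derivative loss absorbed by the strip-shrinking term; but the equation for $\theta_1$ contains $\sigma^{-1}(P_{11}-P_{21})$ and $\sigma^{-1}(P_{22}-P_{12})$ acting on $\partial_x h$, and the paper's treatment of these (Section \ref{distance}, e.g. \eqref{3.19}, \eqref{3.30}) spends up to three extra derivatives of $h$; accordingly the paper only controls $\sigma^{-1}\theta$ in $H^{k-3}_{\gamma(t)}$, which suffices because only about $k/2$ derivatives of $\sigma^{-1}\theta$ enter the coefficients of the main estimate. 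Your stronger claim at level $H^k$ is unsubstantiated and not needed. Finally, the differential inequality you write is not of the correct shape: Lemma \ref{lem 1.1} yields a gain of the form $\gamma'\tanh(2\gamma)\|\Lambda^{1/2}\cdot\|_{H^k_{\gamma}}^2$ (half a derivative on each factor), and the dangerous top-order pairings are bounded by $C\|\partial_x^k h\|_{\dot H^{1/2}}\|\partial_x^k\theta\|_{\dot H^{1/2}}$ with $C$ of order $\Delta\rho$ rather than by $C(\|h\|+\|\theta\|)^2$; one must therefore take $-\gamma'$ bounded below by a fixed (not small) constant, or, as the paper does, extract the parabolic sign of the symmetrized form so that $-\gamma'$ only needs to dominate the small asymmetry and transport contributions. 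As it stands, the proposal is a plausible outline whose central estimate is exactly the open part.
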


\subsection{A brief summary of previous results on two-phase stable case.}

The local existence theory is well-established for equation \eqref{1pmuskat} with $\rho_2>\rho_0$ (stable case). Here we just review some results in this direction concerning the two-phase stable case, with gravity, the same viscosities and constant permeability and without either surface tension or boundaries. For most of them, one could expect that  an analogous version holds for \eqref{f}-\eqref{g} as far as the initial distance between the interfaces is larger than zero. 

C\'ordoba and Gancedo \cite{CordobaGancedo2007} proved local existence  in Sobolev spaces $H^s(\mathbb R), s\geq 3$. See also \cite{Ambrose2004}. In \cite{CGSV2017}, Constantin, Gancedo, Shvydkoy and Vicol proved  local  existence for  initial data in  $W^{2,p}(\mathbb R)$ for $p\in(1,\infty]$.  In \cite{Matioc2019}, Matioc proved  local existence for initial data $ H^s(\mathbb R)$ with $s\in (3/2,2).$ In \cite{AbelMatioc},  Abels and Matioc established local existence for initial data in $W^{s,p}(\mathbb R)$ with $p\in (1,\infty)$ and  $s\in (1+1/p,2).$ Alazard and Lazar established in \cite{AlazardLazard2019} local  existence for  initial data in $\dot H^1(\mathbb R)\cap \dot H^s(\mathbb R)$ with $s>3/2$. Same result can be found in \cite{NguyenPausader2019} thanks to Nguyen and Pausader. In  \cite{AlazardNguyen02-2021} Alazard and  Nguyen  proved  local existence for an initial data in the critical space $\dot W^{1,\infty}(\mathbb R)\cap H^{3/2}(\mathbb R)$.  In  \cite{AlazardNguyen04-2021} the same authors showed local  existence   in $H^{3/2+\log}$ and, in  \cite{AlazardNguyen2023},  in $H^{3/2}(\mathbb R)$. 	In \cite{G-JG-SNP}, Garc\'ia-Ju\'arez,  G\'omez-Serrano, Nguyen and Pausader proved the existence of  self-similar solutions. In  \cite{G-JG-SHP2024}, Garc\'ia-Ju\'arez,  G\'omez-Serrano,  Haziot and  Pausader proved  local existence when the initial interface  has multiple corners and linear growth at infinity. Recently, S\'anchez, in \cite{Omar} proves the existence of solutions with quadratic growth at the infinity.

The 3D case (two-dimensional interface) has been treated, for example, in \cite{CordobaGancedo2007}, \cite{GancedoLazar2020}, \cite{AlazarNguyen3d} and \cite{NguyenPausader2019}.

The first author together with C\'ordoba, Fefferman, Gancedo  and  L\'opez-Fern\'andez proved the existence of turning singularities in \cite{CCFGL-F2012} and 	together with  C\'ordoba, Fefferman and  Gancedo, breakdown of smoothness in \cite{CCFG2013}. C\'ordoba, 	G\'omez-Serrano and	 Zlato\v{s} proved  in \cite{CG-SZ2015} the existence of solutions  that start in the unstable regime, then become stable and finally return to the unstable regime. The same authors in  \cite{CG-SZ2017} established the existence of solutions  that start in the stable regime, then become unstable and finally return to the stable regime.

There are some results proving global existence for small initial data. It seems doable to extend some of them to the multi-phase case. There are also  medium-sized initial data. These ones look, in general, more difficult to adapt to more than one interface. The interested reader could consult, for example, \cite{CG-BS2016,CCGS2013,CCGR-PS2016,cameron2019,Cameron2020,CordobaLazar2018,Lin2017} .

\subsection{Almost-sharp fronts for SQG}

A related problem with a similar flavor is the lifespan of almost-sharp fronts for the Surface Quasi-Geostrophic equation (SQG). This equation is also  a transport equation for the the temperature $\theta(x,t)$ with incompressible velocity $u=R^\perp\theta$, where $R_i$, $i=1,2$, are the Riesz transform in direction $x_i$. If one assumes that the temperature takes only two constant values separated by a curve, an integro-differential equation for the evolution of such curve can be found. In the case, for example, in which,
\begin{align*}
\theta(x,y,t)=\left\{ \begin{array}{cc}\theta_1 & (x,y)\in \Omega(t),\\ \theta_2 & (x,y)\in \Omega^c(t),\end{array}\right.
\end{align*}
where $\Omega(t)$ is a bounded and simply connected domain in $\mathbb{R}^2$, with boundary $\partial \Omega(t)=\{z(\alpha,t)\in \mathbb{R}^2,\,\, \alpha \in \mathbb{T}\}$, it can be computed that
\begin{align}\label{patchsqg}
    \partial_t z(\alpha,t)= \frac{\theta_1-\theta_2}{2\pi}\int_{\mathbb{T}}\frac{\partial_\alpha z(\alpha,t)-\partial_\beta z(\beta,t)}{|z(\alpha,t)-z(\beta,t)|} d\beta.
\end{align}
Expression \eqref{patchsqg} is known as the sharp front equation. Local existence has been proven for \eqref{patchsqg}, in $C^\infty$ \cite{Rodrigo2005} by Rodrigo, in Sobolev spaces \cite{Gancedo2008} by Gancedo, and in analytic spaces \cite{almost2} by Fefferman and Rodrigo.

 In this context, an almost-sharp front is a solution of SQG with initial data
\begin{align*}
    \theta(x,y,0)=\left\{\begin{array}{cc} \theta_1 & (x,y)\in \Omega^1,\\ \theta^m_{0} & (x,y)\in \Omega^m,\\ \theta_2 & (x,y)\in \Omega^2,\end{array}\right.
\end{align*}
where $\theta_i$, $i=1,2$, are constants, and $\theta^m_0$ is smooth in $\Omega_m$, monotone and such that $\theta(x,y,0)$ is smooth in $\mathbb{R}^2$. The domains $\Omega^1$, $\Omega^m$ and $\Omega^2$ are open, simply connected, disjoint, and $\Omega^1\cup \overline{\Omega}^m\cup \Omega^2=\mathbb{R}^2$.  In addition, we consider that the domain in the middle, $\Omega^m$, has a width of order $\sigma$. 

Thus, the question here is whether the almost-sharp fronts are well-posed  with existence time independent of $\sigma$. This problem has been treated in the series of papers \cite{almost1,almost3,almost4} where the authors studied the asymptotic properties of the equation for the almost-sharp front when $\sigma\to 0$. Finally, in \cite{almost5}, Fefferman and Rodrigo were able to prove the existence of families of real-analytic sharp fronts whose lifespan is uniform in $\sigma.$ Actually, in these families, the scalar $\theta^m(\cdot,\cdot,t)$ is who needs to be real-analytic (go to \cite{almost5} for a rigorous version of this statement).

Khor and Rodrigo have also considered this problem for a more singular version of SQG ($\alpha$-SQG, $1<\alpha<2$) in \cite{singularalmost1} and \cite{singularalmost2}.

In our paper, we deal with IPM instead of SQG and in a different scenario since our $\rho_0^m=\rho_0$ is constant and $\rho$ has two jumps. Furthermore, we try a different strategy that takes advantage of the parabolicity in the Muskat problem, which is not present in \eqref{patchsqg}. However, in order to make the analysis uniform in $\sigma$, we lose a derivative that forces us to use analytic spaces, as it  also happens for the almost-sharp fronts in SQG.  
 
\subsection{Organization of the paper.} The paper is organized as follows. In Section \ref{setting} we present the functional setting in which we will solve the equations and Theorem \ref{main} which is the main result of this paper. The proof of Theorem \ref{main} runs along Sections \ref{apriori}, \ref{distance} and \ref{proof}. 

Section \ref{apriori} contains the a priori energy estimates. We have decided to present all the details in the computations and pay the price of a long extension. Let us sketch it to emphasize what the important points are:

The most important sections are \ref{top} and \ref{Asymmetry}. In \ref{top} we deal with the higher order terms in derivatives in the equation, that is, the most singular terms. In order to estimate them, we just use parabolicity, and then we could produce similar estimates in classical Sobolev spaces $H^k(\mathbb{R})$. In Section \ref{Asymmetry}, we deal with the terms involving $(P_{12}-P_{21})$ in \eqref{2.9}-\eqref{2.12}, or equivalently, the left-hand side of \eqref{2.53}-\eqref{2.54}. In these terms, we lose a derivative that cannot be canceled out by the parabolicity uniformly in $\sigma$. This is the only term we would not be able to handle in the case we work in classical Sobolev spaces. Thus, we are forced to introduce analytic spaces $H^k_\gamma$ just because of the terms involving $(P_{12}-P_{21})$ in \eqref{2.9}-\eqref{2.12}. 

In Sections \ref{lower}, \ref{transport}, \ref{commutators1}, \ref{commutators2}, \ref{energy1} and \ref{energy2} we deal with lower-order terms. These sections could be avoided in a first reading since they are rather technical. 

In Section \ref{distance} we control the distance between the interface by using Section \ref{apriori}. We need this control to close the proof of Theorem \ref{main}.

Finally, in Section \ref{proof} we show how estimates in Section \ref{apriori} and \ref{distance} yield the proof of Theorem \ref{main}. 

\section{Functional setting and main Theorem}\label{setting}
For the sake of completeness and to make clear our notation, next we  give a derivation of the equations \eqref{f}-\eqref{g} in the three-phase case. As we said before, a similar exposition (slightly different) can be found in \cite{CordobaGancedo2010}. After that we will present the functional setting and the main result of this paper. 

\subsection{Equations for multi-phase Muskat problem.}
 We suppose that $\rho$ is given by a profile function $\overline{\rho}(y)$ and the level set function $\eta=\eta(x,y,t)$ in the sense that
\begin{equation}
\rho(x,y+\eta(x,y,t),t)=\overline{\rho}(y)
\label{1.2}\end{equation}
for $(x,y)\in\mathbb{R}^2$ and $t\geq0$.
Taking derivatives in (\ref{1.2}) yields
\begin{equation}\begin{aligned}
\partial_t\rho+\partial_y\rho\partial_t\eta&=0,\\
\partial_x\rho+\partial_y\rho\partial_x\eta&=0,\\
\partial_y\rho\left(1+\partial_y\eta\right)&=\partial_y\overline{\rho}.
\label{1.3}\end{aligned}\end{equation}
Then we derive the equation satisfied by $\eta$ from (\ref{1.3}) and (\ref{1.1}):
\begin{equation}
\partial_t\eta+u_1\partial_x\eta-u_2=0.
\label{1.4}\end{equation}
Meanwhile, assuming the hydrostatic condition for the pressure, $u$ can be expressed in terms of $\rho$ through the Riesz transform:
$$u_1(x,y,t)=-\left(-\Delta\right)^{-1}\partial_y\partial_x\rho=\frac{1}{2\pi}P.V.\int_{\mathbb{R}^2}\frac{\alpha\partial_y\rho\left(x-\alpha,\overline{y},t\right)}{\alpha^2+\left(y-\overline{y}\right)^2}d\alpha d\overline{y},$$
$$u_2(x,y,t)=\left(-\Delta\right)^{-1}\partial_x^2\rho=-\frac{1}{2\pi}P.V.\int_{\mathbb{R}^2}\frac{\alpha\partial_x\rho\left(x-\alpha,\overline{y},t\right)}{\alpha^2+\left(y-\overline{y}\right)^2}d\alpha d\overline{y}.$$
 Replacing $y$ by $y+\eta(x,y,t)$ and applying the change of variable $\left(\alpha, \overline{y}\right):=\left(\alpha, y_1+\eta(x-\alpha,y_1,t)\right)$ in the above two equations yields
\begin{equation}
u_1(x,y+\eta,t)=\frac{1}{2\pi}P.V.\int_{\mathbb{R}^2}\frac{\alpha\partial_y\overline{\rho}(y_1)}{\alpha^2+\left(y-y_1+\eta(x,y,t)-\eta(x-\alpha,y_1,t)\right)^2}d\alpha dy_1,
\label{1.5}\end{equation}
\begin{equation}
u_2(x,y+\eta,t)=\frac{1}{2\pi}P.V.\int_{\mathbb{R}^2}\frac{\alpha\partial_x\eta(x-\alpha,y_1,t)\partial_y\overline{\rho}(y_1)}{\alpha^2+\left(y-y_1+\eta(x,y,t)-\eta(x-\alpha,y_1,t)\right)^2}d\alpha dy_1.
\label{1.6}\end{equation}
Then expressing $u_1$ and $u_2$ by (\ref{1.5}-\ref{1.6}) in (\ref{1.4}) gives
\begin{equation}\begin{aligned}
\partial_t\eta(x,y,t)=&-\frac{1}{2\pi}P.V.\int_{\mathbb{R}^2}\frac{\alpha\left(\partial_x\eta(x,y,t)-\partial_x\eta(x-\alpha,y_1,t)\right)\partial_y\overline{\rho}(y_1)}{\alpha^2+\left(y-y_1+\eta(x,y,t)-\eta(x-\alpha,y_1,t)\right)^2}d\alpha dy_1\\
=&-u_1(x,y+\eta,t)\partial_x\eta(x,y,t)\\&+\frac{1}{2\pi}P.V.\int_{\mathbb{R}^2}\frac{\alpha\cdot\partial_x\eta(x-\alpha,y_1,t)\partial_y\overline{\rho}(y_1)}{\alpha^2+\left(y-y_1+\eta(x,y,t)-\eta(x-\alpha,y_1,t)\right)^2}d\alpha dy_1.
\end{aligned}\label{1.7}\end{equation}
We consider the case in which the profile $\overline{\rho}$ is a step function:
\begin{equation}
\overline{\rho}(y)=\left\{\begin{aligned}&\rho_0,\quad y>\sigma,\\ &\rho_1,\quad\sigma>y>-\sigma,\\ &\rho_2,\quad-\sigma>y,
\end{aligned}\right.\label{1.8}\end{equation}
where  $\sigma>0$ is a given constant and $\rho_0<\rho_1<\rho_2$ are real constants. In this case, the derivative $\partial_y\overline{\rho}$ is the sum of two dirac functions:
$$\partial_y\overline{\rho}=-2\pi\Delta\rho\left(\mu_2\delta(y-\sigma)+\mu_1\delta(y+\sigma)\right),$$
where we denote 
\begin{equation}
\Delta\rho:=\frac{\rho_2-\rho_0}{2\pi},\;\mu_1:=(\rho_2-\rho_0)^{-1}\left(\rho_2-\rho_1\right),\;\mu_2:=(\rho_2-\rho_0)^{-1}\left(\rho_1-\rho_0\right).
\label{mu}\end{equation}
Note that $\mu_1+\mu_2=1$. Now let 
$$f(x,t):=\eta(x,\sigma,t),\quad g(x,t):=\eta(x,-\sigma,t),$$
$$u_+(x,t):=u_1(x,\sigma+\eta(x,\sigma,t),t),\quad u_-(x,t):=u_1(x,-\sigma+\eta(x,-\sigma,t),t).$$
From (\ref{1.5}-\ref{1.7}), it follows
\begin{equation}\begin{aligned}
u_+(x,t)=&-\mu_2\Delta\rho \,P.V.\int_{\mathbb{R}}\frac{\alpha}{\alpha^2+\left(\Delta f(x,x-\alpha,t)\right)^2}d\alpha\\
&-\mu_1\Delta\rho \,P.V.\int_{\mathbb{R}}\frac{\alpha}{\alpha^2+\left(2\sigma+f(x,t)-g(x-\alpha,t)\right)^2}d\alpha,
\label{1.9}\end{aligned}\end{equation}
\begin{equation}\begin{aligned}
u_-(x,t)=&-\mu_2\Delta\rho \,P.V.\int_{\mathbb{R}}\frac{\alpha}{\alpha^2+\left(-2\sigma+g(x,t)-f(x-\alpha,t)\right)^2}d\alpha\\
&-\mu_1\Delta\rho \,P.V.\int_{\mathbb{R}}\frac{\alpha}{\alpha^2+\left(\Delta g(x,x-\alpha,t)\right)^2}d\alpha,
\label{1.10}\end{aligned}\end{equation} 
\begin{equation}\begin{aligned}
\partial_t f+u_+\partial_xf=&-\mu_2\Delta\rho \,P.V.\int_{\mathbb{R}}\frac{\alpha\cdot\partial_x f(x-\alpha,t)}{\alpha^2+\left(\Delta f(x,x-\alpha,t)\right)^2}d\alpha\\
&-\mu_1\Delta\rho \,P.V.\int_{\mathbb{R}}\frac{\alpha\cdot\partial_x g(x-\alpha,t)}{\alpha^2+\left(2\sigma+f(x,t)-g(x-\alpha,t)\right)^2}d\alpha,
\label{1.11}\end{aligned}\end{equation}
\begin{equation}\begin{aligned}
\partial_t g+u_-\partial_x g=&-\mu_2\Delta\rho \,P.V.\int_{\mathbb{R}}\frac{\alpha\cdot\partial_x f(x-\alpha,t)}{\alpha^2+\left(-2\sigma+g(x,t)-f(x-\alpha,t)\right)^2}d\alpha\\
&-\mu_1\Delta\rho \,P.V.\int_{\mathbb{R}}\frac{\alpha\cdot\partial_x g(x-\alpha,t)}{\alpha^2+\left(\Delta g(x,x-\alpha,t)\right)^2}d\alpha.
\label{1.12}\end{aligned}\end{equation}

Here we have introduced the notation
\begin{equation}
\Delta w(x,x_1,t):=w(x,t)-w(x_1,t)
\label{deltaw}\end{equation}
for arbitrary function $w$. In particular, $\Delta x=x-x_1$. Along the paper we will also use
\begin{align*}
\tilde{\Delta}w(x,\alpha,t):=\Delta w(x,x-\alpha,t)=w(x,t)-w(x-\alpha,t).
\end{align*}
In most of the paper we will not make explicit the dependence on time. When no confusion is possible, we will also omit the dependence on the spatial variables and just write either $\Delta w$ or $\tilde{\Delta}w$.

For simplicity, we write
\begin{equation}\begin{aligned}
&P_{11}(x,x-\alpha):=\frac{\alpha}{\alpha^2+(\Delta f(x,x-\alpha))^2},\quad P_{12}(x,x-\alpha):=\frac{\alpha}{\alpha^2+(2\sigma+f(x,t)-g(x-\alpha))^2},\\
&P_{21}(x,x-\alpha):=\frac{\alpha}{\alpha^2+(2\sigma+f(x-\alpha)-g(x))^2},\quad P_{22}(x,x-\alpha):=\frac{\alpha}{\alpha^2+(\Delta g(x,x-\alpha)})^2,\end{aligned}\label{Pij}\end{equation}

and subsequently
$$u_+(x,t)=-\mu_2\Delta\rho\,P.V.\int_{\mathbb{R}}P_{11}(x,x-\alpha)d\alpha-\mu_1\Delta\rho\,P.V.\int_{\mathbb{R}}P_{12}(x,x-\alpha)d\alpha,$$
$$u_-(x,t)=-\mu_2\Delta\rho\,P.V.\int_{\mathbb{R}}P_{21}(x,x-\alpha)d\alpha-\mu_1\Delta\rho\,P.V.\int_{\mathbb{R}}P_{22}(x,x-\alpha)d\alpha.$$

To apply a further transformation, we set 
$$h:=\mu_2f+\mu_1g,\quad \theta:=f-g.$$
Then we deduce the equations satisfied by $h$ and $\theta$ from (\ref{1.11}-\ref{1.12}):
\begin{equation}\begin{aligned}
&\partial_t h+\left(\mu_2u_++\mu_1u_-\right)\partial_xh+\mu_1\mu_2\left(u_+-u_-\right)\partial_x\theta\\
=&-\Delta\rho \,P.V.\int_{\mathbb{R}}\left(\mu_2^2P_{11}+\mu_1\mu_2 P_{12}+\mu_1\mu_2 P_{21}+\mu_1^2 P_{22}\right)(x,x-\alpha)\partial_x h(x-\alpha)d\alpha\\
&-\mu_1\mu_2\Delta\rho\,P.V.\int_{\mathbb{R}}\left(\mu_2(P_{11}-P_{12})-\mu_1(P_{22}-P_{21})\right)(x,x-\alpha)\partial_x\theta (x-\alpha)d\alpha,
\end{aligned}\label{1.13}\end{equation}
\begin{equation}\begin{aligned}
&\partial_t\theta+\left(u_+-u_-\right)\partial_xh+\left(\mu_1u_++\mu_2u_-\right)\partial_x\theta\\
=&-\Delta\rho\,P.V.\int_{\mathbb{R}}\left(\mu_2(P_{11}-P_{21})-\mu_1(P_{22}-P_{12})\right)(x,x-\alpha)\partial_xh(x-\alpha)d\alpha\\
&-\mu_1\mu_2\Delta\rho\,P.V.\int_{\mathbb{R}}\left(P_{11}+P_{22}-P_{12}-P_{21}\right)(x,x-\alpha)\partial_x\theta(x-\alpha)d\alpha.
\end{aligned}\label{1.14}\end{equation}

\subsection{Preliminaries}\label{preliminaries}
We will consider the functional space that consists of functions which can be extended analytically to a strip of width $\gamma$ on the complex plane $\Omega_\gamma:=\left\{x\in\mathbb{C}\mid |\Im x|<\gamma\right\},$ and whose restriction to the boundary of $\Omega_\gamma$ belongs to $L^p$. We equipped this space with the norm $\|\cdot\|_{L^p_\gamma}$ defined by
$$\|f\|^p_{L^p_\gamma}:=\|f(\cdot+i\gamma)\|^p_{L^p(\mathbb{R})}+\|f(\cdot-i\gamma)\|^p_{L^p(\mathbb{R})}.$$
Specifically, we denote this functional space by
$$L^p_{\gamma}:=\left\{f\in L^p(\mathbb{R})\mid f \text{ can be extended analytically to } \Omega_\gamma,\; \|f\|_{L^p_{\gamma}}<+\infty\right\}.$$
We also denote the corresponding Sobolev space by
$$H^k_{\gamma}:=\left\{f\in L^2_{\gamma}\mid \partial_x^j f\in L^2_{\gamma} \text{ for each } j\in[0,k]\cap\mathbb{Z}\right\}$$
with the norm defined by
$$\|f\|_{H^k_{\gamma}}^2:=\|f(\cdot+i\gamma)\|_{H^k(\mathbb{R})}^2+\|f(\cdot-i\gamma)\|_{H^k(\mathbb{R})}^2.$$
For later use, we define the operator $\Lambda$ as $\hat{\Lambda f}(\xi)=|\xi|\hat{f}(\xi)$, where $\hat{f}$ denotes the Fourier transform of $f$. An important inequality in our analysis will be the following.
\begin{lem}
Let $f(\cdot,t)\in C^1\left([0,T];H^k_{\gamma(t)}\right)$ with $k\geq1$ and $\gamma(t)$ be a positive time-dependent decreasing $C^1$ function. Then
\begin{equation}\begin{aligned}
&\partial_t\|f\|_{L^2_{\gamma(t)}}^2\\
\leq     &2\Re\int_{\mathbb{R}}\partial_tf(x+i\gamma(t),t)\overline{f(x+i\gamma(t),t)}dx+2\Re\int_{\mathbb{R}}\partial_tf(x-i\gamma(t),t)\overline{f(x-i\gamma(t),t)}dx\\
&+2\gamma^\prime(t)\tanh{(2\gamma(t))}\|\Lambda^\frac{1}{2}f\|_{L^2_{\gamma(t)}}^2-2\gamma^\prime(t)\tanh{(2\gamma(t))}\|f\|_{L^2_{\gamma(t)}}^2.
\end{aligned}\label{1.15}\end{equation}
\label{lem 1.1}\end{lem}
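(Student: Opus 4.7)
The plan is to pass to the Fourier side, where the $L^2_\gamma$-norms admit a clean weighted representation, and then reduce the claim to an elementary pointwise frequency estimate. Since $f(\cdot,t)$ extends analytically to $\Omega_{\gamma(t)}$ with $L^2$ boundary values (from $f\in H^k_{\gamma(t)}$, $k\geq 1$), a standard Paley--Wiener argument gives $e^{\gamma(t)|\xi|}\hat f(\xi,t)\in L^2(\mathbb{R})$ and, via Plancherel on each boundary line,
\begin{equation*}
\|f\|_{L^2_{\gamma(t)}}^2=\frac{1}{\pi}\int_{\mathbb{R}}\cosh(2\gamma(t)\xi)\,|\hat f(\xi,t)|^2\,d\xi,\qquad
\|\Lambda^{1/2}f\|_{L^2_{\gamma(t)}}^2=\frac{1}{\pi}\int_{\mathbb{R}}|\xi|\cosh(2\gamma(t)\xi)\,|\hat f(\xi,t)|^2\,d\xi,
\end{equation*}
with the $C^1$-in-time hypothesis justifying differentiation under the integral sign.

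Next I would differentiate in $t$, obtaining
\begin{equation*}
\partial_t\|f\|_{L^2_{\gamma(t)}}^2=\frac{2}{\pi}\Re\int \widehat{\partial_t f}(\xi,t)\,\overline{\hat f(\xi,t)}\cosh(2\gamma\xi)\,d\xi+\frac{2\gamma'(t)}{\pi}\int \xi\sinh(2\gamma\xi)|\hat f(\xi,t)|^2\,d\xi.
\end{equation*}
Re-applying Plancherel on the lines $\mathbb{R}\pm i\gamma(t)$ (using that the Fourier transform of $f(\cdot\pm i\gamma,t)$ is $e^{\mp\gamma\xi}\hat f(\xi,t)$) rewrites the first integral as the two physical-space terms $2\Re\int\partial_t f(x\pm i\gamma)\overline{f(x\pm i\gamma)}\,dx$ in \eqref{1.15}, so it remains to control the second. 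Note $\xi\sinh(2\gamma\xi)=|\xi|\sinh(2|\xi|\gamma)\geq 0$, and $\gamma'(t)\leq 0$ makes this contribution non-positive.

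Finally I would establish the pointwise comparison
\begin{equation*}
|\xi|\sinh(2|\xi|\gamma)\geq \tanh(2\gamma)\,(|\xi|-1)\cosh(2|\xi|\gamma),
\end{equation*}
or equivalently $|\xi|\tanh(2|\xi|\gamma)\geq(|\xi|-1)\tanh(2\gamma)$. For $|\xi|\leq 1$ the right side is non-positive while the left is non-negative; for $|\xi|\geq 1$, monotonicity of $\tanh$ on $(0,\infty)$ gives $\tanh(2|\xi|\gamma)\geq\tanh(2\gamma)$, whence $|\xi|\tanh(2|\xi|\gamma)\geq|\xi|\tanh(2\gamma)\geq(|\xi|-1)\tanh(2\gamma)$. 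Multiplying by $|\hat f|^2$, integrating, and flipping signs once more with the negative factor $2\gamma'(t)/\pi$ bounds the $\gamma'$-contribution above by $2\gamma'(t)\tanh(2\gamma)\bigl[\|\Lambda^{1/2}f\|_{L^2_{\gamma}}^2-\|f\|_{L^2_{\gamma}}^2\bigr]$, which is exactly the remaining two terms of \eqref{1.15}. The main substantive ingredient is this elementary pointwise inequality; the Paley--Wiener identity and the exchange of $\partial_t$ with the frequency integral are routine from the stated hypotheses.
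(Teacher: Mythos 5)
Your proof is correct and follows essentially the same route as the paper: pass to the Fourier side via Plancherel on the two boundary lines, differentiate the $\cosh(2\gamma(t)\xi)$ weight in time, and reduce to the pointwise inequality $|\xi|\tanh(2\gamma|\xi|)\geq(|\xi|-1)\tanh(2\gamma)$ before using $\gamma'(t)\leq 0$ to flip the sign. The only difference is cosmetic: you spell out the elementary case analysis ($|\xi|\leq 1$ versus $|\xi|\geq 1$) that the paper leaves implicit.
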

\begin{proof}
For $f(\cdot,t)\in C^1\left([0,T];H^k_{\gamma(t)}\right)$ and a time-dependent function $\gamma(t)$, there is
$$\begin{aligned}
\partial_t\|f\|_{L^2_{\gamma(t)}}^2=&\partial_t\left( \|f(\cdot+i\gamma(t),t)\|^2_{L^2(\mathbb{R})}+\|f(\cdot-i\gamma(t),t)\|^2_{L^2(\mathbb{R})}\right)\\
=&\frac{1}{2\pi}\partial_t\int_{\mathbb{R}}|\hat{f}(\xi,t)|^2(e^{2\gamma(t)\xi}+e^{-2\gamma(t)\xi})d\xi\\
=&\frac{1}{\pi}\Re\int_{\mathbb{R}}\partial_t\hat{f}
(\xi,t)\overline{\hat{f}(\xi,t)}(e^{2\gamma(t)\xi}+e^{-2\gamma(t)\xi})d\xi\\
&+\frac{1}{\pi}\gamma^\prime(t)\int_{\mathbb{R}}|\hat{f}(\xi,t)|^2|\xi|(e^{2\gamma(t)|\xi|}-e^{-2\gamma(t)|\xi|})d\xi\\
=&2\Re\int_{\mathbb{R}}\partial_tf(x+i\gamma(t),t)\overline{f(x+i\gamma(t),t)}dx+2\Re\int_{\mathbb{R}}\partial_tf(x-i\gamma(t),t)\overline{f(x-i\gamma(t),t)}dx\\
&+\frac{2}{\pi}\gamma^\prime(t)\int_{\mathbb{R}}|\hat{f}(\xi,t)|^2|\xi|\sinh{(2\gamma(t)|\xi|)}d\xi.
\end{aligned}$$
Note that 
$$\begin{aligned}
|\xi|\sinh{(2\gamma(t)|\xi|)}&=|\xi|\cosh{(2\gamma(t)|\xi|)\tanh{(2\gamma(t)|\xi|)}}\\
&\geq|\xi|\cosh{(2\gamma(t)|\xi|)}\tanh{(2\gamma(t))}-\cosh{(2\gamma(t)|\xi|)}\tanh{(2\gamma(t))}.
\end{aligned}$$
Hence for decreasing $\gamma(t)$ 
$$\begin{aligned}
&\partial_t\|f\|_{L^2_{\gamma(t)}}^2\\
\leq& 2\Re\int_{\mathbb{R}}\partial_tf(x+i\gamma(t),t)\overline{f(x+i\gamma(t),t)}dx+2\Re\int_{\mathbb{R}}\partial_tf(x-i\gamma(t),t)\overline{f(x-i\gamma(t),t)}dx\\
&+\frac{2}{\pi}\gamma^\prime(t)\tanh{(2\gamma(t))}\int_{\mathbb{R}}|\hat{f}(\xi,t)|^2|\xi|\cosh{(2\gamma(t)\xi)}d\xi\\
&-\frac{2}{\pi}\gamma^\prime(t)\tanh{(2\gamma(t))}\int_{\mathbb{R}}|\hat{f}(\xi,t)|^2\cosh{(2\gamma(t)\xi)}d\xi\\
=&2\Re\int_{\mathbb{R}}\partial_tf(x+i\gamma(t),t)\overline{f(x+i\gamma(t),t)}dx+2\Re\int_{\mathbb{R}}\partial_tf(x-i\gamma(t),t)\overline{f(x-i\gamma(t),t)}dx\\
&+2\gamma^\prime(t)\tanh{(2\gamma(t))}\|\Lambda^\frac{1}{2}f\|_{L^2_{\gamma(t)}}^2-2\gamma^\prime(t)\tanh{(2\gamma(t))}\|f\|_{L^2_{\gamma(t)}}^2.
\end{aligned}$$
\end{proof}
\subsection{Statement of the main result}
In this paper, we consider the initial
data $(h_0,\theta_0)\in H^{k}_{\gamma_0}$ for $k\geq10$ and $\gamma_0>0$ 
with the control of norms 
\begin{equation}
\|h_0\|^2_{H^{k}_{\gamma_0}}+\mu_1\mu_2\|\theta_0\|^2_{H^{k}_{\gamma_0}}\leq \epsilon^2_0.
\label{1.16}\end{equation}
Here $\mu_1$, $\mu_2$ are given by (\ref{mu}), and $\epsilon_0$ is a small constant independent on $\sigma$ which will be determined later.
Moreover, we suppose that the initial distance between the two interfaces is proportional to $\sigma$:
\begin{equation}
\|\theta_0\|_{H^{k-3}_{\gamma_0}}\leq\sigma\epsilon_1
\label{1.17}\end{equation}
for some small constant $\epsilon_1$ independent of $\sigma$.
The main result is the uniform local existence of solutions to (\ref{1.13}-\ref{1.14}) as $\sigma$ tends to $0$ with initial data satisfying (\ref{1.16}-\ref{1.17}). Since we are only concerned with small $\sigma$, assume that $\sigma<1$ without loss of generality.
\begin{thm}\label{main}
Suppose that $\sigma<1$. Let $h_0$ and $\theta_0$ be analytic functions that belong to $H_{\gamma_0}^{k}$ with $k\geq 10$ and satisfy (\ref{1.16}-\ref{1.17}) for small constants $\epsilon_0$, $\epsilon_1$ and a given constant $\gamma_0>0$. Then there exists $T>0$ independent of $\sigma$ and a unique solution $(h(t),\theta(t))$ to (\ref{1.13}-\ref{1.14}) on $[0,T]$ satisfying
\begin{equation}
\|h(t)\|^2_{H^{k}_{\gamma(t)}}+\mu_1\mu_2\|\theta(t)\|^2_{H^{k}_{\gamma(t)}}\lesssim\epsilon_0^2,
\label{1.18}\end{equation}
\begin{equation}
\|\theta(t)\|^2_{H^{k-3}_{\gamma(t)}}\lesssim\sigma(\epsilon_1^2+\epsilon_0^2).
\label{1.19}\end{equation}
Here $\mu_1$, $\mu_2$ are given by (\ref{mu}), and $\gamma(t)$ is a time-dependent positive width function satisfying $\gamma(0)=\gamma_0$ and
\begin{equation}
2\gamma^\prime(t)\tanh(2\gamma(t))=-C_2\left(\|\partial_x h(t)\|_{L^\infty_{\gamma(t)}}+\|\partial_x\theta(t)\|_{L^\infty_{\gamma(t)}}\right),
\label{1.20}\end{equation}
where $C_2$ is a universal constant. 
\label{thm1.1}\end{thm}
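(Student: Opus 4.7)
The plan is to prove Theorem \ref{main} by combining a priori energy estimates for $(h,\theta)$ in the analytic Sobolev space $H^k_{\gamma(t)}$ with a bootstrap that simultaneously controls (\ref{1.18}), the distance bound (\ref{1.19}), and the analyticity radius law (\ref{1.20}). The existence scheme itself is standard: mollify (\ref{1.13})-(\ref{1.14}), solve the regularized ODE in $H^k_{\gamma_0}$ by Cauchy-Lipschitz, and pass to the limit using the uniform estimates. The real content is to show that the bounds survive the limit $\sigma\to 0$. To set this up, I would differentiate (\ref{1.13})-(\ref{1.14}) $k$ times in $x$, apply Lemma \ref{lem 1.1} to the pair $\bigl(\partial_x^k h,\sqrt{\mu_1\mu_2}\,\partial_x^k\theta\bigr)$, and organize the right-hand side into the three groups announced in Section \ref{apriori}: the symmetric top-order contributions from $P_{11}$, $P_{22}$ and $P_{12}+P_{21}$, which under the smallness hypothesis (\ref{1.16}) yield a parabolic dissipation $D(t)\sim \|\Lambda^{1/2}\partial_x^k h\|_{L^2_\gamma}^2+\mu_1\mu_2\|\Lambda^{1/2}\partial_x^k\theta\|_{L^2_\gamma}^2$ with a coefficient uniform in $\sigma$; the asymmetric terms proportional to $P_{12}-P_{21}$; and lower-order remainders controlled by product and commutator estimates in $H^k_\gamma$ together with the embedding $H^k_\gamma\hookrightarrow W^{1,\infty}_\gamma$ for $k\geq 10$.

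The hard part, and the whole reason for introducing the analytic framework, is the asymmetric block. An explicit factorization of $P_{12}-P_{21}$ produces a kernel essentially proportional to $\bigl(4\sigma+\theta(x)+\theta(x-\alpha)\bigr)(\Delta f+\Delta g)$ divided by the product of the two quadratic denominators, which as $\sigma\to 0$ behaves like a singular integral operator of order one acting on $h$ or $\theta$, with norm not shrinking in $\sigma$. Since $D(t)$ only provides a half-derivative of control at the parabolic level, Cauchy-Schwarz reduces this bad contribution to a constant multiple of $\|\Lambda^{1/2}\partial_x^k(\cdot)\|_{L^2_\gamma}^2$. This extra half-derivative is exactly what the negative term $2\gamma'(t)\tanh(2\gamma(t))\|\Lambda^{1/2}(\cdot)\|_{L^2_\gamma}^2$ from Lemma \ref{lem 1.1} supplies, once the constant $C_2$ in (\ref{1.20}) is taken large enough relative to $\|\partial_x h\|_{L^\infty_\gamma}+\|\partial_x\theta\|_{L^\infty_\gamma}$. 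Because the latter quantity stays $O(\epsilon_0)$ on the bootstrap interval, $\gamma(t)$ decreases slowly and remains $\geq \gamma_0/2$ on a time interval $[0,T]$ whose length depends only on $\epsilon_0$ and $\gamma_0$, and a Gronwall argument on the resulting energy inequality closes (\ref{1.18}).

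Finally, the distance bound (\ref{1.19}) is obtained by rerunning the same kind of estimate on (\ref{1.14}) alone at the lower regularity $k-3$. The key structural observation is that every top-order contribution on the right-hand side of (\ref{1.14}) either contains $\partial_x\theta$ explicitly, or comes from differences $P_{11}-P_{21}$, $P_{22}-P_{12}$ that, once expanded as above, carry a factor of $2\sigma+\theta$; no term of size $O(1)$ in $\theta$ survives. Combined with (\ref{1.18}) this yields a differential inequality of the form $\tfrac{d}{dt}\|\theta\|_{H^{k-3}_\gamma}^2 \lesssim \|\theta\|_{H^{k-3}_\gamma}^2 + \sigma^2(\epsilon_0^2+\epsilon_1^2)$, whose Gronwall integration starting from (\ref{1.17}) produces (\ref{1.19}). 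Through the embedding this keeps $\|\theta(t)\|_{L^\infty}\leq \sigma/2$ on a possibly shorter but still $\sigma$-independent time interval, so that the denominators of $P_{12},P_{21}$ remain bounded away from zero and the bootstrap hypothesis used in the proof of (\ref{1.18}) is justified a posteriori. Uniqueness follows from a standard energy estimate on the difference of two solutions, and involves no $\sigma$-dependent quantities.
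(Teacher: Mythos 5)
Your overall architecture coincides with the paper's: $H^k_{\gamma(t)}$ energy estimates via Lemma \ref{lem 1.1}, a splitting of the top-order terms into a symmetric block plus the asymmetric block $P_{12}-P_{21}$, absorption of the dangerous contributions by the loss-of-analyticity term with $C_2$ in (\ref{1.20}) chosen large, and a coupled lower-norm estimate for $\theta$ exploiting the explicit factor $2\sigma+\theta(x)$ in $P_{11}-P_{21}$ and $P_{22}-P_{12}$ to close a bootstrap giving (\ref{1.18})--(\ref{1.19}). However, there is one genuine gap in your account of the parabolic block: you assert that the symmetric top-order terms yield a dissipation comparable to $\|\Lambda^{1/2}\partial_x^k h\|_{L^2_\gamma}^2+\mu_1\mu_2\|\Lambda^{1/2}\partial_x^k\theta\|_{L^2_\gamma}^2$ with a constant uniform in $\sigma$. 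This is false for the $\theta$ component: the positive-definite quadratic form of Proposition \ref{prop2.3} only produces the degenerate weight $D_{22}^0(x,x_1)\simeq (\Delta x)^{-2}(2\sigma)^2/((\Delta x)^2+(2\sigma)^2)$, whose Fourier symbol is $|\xi|-\bigl(1-e^{-2\sigma|\xi|}\bigr)/(2\sigma)$ (see (\ref{2.99})); this vanishes on the frequency range $|\xi|\lesssim\sigma^{-1/3}$ as $\sigma\to0$, so there is no uniform half-derivative gain in $\theta$. If your claim were true, the asymmetric block (whose prefactor is $O(\epsilon_0)$) could be absorbed by the dissipation alone and the analytic framework would be unnecessary, contradicting the very reason the theorem is stated in $H^k_\gamma$; since it is not true, every occurrence of $\|\partial_x^k\theta\|_{\dot H^{1/2}_{\gamma}}$ must be paid for by $-\gamma'$, and every lower-order or commutator term carrying a $\sigma^{-1}$ weight must be paired either with the degenerate weight $D_{22}^0$ or with $\sigma$-weighted smallness of $\theta$ — bookkeeping that your sketch subsumes under generic ``product and commutator estimates'' but which is where the uniformity in $\sigma$ is actually won or lost (cf. (\ref{2.34}), (\ref{2.70}), (\ref{2.87})).

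Two further points in the same vein. First, reducing the pairing $\int(P_{12}-P_{21})\,\partial_x^{k+1}\theta\,\overline{\partial_x^k h}$ to a bound by $\|\partial_x^k\theta\|_{\dot H^{1/2}_{\gamma}}\|\partial_x^k h\|_{\dot H^{1/2}_{\gamma}}$ is not a plain Cauchy--Schwarz step: a direct estimate of the order-one operator gives only the non-uniform bound $\sigma^{-1}\|\partial_x^k\theta\|_{L^2}\|\partial_x^k h\|_{L^2}$ as in (\ref{2.44}); to split the derivative evenly one must write $\partial_x^{k+1}\theta=-\Lambda H\partial_x^k\theta$, symmetrize by the translation change of variables, and control the translation commutator of the kernel as in (\ref{2.46})--(\ref{2.53}). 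Second, the bootstrap needs more than $\|\theta\|_{L^\infty}\le\sigma/2$: the coefficients in the $H^k$ estimate involve $\sigma^{-1}\|\partial_x\theta\|$ in Sobolev norms up to roughly $H^{\tilde k}$, so $\sigma^{-1}\theta$ must be propagated in $H^{k-3}$ as part of the energy (the paper's $\theta_1=\sigma^{-1}\theta$). Your claimed inequality $\frac{d}{dt}\|\theta\|^2_{H^{k-3}_\gamma}\lesssim\|\theta\|^2_{H^{k-3}_\gamma}+\sigma^2(\epsilon_0^2+\epsilon_1^2)$ would indeed deliver this (and is the correct target, stronger than (\ref{1.19})), but it must be assumed and closed simultaneously with (\ref{1.18}) in the bootstrap rather than derived a posteriori, since the $H^k$ estimate already uses it.
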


\begin{rem}[Parabolicity and choice of the unknown]
We notice that, when linearized at $f=g=0$, the equation (\ref{1.11}) and (\ref{1.12}) becomes
$$\partial_t f=-\mu_2\Delta\rho\,P.V.\int_{\mathbb{R}}\frac{\partial_xf(x-\alpha,t)}{\alpha}d\alpha-\mu_1\Delta\rho\,P.V.\int_{\mathbb{R}}\frac{\alpha\cdot\partial_xg(x-\alpha),t)}{\alpha^2+(2\sigma)^2}d\alpha,$$
$$\partial_tg=-\mu_2\Delta\rho\,P.V.\int_{\mathbb{R}}\frac{\alpha\cdot\partial_xf(x-\alpha,t)}{\alpha^2+(2\sigma)^2}d\alpha-\mu_1\Delta\rho\,P.V.\int_{\mathbb{R}}\frac{\partial_xg(x-\alpha,t)}{\alpha}d\alpha.$$
If one further takes $\sigma=0$, the right-hand sides of the above two equations become the same:
$$-\Delta\rho\,P.V.\int_{\mathbb{R}}\frac{(\mu_2\partial_xf+\mu_1\partial_xg)(x-\alpha,t)}{\alpha}d\alpha=-\Delta\rho\Lambda h.$$
Consequently, we get a first order parabolic equation of $h$:
$\partial_th=-\Delta\rho \Lambda h.$ Therefore, one can expect to gain half a derivative in $h$ in the energy estimate. Moreover, by multiplying linearized \eqref{1.11} and \eqref{1.12} by $\mu_2\overline{f}$ and $\mu_1\overline{g}$ respectively, integrating over $x\in\Gamma_{\pm}(t)$ and taking the real parts, the left-hand sides will give rise to
 \begin{equation}\begin{aligned}
&\mu_2\Re\int_{\mathbb{R}}\partial_t f(x+i\gamma(t),t)\overline{f(x+i\gamma(t),t)}dx+\mu_2\Re\int_{\mathbb{R}}\partial_t f(x-i\gamma(t),t)\overline{f(x-i\gamma(t),t)}dx\\
&+\mu_1\Re\int_{\mathbb{R}}\partial_t g(x+i\gamma(t),t)\overline{g(x+i\gamma(t),t)}dx+\mu_1\Re\int_{\mathbb{R}}\partial_t g(x-i\gamma(t),t)\overline{g(x-i\gamma(t),t)}dx\\
=&
\frac{1}{2}\partial_t\left(\mu_2\|f\|_{H^{k}_{\gamma(t)}}^2+\mu_1\|g\|_{H^k_{\gamma(t)}}^2\right)-\frac{\mu_2}{\pi}\gamma^\prime(t)\int_{\mathbb{R}}|\hat{f}(\xi,t)|^2|\xi|\sinh\left(2\gamma(t)|\xi|\right)d\xi\\&-\frac{\mu_1}{\pi}\gamma^\prime(t)\int_{\mathbb{R}}|\hat{g}(\xi,t)|^2|\xi|\sinh\left(2\gamma(t)|\xi|\right)d\xi,
\end{aligned}\label{1.22}\end{equation}
and the terms on the right-hand sides will give rise to 
$$\begin{aligned}
-\mu_2^2\,\Re P.V.\int_{\mathbb{R}}\int_{\Gamma_{\pm}(t)}\frac{\partial_x f(x-\alpha)\overline{f(x)}}{\alpha}d\alpha dx-\mu_1\mu_2\,\Re P.V.\int_{\mathbb{R}}\int_{\Gamma_{\pm}(t)}\frac{\alpha\cdot \partial_xg(x-\alpha)\overline{f(x)}}{\alpha^2+(2\sigma)^2}d\alpha dx,
\end{aligned}$$
$$\begin{aligned}
-\mu_1\mu_2\Re\,P.V.\int_{\mathbb{R}}\int_{\Gamma_{\pm}(t)}\frac{\alpha\partial_xf(x-\alpha)\overline{g(x)}}{\alpha^2+(2\sigma)^2}d\alpha dx-\mu_1^2\Re\,P.V.\int_{\mathbb{R}}\int_{\Gamma_{\pm}(t)}\frac{\partial_xg(x-\alpha)\overline{g(x)}}{\alpha}d\alpha dx.
\end{aligned}$$
Here we dropped the common coefficient $\Delta\rho$ and the varriable $t$ for simplicity. Using the identity $\partial_xf(x-\alpha)=-\partial_\alpha\left(f(x-\alpha)-f(x)\right)$ and the symmetry between $x$ and $x-\alpha$, after integrating by parts in $\alpha$, the sum of the above terms becomes
$$\begin{aligned}
&-\frac{1}{2}P.V.\int_{\mathbb{R}}\int_{\Gamma_{\pm(t)}}\left(\mu_2^2\frac{\left|\Delta f(x,x-\alpha)\right|^2}{\alpha^2}+\mu_1^2\frac{\left|\Delta g(x,x-\alpha)\right|^2}{\alpha^2}\right.\\
&+\left.2\mu_1\mu_2\frac{\alpha^2-(2\sigma)^2}{\left(\alpha^2+(2\sigma)^2\right)^2}\Re\left(\Delta f(x,x-\alpha)\cdot\overline{\Delta g(x,x-\alpha)}\right)\right)d\alpha dx,
\end{aligned}$$
which will be a dissipation term in the energy estimate.
If one takes $\sigma=0$, the integrand is exactly $\alpha^{-2}\left|\mu_2 \Delta f+\mu_1\Delta g\right|^2=\alpha^{-2}\left|\Delta h\right|^2$. Since we are concerned with small $\sigma>0$, in general cases, the integrand can be viewed as a perturbation of $\alpha^{-2}\left|\Delta h\right|^2$. In fact, by setting $\theta=f-g$, it can be shown that
$$\mu_2^2\frac{\left|\Delta f\right|^2}{\alpha^2}+\mu_1^2\frac{\left|\Delta g\right|^2}{\alpha^2}+2\mu_1\mu_2\frac{\alpha^2-(2\sigma)^2}{\left(\alpha^2+(2\sigma)^2\right)^2}\Re\left(\Delta f\cdot\overline{\Delta g}\right)\simeq \frac{|\Delta h|^2}{\alpha^2}+\mu_1^2\mu_2^2\frac{(2\sigma)^2}{\alpha^2+(2\sigma)^2}\frac{|\Delta\theta|^2}{\alpha^2}.
$$
One reason to introduce $h$ and $\theta$  is that they are the eigenfunctions in the previous diagonalization. Indeed, the eigenvalue corresponding to $\Delta h$ has lower and upper bounds independent of $\sigma$, and the eigenvalue corresponding to $\theta$ depends on $\sigma$. While the above quadratic form is positive-definite, the factor $\frac{(2\sigma)^2}{\alpha^2+(2\sigma)^2}$ vanishes ununiformly in $\alpha$ as $\sigma$ goes to $0$. Moreover, the first term on the right-hand side of (\ref{1.22}) can be expressed in terms of $h$ and $\theta$:
$$\mu_2\|f\|_{H^{k}_{\gamma(t)}}^2+\mu_1\|g\|_{H^k_{\gamma(t)}}^2=\|h\|_{H^{k}_{\gamma(t)}}^2+\mu_1\mu_2\|\theta\|_{H^k_{\gamma(t)}}^2,$$
which are regarded as higher order energy throughout the paper. The other reason is that in order to bound the nonlinear terms, we will have to control $f-g$ and show that $\sigma^{-1}\|f-g\|_{H^{k-3}_{\gamma(t)}}\leq C$ with a constant $C$ independent of $\sigma$.
\label{rem2.3}\end{rem}

\begin{rem}\label{rem2.4} A very important remark for this paper is the following: in order to prove Theorem \ref{main} we will use energy estimates. Thus, we will have to fight against the loss of derivatives which appears in many terms throughout the analysis. Actually, to handle all these terms, one can simply use the gain of derivatives from losing analyticity by choosing $-\gamma^\prime$ large enough in Lemma \ref{lem 1.1}. For example, the integral of the quadratic form (\ref{2.15}) can be absorbed regardless of its sign if we choose $-\gamma^\prime>C$ for certain large constant $C$. However, in order to avoid losing analyticity and make $-\gamma^\prime$ as small as possible, we will use the parabolicity in (\ref{2.15}) instead of  losing analyticity  whenever it is possible to do so. Indeed, the only essential terms that cannot be absorbed by the parabolicity in (\ref{2.15}) are the anti-symmetric terms involving $(P_{12}-P_{21})$ in (\ref{2.9}-\ref{2.12}). These terms force us to make use of the analytic functional space and  handle them by losing analyticity. Subsequently, we will also use the loss of analyticity to handle the transport terms, since the velocities are then not real-valued, and we are no longer able to deal with the transport terms with integration by parts. Notice that these transport terms would not cause any trouble if we work in classical Sobolev spaces. In other words, the anti-symmetric terms in (\ref{2.9}-\ref{2.12}) are the only reason why we cannot prove Theorem \ref{main} in classical Sobolev spaces. 
\end{rem}

\section{A priori energy estimate}\label{apriori}
In this section, we will derive the a priori energy estimate for equations (\ref{1.13}-\ref{1.14}).
Indeed, we look for the estimate of the type
$$\begin{aligned}
&\frac{d}{dt}\left(\|\partial_x^kh\|^2_{L^2_{\gamma(t)}}+\mu_1\mu_2\|\partial_x^k\theta\|^2_{L^2_{\gamma(t)}}\right)+c_0 \text{Diss}_k^2\\
&-2\gamma^\prime(t)\tanh(2\gamma(t))\left(\|\Lambda^\frac{1}{2}\partial_x^k\|_{L^2_{\gamma(t))}}+\mu_1\mu_2\|\Lambda^\frac{1}{2}\partial_x^k\theta\|_{L^2_{\gamma(t)}}^2\right)\\
\lesssim&\text{nonlinear terms}.
\end{aligned}$$
The term $\text{Diss}_k$ stands for the dissipation term which we have mentioned in Remark \ref{rem2.3}.
In order to get this inequality we will need to take $k$-derivatives in the equations for $h$ and $\theta$ and test with $\overline{\partial_x^kh}$ and $\overline{\partial_x^k\theta}$. After integrating over $\Gamma_{\pm}(t)$, we will have to control terms of the form
\begin{align*}
    P.V.\int_{\Gamma_{\pm(t)}}\int_{\Gamma_{\pm(t)}} P(x,x_1)\partial_x^j  w(x)\partial_x^k w(x_1)dxdx_1.
\end{align*}
Here $w$ stands for $h$ or $\theta$, and $P$ stands for an integral kernel satisfying $|P|\simeq\frac{1}{|x-x_1|}$ which will be specified later for each term.
Depending on the numbers of the derivatives, i.e. depending on $j\in[0,k+1]\cap\mathbb{Z}$ and the behaviour of the kernel $P$,  we will split them in several classes:
\begin{enumerate}
\item Most singular terms and the transport term. They are the terms with $j=k+1$. Section \ref{decomposition}-\ref{transport} are devoted to them. In fact, in section \ref{top}, we will show that the symmetric part of the most singular terms will give rise to the dissipation $\text{Diss}_{k}$.
\item Safe term. They are the terms with $j=k$. These terms may be controlled by using $\text{Diss}_k$, or controlled by $\|\partial_x^kw\|_{L^2_{\gamma(t)}}^2$ through the T1 theorem, which will be presented in  section \ref{commutators1}.
\item Easy term. They are the terms with $j<k$. These terms can be controlled  by $\|h\|^2_{H^k_{\gamma(t)}}+\|\theta\|_{H^k_{\gamma(t)}}^2$, which is given in section \ref{commutators2}.
\end{enumerate}

To begin with, applying $\partial_x^k$ on each side of (\ref{1.13}-\ref{1.14}) yields that
\begin{equation}\begin{aligned}
&\partial_t\partial_x^kh+\left(\mu_2u_++\mu_1u_-\right)\partial^{k+1}_xh+\mu_1\mu_2\left(u_+-u_-\right)\partial_x^{k+1}\theta\\
=&-\Delta\rho \,P.V.\int_{\mathbb{R}}\left(\mu_2^2P_{11}+\mu_1\mu_2 P_{12}+\mu_1\mu_2 P_{21}+\mu_1^2 P_{22}\right)(x,x-\alpha)\partial^{k+1}_x h(x-\alpha)d\alpha\\
&-\mu_1\mu_2\Delta\rho\,P.V.\int_{\mathbb{R}}\left(\mu_2(P_{11}-P_{12})-\mu_1(P_{22}-P_{21})\right)(x,x-\alpha)\partial_x^{k+1}\theta (x-\alpha)d\alpha\\
&+\sum_{m=1}^4 N^h_{m}
\end{aligned}\label{2.1}\end{equation}
\begin{equation}\begin{aligned}
&\partial_t\partial_x^{k}\theta+\left(u_+-u_-\right)\partial^{k+1}_xh+\left(\mu_1u_++\mu_2u_-\right)\partial^{k+1}_x\theta\\
=&-\Delta\rho\,P.V.\int_{\mathbb{R}}\left(\mu_2(P_{11}-P_{21})-\mu_1(P_{22}-P_{12})\right)(x,x-\alpha)\partial^{k+1}_xh(x-\alpha)d\alpha\\
&-\mu_1\mu_2\Delta\rho\,P.V.\int_{\mathbb{R}}\left(P_{11}+P_{22}-P_{12}-P_{21}\right)(x,x-\alpha)\partial_x^{k+1}\theta(x-\alpha)d\alpha\\
&+\sum_{m=1}^4 N^\theta_{m},
\end{aligned}\label{2.2}\end{equation}
where $P_{ij}$ are given by (\ref{Pij}), and $N^h_{m}$ and $N^\theta_{m}$ are commutators given by
$$N_{1}^h:=-\left[\partial_x^k, \mu_2u_++\mu_1u_-\right]\partial_x h,\quad N_{2}^h:=-\mu_1\mu_2\left[\partial_x^k, u_+-u_-\right]\partial_x \theta,$$
$$N_{3}^h:=-\Delta\rho\,P.V.\int_{\mathbb{R}}\left[\partial_x^k, \left(\mu_2^2P_{11}+\mu_1\mu_2 P_{12}+\mu_1\mu_2 P_{21}+\mu_1^2 P_{22}\right)(x,x-\alpha)\right]\partial_xh(x-\alpha)d\alpha,$$
$$N_{4}^h:=-\mu_1\mu_2\Delta\rho\,P.V.\int_{\mathbb{R}}\left[\partial_x^k, \left(\mu_2(P_{11}-P_{12})-\mu_1(P_{22}-P_{21})\right)(x,x-\alpha)\right]\partial_x\theta(x-\alpha)d\alpha,$$
$$N_{1}^\theta:=-\left[\partial_x^k, u_+-u_-\right]\partial_xh,\quad N_{2}^\theta:=-\left[\partial_x^k, \mu_1u_++\mu_2 u_-\right]\partial_x\theta,$$
$$N_{3}^\theta:=-\Delta\rho\,P.V.\int_{\mathbb{R}}\left[\partial_x^k,\left(\mu_2(P_{11}-P_{21})-\mu_1(P_{22}-P_{12})\right)(x,x-\alpha)\right]\partial_xh(x-\alpha)d\alpha,$$
$$N_{4}^\theta:=-\mu_1\mu_2\Delta\rho\,P.V.\int_{\mathbb{R}}\left[\partial_x^k, \left(P_{11}+P_{22}-P_{12}-P_{21}\right)(x,x-\alpha)\right]\partial_x\theta(x-\alpha)d\alpha.$$
Then we multiply equation (\ref{2.1}) by $\overline{\partial^k_xh}$, multiply equation (\ref{2.2}) by $\mu_1\mu_2\overline{\partial^k_x\theta}$, and integrate the real part of the sum over $\Gamma_{\pm}(t):=\left\{x\in\mathbb{C}\mid x=x^\prime\pm i\gamma(t),\,x^\prime\in\mathbb{R}\right\}.$
To obtain the estimate over the resulted equation, we will look at the contribution of each term in (\ref{2.1})(\ref{2.2}) in this section.\\ 
\indent The organization of this section is as follows. In section \ref{decomposition}, the most singular terms\textemdash the contribution of the first two lines on the right-hand side of \eqref{2.1}\eqref{2.2} are decomposed into three classes after an integration by parts: the symmetric part, the anti-symmetric part and the lower order terms. In Section \ref{top}, we prove that the symmetric part is negative definite and provides dissipation terms. The lower order terms are handled in Section \ref{lower} in an easier fashion. In Section \ref{Asymmetry}, using the properties of the Hilbert transform, we are able to control the anti-symmetric part with loss of half a derivative in both $h$ and $\theta$. The second and third terms on the left-hand side of \eqref{2.1}\eqref{2.2}, which we refer to as transport terms, are treated in section \ref{transport}. Then we control the commutators $N_{m}^h$ and $N_{m}^\theta$ in Section \ref{commutators1} and \ref{commutators2}. The bounds obtained above are collected to complete the energy estimate in Section \ref{energy1} and \ref{energy2}.

\subsection{Decomposition of the most singular terms} \label{decomposition}
In this section, the most singular terms are decomposed into the aforementioned three classes. Recall that $$\Gamma_{\pm}(t)=\left\{x\in\mathbb{C}\mid x=x^\prime\pm i\gamma(t),\,x^\prime\in\mathbb{R}\right\}.$$
\noindent For $x\in\Gamma_{\pm}(t)$ and $(i,j)\in\left\{1,2\right\}^2$, applying the change of variable $x_1=x-\alpha$ and integrating by parts gives
\begin{equation}\begin{aligned}
P.V.\int_{\mathbb{R}}P_{ij}(x,x-\alpha)\partial_x^{k+1}h(x-\alpha)d\alpha&=P.V.\int_{\Gamma_{\pm}(t)}P_{ij}(x,x_1)\partial_x^{k+1}h(x_1)dx_1\\\
&=P.V.\int_{\Gamma_{\pm}(t)}\partial_{x_1}P_{ij}(x,x_1)\cdot\Delta\partial_x^kh(x,x_1)\,dx_1.
\end{aligned}\label{2.3}\end{equation}
 Then we compute that
$$\partial_{x_1}P_{11}(x,x_1)=K_{11}(x,x_1)+J_{11}(x,x_1),$$
$$K_{11}(x,x_1):=\frac{1}{(\Delta x)^2+(\Delta f)^2},\quad 
J_{11}(x,x_1):=\frac{2\Delta f(\partial_xf(x_1)\Delta x-\Delta f)}{\left((\Delta x)^2+(\Delta f)^2\right)^2}.$$
$$\partial_{x_1}P_{22}(x,x_1)=K_{22}(x,x_1)+J_{22}(x,x_1),$$
$$K_{22}(x,x_1):=\frac{1}{(\Delta x)^2+(\Delta g)^2},\quad J_{22}(x,x_1):=\frac{2\Delta g(\partial_x g(x_1)\Delta x-\Delta g)}{\left((\Delta x)^2+(\Delta g)^2\right)}.$$
For $\partial_{x_1}P_{12}$ and $\partial_{x_1}P_{21}$, we will use two equivalent expressions for each of them:
$$\partial_{x_1}P_{12}(x,x_1)=K_{12}(x,x_1)+J_{12}(x,x_1)=\tilde{K}_{12}(x,x_1)+\tilde{J}_{12}(x,x_1),$$
$$\partial_{x_1}P_{21}(x,x_1)=K_{21}(x,x_1)+J_{21}(x,x_1)=\tilde{K}_{21}(x,x_1)+\tilde{J}_{21}(x,x_1),$$
$$K_{12}(x,x_1):=\frac{(\Delta x)^2+(\Delta f)^2-(2\sigma+\theta(x_1))^2}{\left((\Delta x)^2+(\Delta f+2\sigma+\theta(x_1))^2\right)^2},\quad \tilde{K}_{12}(x,x_1):=\frac{(\Delta x)^2+(\Delta g)^2-(2\sigma+\theta(x))^2}{\left((\Delta x)^2+(\Delta g+2\sigma+\theta(x))^2\right)^2},$$
$$J_{12}(x,x_1):=\frac{2(\Delta f+2\sigma+\theta(x_1))(\partial_x f(x_1)\Delta x-\Delta f)}{\left((\Delta x)^2+(\Delta f+2\sigma+\theta(x_1))^2\right)^2}-\frac{2\Delta x\partial_x\theta(x_1)(\Delta f+2\sigma+\theta(x_1))}{\left((\Delta x)^2+(\Delta f+2\sigma+\theta(x_1))^2\right)^2},$$
$$\tilde{J}_{12}(x,x_1)=\frac{2(\Delta g+2\sigma+\theta(x))(\partial_x g(x_1)\Delta x-\Delta g)}{\left((\Delta x)^2+(\Delta g+2\sigma+\theta(x))^2\right)^2}.$$
$$K_{21}(x,x_1):=\frac{(\Delta x)^2+(\Delta f)^2-(2\sigma+\theta(x))^2}{\left((\Delta x)^2+(\Delta f-2\sigma-\theta(x))^2\right)^2},\quad \tilde{K}_{21}(x,x_1):=\frac{(\Delta x)^2+(\Delta g)^2-(2\sigma+\theta(x_1))^2}{\left((\Delta x)^2+(\Delta g-2\sigma-\theta(x_1))^2\right)^2},$$
$$J_{21}(x,x_1):=\frac{2(\Delta f-2\sigma-\theta(x))(\partial_x f(x_1)\Delta x-\Delta f)}{\left((\Delta x)^2+(\Delta f-2\sigma-\theta(x))^2\right)^2},$$
$$\tilde{J}_{21}(x,x_1):=\frac{2(\Delta g-2\sigma-\theta(x_1))(\partial_x g(x_1)\Delta x-\Delta g)}{\left((\Delta x)^2+(\Delta g-2\sigma-\theta(x_1))^2\right)^2}+\frac{2\Delta x\partial_x\theta(x_1)(\Delta g-2\sigma-\theta(x_1))}{\left((\Delta x)^2+(\Delta g-2\sigma-\theta(x_1))^2\right)^2}.$$
Actually we will proof the following lemma in this section.
\begin{lem}\label{summary3.1}\allowdisplaybreaks[4]
The contribution of the most singular terms can be decomposed into the symmetric parts, the anti-symmetric parts and the lower order terms:
\begin{align*}
&-\int_{\Gamma_{\pm}(t)}P.V.\int_{\Gamma_{\pm}(t)}\left(\mu_2^2P_{11}+\mu_1\mu_2(P_{12}+P_{21})+\mu_1^2P_{22}\right)(x,x_1)\partial_x^{k+1}h(x_1)dx_1\,\overline{\partial_x^{k}h(x)}dx\\
&-\mu_1\mu_2^2\int_{\Gamma_{\pm}(t)}P.V.\int_{\Gamma_{\pm}(t)}(P_{11}-P_{12})(x,x_1)\partial_x^{k+1}\theta(x_1)dx_1\,\overline{\partial_x^kh(x)}dx\\
&+\mu_1^2\mu_2\int_{\Gamma_{\pm}(t)}P.V.\int_{\Gamma_{\pm}(t)}(P_{22}-P_{21})(x,x_1)\partial_x^{k+1}\theta(x_1)dx_1\,\overline{\partial_x^kh(x)}dx\\
&-\mu_1^2\mu_2^2\int_{\Gamma_{\pm}(t)}P.V.\int_{\Gamma_{\pm}(t)}\left(P_{11}+P_{22}-P_{12}-P_{21}\right)(x,x_1)\partial_x^{k+1}\theta(x_1)dx_1\,\overline{\partial_x^k\theta(x)}dx\\
&-\mu_1\mu_2^2\int_{\Gamma_{\pm}(t)}P.V.\int_{\Gamma_{\pm}(t)}(P_{11}-P_{21})(x,x_1)\partial_x^{k+1}h(x_1)dx_1\,\overline{\partial_x^k\theta(x)}dx\\
&+\mu_1^2\mu_2\int_{\Gamma_{\pm}(t)}P.V.\int_{\Gamma_{\pm}(t)}(P_{22}-P_{12})(x,x_1)\partial_x^{k+1}h(x_1)dx_1\,\overline{\partial_x^k\theta(x)}dx\\
=&T_{sym}+T_{low}+T_{asym},
\end{align*}
where $T_{sym}$, $T_{asym}$ and $T_{low}$ denote the symmetric part, anti-symmetric part and the lower order terms respectively. They are given as follows.
\begin{align*}
T_{sym}
:=&-\int_{\Gamma_{\pm}(t)}\int_{\Gamma_{\pm}(t)}\left[\frac{1}{2}\Re D_{11}|\Delta\partial_x^kh|^2\right.\\
&+\mu_1\mu_2\Re\left(\mu_2\left(K_{11}-\frac{1}{2}K_{12}-\frac{1}{2}K_{21}\right)-\mu_1\left(K_{22}-\frac{1}{2}\tilde{K}_{12}-\frac{1}{2}\tilde{K}_{21}\right)\right)\Re\left(\Delta\partial_x^kh\cdot\overline{\Delta\partial_x^k\theta}\right)\\
&+\left.\frac{\mu_1^2\mu_2^2}{2}\Re\left(K_{11}-\frac{1}{2}K_{12}-\frac{1}{2}K_{21}+K_{22}-\frac{1}{2}\tilde{K}_{12}-\frac{1}{2}\tilde{K}_{21}\right)|\Delta\partial_x^k\theta|^2\right]dxdx_1,
\end{align*}
where we denote
$$D_{11}(x,x_1):=\left(\mu_2^2K_{11}+\frac{1}{2}\mu_1\mu_2K_{12}+\frac{1}{2}\mu_1\mu_2K_{21}+\frac{1}{2}\mu_1\mu_2\tilde{K}_{12}+\frac{1}{2}\mu_1\mu_2\tilde{K}_{21}+\mu_1^2K_{22}\right)(x,x_1).$$
\begin{align*}
T_{low}
:=&-\int_{\Gamma_{\pm}(t)}\int_{\Gamma_{\pm}(t)}\left(\mu_2^2J_{11}+\mu_1^2J_{22}\right)(x,x_1)\cdot\Delta\partial_x^kh\cdot\overline{\partial_x^kh(x)}dxdx_1\\
&-\int_{\Gamma_{\pm}(t)}\int_{\Gamma_{\pm}(t)}\frac{\mu_1\mu_2}{2}\left(J_{12}+J_{21}+\tilde{J}_{12}+\tilde{J}_{21}\right)(x,x_1)\cdot\Delta\partial_x^kh\cdot\overline{\partial_x^kh(x)}dxdx_1\\
&-\mu_1^2\mu_2^2\int_{\Gamma_{\pm}(t)}\int_{\Gamma_{\pm}(t)}\left[J_{11}+J_{22}-\frac{1}{2}\left(J_{12}+J_{21}+\tilde{J}_{12}+\tilde{J}_{21}\right)\right](x,x_1)\cdot\Delta\partial_x^k\theta\cdot\overline{\partial_x^k\theta(x)}dxdx_1\\
&+\mu_1^2\mu_2\int_{\Gamma_{\pm}(t)}\int_{\Gamma_{\pm}(t)}\left(J_{22}-\frac{1}{2}\tilde{J}_{12}-\frac{1}{2}\tilde{J}_{21}\right)(x,x_1)\cdot\Delta\partial_x^k\theta\cdot\overline{\partial_x^kh(x)}dxdx_1\\
&-\mu_1\mu_2^2\int_{\Gamma_{\pm}(t)}\int_{\Gamma_{\pm}(t)}\left(J_{11}-\frac{1}{2}J_{12}-\frac{1}{2}J_{21}\right)(x,x_1)\cdot\Delta\partial_x^k\theta\cdot\overline{\partial_x^kh(x)}dxdx_1\\
&-\mu_1\mu_2^2\int_{\Gamma_{\pm}(t)}\int_{\Gamma_{\pm}(t)}\left(J_{11}-\frac{1}{2}J_{12}-\frac{1}{2}J_{21}\right)(x,x_1)\cdot\Delta\partial_x^kh\cdot\overline{\partial_x^k\theta(x)}dxdx_1\\
&+\mu_1^2\mu_2\int_{\Gamma_{\pm}(t)}\int_{\Gamma_{\pm}(t)}\left(J_{22}-\frac{1}{2}\tilde{J}_{12}-\frac{1}{2}\tilde{J}_{21}\right)(x,x_1)\cdot\Delta\partial_x^kh\cdot\overline{\partial_x^k\theta(x)}dxdx_1.
\end{align*}
\begin{align*}
T_{asym}:=
&\mu_1\mu_2^2P.V.\int_{\Gamma_{\pm}(t)}\int_{\Gamma_{\pm}(t)}\frac{1}{2}(P_{12}-P_{21})(x,x_1)\partial_x^{k+1}\theta(x_1)\overline{\partial_x^kh(x)}dxdx_1\\
&+\mu_1^2\mu_2\int_{\Gamma_{\pm}(t)}\int_{\Gamma_{\pm}(t)}\frac{1}{2}(P_{12}-P_{21})(x,x_1)\partial_x^{k+1}\theta(x_1)\overline{\partial_x^kh(x)}dxdx_1\\
&-\mu_1\mu_2^2P.V.\int_{\Gamma_{\pm}(t)}\int_{\Gamma_{\pm}(t)}\frac{1}{2}(P_{12}-P_{21})(x,x_1)\partial_x^{k+1}h(x_1)\overline{\partial_x^k\theta(x)}dxdx_1\\
&-\mu_1^2\mu_2\int_{\Gamma_{\pm}(t)}\int_{\Gamma_{\pm}(t)}\frac{1}{2}(P_{12}-P_{21})(x,x_1)\partial_x^{k+1}h(x_1)\overline{\partial_x^k\theta(x)}dxdx_1.
\end{align*}\allowdisplaybreaks[0]
\end{lem}

\begin{proof}[Proof of Lemma \ref{summary3.1}]
To derive the above decomposition, by (\ref{2.3}) the terms from (\ref{2.1}) involving the highest order derivative of $h$ are
\begin{equation}\begin{aligned}
&-\int_{\Gamma_{\pm}(t)}P.V.\int_{\Gamma_{\pm}(t)}\left(\mu_2^2P_{11}+\mu_1\mu_2(P_{12}+P_{21})+\mu_1^2P_{22}\right)(x,x_1)\partial_x^{k+1}h(x_1)dx_1\,\overline{\partial_x^{k}h(x)}dx\\
=&-P.V.\int_{\Gamma_{\pm}(t)}\int_{\Gamma_{\pm}(t)}D_{11}(x,x_1)\cdot\Delta\partial_x^kh\cdot\overline{\partial_x^kh(x)}dxdx_1\\
&-\int_{\Gamma_{\pm}(t)}\int_{\Gamma_{\pm}(t)}\left(\mu_2^2J_{11}+\mu_1^2J_{22}\right)(x,x_1)\cdot\Delta\partial_x^kh\cdot\overline{\partial_x^kh(x)}dxdx_1\\
&-\int_{\Gamma_{\pm}(t)}\int_{\Gamma_{\pm}(t)}\frac{\mu_1\mu_2}{2}\left(J_{12}+J_{21}+\tilde{J}_{12}+\tilde{J}_{21}\right)(x,x_1)\cdot\Delta\partial_x^kh\cdot\overline{\partial_x^kh(x)}dxdx_1.
\end{aligned}\label{2.4}\end{equation}
By noticing $K_{12}(x,x_1)=K_{21}(x_1,x)$ and $\tilde{K}_{12}(x,x_1)=\tilde{K}_{21}(x,x_1)$, we see that $K_{11}$, $K_{22}$, $K_{12}+K_{21}$ and $\tilde{K}_{12}+\tilde{K}_{21}$ are symmetric in the sense that
$$K_{11}(x,x_1)=K_{11}(x_1,x),\quad K_{22}(x,x_1)=K_{22}(x_1,x),$$
$$(K_{12}+K_{21})(x,x_1)=(K_{12}+K_{21})(x_1,x),\quad (\tilde{K}_{12}+\tilde{K}_{21})(x,x_1)=(\tilde{K}_{12}+\tilde{K}_{21})(x,x_1).$$
In particular, $D_{11}(x,x_1)=D_{11}(x_1,x)$. Hence 
\begin{equation}\begin{aligned}
&\Re\,P.V.\int_{\Gamma_{\pm}(t)}\int_{\Gamma_{\pm}(t)}D_{11}(x,x_1)\cdot\Delta\partial_x^k h\cdot\overline{\partial_x^kh(x)}dxdx_1\\
=&\frac{1}{2}\Re\int_{\Gamma_{\pm}(t)}\int_{\Gamma_{\pm}(t)}D_{11}(x,x_1)\left|\Delta\partial_x^kh\right|^2dxdx_1\\
=&\frac{1}{2}\int_{\Gamma_{\pm}(t)}\int_{\Gamma_{\pm}(t)}\Re D_{11}(x,x_1)\left|\Delta\partial_x^kh\right|^2dxdx_1.
\end{aligned}\label{2.5}\end{equation}
Replacing $h$ by $\theta$ in (\ref{2.3}), the terms from (\ref{2.2}) involving the highest order of $\theta$ are
\begin{equation}\begin{aligned}
&-\mu_1^2\mu_2^2\int_{\Gamma_{\pm}(t)}P.V.\int_{\Gamma_{\pm}(t)}\left(P_{11}+P_{22}-P_{12}-P_{21}\right)(x,x_1)\partial_x^{k+1}\theta(x_1)dx_1\,\overline{\partial_x^k\theta(x)}dx\\
=&-\mu_1^2\mu_2^2P.V.\int_{\Gamma_{\pm}(t)}\int_{\Gamma_{\pm}(t)}\left(K_{11}-\frac{1}{2}K_{12}-\frac{1}{2}K_{21}\right)(x,x_1)\cdot\Delta\partial_x^k\theta\cdot\overline{\partial_x^k\theta(x)}dxdx_1\\
&-\mu_1^2\mu_2^2P.V.\int_{\Gamma_{\pm}(t)}\int_{\Gamma_{\pm}(t)}\left(K_{22}-\frac{1}{2}\tilde{K}_{12}-\frac{1}{2}\tilde{K}_{21}\right)(x,x_1)\cdot\Delta\partial_x^k\theta\cdot\overline{\partial_x^k\theta(x)}dxdx_1\\
&-\mu_1^2\mu_2^2\int_{\Gamma_{\pm}(t)}\int_{\Gamma_{\pm}(t)}\left(J_{11}+J_{22}\right)(x,x_1)\cdot\Delta\partial_x^k\theta\cdot\overline{\partial_x^k\theta(x)}dxdx_1.\\
&+\frac{\mu_1^2\mu_2^2}{2}\int_{\Gamma_{\pm}(t)}\int_{\Gamma_{\pm}(t)}(J_{12}+J_{21}+\tilde{J}_{12}+\tilde{J}_{21})(x,x_1)\cdot\Delta\partial_x^k\theta\cdot\overline{\partial_x^k\theta(x)}dxdx_1.
\end{aligned}\label{2.6}\end{equation}
By symmetry, it follows
\begin{equation}\begin{aligned}
&\mu_1^2\mu_2^2\Re\,P.V.\int_{\Gamma_{\pm}(t)}\int_{\Gamma_{\pm}(t)}\left(K_{11}-\frac{1}{2}K_{12}-\frac{1}{2}K_{21}\right)(x,x_1)\cdot\Delta\partial_x^k\theta\cdot\overline{\partial_x^k\theta(x)}dxdx_1\\
=&\frac{1}{2}\mu_1^2\mu_2^2\int_{\Gamma_{\pm}(t)}\int_{\Gamma_{\pm}(t)}\Re\left(K_{11}-\frac{1}{2}K_{12}-\frac{1}{2}K_{21}\right)(x,x_1)|\Delta\partial_x^k\theta|^2dxdx_1,
\end{aligned}\label{2.7}\end{equation}
\begin{equation}\begin{aligned}
&\mu_1^2\mu_2^2\Re\,P.V.\int_{\Gamma_{\pm}(t)}\int_{\Gamma_{\pm}(t)}\left(K_{22}-\frac{1}{2}\tilde{K}_{12}-\frac{1}{2}\tilde{K}_{21}\right)(x,x_1)\cdot\Delta\partial_x^k\theta\cdot\overline{\partial_x^k\theta(x)}dxdx_1\\
=&\frac{1}{2}\mu_1^2\mu_2^2\int_{\Gamma_{\pm}(t)}\int_{\Gamma_{\pm}(t)}\Re\left(K_{22}-\frac{1}{2}\tilde{K}_{12}-\frac{1}{2}\tilde{K}_{21}\right)(x,x_1)|\Delta\partial_x^k\theta|^2dxdx_1.
\end{aligned}\label{2.8}\end{equation}
For the terms from (\ref{2.1}) involving the highest order derivative of $\theta$, we decompose them into symmetric parts and anti-symmetric parts:
$$P_{11}-P_{12}=\left(P_{11}-\frac{1}{2}P_{12}-\frac{1}{2}P_{21}\right)-\frac{1}{2}\left(P_{12}-P_{21}\right),$$
$$P_{22}-P_{21}=\left(P_{22}-\frac{1}{2}P_{12}-\frac{1}{2}P_{21}\right)+\frac{1}{2}\left(P_{12}-P_{21}\right).$$
Then use (\ref{2.3}) to get
\begin{equation}\begin{aligned}
&-\mu_1\mu_2^2\int_{\Gamma_{\pm}(t)}P.V.\int_{\Gamma_{\pm}(t)}(P_{11}-P_{12})(x,x_1)\partial_x^{k+1}\theta(x_1)dx_1\,\overline{\partial_x^kh(x)}dx\\
=&-\frac{1}{2}\mu_1\mu_2^2 \int_{\Gamma_{\pm}(t)}\int_{\Gamma_{\pm}(t)}\left(K_{11}-\frac{1}{2}K_{12}-\frac{1}{2}K_{21}\right)(x,x_1)\cdot\Delta\partial_x^k\theta\cdot\overline{\Delta\partial_x^kh}\,dxdx_1\\
&-\mu_1\mu_2^2\int_{\Gamma_{\pm}(t)}\int_{\Gamma_{\pm}(t)}\left(J_{11}-\frac{1}{2}J_{12}-\frac{1}{2}J_{21}\right)(x,x_1)\cdot\Delta\partial_x^k\theta\cdot\overline{\partial_x^kh(x)}dxdx_1\\
&+\mu_1\mu_2^2P.V.\int_{\Gamma_{\pm}(t)}\int_{\Gamma_{\pm}(t)}\frac{1}{2}(P_{12}-P_{21})(x,x_1)\partial_x^{k+1}\theta(x_1)\overline{\partial_x^kh(x)}dxdx_1
\end{aligned}\label{2.9}\end{equation}
\begin{equation}\begin{aligned}
&\mu_1^2\mu_2\int_{\Gamma_{\pm}(t)}P.V.\int_{\Gamma_{\pm}(t)}(P_{22}-P_{21})(x,x_1)\partial_x^{k+1}\theta(x_1)dx_1\,\overline{\partial_x^kh(x)}dx\\
=&\frac{1}{2}\mu_1^2\mu_2\int_{\Gamma_{\pm}(t)}\int_{\Gamma_{\pm}(t)}\left(K_{22}-\frac{1}{2}\tilde{K}_{12}-\frac{1}{2}\tilde{K}_{21}\right)(x,x_1)\cdot\Delta\partial_x^k\theta\cdot\overline{\Delta\partial_x^kh}\,dxdx_1\\
&+\mu_1^2\mu_2\int_{\Gamma_{\pm}(t)}\int_{\Gamma_{\pm}(t)}\left(J_{22}-\frac{1}{2}\tilde{J}_{12}-\frac{1}{2}\tilde{J}_{21}\right)(x,x_1)\cdot\Delta\partial_x^k\theta\cdot\overline{\partial_x^kh(x)}dxdx_1\\
&+\mu_1^2\mu_2\int_{\Gamma_{\pm}(t)}\int_{\Gamma_{\pm}(t)}\frac{1}{2}(P_{12}-P_{21})(x,x_1)\partial_x^{k+1}\theta(x_1)\overline{\partial_x^kh(x)}dxdx_1.
\end{aligned}\label{2.10}\end{equation}
Similarly, for the terms from (\ref{2.2}) involving the  highest order derivative of $h$, we apply the decomposition
$P_{11}-P_{21}=\left(P_{11}-\frac{1}{2}P_{12}-\frac{1}{2}P_{21}\right)+\frac{1}{2}(P_{12}-P_{21})$ and
$P_{22}-P_{12}=\left(P_{22}-\frac{1}{2}P_{12}-\frac{1}{2}P_{21}\right)-\frac{1}{2}(P_{12}-P_{21})$
so that
\begin{equation}\begin{aligned}
&-\mu_1\mu_2^2\int_{\Gamma_{\pm}(t)}P.V.\int_{\Gamma_{\pm}(t)}(P_{11}-P_{21})(x,x_1)\partial_x^{k+1}h(x_1)dx_1\,\overline{\partial_x^k\theta(x)}dx\\
=&-\frac{1}{2}\mu_1\mu_2^2 \int_{\Gamma_{\pm}(t)}\int_{\Gamma_{\pm}(t)}\left(K_{11}-\frac{1}{2}K_{12}-\frac{1}{2}K_{21}\right)(x,x_1)\cdot\Delta\partial_x^kh\cdot\overline{\Delta\partial_x^k\theta}\,dxdx_1\\
&-\mu_1\mu_2^2\int_{\Gamma_{\pm}(t)}\int_{\Gamma_{\pm}(t)}\left(J_{11}-\frac{1}{2}J_{12}-\frac{1}{2}J_{21}\right)(x,x_1)\cdot\Delta\partial_x^kh\cdot\overline{\partial_x^k\theta(x)}dxdx_1\\
&-\mu_1\mu_2^2P.V.\int_{\Gamma_{\pm}(t)}\int_{\Gamma_{\pm}(t)}\frac{1}{2}(P_{12}-P_{21})(x,x_1)\partial_x^{k+1}h(x_1)\overline{\partial_x^k\theta(x)}dxdx_1,
\end{aligned}\label{2.11}\end{equation}
\begin{equation}\begin{aligned}
&\mu_1^2\mu_2\int_{\Gamma_{\pm}(t)}P.V.\int_{\Gamma_{\pm}(t)}(P_{22}-P_{12})(x,x_1)\partial_x^{k+1}h(x_1)dx_1\,\overline{\partial_x^k\theta(x)}dx\\
=&\frac{1}{2}\mu_1^2\mu_2\int_{\Gamma_{\pm}(t)}\int_{\Gamma_{\pm}(t)}\left(K_{22}-\frac{1}{2}\tilde{K}_{12}-\frac{1}{2}\tilde{K}_{21}\right)(x,x_1)\cdot\Delta\partial_x^kh\cdot\overline{\Delta\partial_x^k\theta}\,dxdx_1\\
&+\mu_1^2\mu_2\int_{\Gamma_{\pm}(t)}\int_{\Gamma_{\pm}(t)}\left(J_{22}-\frac{1}{2}\tilde{J}_{12}-\frac{1}{2}\tilde{J}_{21}\right)(x,x_1)\cdot\Delta\partial_x^kh\cdot\overline{\partial_x^k\theta(x)}dxdx_1\\
&-\mu_1^2\mu_2\int_{\Gamma_{\pm}(t)}\int_{\Gamma_{\pm}(t)}\frac{1}{2}(P_{12}-P_{21})(x,x_1)\partial_x^{k+1}h(x_1)\overline{\partial_x^k\theta(x)}dxdx_1.
\end{aligned}\label{2.12}\end{equation}
Moreover, the real part of the sum of the $K$'s terms in (\ref{2.9}) and (\ref{2.11}) is
\begin{equation}\begin{aligned}
&-\frac{1}{2}\mu_1\mu_2^2\Re\int_{\Gamma_{\pm}(t)}\int_{\Gamma_{\pm}(t)}\left(K_{11}-\frac{1}{2}K_{12}-\frac{1}{2}K_{21}\right)(x,x_1)\cdot\Delta\partial_x^k\theta\cdot\overline{\Delta\partial_x^kh}\,dxdx_1\\
&-\frac{1}{2}\mu_1\mu_2^2\Re\int_{\Gamma_{\pm}(t)}\int_{\Gamma_{\pm}(t)}\left(K_{11}-\frac{1}{2}K_{12}-\frac{1}{2}K_{21}\right)(x,x_1)\cdot\Delta\partial_x^kh\cdot\overline{\Delta\partial_x^k\theta}\,dxdx_1\\
=&-\mu_1\mu_2^2\int_{\Gamma_{\pm}(t)}\int_{\Gamma_{\pm}(t)}\Re\left(K_{11}-\frac{1}{2}K_{12}-\frac{1}{2}K_{21}\right)(x,x_1)\cdot\Re\left(\Delta\partial_x^k h\cdot\overline{\Delta\partial_x^k\theta}\right)dxdx_1,
\end{aligned}\label{2.13}\end{equation}
while the real part of the sum of the $K$'s terms in (\ref{2.10}) and (\ref{2.12}) is
\begin{equation}\begin{aligned}
&\frac{1}{2}\mu_1^2\mu_2\Re\int_{\Gamma_{\pm}(t)}\int_{\Gamma_{\pm}(t)}\left(K_{22}-\frac{1}{2}\tilde{K}_{12}-\frac{1}{2}\tilde{K}_{21}\right)(x,x_1)\cdot\Delta\partial_x^k\theta\cdot\overline{\Delta\partial_x^kh}\,dxdx_1\\
&+\frac{1}{2}\mu_1^2\mu_2\Re\int_{\Gamma_{\pm}(t)}\int_{\Gamma_{\pm}(t)}\left(K_{22}-\frac{1}{2}\tilde{K}_{12}-\frac{1}{2}\tilde{K}_{21}\right)(x,x_1)\cdot\Delta\partial_x^kh\cdot\overline{\Delta\partial_x^k\theta}\,dxdx_1\\
=&\mu_1^2\mu_2\int_{\Gamma_{\pm}(t)}\int_{\Gamma_{\pm}(t)}\Re\left(K_{22}-\frac{1}{2}\tilde{K}_{12}-\frac{1}{2}\tilde{K}_{21}\right)(x,x_1)\cdot\Re\left(\Delta\partial_x^k h\cdot\overline{\Delta\partial_x^k\theta}\right)dxdx_1.
\end{aligned}\label{2.14}\end{equation}
Summing up equations (\ref{2.4})(\ref{2.6}) and (\ref{2.9}-\ref{2.12}), we arrive at the desired decomposition in Lemma \ref{summary3.1}.
\end{proof}
\subsection{Symmetric part and parabolicity}\label{top}
The symmetric part $T_{sym}$ consists of an integral of a quadratic form of $\Delta\partial_x^kh$ and $\Delta\partial_x^k\theta$:
\begin{equation}\begin{aligned}
&\frac{1}{2}\Re D_{11}|\Delta\partial_x^kh|^2+\frac{\mu_1^2\mu_2^2}{2}\Re\left(K_{11}-\frac{1}{2}K_{12}-\frac{1}{2}K_{21}+K_{22}-\frac{1}{2}\tilde{K}_{12}-\frac{1}{2}\tilde{K}_{21}\right)|\Delta\partial_x^k\theta|^2\\
&+\mu_1\mu_2\Re\left(\mu_2\left(K_{11}-\frac{1}{2}K_{12}-\frac{1}{2}K_{21}\right)-\mu_1\left(K_{22}-\frac{1}{2}\tilde{K}_{12}-\frac{1}{2}\tilde{K}_{21}\right)\right)\Re\left(\Delta\partial_x^kh\cdot\overline{\Delta\partial_x^k\theta}\right).
\end{aligned}\label{2.15}\end{equation}
The goal of this section is to prove that this quadratic form is positive definite up to some other lower order terms, and therefore the symmetric part $T_{sym}$ gives rise to a dissipation term. The main result of this section is as follows.
\begin{lem}\label{summary3.2}
Let $D_{11}^0$ and $D_{22}^0$ be given by
$$D_{11}^0(x,x_1)=\mu_2^2\frac{1}{(\Delta x)^2}+2\mu_1\mu_2\frac{(\Delta x)^2-(2\sigma)^2}{\left((\Delta x)^2+(2\sigma)^2\right)^2}+\mu_1^2\frac{1}{(\Delta x)^2},$$
$$D_{22}^0(x,x_1)=2G_0(\Delta x,2\sigma)^{-1}F_0(\Delta x,2\sigma),$$
where
$$G_0(\Delta x,2\sigma)=(\Delta x)^2\left[(\Delta x)^2+(2\sigma)^2\right]^4,$$
$$F_0(\Delta x,2\sigma)=(2\sigma)^8+5(\Delta x)^2(2\sigma)^6+7(\Delta x)^4(2\sigma)^4+3(\Delta x)^6(2\sigma)^2.$$
The symmetric part $T_{sym}$ satisfies the following control.
$$\begin{aligned}
-T_{sym}
\geq&C\int_{\Gamma_{\pm}(t)}\int_{\Gamma_{\pm}(t)}D_{11}^0(x,x_1)|\Delta\partial_x^kh|^2dxdx_1+C\mu_1^2\mu_2^2\int_{\Gamma_{\pm}(t)}\int_{\Gamma_{\pm}(t)}D_{22}^0(x,x_1)|\Delta\partial_x^k\theta|^2dxdx_1\\
&-C\sigma^{-1}\|\partial_x\theta\|_{L^\infty_{\gamma(t)}}\left(\|\partial_xf\|_{L^\infty_{\gamma(t)}}+\|\partial_xg\|_{L^\infty_{\gamma(t)}}\right)\|\partial_x^kh\|_{L^2_{\gamma(t)}}\|\partial_x^k\theta\|_{L^2_{\gamma(t)}}\\
&-C\sigma^{-1}\|\partial_x\theta\|_{L^\infty_{\gamma(t)}}\left(\|\partial_xf\|_{L^\infty_{\gamma(t)}}+\|\partial_xg\|_{L^\infty_{\gamma(t)}}\right)\|\partial_x^k\theta\|_{L^2_{\gamma(t)}}^2.
\end{aligned}$$
\end{lem}
\begin{rem}\label{parabolicity}
As a complement of Remark \ref{rem2.3}, the lower bound of $-T_{sym}$ given by Lemma \ref{summary3.2} shows that the parabolicity of the equations (\ref{1.13})(\ref{1.14}) gives rise to not only dissipation in $h$ but also in $\theta$. These dissipation terms will be used to cancel other terms from section \ref{lower}, section \ref{commutators1} and section \ref{commutators2}, see Lemma \ref{summary3.3} and Lemma \ref{summary3.6}-\ref{summary3.7-3}. Moreover, the lower bound of $-T_{sym}$ allows us to choose $-\gamma^\prime$  as small as it is given in (\ref{1.20}) instead of constant size, see also Remark \ref{rem2.4}.
\end{rem}
In order to prove Lemma \ref{summary3.2}, the first step is decomposing $K_{11}-\frac{1}{2}K_{12}-\frac{1}{2}K_{21}$ and $K_{22}-\frac{1}{2}\tilde{K}_{12}-\frac{1}{2}\tilde{K}_{21}$ into their principal parts and lower order parts.
\begin{lem}
There exist functions $K_f$, $K_g$, $e_f$, $e_g$ such that\\
(1) $K_{11}-\frac{1}{2}K_{12}-\frac{1}{2}K_{21}=K_f+e_f$, $K_{22}-\frac{1}{2}\tilde{K}_{12}-\frac{1}{2}\tilde{K}_{21}=K_g+e_g$.\\
(2) $K_f$ and $K_g$ are perturbations of positive rational functions of $\Delta x$ and $2\sigma$:
$$K_f(x,x_1)=G\left(\Delta x, 2\sigma; \frac{\Delta f}{\Delta x},\frac{\theta(x)}{2\sigma},\frac{\theta(x_1)}{2\sigma}\right)^{-1}F\left(\Delta x, 2\sigma;\frac{\Delta f}{\Delta x},\frac{\theta(x)}{2\sigma},\frac{\theta(x_1)}{2\sigma}\right),$$
$$K_g(x,x_1)=G\left(\Delta x, 2\sigma; \frac{\Delta g}{\Delta x},\frac{\theta(x_1)}{2\sigma},\frac{\theta(x)}{2\sigma}\right)^{-1}F\left(\Delta x, 2\sigma;\frac{\Delta g}{\Delta x},\frac{\theta(x_1)}{2\sigma},\frac{\theta(x)}{2\sigma}\right).$$
(3) $e_f$ and $e_g$ have $\Delta\theta$ as a factor:
$$e_f(x,x_1)=\Delta\theta\Delta f\cdot G\left(\Delta x, 2\sigma; \frac{\Delta f}{\Delta x},\frac{\theta(x)}{2\sigma},\frac{\theta(x_1)}{2\sigma}\right)^{-1}E\left(\Delta x,2\sigma;\frac{\Delta f}{\Delta x},\frac{\theta(x)}{2\sigma},\frac{\theta(x_1)}{2\sigma}\right),$$
$$e_g(x,x_1)=-\Delta\theta\Delta g\cdot G\left(\Delta x, 2\sigma; \frac{\Delta g}{\Delta x},\frac{\theta(x_1)}{2\sigma},\frac{\theta(x)}{2\sigma}\right)^{-1}E\left(\Delta x,2\sigma;\frac{\Delta g}{\Delta x},\frac{\theta(x_1)}{2\sigma},\frac{\theta(x)}{2\sigma}\right).$$
Here $E\left(\Delta x,2\sigma;w_1,w_2,w_3\right)$, $F\left(\Delta x,2\sigma;w_1,w_2,w_3\right)$, $G\left(\Delta x,2\sigma;w_1,w_2,w_3\right)$ are two-element polynomials of $\Delta x$ and $2\sigma$ with coefficients depending on $(w_1,w_2,w_3)$. Moreover, $G(\Delta x,2\sigma;0,0,0)\geq0$, $F(\Delta x,2\sigma;0,0,0)\geq0$.
\label{lem2.1}\end{lem}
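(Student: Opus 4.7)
The plan is to compute $K_{11} - \tfrac12 K_{12} - \tfrac12 K_{21}$ directly by algebraic manipulation, place everything over a common denominator, and split the resulting rational expression into a piece that survives when $\theta(x)=\theta(x_1)$ (this becomes $K_f$) and a remainder carrying $\Delta\theta := \theta(x)-\theta(x_1)$ as a factor (this becomes $e_f$). The corresponding claim for $K_{22} - \tfrac12 \tilde K_{12} - \tfrac12 \tilde K_{21}$ follows from the same computation after swapping the roles of $x$ and $x_1$ in the $\theta$-arguments; this swap is exactly what is reflected in the positions of $w_2$ and $w_3$ in the formulas announced for $K_g$ and $e_g$.

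A structural observation guides the decomposition and explains why $e_f$ actually carries the stronger factor $\Delta\theta\cdot\Delta f$ rather than only $\Delta\theta$. The paper records $K_{12}(x,x_1) = K_{21}(x_1,x)$, so the combination $K_{11} - \tfrac12(K_{12}+K_{21})$ is symmetric under the swap $(x\leftrightarrow x_1)$. Under this swap, both $\Delta f$ and $\Delta\theta$ are antisymmetric. Hence any remainder carrying the single factor $\Delta\theta$ must also carry a compensating antisymmetric factor in order that $K_f + e_f$ remain symmetric, and the only natural candidate at leading order is $\Delta f$. Concretely, after placing $K_{11}$, $K_{12}$, $K_{21}$ over the common denominator
\[
\bigl((\Delta x)^2+(\Delta f)^2\bigr)\bigl((\Delta x)^2+(\Delta f+2\sigma+\theta(x_1))^2\bigr)^2\bigl((\Delta x)^2+(\Delta f-2\sigma-\theta(x))^2\bigr)^2,
\]
the numerator $N$ is a polynomial in $\Delta x,\Delta f,2\sigma,\theta(x),\theta(x_1)$. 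Taylor-expanding $N$ in $\Delta\theta$ about $\Delta\theta=0$ yields $N = N\big|_{\Delta\theta=0} + \Delta\theta\cdot R$, and the symmetry argument forces $R$ to be divisible by $\Delta f$. After homogenizing via the substitutions $w_1=\Delta f/\Delta x$, $w_2=\theta(x)/2\sigma$, $w_3=\theta(x_1)/2\sigma$, the two-variable polynomials $F$, $G$, $E$ required by the lemma can be read off directly.

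It remains to verify the baseline positivity $F(\Delta x,2\sigma;0,0,0)\geq 0$ and $G(\Delta x,2\sigma;0,0,0)\geq 0$. Setting $\Delta f=0$ and $\theta(x)=\theta(x_1)=0$, a direct computation gives
\[
K_{11} - \tfrac12 K_{12} - \tfrac12 K_{21} = \frac{(2\sigma)^2\bigl(3(\Delta x)^2 + (2\sigma)^2\bigr)}{(\Delta x)^2\bigl((\Delta x)^2+(2\sigma)^2\bigr)^2},
\]
which is manifestly the ratio of two non-negative polynomials in $\Delta x$ and $2\sigma$. The main obstacle is therefore not conceptual but notational: carefully tracking which $\theta$-argument sits in which denominator throughout a bulky polynomial manipulation, and confirming that the extraction of $\Delta f$ from the $\Delta\theta$-linear part goes through cleanly with coefficients that really are polynomial in $\Delta x$ and $2\sigma$, rather than leaving a spurious non-factorizable remainder.
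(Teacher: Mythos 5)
Your baseline computation at $\Delta f=\theta=0$ is correct (it agrees with the paper's $F_0/G_0$ after cancelling a factor $((\Delta x)^2+(2\sigma)^2)^2$), and clearing denominators against the common denominator $G$ is exactly the paper's starting point. The gap is in the decomposition itself and in the symmetry argument you offer for part (3). If you define $K_f$ as the restriction of the expression to $\Delta\theta=0$ and put the entire remainder into $e_f$, then $e_f$ cannot carry $\Delta f$ as a factor: setting $\Delta f=0$ (i.e.\ $f(x)=f(x_1)$), the quantity $K_{11}-\tfrac12K_{12}-\tfrac12K_{21}$ reduces to $\frac{1}{(\Delta x)^2}-\tfrac12\frac{(\Delta x)^2-A^2}{((\Delta x)^2+A^2)^2}-\tfrac12\frac{(\Delta x)^2-B^2}{((\Delta x)^2+B^2)^2}$ with $A=2\sigma+\theta(x_1)$, $B=2\sigma+\theta(x)$, which still depends on $\Delta\theta$ (at quadratic order), so it differs from its value at $A=B$ even though $\Delta f=0$. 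Hence your remainder $\Delta\theta\cdot R$ contains $(\Delta\theta)^2$-terms with no $\Delta f$ factor, and conclusion (3) fails for this splitting (equivalently, $E$ would have coefficients singular as $w_1\to0$, which destroys the bound \eqref{2.31} that the lemma feeds into). The symmetry argument does not rescue this: under $x\leftrightarrow x_1$ the antisymmetry of the $\Delta\theta$-odd part can be, and in the actual formulas is, compensated by odd powers of $\Delta x$, so swap symmetry alone cannot single out $\Delta f$ as the compensating factor; and the $\Delta\theta$-even part of your remainder needs no antisymmetric factor at all.

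The paper's decomposition is genuinely different: after clearing denominators it splits the numerator by parity in $\Delta x$. The even-degree part (which by the swap symmetry is automatically even in $\Delta\theta$, and in particular retains the $(\Delta\theta)^2$ contributions such as the $8w_1^2(1+w_2)^2(1+w_3)^2(w_2-w_3)^2$ term in $c_2$) defines $F$ and hence $K_f$; only the odd-degree part goes into $e_f$. The key divisibility — that each odd coefficient $c_1,c_3,c_5,c_7$ contains the factor $w_1(w_2-w_3)$, i.e.\ $\Delta f\,\Delta\theta/(\Delta x\cdot2\sigma)$ — is established by the explicit brute-force computation of those coefficients, not by a structural argument. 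To complete your proof you would have to adopt this parity splitting (keeping the $\Delta\theta$-even terms inside $K_f$) and actually verify the factor $w_1(w_2-w_3)$ in the odd coefficients; as written, your proposed $K_f,e_f$ do not satisfy the lemma.
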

\begin{proof}
Let 
$$\begin{aligned}
&G\left(\Delta x, 2\sigma;,w_1,w_2,w_3\right)\\
:=&\left(1+w_1^2\right)\left(\Delta x\right)^2\left[\left(\Delta x\right)^2+\left(w_1\Delta x+2(1+w_3)\sigma\right)^2\right]^2\left[\left(\Delta x\right)^2+\left(-w_1\Delta x+2(1+w_2)\sigma\right)^2\right]^2,
\end{aligned}$$
so that
$$\begin{aligned}
&G\left(\Delta x,2\sigma;\frac{\Delta f}{\Delta x},\frac{\theta(x)}{2\sigma},\frac{\theta(x_1)}{2\sigma}\right)\\
=&\left((\Delta x)^2+(\Delta f)^2\right)\left((\Delta x)^2+(\Delta f+2\sigma+\theta(x_1))^2\right)^2\left((\Delta x)^2+(-\Delta f+2\sigma+\theta(x))^2\right)^2.
\end{aligned}$$
Then we compute by brute force to get
$$\begin{aligned}
&G\left(\Delta x,2\sigma;\frac{\Delta f}{\Delta x},\frac{\theta(x)}{2\sigma},\frac{\theta(x_1)}{2\sigma}\right)\left(K_{11}-\frac{1}{2}K_{12}-\frac{1}{2}K_{21}\right)(x,x_1)\\
=&c_0(2\sigma)^8+c_{2}(\Delta x)^2(2\sigma)^6+c_4(\Delta x)^4(2\sigma)^4+c_6(\Delta x)^6(2\sigma)^2\\
&+c_1(\Delta x)(2\sigma)^7+c_3(\Delta x)^3(2\sigma)^5+c_5(\Delta x)^5(2\sigma)^3+c_7(\Delta x)^7(2\sigma),
\end{aligned}$$
where $c_j=c_j\left(\frac{\Delta f}{\Delta x},\frac{\theta(x)}{2\sigma},\frac{\theta(x_1)}{2\sigma}\right)$, $0\leq j\leq8$ are functions of $\left(\frac{\Delta f}{\Delta x},\frac{\theta(x)}{2\sigma},\frac{\theta(x_1)}{2\sigma}\right)$:
$$c_0(w_1,w_2,w_3)=(1+w_2)^4(1+w_3)^4,$$
$$\begin{aligned}
c_2(w_1,w_2,w_3)=&\frac{1}{2}(5-3w_1^2)(1+w_2)^2(1+w_3)^2\left[(1+w_2)^2+(1+w_3)^2\right]\\
&+8w_1^2(1+w_2)^2(1+w_3)^2(w_2-w_3)^2,
\end{aligned}$$
$$\begin{aligned}
c_4(w_1,w_2,w_3)=&\frac{1}{2}(1+w_1)^2(1-15w_1)^2\left[(1+w_2)^4+(1+w_3)^4\right]\\
&+(6+16w_1^2+34w_1^4)(1+w_2^2)(1+w_3^2)\\
&+8w_1^2(1+w_1^2)\left[(1+w_2)^2+(1+w_3)^2\right](w_2-w_3)^2,
\end{aligned}$$
$$\begin{aligned}
c_6(w_1,w_2,w_3)=\frac{3}{2}(1+w_1^2)^2(1-3w_1^2)\left[(1+w_2)^2+(1+w_3)^2\right]+8w_1^2(1+w_1^2)^2(w_2-w_3)^2,
\end{aligned}$$
$$c_1(w_1,w_2,w_3)=4w_1(w_2-w_3)(1+w_2)^3(1+w_3)^3,$$
$$\begin{aligned}
c_3(w_1,w_2,w_3)=&w_1(w_2-w_3)\left[-(10+26w_1^2)(1+w_2)^2(1+w_3)^2\right.\\
&\left.+4(1+w_1^2)(1+w_2)(1+w_3)\frac{(1+w_2)^3-(1+w_3)^3}{w_2-w_3}\right],
\end{aligned}$$
$$\begin{aligned}
c_5(w_1,w_2,w_3)&=w_1(w_2-w_3)\left[2(1+w_1^2)^2\frac{(1+w_2)^3-(1+w_3)^3}{w_2-w_3}\right.\\
&\left.+(1+w_1^2)(10+26w_1^2)(1+w_2)(1+w_3)\right],
\end{aligned}$$
$$\begin{aligned}
c_7(w_1,w_2,w_3)=-2w_1(w_2-w_3)(1+w_1^2)^3.
\end{aligned}$$
Now we let 
$$F(\Delta x,2\sigma;w_1,w_2,w_3):=\sum_{j=0}^3c_{2j}(w_1,w_2,w_3)(\Delta x)^{2j}(2\sigma)^{8-2j},$$
$$E(\Delta x,2\sigma;w_1,w_2,w_3):=w_1^{-1}(w_2-w_3)^{-1}\sum_{j=0}^3c_{1+2j}(w_1,w_2,w_3)(\Delta x)^{2j}(2\sigma)^{6-2j}.$$
Note that $c_{1+2j}$, $j=0,1,2,3$ have $w_1(w_2-w_3)$ as a factor, so $E$ is well-defined. Then we get
$$\left(K_{11}-\frac{1}{2}K_{12}-\frac{1}{2}K_{21}\right)(x,x_1)=K_f(x,x_1)+e_f(x,x_1),$$
$$\begin{aligned}
K_f(x,x_1)=&G\left(\Delta x,2\sigma;\frac{\Delta f}{\Delta x},\frac{\theta(x)}{2\sigma},\frac{\theta(x_1)}{2\sigma}\right)^{-1}F\left(\Delta x,2\sigma;\frac{\Delta f}{\Delta x},\frac{\theta(x)}{2\sigma},\frac{\theta(x_1)}{2\sigma}\right),
\end{aligned}$$
$$\begin{aligned}
e_f(x,x_1)=\Delta\theta\Delta f\cdot G\left(\Delta x,2\sigma;\frac{\Delta f}{\Delta x},\frac{\theta(x)}{2\sigma},\frac{\theta(x_1)}{2\sigma}\right)^{-1} E\left(\Delta x,2\sigma;\frac{\Delta f}{\Delta x},\frac{\theta(x)}{2\sigma},\frac{\theta(x_1)}{2\sigma}\right).
\end{aligned}$$
To obtain the corresponding identities for $K_{22}-\frac{1}{2}\tilde{K}_{12}-\frac{1}{2}\tilde{K}_{21}$, it suffices to observe that the expression of $K_{22}-\frac{1}{2}\tilde{K}_{12}-\frac{1}{2}\tilde{K}_{21}$ is the same as $K_{11}-\frac{1}{2}K_{12}-\frac{1}{2}K_{21}$ if we replace $\Delta g$ by $\Delta f$ and swap $\theta(x)$ and $\theta(x_1)$. Finally, we see that
$$G_0(\Delta x,2\sigma):=G(\Delta x,2\sigma;0,0,0)=(\Delta x)^2\left[(\Delta x)^2+(2\sigma)^2\right]^4\geq0,$$
$$F_0(\Delta x,2\sigma):=F(\Delta x,2\sigma;0,0,0)=(2\sigma)^8+5(\Delta x)^2(2\sigma)^6+7(\Delta x)^4(2\sigma)^4+3(\Delta x)^6(2\sigma)^2\geq0.$$
\end{proof}
Using Lemma \ref{lem2.1}, we decompose the quadratic form (\ref{2.15}) into its principal part and lower-order parts:
\begin{equation}\begin{aligned}
&\frac{1}{2}\Re D_{11}|\Delta\partial_x^kh|^2+\frac{\mu_1^2\mu_2^2}{2}\Re\left(K_{11}-\frac{1}{2}K_{12}-\frac{1}{2}K_{21}+K_{22}-\frac{1}{2}\tilde{K}_{12}-\frac{1}{2}\tilde{K}_{21}\right)|\Delta\partial_x^k\theta|^2\\
&+\mu_1\mu_2\Re\left(\mu_2\left(K_{11}-\frac{1}{2}K_{12}-\frac{1}{2}K_{21}\right)-\mu_1\left(K_{22}-\frac{1}{2}\tilde{K}_{12}-\frac{1}{2}\tilde{K}_{21}\right)\right)\Re\left(\Delta\partial_x^kh\cdot\overline{\Delta\partial_x^k\theta}\right)\\
=&\frac{1}{2}\Re D_{11}|\Delta\partial_x^kh|^2+\mu_1\mu_2\Re D_{12}\cdot\Re\left(\Delta\partial_x^kh\cdot\overline{\Delta\partial_x^k\theta}\right)+\frac{\mu_1^2\mu_2^2}{2}\Re D_{22}|\Delta\partial_x^k\theta|^2\\
&+\mu_1\mu_2\Re\left(\mu_2e_f-\mu_1e_g\right)\cdot\Re\left(\Delta\partial_x^kh\cdot\overline{\Delta\partial_x^k\theta}\right)+\frac{\mu_1^2\mu_2^2}{2}\Re\left(e_f+e_g\right)|\Delta\partial_x^k\theta|^2,
\end{aligned}\label{2.16}\end{equation}
where we denote
$D_{12}:=\mu_2K_f-\mu_1 K_g$, $D_{22}=K_f+K_g.$
The last two terms on the right-hand side of (\ref{2.16}) are considered lower order for that $e_f$ and $e_g$ have $\Delta\theta$ as a factor, which is expected to be proportional to $\sigma\Delta x$. In the next, we show that the principle part is positive definite for small data, for which we need the following algebraic inequality.
\begin{lem}
Suppose that $a_j$ and $b_j$, $1\leq j\leq m$ are complex valued. Then we have the inequality
$$\left|\prod_{j=1}^na_j-\prod_{j=1}^nb_j\right|\leq\prod_{j=1}^n\max\left\{|a_j|,|b_j|\right\}\sum_{j=1}^n\frac{|a_j-b_j|}{\max\left\{|a_j|,|b_j|\right\}}.$$
\label{lem2.2}\end{lem}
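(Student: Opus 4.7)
The plan is to prove this by the standard telescoping identity together with the triangle inequality. First I would write
\begin{equation*}
\prod_{j=1}^n a_j - \prod_{j=1}^n b_j \;=\; \sum_{k=1}^n \left(\prod_{j=1}^{k-1} a_j\right)(a_k - b_k)\left(\prod_{j=k+1}^{n} b_j\right),
\end{equation*}
which is verified by checking that successive terms cancel when one expands the sum. Taking absolute values and using the triangle inequality then yields
\begin{equation*}
\left|\prod_{j=1}^n a_j - \prod_{j=1}^n b_j\right| \;\leq\; \sum_{k=1}^n |a_k-b_k|\prod_{j<k}|a_j|\prod_{j>k}|b_j|.
\end{equation*}

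Next I would bound each factor $|a_j|$ and $|b_j|$ that appears on the right by $M_j:=\max\{|a_j|,|b_j|\}$. Since $|a_j|\leq M_j$ and $|b_j|\leq M_j$, the $k$-th summand is at most $|a_k-b_k|\prod_{j\neq k} M_j$. Pulling out the common factor $\prod_{j=1}^n M_j$ and writing $\prod_{j\neq k} M_j = \prod_{j=1}^n M_j / M_k$ (which is legitimate whenever $M_k>0$), we obtain
\begin{equation*}
\left|\prod_{j=1}^n a_j - \prod_{j=1}^n b_j\right| \;\leq\; \prod_{j=1}^n M_j \sum_{k=1}^n \frac{|a_k-b_k|}{M_k},
\end{equation*}
which is exactly the stated bound.

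The only delicate point, and hence essentially the only obstacle, is the degenerate case when $M_k=0$ for some $k$, i.e.\ $a_k=b_k=0$. In that case $|a_k-b_k|=0$ and also the full product $\prod_{j\neq k} M_j$ times $|a_k-b_k|$ vanishes, so the $k$-th summand contributes nothing and one simply adopts the convention $|a_k-b_k|/M_k=0$ there; alternatively, one perturbs $a_k,b_k$ by $\varepsilon$ and takes $\varepsilon\to 0$. With that convention the inequality holds in full generality, and no further analytic input is needed—this is a purely combinatorial/algebraic identity, which is precisely why it can later be applied to the products of denominators arising in the decompositions of $K_{ij}$, $\tilde K_{ij}$ in Lemma~\ref{lem2.1}.
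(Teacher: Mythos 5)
Your proof is correct and is essentially the same argument as the paper's: the paper proceeds by induction on $n$, splitting off the last factor via $\prod a_j-\prod b_j=(a_n-b_n)\prod_{j<n}a_j+b_n\bigl(\prod_{j<n}a_j-\prod_{j<n}b_j\bigr)$, which when unrolled is exactly your telescoping identity, followed by the same bound of each remaining factor by $\max\{|a_j|,|b_j|\}$. Your remark on the degenerate case $\max\{|a_k|,|b_k|\}=0$ is a detail the paper leaves implicit (and is harmless in the applications, where the factors are nonvanishing denominators).
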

\begin{proof}
We prove the desired inequality by induction. The case $n=1$ is trivial. Suppose that the above inequality holds for $n-1$. Then 
$$\begin{aligned}
&\left|\prod_{j=1}^na_j-\prod_{j=1}^nb_j\right|\\
\leq&|a_n-b_n|\prod_{j=1}^{n-1}|a_j|+|b_n|\left|\prod_{j=1}^{n-1}a_j-\prod_{j=1}^{n-1}b_j\right|\\
\leq&\max\left\{|a_n|,|b_n|\right\}\frac{|a_n-b_n|}{\max\left\{|a_n|,|b_n|\right\}}\prod_{j=1}^{n-1}|a_j|+|b_n|\sum_{j=1}^{n-1}\frac{|a_j-b_j|}{\max\left\{|a_j|,|b_j|\right\}}\prod_{j=1}^{n-1}\max\left\{|a_j|,|b_j|\right\}\\
\leq&\prod_{j=1}^n\max\left\{|a_j|,|b_j|\right\}\sum_{j=1}^n\frac{|a_j-b_j|}{\max\left\{|a_j|,|b_j|\right\}}.
\end{aligned}$$
\end{proof}
\begin{prop}
There exists a small constant $w_0>0$ such that if 
\begin{equation}    \|\partial_xf\|_{L^\infty_\gamma}+\|\partial_xg\|_{L^\infty_\gamma}+\sigma^{-1}\left\|\theta\right\|_{L^\infty_\gamma}\leq w_0,
\label{2.17}\end{equation}
then 
$$\begin{aligned}
&\frac{1}{2}\Re D_{11}|\Delta\partial_x^kh|^2+\mu_1\mu_2\Re D_{12}\Re\left(\Delta\partial_x^kh\cdot\overline{\Delta\partial_x^k\theta}\right)+\frac{1}{2}\mu_1^2\mu_2^2\Re D_{22}|\Delta\partial_x^k\theta|^2\\
\geq&\left(\frac{7}{50}-Cw_0\right)D_{11}^0|\Delta\partial_x^kh|^2+\left(\frac{7}{64}-Cw_0\right)\mu_1^2\mu_2^2D_{22}^0|\Delta\partial_x^k\theta|^2.
\end{aligned}$$
Here $D_{11}^0$ and $D_{22}^0$ are given by (\ref{2.18})(\ref{2.19}), and satisfy the following estimates:
$$D_{11}^0(x,x_1)\simeq\frac{1}{(\Delta x)^2}, \quad D_{22}^0(x,x_1)\simeq\frac{1}{(\Delta x)^2}\frac{(2\sigma)^2}{(\Delta x)^2+(2\sigma)^2}.$$
\label{prop2.3}\end{prop}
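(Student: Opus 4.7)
The plan is to recognize the quadratic form in (\ref{2.15}) as a $2\times 2$ symmetric form in $(\Delta\partial_x^kh,\Delta\partial_x^k\theta)$, verify positive-definiteness of its baseline via a discriminant computation, and absorb all deviations from the baseline as $O(w_0)$ perturbations controllable by Lemma \ref{lem2.2}.

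\emph{Step 1: Reduce to the baseline.} Using the identities of Lemma \ref{lem2.1} together with $\mu_1+\mu_2=1$, rearrange
\[
D_{11}=\mu_2^2(K_f+e_f)+\mu_1^2(K_g+e_g)+\tfrac{\mu_2}{2}(K_{12}+K_{21})+\tfrac{\mu_1}{2}(\tilde K_{12}+\tilde K_{21}),
\]
while $D_{12}=\mu_2K_f-\mu_1K_g$ and $D_{22}=K_f+K_g$ are already in reduced form. Introduce $K^0:=F_0/G_0$, the common value of $K_f$ and $K_g$ at the symmetric baseline $\partial_xf=\partial_xg=0$, $\theta\equiv 0$, and $M^0:=((\Delta x)^2-(2\sigma)^2)/((\Delta x)^2+(2\sigma)^2)^2$, the common baseline of $K_{12},K_{21},\tilde K_{12},\tilde K_{21}$. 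A direct substitution produces the baseline principal values
\[
D_{11}^0=(\mu_1^2+\mu_2^2)K^0+M^0,\qquad D_{12}^0=(\mu_2-\mu_1)K^0,\qquad D_{22}^0=2K^0,
\]
matching the expressions of (\ref{2.18})--(\ref{2.19}).

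\emph{Step 2: Positivity of the baseline form.} A short computation yields the discriminant
\[
D_{11}^0D_{22}^0-(D_{12}^0)^2=\bigl[2(\mu_1^2+\mu_2^2)-(\mu_2-\mu_1)^2\bigr](K^0)^2+2K^0M^0=(\mu_1+\mu_2)^2(K^0)^2+2K^0M^0=K^0(K^0+2M^0).
\]
The crucial algebraic collapse is $2(\mu_1^2+\mu_2^2)-(\mu_2-\mu_1)^2=(\mu_1+\mu_2)^2=1$, which uses $\mu_1+\mu_2=1$ essentially. An explicit computation gives $K^0+2M^0=(2(\Delta x)^4+(\Delta x)^2(2\sigma)^2+(2\sigma)^4)/[(\Delta x)^2((\Delta x)^2+(2\sigma)^2)^2]>0$, so the discriminant is strictly positive. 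Combined with the explicit $K^0=(2\sigma)^2((2\sigma)^2+3(\Delta x)^2)/[(\Delta x)^2((\Delta x)^2+(2\sigma)^2)^2]$ from the proof of Lemma \ref{lem2.1}, this gives the asymptotics $D_{11}^0\simeq(\Delta x)^{-2}$ and $D_{22}^0\simeq(2\sigma)^2/[(\Delta x)^2((\Delta x)^2+(2\sigma)^2)]$. Solving the resulting $2\times 2$ matrix inequality by optimizing the split of the diagonal entries, so that the remainder matrix is still nonnegative, produces the specific lower-bound constants $\tfrac{7}{50}$ and $\tfrac{7}{64}$.

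\emph{Step 3: Absorb perturbations.} Apply Lemma \ref{lem2.2} to each rational function from Lemma \ref{lem2.1}: under (\ref{2.17}), the three small arguments $\Delta f/\Delta x,\theta(x)/(2\sigma),\theta(x_1)/(2\sigma)$ are each bounded by $w_0$, and the lemma gives $|K_f-K_f^0|+|K_g-K_g^0|+|e_f|+|e_g|\leq Cw_0 K^0$, together with the analogous estimate for the mixed kernels $K_{12},K_{21},\tilde K_{12},\tilde K_{21}$. Taking real parts and applying Cauchy-Schwarz to the off-diagonal perturbation $\mu_1\mu_2(D_{12}-D_{12}^0)\Re(\Delta\partial_x^kh\cdot\overline{\Delta\partial_x^k\theta})$, one controls all error contributions by $Cw_0(D_{11}^0|\Delta\partial_x^kh|^2+\mu_1^2\mu_2^2D_{22}^0|\Delta\partial_x^k\theta|^2)$. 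Subtracting this from the baseline inequality of Step 2 yields the stated bound with constants $\tfrac{7}{50}-Cw_0$, $\tfrac{7}{64}-Cw_0$. The imaginary contributions arising from evaluation on $\Gamma_\pm(t)$ vanish at the baseline (since $K^0,M^0$ are real rational functions of the real variables $\Delta x,2\sigma$) and are themselves $O(w_0)$, absorbed in the same way.

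The main obstacle is the algebraic identity in Step 2: without $(\mu_1+\mu_2)^2=1$ eliminating the cross-term contribution of the density asymmetry $\mu_2-\mu_1$, the baseline discriminant would not be manifestly positive and the entire parabolic strategy would collapse, forcing us to absorb the cross term via the loss of analyticity ($\gamma^\prime$) in Lemma \ref{lem 1.1}. The remaining steps are bookkeeping applications of Lemmas \ref{lem2.1} and \ref{lem2.2}.
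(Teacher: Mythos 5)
Your strategy is essentially the paper's: pass to the $2\times2$ form in $(\Delta\partial_x^kh,\mu_1\mu_2\Delta\partial_x^k\theta)$, exploit the collapse $2(\mu_1^2+\mu_2^2)-(\mu_2-\mu_1)^2=(\mu_1+\mu_2)^2=1$ in the determinant, and treat everything else as $O(w_0)$ perturbations via Lemmas \ref{lem2.1} and \ref{lem2.2}; the only structural difference is that you diagonalize the unperturbed form first and perturb afterwards, while the paper perturbs first (inequalities (\ref{2.26})--(\ref{2.30})) and completes the square at the end. However, two steps do not survive as written. First, the asserted bound $|e_f|+|e_g|\leq Cw_0K^0$ is false: by (\ref{2.31})--(\ref{2.32}) one has $|e_f|\lesssim |\Delta f/\Delta x|\,|\Delta\theta/\Delta x|\,((\Delta x)^2+(2\sigma)^2)^{-1}$, and with $|\Delta\theta|\leq 2\sigma w_0$ this decays like $w_0^2\sigma|\Delta x|^{-3}$ for $|\Delta x|\gg\sigma$, whereas $w_0K^0\simeq w_0\sigma^2|\Delta x|^{-4}$, so the ratio is unbounded; similarly the ``analogous estimate'' for $K_{12},K_{21},\tilde K_{12},\tilde K_{21}$ cannot be relative to their own baseline $M^0=((\Delta x)^2-(2\sigma)^2)/((\Delta x)^2+(2\sigma)^2)^2$, which vanishes and changes sign at $|\Delta x|=2\sigma$. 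These errors are reparable only because $e_f,e_g$ and the mixed kernels enter the Proposition solely through $D_{11}$, whose baseline is $\simeq(\Delta x)^{-2}$: the correct and sufficient bounds are $|e_f|+|e_g|\lesssim w_0^2(\Delta x)^{-2}$ and $|K_{12}-M^0|+\cdots\lesssim w_0(\Delta x)^{-2}$, i.e.\ the unlabeled estimate preceding (\ref{2.29}) together with (\ref{2.30}); you should restate Step 3 in that form.

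Second, the quantitative heart of the Proposition, the constants $\tfrac{7}{50}$ and $\tfrac{7}{64}$, is asserted (``optimizing the split \ldots produces the specific lower-bound constants'') but never derived, and here nothing can be waved through because at the baseline the claimed inequality is exactly borderline. The required condition is $(\tfrac12-\tfrac{7}{50})(\tfrac12-\tfrac{7}{64})=\tfrac{9}{25}\cdot\tfrac{25}{64}=\tfrac{9}{64}\geq (D_{12}^0)^2/(4D_{11}^0D_{22}^0)$, and with $t:=(\mu_1-\mu_2)^2\leq1$ and $\rho:=(K^0+2M^0)/K^0$ one has $(D_{12}^0)^2/(4D_{11}^0D_{22}^0)=t/(4(t+\rho))$, whose supremum is $9/64$ precisely because $\inf_{s\geq0}\rho=\inf_{s\geq0}(2s^2+s+1)/(3s+1)=7/9$ at $s=(\Delta x)^2/(2\sigma)^2=1/3$. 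This minimization (the source of all the $7$'s, cf.\ (\ref{2.29}) and (\ref{2.33})) is exactly what you omit: positivity of $K^0+2M^0$ gives pointwise positive-definiteness but no uniform constants, and since equality is attained in the limit $t\to1$, $s=1/3$, the constants $\tfrac{7}{50}$, $\tfrac{7}{64}$ leave no slack at the baseline. Once these two computations are supplied, your argument becomes a valid (order-swapped) variant of the paper's proof.
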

We remark that the two constants $\frac{7}{50}$ and $\frac{7}{64}$ are neither essential nor sharp. In the subsequent proof, we will only use the fact that the two leading coefficients in the lower bound given by Proposition \ref{prop2.3} are positive.\\
\indent To prove the desired inequality, we look at the determinant of the quadratic form:
\begin{equation}\begin{aligned}
&\Re D_{11}\Re D_{22}-(\Re D_{12})^2\\
=&\Re\left(D_{11}-\mu_2^2K_f-\mu_1^2K_g\right)\cdot\Re(K_f+K_g)+\Re K_f\cdot \Re K_g\\
=&\Re K_f\cdot\Re\left(\frac{1}{2}K_g+D_{11}-\mu_2^2K_f-\mu_1^2K_g\right)+\Re K_g\cdot\Re\left(\frac{1}{2}K_f+D_{11}-\mu_2^2K_f-\mu_1^2K_g\right).
\end{aligned}\label{det1}\end{equation}
By Lemma \ref{lem2.1}, 
\begin{equation}\begin{aligned}
&\frac{1}{2}K_g+D_{11}-\mu_2^2K_f-\mu_1^2K_g\\
=&\frac{1}{2}K_{22}+\frac{1}{2}\mu_2(K_{12}+K_{21})+\left(\frac{1}{2}\mu_1-\frac{1}{4}\right)(\tilde{K}_{12}+\tilde{K}_{21})+\mu_2^2e_f+\left(\mu_1^2-\frac{1}{2}\right)e_g\\
=&\frac{1}{2}K_{22}+\frac{1}{4}\tilde{K}_{12}+\frac{1}{4}\tilde{K}_{21}+\frac{1}{2}\mu_2(K_{12}+K_{21}-\tilde{K}_{12}-\tilde{K}_{21})+\mu_2^2e_f+\left(\mu_1^2-\frac{1}{2}\right)e_g,
\end{aligned}\label{det2}\end{equation}
and similarly,
\begin{equation}\begin{aligned}
&\frac{1}{2}K_f+D_{11}-\mu_2^2K_f-\mu_1^2K_g\\
=&\frac{1}{2}K_{11}+\left(\frac{1}{2}\mu_2-\frac{1}{4}\right)(K_{12}+K_{21})+\frac{1}{2}\mu_1(\tilde{K}_{12}+\tilde{K}_{21})+\left(\mu_2^2-\frac{1}{2}\right)e_f+\mu_1^2e_g\\
=&\frac{1}{2}K_{11}+\frac{1}{4}K_{12}+\frac{1}{4}K_{21}-\frac{1}{2}\mu_1(K_{12}+K_{21}-\tilde{K}_{12}-\tilde{K}_{21})+\left(\mu_2^2-\frac{1}{2}\right)e_f+\mu_1^2e_g.
\end{aligned}\label{det3}\end{equation}
In order to prove Proposition \ref{prop2.3}, the key point is to find a lower bound for the determinant. To this end, let $K_f^0$ and $K_g^0$ denote the unperturbed versions of $K_f$ and $K_g$:
$$K_f^0(x,x_1)=K_g^0(x,x_1):=G_0(\Delta x,2\sigma)^{-1}F_0(\Delta x,2\sigma),$$
where $F$ and $G$ are given by Lemma \ref{lem2.1}. 
Also, we denote $D_{11}^0$ and $D_{22}^0$ the unperturbed versions of $D_{11}$ and $D_{22}$:
\begin{equation}
D_{11}^0(x,x_1)=\mu_2^2\frac{1}{(\Delta x)^2}+2\mu_1\mu_2\frac{(\Delta x)^2-(2\sigma)^2}{\left((\Delta x)^2+(2\sigma)^2\right)^2}+\mu_1^2\frac{1}{(\Delta x)^2},
\label{2.18}\end{equation}
\begin{equation}
D_{22}^0(x,x_1):=K_f^0+K_g^0=2G_0(\Delta x,2\sigma)^{-1}F_0(\Delta x,2\sigma).
\label{2.19}\end{equation}
We will need the following estimates.
\begin{lem}\label{estimatesdetermminant}
Suppose that condition (\ref{2.17}) holds. Then the following estimates hold.\\
(i) $\Re K_f$ and $\Re K_g$ are perturbations of $K_f^0$ and $K_g^0$:
$$(1-Cw_0)K_f^0(x,x_1)\leq\Re K_f(x,x_1)\leq(1+Cw_0)K_f^0(x,x_1),$$
$$(1-Cw_0)K_g^0(x,x_1)\leq\Re K_g(x,x_1)\leq(1+Cw_0)K_g^0(x,x_1).$$
In particular,
\begin{equation}
\frac{1}{2}-Cw_0\leq\frac{\Re K_f}{\Re K_f+\Re K_g}\leq\frac{1}{2}+Cw_0.
\label{2.27}\end{equation}
(ii) $D_{11}^0$ can be bounded from below and above:
\begin{equation}\begin{aligned}
\frac{1}{(\Delta x)^2}\geq D_{11}^0(x,x_1)\geq\frac{7}{16}\frac{1}{(\Delta x)^2},
\end{aligned}\label{2.28}\end{equation}
and the following bound holds:
\begin{equation}\begin{aligned}
\Re(K_{11}+K_{12})(x,x_1)\geq\left(\frac{7}{8}-Cw_0\right)\frac{1}{(\Delta x)^2}.
\end{aligned}\label{2.29}\end{equation}
The same lower bound also holds for $\Re(K_{11}+K_{21})$, $\Re(K_{22}+\tilde{K}_{12})$ and $\Re(K_{22}+\tilde{K}_{21})$.  Moreover, $D_{11}$ is a perturbation of $D_{11}^0$:
\begin{equation}\begin{aligned}
\left|D_{11}(x,x_1)-D_{11}^0(x,x_1)\right|\leq Cw_0D_{11}^0(x,x_1).
\end{aligned}\label{2.30}\end{equation}
(iii) Controls for nonlinear terms:
\begin{equation}|e_f|+|e_g|\leq Cw_0^2(\Delta x)^{-2},\label{efeg}\end{equation}
\begin{equation}|K_{12}-\tilde{K}_{12}|+|K_{21}-\tilde{K}_{21}|\leq Cw_0(\Delta x)^{-2}.\label{K-K}\end{equation}
\end{lem}
\begin{proof}
(i) Recall that $K_f^0(x,x_1)=K_g^0(x,x_1)=G_0(\Delta x,2\sigma)^{-1}F_0(\Delta x,2\sigma)$ and
$$G_0(\Delta x,2\sigma)=(\Delta x)^2\left[(\Delta x)^2+(2\sigma)^2\right]^4,$$
$$F_0(\Delta x,2\sigma)=(2\sigma)^8+5(\Delta x)^2(2\sigma)^6+7(\Delta x)^4(2\sigma)^4+3(\Delta x)^6(2\sigma)^2\simeq(2\sigma)^2\left[(\Delta x)^2+(2\sigma)^2\right]^3.$$
Hence $D_{22}^0(x,x_1)\simeq\frac{1}{(\Delta x)^2}\frac{(2\sigma)^2}{(\Delta x)^2+(2\sigma)^2}.$
By virtue of $w_0\ll1$, the following inequalities hold:
\begin{equation}
\left|\left|(\Delta x)^2+(\Delta f)^2\right|-|\Delta x|^2\right|\leq\left|(\Delta f)^2\right|\leq w_0^2(\Delta x)^2,
\label{2.20}\end{equation}
\begin{equation}\begin{aligned}
\left|\left|(\Delta x)^2+(\Delta f+2\sigma+\theta(x_1))^2\right|-\left((\Delta x)^2+(2\sigma)^2\right)\right|\leq&\left|\Delta f+\theta(x_1)\right|\left|\Delta f+\theta(x_1)+4\sigma\right|\\
\leq& Cw_0\left((\Delta x)^2+(2\sigma)^2\right),
\end{aligned}\label{2.21}\end{equation}
\begin{equation}\begin{aligned}
\left|\left|(\Delta x)^2+(-\Delta f+2\sigma+\theta(x))^2\right|-\left((\Delta x)^2+(2\sigma)^2\right)\right|\leq&\left|\Delta f-\theta(x)\right|\left|-\Delta f+\theta(x)+4\sigma\right|\\
\leq& Cw_0\left((\Delta x)^2+(2\sigma)^2\right),
\end{aligned}\label{2.22}\end{equation}
\begin{equation}
(\Delta x)^2\left|\frac{1}{(\Delta x)^2+(\Delta f)^2}-\frac{1}{(\Delta x)^2}\right|\leq\left|\frac{(\Delta f)^2}{(\Delta x)^2+(\Delta f)^2}\right|\leq\frac{w_0^2}{1-w_0^2}\leq Cw_0^2,
\label{2.23}\end{equation}
\begin{equation}\begin{aligned}
&\left((\Delta x)^2+(2\sigma)^2\right)\left|\frac{1}{(\Delta x)^2+(2\sigma)^2}-\frac{1}{(\Delta x)^2+(\Delta f+2\sigma+\theta(x_1))^2}\right|\\
=&\left|\frac{(\Delta f+4\sigma+\theta(x_1))(\Delta f+\theta(x_1))}{(\Delta x)^2+(\Delta f+2\sigma+\theta(x_1))^2}\right|\\
\leq&\frac{Cw_0\left((\Delta x)^2+(2\sigma)^2\right)}{(1-Cw_0)\left((\Delta x)^2+(2\sigma)^2\right)}\\
\leq&Cw_0,
\end{aligned}\label{2.24}\end{equation}
and similarly
\begin{equation}\begin{aligned}
\left((\Delta x)^2+(2\sigma)^2\right)\left|\frac{1}{(\Delta x)^2+(2\sigma)^2}-\frac{1}{(\Delta x)^2+(-\Delta f+2\sigma+\theta(x))^2}\right|\leq Cw_0.
\end{aligned}\label{2.25}\end{equation}
Using Lemma \ref{lem2.2} with inequalities (\ref{2.20}-\ref{2.25}), we obtain
$$\begin{aligned}
&\left|G_0(\Delta x,2\sigma)^{-1}-G\left(\Delta x,2\sigma;\frac{\Delta f}{\Delta x},\frac{\theta(x)}{2\sigma},\frac{\theta(x_1)}{2\sigma}\right)^{-1}\right|\leq Cw_0G_0(\Delta x,2\sigma)^{-1}.
\end{aligned}$$
Recall that $$F(\Delta x,2\sigma;w_1,w_2,w_3)=\sum_{j=0}^3c_{2j}(w_1,w_2,w_3)(\Delta x)^{2j}(2\sigma)^{8-2j}$$ with $c_{2j}$ depending smoothly on $w_1,w_2,w_3$, and that all coefficients of $F_0$ are positive. Hence
$$\left|F\left(\Delta x,2\sigma;\frac{\Delta f}{\Delta x},\frac{\theta(x_1)}{2\sigma},\frac{\theta(x)}{2\sigma}\right)-F_0(\Delta x,2\sigma)\right|\leq Cw_0F_0(\Delta x,2\sigma).$$
By Lemma \ref{lem2.2} again, we obtain
\begin{equation}\begin{aligned}
\left|K_f(x,x_1)-K_f^0(x,x_1)\right|\leq Cw_0K_f^0(x,x_1).
\end{aligned}\label{2.26}\end{equation}
Since $K_f^0(x,x_1)$ is real-valued, it follows that
$$(1-Cw_0)K_f^0(x,x_1)\leq\Re K_f(x,x_1)\leq(1+Cw_0)K_f^0(x,x_1).$$
By repeating the above arguments, we also have
$$(1-Cw_0)K_g^0(x,x_1)\leq\Re K_g(x,x_1)\leq(1+Cw_0)K_g^0(x,x_1),$$
and thus
$\frac{1}{2}-Cw_0\leq\frac{\Re K_f}{\Re K_f+\Re K_g}\leq\frac{1}{2}+Cw_0.$\\
(ii) Using the inequalities
$$\frac{1}{8(\Delta x)^2}+\frac{(\Delta x)^2-(2\sigma)^2}{\left((\Delta x)^2+(2\sigma)^2\right)^2}=\frac{\left(3\Delta x-2\sigma\right)^2}{8(\Delta x)^2\left((\Delta x)^2+(2\sigma)^2\right)^2}\geq0,\quad\frac{(\Delta x)^2-(2\sigma)^2}{\left((\Delta x)^2+(2\sigma)^2\right)^2}\leq\frac{1}{(\Delta x)^2},$$
$D_{11}^0$ can be bounded from below and above:
$$\begin{aligned}
\frac{1}{(\Delta x)^2}\geq D_{11}^0(x,x_1)\geq\left(\mu_1^2+\mu_2^2-\frac{1}{4}\mu_1\mu_2\right)\frac{1}{(\Delta x)^2}=\left(\frac{7}{16}+\frac{9}{16}(\mu_1-\mu_2)^2\right)\frac{1}{(\Delta x)^2}.
\end{aligned}$$
Next, we compute
$$\begin{aligned}
&\left|\left((\Delta x)^2+(\Delta f)^2-(2\sigma+\theta(x_1))^2\right)-\left((\Delta x)^2-(2\sigma)^2\right)\right|\\
\leq&|\Delta f|^2+|4\sigma+\theta(x_1)||\theta(x_1)|\\
\leq&Cw_0\left((\Delta x)^2+(2\sigma)^2\right).
\end{aligned}$$
The similar bounds hold with $f$ replaced by $g$ or $\theta(x_1)$ replaced by $\theta(x)$.
Hence by Lemma \ref{lem2.2}
$$\begin{aligned}
\left|(K_{11}+K_{12})(x,x_1)-\frac{1}{(\Delta x)^2}-\frac{(\Delta x)^2-(2\sigma)^2}{\left((\Delta x)^2+(2\sigma)^2\right)^2}\right|\leq\frac{Cw_0}{(\Delta x)^2},
\end{aligned}$$
and consequently,
$$\begin{aligned}
\Re(K_{11}+K_{12})(x,x_1)\geq\frac{1}{(\Delta x)^2}+\frac{(\Delta x)^2-(2\sigma)^2}{\left((\Delta x)^2+(2\sigma)^2\right)^2}-\frac{Cw_0}{(\Delta x)^2}\geq\left(\frac{7}{8}-Cw_0\right)\frac{1}{(\Delta x)^2}.
\end{aligned}$$
The lower bounds of $\Re(K_{11}+K_{21})$, $\Re(K_{22}+\tilde{K}_{12})$, $\Re(K_{22}+\tilde{K}_{21})$ can be proved in the same way. Moreover, from (\ref{2.28}) it follows
$$\begin{aligned}
\left|D_{11}(x,x_1)-D_{11}^0(x,x_1)\right|\leq Cw_0\frac{1}{(\Delta x)^2} \leq Cw_0D_{11}^0(x,x_1).
\end{aligned}$$
(iii) Using Lemma \ref{lem2.1} and that $G\left(\Delta x,2\sigma;\frac{\Delta f}{\Delta x},\frac{\theta(x)}{2\sigma},\frac{\theta(x_1)}{2\sigma}\right)\simeq G_0(\Delta x,2\sigma)$, we see that
\begin{equation}\begin{aligned}
|e_f(x,x_1)|=&\left|\Delta\theta \Delta f\cdot G\left(\Delta x,2\sigma;\frac{\Delta f}{\Delta x},\frac{\theta(x)}{2\sigma},\frac{\theta(x_1)}{2\sigma}\right)^{-1}E\left(\Delta x,2\sigma;\frac{\Delta f}{\Delta x},\frac{\theta(x)}{2\sigma},\frac{\theta(x_1)}{2\sigma}\right)\right|\\
\leq&C\left|\frac{\Delta f}{\Delta x}\frac{\Delta\theta}{\Delta x}\right|(\Delta x)^2G_0(\Delta x,2\sigma)^{-1}((\Delta x)^2+(2\sigma)^2)^3\\
\leq&C\left|\frac{\Delta f}{\Delta x}\frac{\Delta\theta}{\Delta x}\right|\left((\Delta x)^2+(2\sigma)^2\right)^{-1}.
\end{aligned}\label{2.31}\end{equation}
In the same manner, we have
\begin{equation}|e_g|\leq C\left|\frac{\Delta g}{\Delta x}\frac{\Delta\theta}{\Delta x}\right|\left((\Delta x)^2+(2\sigma)^2\right)^{-1}.
\label{2.32}\end{equation}
In particular, it follows
$|e_f|+|e_g|\leq Cw_0^2(\Delta x)^{-2}.$
Recall that $\partial_{x_1}P_{12}=K_{12}+J_{12}=\tilde{K}_{12}+\tilde{J}_{12}$, so 
$$\begin{aligned}
\left|K_{12}-\tilde{K}_{12}\right|=&\left|\tilde{J}_{12}-J_{12}\right|\\
\leq&\left|\frac{2(\Delta g+2\sigma+\theta(x))(\partial_xg(x_1)\Delta x-\Delta g)}{\left((\Delta x)^2+(\Delta g+2\sigma+\theta(x))^2\right)^2}\right|+\left|\frac{2(\Delta f+2\sigma+\theta(x_1))(\partial_xg(x_1)\Delta x-\Delta f)}{\left((\Delta x)^2+(\Delta f+2\sigma+\theta(x_1))^2\right)^2}\right|\\
\leq& Cw_0(\Delta x)^{-2},
\end{aligned}$$
and in the same way we have $\left|K_{21}-\tilde{K}_{21}\right|\leq Cw_0(\Delta x)^{-2}.$
\end{proof}
\begin{proof}[Proof of Proposition \ref{prop2.3}]
With the help of (\ref{2.29}), (\ref{efeg}) and (\ref{K-K}), (\ref{det2})(\ref{det3}) yield that
$$\Re\left(\frac{1}{2}K_g+D_{11}-\mu_2^2K_f-\mu_1^2K_g\right)\geq\left(\frac{7}{16}-Cw_0\right)(\Delta x)^{-2},$$
$$\Re\left(\frac{1}{2}K_f+D_{11}-\mu_2^2K_f-\mu_1^2K_g\right)\geq\left(\frac{7}{16}-Cw_0\right)(\Delta x)^{-2}.$$
Then from (\ref{2.27})(\ref{2.28})(\ref{2.30}), it follows  from (\ref{det1}) that
\begin{equation}\begin{aligned}
\Re D_{11}\Re D_{22}-(\Re D_{12})^2
\geq&\left(\frac{1}{2}-Cw_0\right)(\Re K_f+\Re K_g)\cdot \left(\frac{7}{8}-Cw_0\right)(\Delta x)^{-2}\\
\geq&\left(\frac{7}{16}-Cw_0\right)\Re D_{22}\Re D_{11}.
\end{aligned}\label{2.33}\end{equation}
Finally, by (\ref{2.30})(\ref{2.26})(\ref{2.33}) we obtain
\allowdisplaybreaks[4]\begin{align*}
&\frac{1}{2}\Re D_{11}|\Delta\partial_x^kh|^2+\mu_1\mu_2\Re D_{12}\Re\left(\Delta\partial_x^kh\cdot\overline{\Delta\partial_x^k\theta}\right)+\frac{1}{2}\mu_1^2\mu_2^2\Re D_{22}|\Delta\partial_x^k\theta|^2\\
=&\frac{1}{2}\Re D_{11}\left|\Delta\partial_x^kh-\mu_1\mu_2\frac{\Re D_{12}}{\Re D_{11}}\Delta\partial_x^k\theta\right|^2+\frac{\mu_1^2\mu_2^2}{2}\left(\Re D_{22}-\frac{(\Re D_{12})^2}{\Re D_{11}}\right)|\Delta\partial_x^k\theta|^2\\
=&\frac{1}{2}\Re D_{11}\left(\left|\Delta\partial_x^kh-\mu_1\mu_2\frac{\Re D_{12}}{\Re D_{11}}\Delta\partial_x^k\theta\right|^2+\mu_1^2\mu_2^2\frac{\Re D_{11}\Re D_{22}-(\Re D_{12})^2}{2(\Re D_{11})^2}|\Delta\partial_x^k\theta|^2\right)\\
&+\frac{\mu_1^2\mu_2^2}{4}\left(\Re D_{22}-\frac{(\Re D_{12})^2}{\Re D_{11}}\right)|\Delta\partial_x^k\theta|^2\\
\geq&\frac{1}{2}\Re D_{11}\left(1+\frac{2(\Re D_{12})^2}{\Re D_{11}\Re D_{22}-(\Re D_{12})^2}\right)^{-1}|\Delta\partial_x^kh|^2+\frac{\mu_1^2\mu_2^2}{4}\left(\Re D_{22}-\frac{(\Re D_{12})^2}{\Re D_{11}}\right)|\Delta\partial_x^k\theta|^2\\
\geq&\frac{1}{2}\left(1+\frac{18}{7}+Cw_0\right)^{-1}\Re D_{11}|\Delta\partial_x^kh|^2+\mu_1^2\mu_2^2\left(\frac{7}{64}-Cw_0\right)\Re D_{22}|\Delta\partial_x^k\theta|^2\\
\geq&\left(\frac{7}{50}-Cw_0\right)D_{11}^0|\Delta\partial_x^kh|^2+\mu_1^2\mu_2^2\left(\frac{7}{64}-Cw_0\right)D_{22}^0|\Delta\partial_x^k\theta|^2.
\end{align*}\allowdisplaybreaks[0]
\end{proof}
\begin{proof}[Proof of Lemma \ref{summary3.2}]
To obtain the control of the integral of quadratic form (\ref{2.15}), it remains to bound the terms involving $e_f$ and $e_g$ on the right-hand side of (\ref{2.16}), for which we have
$$\begin{aligned}
&\left|\int_{\Gamma_{\pm}(t)}\int_{\Gamma_{\pm}(t)}\Re e_f(x,x_1)\cdot\Re\left(\Delta\partial_x^kh\cdot\overline{\Delta\partial_x^k\theta}\right)dxdx_1\right|\\
\lesssim&\int_{\Gamma_{\pm}(t)}\int_{\Gamma_{\pm}(t)}\left|\frac{\Delta f}{\Delta x}\frac{\Delta\theta}{\Delta x}\right|\left((\Delta x)^2+(2\sigma)^2\right)^{-1}|\Delta\partial_x^kh\|\Delta\partial_x^k\theta|dxdx_1\\
\lesssim&\sigma^{-1}\|\partial_x\theta\|_{L^\infty_{\gamma(t)}}\|\partial_xf\|_{L^\infty_{\gamma(t)}}\|\partial_x^k\theta\|_{L^2_{\gamma(t)}}\|\partial_x^kh\|_{L^2_{\gamma(t)}},
\end{aligned}$$
where we used $\int_{\mathbb{R}}(\alpha^2+(2\sigma)^2)^{-1}d\alpha=C\sigma^{-1}$ and
$|\Delta\partial_x^k\theta|\leq|\partial_x^k\theta(x)|+|\partial_x^k\theta(x_1)|$, $|\Delta\partial_x^kh|\leq|\partial_x^kh(x)|+|\partial_x^kh(x_1)|$.
As analogues to the above inequality, we also have
$$\begin{aligned}
\left|\int_{\Gamma_{\pm}(t)}\int_{\Gamma_{\pm}(t)}\Re e_g(x,x_1)\Re\left(\Delta\partial_x^kh\cdot\overline{\Delta\partial_x^k\theta}\right)dxdx_1\right|
\lesssim\frac{\|\partial_x\theta\|_{L^\infty_{\gamma(t)}}}{\sigma}\|\partial_xg\|_{L^\infty_{\gamma(t)}}\|\partial_x^k\theta\|_{L^2_{\gamma(t)}}\|\partial_x^kh\|_{L^2_{\gamma(t)}},
\end{aligned}$$
$$\begin{aligned}
\left|\int_{\Gamma_{\pm}(t)}\int_{\Gamma_{\pm}(t)}\Re \left(e_f+e_g\right)(x,x_1)|\Delta\partial_x^k\theta|^2 dxdx_1\right|
\lesssim\frac{\|\partial_x\theta\|_{L^\infty_{\gamma(t)}}}{\sigma}\left(\|\partial_xf\|_{L^\infty_{\gamma(t)}}+\|\partial_xg\|_{L^\infty_{\gamma(t)}}\right)\|\partial_x^k\theta\|_{L^2_{\gamma(t)}}^2.
\end{aligned}$$
Collecting the above bounds and using Proposition \ref{prop2.3} with a small enough $w_0$ yields that
\begin{equation}\begin{aligned}
&\int_{\Gamma_{\pm}(t)}\int_{\Gamma_{\pm}(t)}\left\{\Re D_{11}|\Delta\partial_x^kh|^2+\frac{\mu_1^2\mu_2^2}{2}\Re\left(2K_{11}-K_{12}-K_{21}+2K_{22}-\tilde{K}_{12}-\tilde{K}_{21}\right)|\Delta\partial_x^k\theta|^2\right.\\
&\left.+\mu_1\mu_2\Re\left(\mu_2\left(2K_{11}-K_{12}-K_{21}\right)-\mu_1\left(2K_{22}-\tilde{K}_{12}-\tilde{K}_{21}\right)\right)\Re\left(\Delta\partial_x^kh\cdot\overline{\Delta\partial_x^k\theta}\right)\right\}dxdx_1\\
\geq&C\int_{\Gamma_{\pm}(t)}\int_{\Gamma_{\pm}(t)}D_{11}^0(x,x_1)|\Delta\partial_x^kh|^2dxdx_1+C\mu_1^2\mu_2^2\int_{\Gamma_{\pm}(t)}\int_{\Gamma_{\pm}(t)}D_{22}^0(x,x_1)|\Delta\partial_x^k\theta|^2dxdx_1\\
&-C\sigma^{-1}\|\partial_x\theta\|_{L^\infty_{\gamma(t)}}\left(\|\partial_xf\|_{L^\infty_{\gamma(t)}}+\|\partial_xg\|_{L^\infty_{\gamma(t)}}\right)\|\partial_x^kh\|_{L^2_{\gamma(t)}}\|\partial_x^k\theta\|_{L^2_{\gamma(t)}}\\
&-C\sigma^{-1}\|\partial_x\theta\|_{L^\infty_{\gamma(t)}}\left(\|\partial_xf\|_{L^\infty_{\gamma(t)}}+\|\partial_xg\|_{L^\infty_{\gamma(t)}}\right)\|\partial_x^k\theta\|_{L^2_{\gamma(t)}}^2.
\end{aligned}\label{2.34}\end{equation}
\end{proof}
\subsection{Lower order terms}\label{lower}
In this part, we give the control over $T_{low}$, i.e. the terms involving $J_{ij}$ and $\tilde{J}_{ij}$, $(i,j)\in\left\{1,2\right\}^2$ in (\ref{2.4})(\ref{2.6}) and (\ref{2.9}-\ref{2.12}). These terms are considered lower order for that $J_{ij}$ and $\tilde{J}_{ij}$ have $\Delta f-\partial_xf(x_1)\Delta x$ or $\Delta g-\partial_xg(x_1)\Delta x$ as a factor, which contributes $(\Delta x)^2$ when $\Delta x$ is close to $0$:
$$
\left|f(x)-f(x_1)-\partial_xf(x_1)\Delta x\right|=\left|\int_{x_1}^x\partial_x^2f(y)(x-y)dy\right|\leq(\Delta x)^2M[\partial_x^2f](x_1).
$$
\begin{lem}\label{summary3.3}
Suppose that condition (\ref{2.17}) holds. Then 
$$\begin{aligned}
|T_{low}|\lesssim&\left[\left(\|\partial_xf\|_{L^\infty_{\gamma(t)}}+\|\partial_xg\|_{L^\infty_{\gamma(t)}}+\sigma^{-1}\|\theta\|_{L^\infty_{\gamma(t)}}\right)\left(\|\partial_x^2f\|_{L^2_{\gamma(t)}}+\|\partial_x^2g\|_{L^2_{\gamma(t)}}\right)\right.\\
&+\left.\sigma^{-1}\|\partial_x\theta\|_{L^\infty_{\gamma(t)}}\left(\|\partial_xf\|_{L^2_{\gamma(t)}}+\|\partial_xg\|_{L^2_{\gamma(t)}}\right)\right]\left(\|\partial_x^kh\|_{L^2_{\gamma(t)}}+\|\partial_x^k\theta\|_{L^2_{\gamma(t)}}\right)\\
&\cdot\left(\int_{\Gamma_{\pm(t)}}\int_{\Gamma_{\pm}(t)}D_{11}^0(x,x_1)|\Delta\partial_x^kh|^2dxdx_1+\int_{\Gamma_{\pm(t)}}\int_{\Gamma_{\pm}(t)}D_{22}^0(x,x_1)|\Delta\partial_x^kh|^2dxdx_1\right).
\end{aligned}$$
\end{lem}
In the remainder of this section, we assume that the solution satisfies (\ref{2.17}) so that
$$|J_{11}|\leq2\left|\frac{\Delta f}{\Delta x}\right|\left|\frac{\Delta f-\partial_x f(x_1)\Delta x}{(\Delta x)^2}\right|\left|\frac{(\Delta x)^3}{\left((\Delta x)^2+(\Delta f)^2\right)^2}\right|\lesssim\frac{1}{|\Delta x|}M[\partial_x^2 f](x_1)M[\partial_xf](x_1),$$
$$|J_{22}|\leq2\left|\frac{\Delta g}{\Delta x}\right|\left|\frac{\Delta g-\partial_x g(x_1)\Delta x}{(\Delta x)^2}\right|\left|\frac{(\Delta x)^3}{\left((\Delta x)^2+(\Delta g)^2\right)^2}\right|\lesssim\frac{1}{|\Delta x|}M[\partial_x^2 g](x_1)M[\partial_xg](x_1).$$
Here we denote $M[v]$ the Hardy-Littlewood maximal function of $v$ for an arbitrary function $v$. Then using H\"{o}lder's inequality we obtain
\begin{equation}\begin{aligned}
&\left|\int_{\Gamma_{\pm}(t)}\int_{\Gamma_{\pm}(t)}J_{11}(x,x_1)\Delta\partial_x^kh\cdot\overline{\partial_x^kh(x)}dxdx_1\right|\\
\lesssim&\int_{\Gamma_{\pm}(t)}\int_{\Gamma_{\pm}(t)}M[\partial_xf](x_1)M[\partial_x^2f](x_1)|\partial_x^kh(x)|\left|\frac{\Delta\partial_x^k h}{\Delta x}\right|dxdx_1\\
\lesssim&\left(\int_{\Gamma_{\pm}(t)}\int_{\Gamma_{\pm}(t)}\left|\frac{\Delta\partial_x^kh}{\Delta x}\right|^2dxdx_1\right)^\frac{1}{2}\left(\int_{\Gamma_{\pm}(t)}M[\partial_xf](x_1)^2M[\partial_x^2f](x_1)^2dx_1\right)^\frac{1}{2}\|\partial_x^kh\|_{L^2_{\gamma(t)}}\\
\lesssim&\|\partial_xf\|_{L^\infty_{\gamma(t)}}\|\partial_x^2f\|_{L^2_{\gamma(t)}}\|\partial_x^kh\|_{L^2_{\gamma(t)}}\|\partial_x^kh\|_{\dot{H}^{\frac{1}{2}}_{\gamma(t)}},
\end{aligned}\label{2.35}\end{equation}
\begin{equation}\begin{aligned}
&\left|\int_{\Gamma_{\pm}(t)}\int_{\Gamma_{\pm}(t)}J_{22}(x,x_1)\Delta\partial_x^kh\cdot\overline{\partial_x^kh(x)}dxdx_1\right|
\lesssim\|\partial_xg\|_{L^\infty_{\gamma(t)}}\|\partial_x^2g\|_{L^2_{\gamma(t)}}\|\partial_x^kh\|_{L^2_{\gamma(t)}}\|\partial_x^kh\|_{\dot{H}^{\frac{1}{2}}_{\gamma(t)}}.
\end{aligned}\label{2.36}\end{equation}
To control $J_{12}+J_{21}$, we have
$$\begin{aligned}
&\left|\frac{2(\Delta f+2\sigma+\theta(x_1))}{\left((\Delta x)^2+(\Delta f+2\sigma+\theta(x_1))^2\right)^2}+\frac{2(\Delta f-2\sigma-\theta(x))}{\left((\Delta x)^2+(\Delta f -2\sigma-\theta(x))^2\right)^2}\right|\\
\leq&|\Delta f+\Delta g|\left|\frac{1}{\left((\Delta x)^2+(\Delta f+2\sigma+\theta(x_1))^2\right)^2}+\frac{1}{\left((\Delta x)^2+(\Delta f-2\sigma-\theta(x))^2\right)^2}\right|\\
&+|4\sigma+\theta(x_1)+\theta(x)|\left|\frac{1}{\left((\Delta x)^2+(\Delta f+2\sigma+\theta(x_1))^2\right)^2}-\frac{1}{\left((\Delta x)^2+(\Delta f-2\sigma-\theta(x))^2\right)^2}\right|\\
\lesssim&\frac{|\Delta f+\Delta g|}{\left((\Delta x)^2+(2\sigma)^2\right)^2},
\end{aligned}$$
$$\begin{aligned}
\left|\frac{2\Delta x\partial_x\theta(x_1)(\Delta f+2\sigma+\theta(x_1))}{\left((\Delta x)^2+(\Delta f+2\sigma+\theta(x_1))^2\right)^2}\right|\lesssim\frac{|\partial_x\theta(x_1)|}{(\Delta x)^2+(2\sigma)^2}.
\end{aligned}$$
Hence
$$|J_{12}+J_{21}|\lesssim\frac{M[\partial_xf+\partial_xg](x_1)M[\partial_x^2f](x_1)}{|\Delta x|}+\frac{|\partial_x\theta(x_1)|}{(\Delta x)^2+(2\sigma)^2},$$ and as an analogue,
$$|\tilde{J}_{12}+\tilde{J}_{21}|\lesssim\frac{M[\partial_xf+\partial_xg](x_1)M[\partial_x^2g](x_1)}{|\Delta x|}+\frac{|\partial_x\theta(x_1)|}{(\Delta x)^2+(2\sigma)^2}.$$
Therefore, we get
\begin{equation}\begin{aligned}
&\left|\int_{\Gamma_{\pm}(t)}\int_{\Gamma_{\pm}(t)}(J_{12}+J_{21}+\tilde{J}_{12}+\tilde{J}_{21})(x,x_1)\cdot\Delta\partial_x^k h\cdot\overline{\partial_x^k h(x)}dxdx_1\right|\\
\lesssim&\int_{\Gamma_{\pm}(t)}\int_{\Gamma_{\pm}(t)}M[\partial_xf+\partial_xg](x_1)\left(M[\partial_x^2f](x_1)+M[\partial_x^2g](x_1)\right)\frac{|\Delta\partial_x^kh|}{|\Delta x|}|\partial_x^kh(x)|dxdx_1\\
&+\int_{\Gamma_{\pm}(t)}\int_{\Gamma_{\pm}(t)}\frac{|\partial_x\theta(x_1)|}{(\Delta x)^2+(2\sigma)^2}|\Delta\partial_x^kh||\partial_x^kh(x)|dxdx_1\\
\lesssim&\|\partial_xf+\partial_xg\|_{L^\infty_{\gamma(t)}}\left(\|\partial_x^2f\|_{L^2_{\gamma(t)}}+\|\partial_x^2g\|_{L^2_{\gamma(t)}}\right)\|\partial_x^kh\|_{L^2_{\gamma(t)}}\|\partial_x^kh\|_{\dot{H}^{\frac{1}{2}}_{\gamma(t)}}\\
&+\left(\int_{\mathbb{R}}\frac{\alpha^2}{(\alpha^2+(2\sigma)^2)^2}d\alpha\right)^\frac{1}{2}\|\partial_x\theta\|_{L^\infty_{\gamma(t)}}\|\partial_x^kh\|_{L^2_{\gamma(t)}}\|\partial_x^kh\|_{\dot{H}^{\frac{1}{2}}_{\gamma(t)}}\\
\lesssim&\left[\sigma^{-\frac{1}{2}}\|\partial_x\theta\|_{L^\infty_{\gamma(t)}}+\|\partial_xf+\partial_xg\|_{L^\infty_{\gamma(t)}}\left(\|\partial_x^2f\|_{L^2_{\gamma(t)}}+\|\partial_x^2g\|_{L^2_{\gamma(t)}}\right)\right]\|\partial_x^kh\|_{L^2_{\gamma(t)}}\|\partial_x^kh\|_{\dot{H}^{\frac{1}{2}}_{\gamma(t)}}.
\end{aligned}\label{2.37}\end{equation}
Since we do not have enough dissipation in $\theta$, it is necessary to carefully make use of the cancellation within $J_{11}-\frac{1}{2}J_{12}-\frac{1}{2}J_{21}$ and $J_{22}-\frac{1}{2}\tilde{J}_{12}-\frac{1}{2}\tilde{J}_{21}$ to control the terms involving $J_{ij}$ and $\tilde{J}_{ij}$ in (\ref{2.6}) and (\ref{2.9}-\ref{2.12}).
\begin{lem}
Suppose that (\ref{2.17}) holds, then for $(x,x_1)\in\Gamma_{\pm}(t)^2$
$$\begin{aligned}
&\left|J_{11}-\frac{1}{2}J_{12}-\frac{1}{2}J_{21}\right|\\
\lesssim&\frac{|\Delta f||\Delta f-\partial_xf(x_1)\Delta x|}{(\Delta x)^4}\cdot\frac{(2\sigma)^2}{(\Delta x)^2+(2\sigma)^2}+\frac{|\Delta\theta||\Delta f-\partial_xf(x_1)\Delta x|}{(\Delta x)^2((\Delta x)^2+(2\sigma)^2)}+\frac{|\partial_x\theta(x_1)||\Delta x|(|\Delta f|+\sigma)}{((\Delta x)^2+(2\sigma)^2)^2},
\end{aligned}$$
$$\begin{aligned}
&\left|J_{22}-\frac{1}{2}\tilde{J}_{12}-\frac{1}{2}\tilde{J}_{21}\right|\\
\lesssim&\frac{|\Delta g||\Delta g-\partial_xg(x_1)\Delta x|}{(\Delta x)^4}\cdot\frac{(2\sigma)^2}{(\Delta x)^2+(2\sigma)^2}+\frac{|\Delta\theta||\Delta g-\partial_xg(x_1)\Delta x|}{(\Delta x)^2((\Delta x)^2+(2\sigma)^2)}+\frac{|\partial_x\theta(x)||\Delta x|(|\Delta g|+\sigma)}{((\Delta x)^2+(2\sigma)^2)^2}.
\end{aligned}$$
\label{lem2.4}\end{lem}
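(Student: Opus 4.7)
The plan is to pull out the common factor $N_1 := \partial_x f(x_1)\Delta x - \Delta f$ that appears in $J_{11}$, $J_{21}$, and the first piece of $J_{12}$, and to analyze the resulting kernel difference in terms of the one-variable function $\phi(s) := s/((\Delta x)^2 + s^2)^2$, with shifts $a := 2\sigma + \theta(x_1)$ and $b := -2\sigma - \theta(x)$. Direct inspection of the definitions gives
\[
J_{11} - \tfrac{1}{2}J_{12} - \tfrac{1}{2}J_{21} \;=\; N_1\cdot B \;+\; R,
\]
where $B := 2\phi(\Delta f) - \phi(\Delta f + a) - \phi(\Delta f + b)$ and $R$ collects the second (i.e.\ $\partial_x\theta$) piece of $J_{12}$:
\[
R \;=\; \frac{\Delta x\,\partial_x\theta(x_1)\,(\Delta f + 2\sigma + \theta(x_1))}{\bigl((\Delta x)^2 + (\Delta f + 2\sigma + \theta(x_1))^2\bigr)^2}.
\]
Under \eqref{2.17} one has $|\Delta f + 2\sigma + \theta(x_1)| \lesssim |\Delta f| + \sigma$ while the denominator is $\gtrsim ((\Delta x)^2 + (2\sigma)^2)^2$, so $|R|$ is immediately controlled by the third target term.

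For the kernel difference $B$, I would split $B = B^{\mathrm{sym}} + B^{\mathrm{asym}}$, where $B^{\mathrm{sym}} := 2\phi(\Delta f) - \phi(\Delta f + 2\sigma) - \phi(\Delta f - 2\sigma)$ corresponds to setting $\theta\equiv 0$. Since $\phi$ is odd (and so $\phi''(0)=0$), a Taylor expansion of $B^{\mathrm{sym}}$ in $\Delta f$ around $0$ eliminates all even-order contributions and produces
\[
B^{\mathrm{sym}} \;=\; \sum_{k\geq 0}\frac{2(\Delta f)^{2k+1}}{(2k+1)!}\bigl[\phi^{(2k+1)}(0) - \phi^{(2k+1)}(2\sigma)\bigr].
\]
For the leading term $k = 0$, writing $\phi'(0) - \phi'(2\sigma) = -\int_0^{2\sigma}\phi''(u)\,du$ and computing directly yields the sharp bound $|\phi'(0) - \phi'(2\sigma)| \lesssim (2\sigma)^2/\bigl((\Delta x)^4((\Delta x)^2 + (2\sigma)^2)\bigr)$, which after multiplying by $|N_1|$ matches the first target term. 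For $k\geq 1$, the oddness of $\phi$ forces $\phi^{(2k+2)}(0) = 0$, so another Taylor step gives $|\phi^{(2k+1)}(0) - \phi^{(2k+1)}(2\sigma)| \lesssim (2\sigma)^2 \bigl(|\Delta x| + 2\sigma\bigr)^{-(5+2k)}$, and each correction ends up a factor $\lesssim (|\Delta f|/|\Delta x|)^{2k} \leq w_0^{2k}$ smaller than the leading one.

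For the asymmetric part, the integral representation
\[
B^{\mathrm{asym}} \;=\; -\int_0^{\theta(x_1)}\phi'(\Delta f + 2\sigma + s)\,ds + \int_0^{\theta(x)}\phi'(\Delta f - 2\sigma - s)\,ds
\]
is convenient. Using that $\phi'$ is even and splitting one integral along the common subinterval $[0,\min(\theta(x),\theta(x_1))]$, one writes $B^{\mathrm{asym}}$ as a sum of (i) an integral over a segment of length $|\Delta\theta|$, whose integrand satisfies $|\phi'(\cdot)| \lesssim 1/((\Delta x)^2 + (2\sigma)^2)^2$ when its argument is near $\pm 2\sigma$, producing the second target term after multiplication by $|N_1|$; and (ii) an integral of the difference $\phi'(-\Delta f + 2\sigma + s) - \phi'(\Delta f + 2\sigma + s) = -\int_{-\Delta f}^{\Delta f}\phi''(r + 2\sigma + s)\,dr$, which using $|\phi''(u)| \lesssim |u|/((\Delta x)^2 + u^2)^3$ and $|\theta|\lesssim w_0\sigma$ is absorbed by $w_0$ times the first target term. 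The second inequality of the lemma follows from the identical scheme after the symmetry-swap $f\leftrightarrow g$, $\theta(x)\leftrightarrow\theta(x_1)$ dictated by $P_{12},P_{21}$ vs $\tilde P_{12},\tilde P_{21}$; the $\partial_x\theta$ correction is then generated by $\tilde J_{21}$.

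The main obstacle is the bookkeeping: the three target terms behave very differently in the two regimes $\sigma\lesssim|\Delta x|$ and $\sigma\gtrsim|\Delta x|$, and only the precise cancellations furnished by the oddness of $\phi$ and the identity $\phi''(0)=0$ make the leading parts of $B^{\mathrm{sym}}$ and $B^{\mathrm{asym}}$ hit the target in both regimes; a naive bound by $L^\infty$ norms of $\phi^{(k)}$ produces a spurious factor $|\Delta x|/\sigma$ or $\sigma/|\Delta x|$ in the wrong regime. Hypothesis \eqref{2.17}, which simultaneously keeps $|\Delta f|/|\Delta x|$, $|\Delta g|/|\Delta x|$, and $|\theta|/\sigma$ bounded by the small constant $w_0$, is exactly what is needed to close the estimate.
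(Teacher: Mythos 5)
Your skeleton---factoring out $N_1=\partial_xf(x_1)\Delta x-\Delta f$, isolating the $\partial_x\theta(x_1)$ remainder $R$ (which indeed yields the third term), and then estimating the one-variable combination $2\phi(\Delta f)-\phi(\Delta f+a)-\phi(\Delta f+b)$---coincides with the paper's starting point. But where the paper evaluates that combination by brute force over a common denominator, obtaining an explicit polynomial identity in $\Delta x$ and $2\sigma$ whose odd-power coefficients carry a factor $\Delta f/\Delta x$ and whose even-power coefficients carry $\Delta\theta/(2\sigma)$, you replace the computation by Taylor expansions of $\phi$ based on its oddness. That route could work, but as written it has a genuine gap in the asymmetric part.

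Concretely, in piece (ii) of $B^{\mathrm{asym}}$ you bound $\int_{-\Delta f}^{\Delta f}\phi''(r+2\sigma+s)\,dr$ by $2|\Delta f|\sup|\phi''|$ and invoke $|\phi''(u)|\lesssim |u|/((\Delta x)^2+u^2)^3$ together with $|\theta|\lesssim w_0\sigma$ to conclude that this piece is $w_0$ times the first target term. That presumes the argument $u=r+2\sigma+s$ has size $O(\sigma)$, but $|r|$ ranges up to $|\Delta f|\le w_0|\Delta x|$, which under (\ref{2.17}) may be much larger than $\sigma$. Your inequalities only give $w_0\sigma\,|\Delta f|\,(|\Delta f|+\sigma)\,((\Delta x)^2+(2\sigma)^2)^{-3}$, which exceeds the first target term by a factor of order $w_0^2|\Delta x|/\sigma$ when $\sigma\ll|\Delta x|$---exactly the regime that matters, and exactly the kind of spurious factor you warn against in your closing paragraph, but which the cancellations you list do not remove at this spot. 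The quantity itself is fine; the missing ingredient is one more symmetrization: since $\phi''$ is odd, $\int_{-\Delta f}^{\Delta f}\phi''(r)\,dr=0$, so one must estimate $\int_{-\Delta f}^{\Delta f}\bigl(\phi''(r+c)-\phi''(r)\bigr)\,dr$ with $c=2\sigma+s$, gaining the factor $|c|\simeq\sigma$ from $\phi'''$; and even this only closes when $\sigma\lesssim|\Delta x|$, the opposite regime being handled separately by plain decay (there your original bound is valid because then $|u|\simeq\sigma$). A similar two-regime repair is needed for your $k\ge1$ terms in $B^{\mathrm{sym}}$: the stated bound $(2\sigma)^2(|\Delta x|+2\sigma)^{-(5+2k)}$ is dimensionally inconsistent and false for $\sigma\gg|\Delta x|$ (where $\phi^{(2k+1)}(0)$ alone has size comparable to $|\Delta x|^{-(2k+4)}$), though there the conclusion survives because the target is larger in that regime. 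So the approach is salvageable, but two of the key inequalities as stated would not survive checking, and the extra cancellation plus the honest split between $\sigma\lesssim|\Delta x|$ and $\sigma\gtrsim|\Delta x|$ must be supplied; the paper sidesteps all of this through its explicit coefficient computation.
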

\begin{proof}
Denote $$\begin{aligned}
&\tilde{G}\left(\Delta x,2\sigma;\frac{\Delta f}{\Delta x},\frac{\theta(x)}{2\sigma},\frac{\theta(x_1)}{2\sigma}\right)\\
:=&\left((\Delta x)^2+(\Delta f)^2\right)^{2}\left((\Delta x)^2+(\Delta f+2\sigma+\theta(x_1))^2\right)^{2}\left((\Delta x)^2+(-\Delta f+2\sigma+\theta(x))^2\right)^{2}
\end{aligned}$$
We compute by brute force:
$$\begin{aligned}
&\frac{2\Delta f}{\left((\Delta x)^2+(\Delta f)^2\right)^2}-\frac{\Delta f+2\sigma+\theta(x_1)}{\left((\Delta x)^2+(\Delta f+2\sigma+\theta(x_1))^2\right)^2}-\frac{\Delta f-2\sigma-\theta(x)}{\left((\Delta x)^2+(-\Delta f+2\sigma+\theta(x))^2\right)^2}\\
=&\tilde{G}\left(\Delta x,2\sigma;\frac{\Delta f}{\Delta x},\frac{\theta(x)}{2\sigma},\frac{\theta(x_1)}{2\sigma}\right)^{-1}\cdot\sum_{j=1}^8\tilde{c}_{j}\left(\frac{\Delta f}{\Delta x},\frac{\theta(x)}{2\sigma},\frac{\theta(x_1)}{2\sigma}\right)(\Delta x)^j(2\sigma)^{9-j},
\end{aligned}$$
where the coefficients $c_j$ are given by
$$\tilde{c}_1(w_1,w_2,w_3):=2w_1(1+w_2)^2(1+w_3)^2,$$
$$\tilde{c}_2(w_1,w_2,w_3):=8w_1^2(1+w_2)^3(1+w_3)^3(w_2-w_3),$$
$$\begin{aligned}
\tilde{c}_3(w_1,w_2,w_3):=&4w_1(1+w_2)^2(1+w_3)^2(1+3w_1^2)\left[(1+w_2)^2+(1+w_3)^2\right]\\
&-32w_1^3(1+w_2)^3(1+w_3)^3,
\end{aligned}$$
$$\begin{aligned}
\tilde{c}_4(w_1,w_2,w_3):=&-(1+w_1^2)(1-7w_1^2)(1+w_2)(1+w_3)\left[(1+w_2)^3-(1+w_3)^3\right]\\
&+6w_1^2(1+3w_1^2)(1+w_2)^2(1+w_3)^2(w_2-w_3),
\end{aligned}$$
$$\begin{aligned}
\tilde{c}_5(w_1,w_2,w_3):=&w_1(1+w_1^2)^2\left[(1+w_2)^4+(1+w_3)^4\right]+8w_1(1+3w_1^2)(1+w_2)^2(1+w_3)^2\\
&+4w_1(1+w_1^2)(1-7w_1^2)(1+w_2)(1+w_3)\left[(1+w_2)^2+(1+w_3)^2\right],
\end{aligned}$$
$$\begin{aligned}
\tilde{c}_6(w_1,w_2,w_3):=&4w_1^2(1+w_1^2)^2\left[(1+w_3)^3-(1+w_2)^3\right]\\
&-2(1+w_1^2)(1+3w_1^2)(1-7w_1^2)(1+w_2)(1+w_3)(w_2-w_3),
\end{aligned}$$
$$\begin{aligned}
\tilde{c}_7(w_1,w_2,w_3):=&2w_1(1+3w_1^2)(1+w_1^2)^2\left[(1+w_2)^2+(1+w_3)^2\right]\\
&+4w_1(1+w_1^2)^2(1-7w_1^2)(1+w_2)(1+w_3),
\end{aligned}$$
$$\begin{aligned}
\tilde{c}_8(w_1,w_2,w_3):=(w_2-w_3)(1+w_1^2)^3(1-3w_1^2).
\end{aligned}$$
By virtue of (\ref{2.17}), the following controls hold:
$$\begin{aligned}
&\sum_{j=1}^4\left|\tilde{c}_{2j-1}\left(\frac{\Delta f}{\Delta x},\frac{\theta(x)}{2\sigma},\frac{\theta(x_1)}{2\sigma}\right)(\Delta x)^{2j-1}(2\sigma)^{10-2j}\right|\lesssim|\Delta f|(2\sigma)^2\left((\Delta x)^2+(2\sigma)^2\right)^3,
\end{aligned}$$
$$\begin{aligned}
\sum_{j=1}^4\left|\tilde{c}_{2j}\left(\frac{\Delta f}{\Delta x},\frac{\theta(x)}{2\sigma},\frac{\theta(x_1)}{2\sigma}\right)(\Delta x)^{2j}(2\sigma)^{9-2j}\right|\lesssim|\Delta \theta|(\Delta x)^2\left((\Delta x)^2+(2\sigma)^2\right)^3,
\end{aligned}$$
$$\begin{aligned}
\left|\tilde{G}\left(\Delta x,2\sigma;\frac{\Delta f}{\Delta x},\frac{\theta(x)}{2\sigma},\frac{\theta(x_1)}{2\sigma}\right)\right|\simeq (\Delta x)^4\left((\Delta x)^2+(2\sigma)^2\right)^4.
\end{aligned}$$
Therefore,
$$\begin{aligned}
&\left|J_{11}-\frac{1}{2}J_{12}-\frac{1}{2}J_{21}\right| \\
\leq&\left|\Delta f-\partial_xf(x_1)\Delta x\right|\left|\tilde{G}\left(\Delta x,2\sigma;\frac{\Delta f}{\Delta x},\frac{\theta(x)}{2\sigma},\frac{\theta(x_1)}{2\sigma}\right)\right|^{-1}\left|\sum_{j=1}^8\tilde{c}_{j}\left(\frac{\Delta f}{\Delta x},\frac{\theta(x)}{2\sigma},\frac{\theta(x_1)}{2\sigma}\right)(\Delta x)^j(2\sigma)^{9-j}\right|\\
&+\left|\frac{2\Delta x\partial_x\theta(x_1)(\Delta f+2\sigma+\theta(x_1))}{\left((\Delta x)^2+(\Delta f+2\sigma+\theta(x_1))^2\right)^2}\right|\\
\lesssim&\frac{|\Delta f||\Delta f-\partial_xf(x_1)\Delta x|}{(\Delta x)^4}\cdot\frac{(2\sigma)^2}{(\Delta x)^2+(2\sigma)^2}+\frac{|\Delta\theta||\Delta f-\partial_xf(x_1)\Delta x|}{(\Delta x)^2((\Delta x)^2+(2\sigma)^2)}+\frac{|\partial_x\theta(x_1)||\Delta x|(|\Delta f|+\sigma)}{((\Delta x)^2+(2\sigma)^2)^2}.
\end{aligned}$$
Replacing $f$ by $g$ and swapping $\theta(x)$ with $\theta(x_1)$ yields the desired control of $\left|J_{22}-\tilde{J}_{12}-\tilde{J}_{21}\right|$.
\end{proof}
Using Lemma \ref{lem2.4} and H\"older inequality, we estimate the lower order terms in (\ref{2.6}):
$$\begin{aligned}
&\left|\int_{\Gamma_{\pm}(t)}\int_{\Gamma_{\pm}(t)}\left(J_{11}-\frac{1}{2}J_{12}-\frac{1}{2}J_{21}\right)(x,x_1)\cdot\Delta\partial_x^k\theta\cdot\overline{\partial_x^k\theta(x)}dxdx_1\right|\\
\lesssim&\int_{\Gamma_{\pm}(t)}\int_{\Gamma_{\pm}(t)}\left|\Delta f-\partial_xf(x_1)\Delta x\right|\frac{|\Delta f|}{(\Delta x)^4}\frac{(2\sigma)^2}{(\Delta x)^2+(2\sigma)^2}|\Delta\partial_x^k\theta||\partial_x^k\theta(x)|dxdx_1\\
&+\int_{\Gamma_{\pm}(t)}\int_{\Gamma_{\pm}(t)}\left|\Delta f-\partial_xf(x_1)\Delta x\right|\frac{|\Delta\theta|}{(\Delta x)^2\left((\Delta x)^2+(2\sigma)^2\right)}|\Delta\partial_x^k\theta||\partial_x^k\theta(x)|dxdx_1\\
&+\int_{\Gamma_{\pm}(t)}\int_{\Gamma_{\pm(t)}}\frac{|\partial_x\theta(x_1)||\Delta x|(|\Delta f|+\sigma)}{((\Delta x)^2+(2\sigma)^2)^2}|\Delta\partial_x^k\theta||\partial_x^k\theta(x)|dxdx_1\\
\lesssim&\left(\int_{\Gamma_{\pm}(t)}\int_{\Gamma_{\pm}(t)}\left|\frac{\Delta\partial_x^k\theta}{\Delta x}\right|^2\frac{(2\sigma)^2}{(\Delta x)^2+(2\sigma)^2}dxdx_1\right)^\frac{1}{2}\\
&\cdot\left[\left(\int_{\Gamma_{\pm}(t)}\int_{\Gamma_{\pm}(t)}\frac{|\Delta f-\partial_xf(x_1)\Delta x|^2|\Delta f|^2}{(\Delta x)^6}\frac{(2\sigma)^2}{(\Delta x)^2+(2\sigma)^2}|\partial_x^k\theta(x)|^2dxdx_1\right)^\frac{1}{2}\right.\\
&+\left.\left(\int_{\Gamma_{\pm}(t)}\int_{\Gamma_{\pm}(t)}\frac{|\Delta f-\partial_xf(x_1)\Delta x|^2|\Delta\theta|^2}{(2\sigma)^2(\Delta x)^2\left((\Delta x)^2+(2\sigma)^2\right)}|\partial_x^k\theta(x)|^2dxdx_1\right)^\frac{1}{2}\right.\\
&+\left.\left(\int_{\Gamma_{\pm}(t)}\int_{\Gamma_{\pm}(t)}\frac{|\partial_x\theta(x_1)|^2|\Delta x|^4(|\Delta f|+\sigma)^2}{\sigma^2((\Delta x)^2+(2\sigma)^2)^3}|\partial_x^k\theta(x)|^2dxdx_1\right)^\frac{1}{2}\right].
\end{aligned}$$
For each $x\in\Gamma_{\pm}(t)$, 
$$\begin{aligned}
\int_{\Gamma_{\pm}(t)}\frac{|\Delta f-\partial_xf(x_1)\Delta x|^2|\Delta f|^2}{(\Delta x)^6}\frac{(2\sigma)^2}{(\Delta x)^2+(2\sigma)^2}dx_1\leq&\int_{\Gamma_{\pm(t)}}M[\partial_x^2f](x_1)^2M[\partial_xf](x_1)^2dx_1\\
\leq&\|\partial_xf\|_{L^\infty_{\gamma(t)}}^2\|\partial_x^2f\|_{L^2_{\gamma(t)}}^2,
\end{aligned}$$
$$\begin{aligned}
\int_{\Gamma_{\pm}(t)}\frac{|\Delta f-\partial_xf(x_1)\Delta x|^2|\Delta \theta|^2}{(2\sigma)^2(\Delta x)^2\left((\Delta x)^2+(2\sigma)^2\right)}dx_1\leq\int_{\Gamma_{\pm(t)}}M[\partial_x^2f](x_1)^2\left|\frac{\Delta\theta}{2\sigma}\right|^2dx_1
\lesssim\sigma^{-2}\left\|\theta\right\|^2_{L^\infty_{\gamma(t)}}\|\partial_x^2f\|_{L^2_{\gamma(t)}}^2,
\end{aligned}$$
$$\begin{aligned}
\int_{\Gamma_{\pm}(t)}\frac{|\partial_x\theta(x_1)|^2|\Delta x|^4(|\Delta f|+\sigma)^2}{\sigma^2((\Delta x)^2+(2\sigma)^2)^3}dx_1\leq&\sigma^{-2}\|\partial_x\theta\|_{L^\infty_{\gamma(t)}}^2\int_{\Gamma_{\pm}(t)}\left(M[\partial_xf](x_1)^2+\frac{\sigma^2}{(\Delta x)^2+(2\sigma)^2}\right)dx_1\\
\lesssim&\sigma^{-2}\|\partial_x\theta\|_{L^\infty_{\gamma(t)}}^2\left(\|\partial_xf\|_{L^2_{\gamma(t)}}^2+\sigma\right).
\end{aligned}$$
Therefore,
\begin{equation}\begin{aligned}
&\left|\int_{\Gamma_{\pm}(t)}\int_{\Gamma_{\pm}(t)}\left(J_{11}-\frac{1}{2}J_{12}-\frac{1}{2}J_{21}\right)(x,x_1)\cdot\Delta\partial_x^k\theta\cdot\overline{\partial_x^k\theta(x)}dxdx_1\right|\\
\lesssim&\left[\left(\|\partial_xf\|_{L^\infty_{\gamma(t)}}+\sigma^{-1}\left\|\theta\right\|_{L^\infty_{\gamma(t)}}\right)\|\partial_x^2f\|_{L^2_{\gamma(t)}}+\sigma^{-1}\|\partial_x\theta\|_{L^\infty_{\gamma(t)}}\left(\|\partial_xf\|_{L^2_{\gamma(t)}}+\sigma^{\frac{1}{2}}\right)\right]\\
&\cdot\|\partial_x^k\theta\|_{L^2_{\gamma(t)}}\left(\int_{\Gamma_{\pm}(t)}\int_{\Gamma_{\pm}(t)}D_{22}^0|\Delta\partial_x^k\theta|^2dxdx_1\right)^\frac{1}{2}.
\end{aligned}\label{2.38}\end{equation}
By replacing $J_{11}-\frac{1}{2}J_{12}-\frac{1}{2}J_{21}$ by $J_{22}-\frac{1}{2}\tilde{J}_{12}-\frac{1}{2}\tilde{J}_{21}$ or replacing $\partial_x^k\theta$ by $\partial_x^kh$, we can also obtain
\begin{equation}\begin{aligned}
&\left|\int_{\Gamma_{\pm}(t)}\int_{\Gamma_{\pm}(t)}\left(J_{22}-\frac{1}{2}\tilde{J}_{12}-\frac{1}{2}\tilde{J}_{21}\right)(x,x_1)\cdot\Delta\partial_x^k\theta\cdot\overline{\partial_x^k\theta(x)}dxdx_1\right|\\
\lesssim&\left[\left(\|\partial_xg\|_{L^\infty_{\gamma(t)}}+\sigma^{-1}\left\|\theta\right\|_{L^\infty_{\gamma(t)}}\right)\|\partial_x^2g\|_{L^2_{\gamma(t)}}+\sigma^{-1}\|\partial_x\theta\|_{L^\infty_{\gamma(t)}}\left(\|\partial_xg\|_{L^2_{\gamma(t)}}+\sigma^{\frac{1}{2}}\right)\right]\\
&\cdot\|\partial_x^k\theta\|_{L^2_{\gamma(t)}}\left(\int_{\Gamma_{\pm}(t)}\int_{\Gamma_{\pm}(t)}D_{22}^0|\Delta\partial_x^k\theta|^2dxdx_1\right)^\frac{1}{2}.
\end{aligned}\label{2.39}\end{equation}
\begin{equation}\begin{aligned}
&\left|\int_{\Gamma_{\pm}(t)}\int_{\Gamma_{\pm}(t)}\left(J_{11}-\frac{1}{2}J_{12}-\frac{1}{2}J_{21}\right)(x,x_1)\cdot\Delta\partial_x^k\theta\cdot\overline{\partial_x^kh(x)}dxdx_1\right|\\
\lesssim&\left[\left(\|\partial_xf\|_{L^\infty_{\gamma(t)}}+\sigma^{-1}\left\|\theta\right\|_{L^\infty_{\gamma(t)}}\right)\|\partial_x^2f\|_{L^2_{\gamma(t)}}+\sigma^{-1}\|\partial_x\theta\|_{L^\infty_{\gamma(t)}}\left(\|\partial_xf\|_{L^2_{\gamma(t)}}+\sigma^{\frac{1}{2}}\right)\right]\\
&\cdot\|\partial_x^kh\|_{L^2_{\gamma(t)}}\left(\int_{\Gamma_{\pm}(t)}\int_{\Gamma_{\pm}(t)}D_{22}^0|\Delta\partial_x^k\theta|^2dxdx_1\right)^\frac{1}{2}.
\end{aligned}\label{2.40}\end{equation}
\begin{equation}\begin{aligned}
&\left|\int_{\Gamma_{\pm}(t)}\int_{\Gamma_{\pm}(t)}\left(J_{22}-\frac{1}{2}\tilde{J}_{12}-\frac{1}{2}\tilde{J}_{21}\right)(x,x_1)\cdot\Delta\partial_x^k\theta\cdot\overline{\partial_x^kh(x)}dxdx_1\right|\\
\lesssim&\left[\left(\|\partial_xg\|_{L^\infty_{\gamma(t)}}+\sigma^{-1}\left\|\theta\right\|_{L^\infty_{\gamma(t)}}\right)\|\partial_x^2g\|_{L^2_{\gamma(t)}}+\sigma^{-1}\|\partial_x\theta\|_{L^\infty_{\gamma(t)}}\left(\|\partial_xg\|_{L^2_{\gamma(t)}}+\sigma^{\frac{1}{2}}\right)\right]\\
&\cdot\|\partial_x^kh\|_{L^2_{\gamma(t)}}\left(\int_{\Gamma_{\pm}(t)}\int_{\Gamma_{\pm}(t)}D_{22}^0|\Delta\partial_x^k\theta|^2dxdx_1\right)^\frac{1}{2}.
\end{aligned}\label{2.41}\end{equation}
For the lower order terms in (\ref{2.11})(\ref{2.12}), we write\allowdisplaybreaks[4]
\begin{align*}
&\left|\int_{\Gamma_{\pm}(t)}\int_{\Gamma_{\pm}(t)}\left(J_{11}-\frac{1}{2}J_{12}-\frac{1}{2}J_{21}\right)(x,x_1)\cdot\Delta\partial_x^kh\cdot\overline{\partial_x^k\theta(x)}dxdx_1\right|\\
\lesssim&\int_{\Gamma_{\pm}(t)}\int_{\Gamma_{\pm}(t)}\left|\Delta f-\partial_xf(x_1)\Delta x\right|\frac{|\Delta f|}{(\Delta x)^4}\frac{(2\sigma)^2}{(\Delta x)^2+(2\sigma)^2}|\Delta\partial_x^kh||\partial_x^k\theta(x)|dxdx_1\\
&+\int_{\Gamma_{\pm}(t)}\int_{\Gamma_{\pm}(t)}\left|\Delta f-\partial_xf(x_1)\Delta x\right|\frac{|\Delta\theta|}{(\Delta x)^2\left((\Delta x)^2+(2\sigma)^2\right)}|\Delta\partial_x^kh||\partial_x^k\theta(x)|dxdx_1\\
&+\int_{\Gamma_{\pm}(t)}\int_{\Gamma_{\pm}(t)}\frac{|\partial_x\theta(x_1)||\Delta x|(|\Delta f|+\sigma)}{((\Delta x)^2+(2\sigma)^2)^2}|\Delta\partial_x^kh||\partial_x^k\theta(x)|dxdx_1\\
\lesssim&\left(\int_{\Gamma_{\pm}(t)}\int_{\Gamma_{\pm}(t)}\left|\frac{\Delta\partial_x^kh}{\Delta x}\right|dxdx_1\right)^\frac{1}{2}\\
&\cdot\left[\left(\int_{\Gamma_{\pm}(t)}\int_{\Gamma_{\pm}(t)}\frac{|\Delta f-\partial_xf(x_1)\Delta x|^2|\Delta f|^2}{(\Delta x)^6}\frac{(2\sigma)^4}{\left((\Delta x)^2+(2\sigma)^2\right)^2}|\partial_x^k\theta(x)|^2dxdx_1\right)^\frac{1}{2}\right.\\
&+\left.\left(\int_{\Gamma_{\pm}(t)}\int_{\Gamma_{\pm}(t)}\frac{|\Delta f-\partial_xf(x_1)\Delta x|^2|\Delta\theta|^2}{(\Delta x)^2\left((\Delta x)^2+(2\sigma)^2\right)^2}|\partial_x^k\theta(x)|^2dxdx_1\right)^\frac{1}{2}\right.\\
&+\left.\left(\int_{\Gamma_{\pm}(t)}\int_{\Gamma_{\pm}(t)}\frac{|\partial_x\theta(x_1)|^2|\Delta x|^4(|\Delta f|+\sigma)^2}{((\Delta x)^2+(2\sigma)^2)^4}|\partial_x^k\theta(x)|^2dxdx_1\right)^\frac{1}{2}\right].
\end{align*}
\allowdisplaybreaks[0]
For any $x\in\Gamma_{\pm}(t)$ we have the following controls:
$$\begin{aligned}
\int_{\Gamma_{\pm}(t)}\frac{|\Delta f-\partial_xf(x_1)\Delta x|^2|\Delta f|^2}{(\Delta x)^6}\frac{(2\sigma)^4}{\left((\Delta x)^2+(2\sigma)^2\right)^2}dx_1
\leq\|\partial_xf\|_{L^\infty_{\gamma(t)}}^2\|\partial_x^2f\|_{L^2_{\gamma(t)}}^2,
\end{aligned}$$
$$\begin{aligned}
\int_{\Gamma_{\pm}(t)}\frac{|\Delta f-\partial_xf(x_1)\Delta x|^2|\Delta \theta|^2}{(\Delta x)^2\left((\Delta x)^2+(2\sigma)^2\right)^2}dx_1\leq&\int_{\Gamma_{\pm(t)}}M[\partial_x^2f](x_1)^2M[\partial_x\theta](x_1)^2dx_1\\
\leq&\|\partial_x\theta\|^2_{L^\infty_{\gamma(t)}}\|\partial_x^2f\|_{L^2_{\gamma(t)}}^2,
\end{aligned}$$
$$\begin{aligned}
\int_{\Gamma_{\pm}(t)}\frac{|\partial_x\theta(x_1)|^2|\Delta x|^4(|\Delta f|+\sigma)^2}{((\Delta x)^2+(2\sigma)^2)^4}dx_1\lesssim\int_{\Gamma_{\pm}(t)}\frac{|\partial_x\theta(x_1)|^2}{(\Delta x)^2+(2\sigma)^2}dx_1\lesssim\sigma^{-1}\|\partial_x\theta\|_{L^\infty_{\gamma(t)}}^2.
\end{aligned}$$
Therefore,
\begin{equation}\begin{aligned}
&\left|\int_{\Gamma_{\pm}(t)}\int_{\Gamma_{\pm}(t)}\left(J_{11}-\frac{1}{2}J_{12}-\frac{1}{2}J_{21}\right)(x,x_1)\cdot\Delta\partial_x^kh\cdot\overline{\partial_x^k\theta(x)}dxdx_1\right|\\
\lesssim&\left[\left(\|\partial_xf\|_{L^\infty_{\gamma(t)}}+\|\partial_x\theta\|_{L^\infty_{\gamma(t)}}\right)\|\partial_x^2f\|_{L^2_{\gamma(t)}}+\sigma^{-\frac{1}{2}}\|\partial_x\theta\|_{L^\infty_{\gamma(t)}}\right]\|\partial_x^k\theta\|_{L^2_{\gamma(t)}}\|\partial_x^kh\|_{\dot{H}^{\frac{1}{2}}_{\gamma(t)}},
\end{aligned}\label{2.42}\end{equation}
and likewise
\begin{equation}\begin{aligned}
&\left|\int_{\Gamma_{\pm}(t)}\int_{\Gamma_{\pm}(t)}\left(J_{22}-\frac{1}{2}\tilde{J}_{12}-\frac{1}{2}\tilde{J}_{21}\right)(x,x_1)\cdot\Delta\partial_x^kh\cdot\overline{\partial_x^k\theta(x)}dxdx_1\right|\\
\lesssim&\left[\left(\|\partial_xg\|_{L^\infty_{\gamma(t)}}+\|\partial_x\theta\|_{L^\infty_{\gamma(t)}}\right)\|\partial_x^2g\|_{L^2_{\gamma(t)}}+\sigma^{-\frac{1}{2}}\|\partial_x\theta\|_{L^\infty_{\gamma(t)}}\right]\|\partial_x^k\theta\|_{L^2_{\gamma(t)}}\|\partial_x^kh\|_{\dot{H}^{\frac{1}{2}}_{\gamma(t)}}.
\end{aligned}\label{2.43}\end{equation}
Summing up (\ref{2.35}-\ref{2.43}), we arrive at Lemma \ref{summary3.3}.
\subsection{Anti-symmetric part}\label{Asymmetry}
In this part, we give the control of the last terms on the right-hand side of (\ref{2.9}-\ref{2.12}) involving $P_{12}-P_{21}$. The main result of this section is the following.
\begin{lem}\label{summary3.4}
Suppose that condition (\ref{2.17}) holds. The anti-symmetric part $T_{asym}$ satisfies
$$\begin{aligned}
|T_{asym}|\lesssim
&\left|\int_{\Gamma_{\pm}(t)}\int_{\Gamma_{\pm}(t)}(P_{12}-P_{21})(x,x_1)\partial_x^{k+1}\theta(x_1)\overline{\partial_x^kh(x)}dxdx_1\right|\\
\lesssim&\|\partial_xf+\partial_xg\|_{L^\infty_{\gamma(t)}}\|\partial_x^k\theta\|_{H^\frac{1}{2}_{\gamma(t)}}\|\partial_x^kh\|_{H^\frac{1}{2}_{\gamma(t)}}\\
&+\left(\|\partial_x^2f+\partial_x^2g\|_{L^\infty_{\gamma(t)}}+\|\partial_xf+\partial_xg\|_{L^\infty_{\gamma(t)}}\left\|\frac{\partial_x\theta}{2\sigma}\right\|_{L^\infty_{\gamma(t)}}\right)\|\partial_x^k\theta\|_{\dot{H}^{\frac{1}{2}}_{\gamma(t)}}\|\partial_x^kh\|_{L^2_{\gamma(t)}}.
\end{aligned}$$
\end{lem}
\begin{proof}
To begin with, we recall the definition of $P_{12}$ and $P_{21}$ to get
$$
P_{12}(x,x_1)-P_{21}(x,x_1)=-\frac{\Delta x(4\sigma+\theta(x)+\theta(x_1))(\Delta f+\Delta g)}{\left((\Delta x)^2+(\Delta f+2\sigma+\theta(x_1))^2\right)\left((\Delta x)^2+(-\Delta f+2\sigma+\theta(x))^2\right)},
$$
and under the condition (\ref{2.17})
\allowdisplaybreaks[4]\begin{align*}
&\partial_{x_1}\left(P_{12}(x,x_1)-P_{21}(x,x_1)\right)\\
=&\frac{\Delta x(\Delta f+\Delta g)\partial_x\theta(x_1)-(4\sigma+\theta(x)+\theta(x_1))\left(\Delta x(\partial_xf(x_1)+\partial_xg(x_1))+\Delta f+\Delta g\right)}{\left((\Delta x)^2+(\Delta f+2\sigma+\theta(x_1))^2\right)\left((\Delta x)^2+(-\Delta f+2\sigma+\theta(x))^2\right)}\\
&+\frac{\Delta x(4\sigma+\theta(x)+\theta(x_1))(\Delta f+\Delta g)}{\left((\Delta x)^2+(\Delta f+2\sigma+\theta(x_1))^2\right)\left((\Delta x)^2+(-\Delta f+2\sigma+\theta(x))^2\right)}\\
&\cdot\left[\frac{2\Delta x+2\partial_xg(x_1)(\Delta f+2\sigma+\theta(x_1))}{\left((\Delta x)^2+(\Delta f+2\sigma+\theta(x_1))^2\right)}+\frac{2\Delta x+2\partial_xf(x_1)(\Delta f-2\sigma-\theta(x))}{\left((\Delta x)^2+(-\Delta f+2\sigma+\theta(x))^2\right)}\right]\\
\lesssim&\|\partial_xf+\partial_xg\|_{L^\infty_{\gamma(t)}}\left[\|\partial_x\theta\|_{L^\infty_{\gamma(t)}}\frac{(\Delta x)^2}{\left((\Delta x)^2+(2\sigma)^2\right)^2}+\frac{2\sigma|\Delta x|}{\left((\Delta x)^2+(2\sigma)^2\right)^2}+\frac{2\sigma(\Delta x)^2(|\Delta x|+|2\sigma|)}{\left((\Delta x)^2+(2\sigma)^2\right)^3}\right]\\
\lesssim&\|\partial_xf+\partial_xg\|_{L^\infty_{\gamma(t)}}\left[\|\partial_x\theta\|_{L^\infty_{\gamma(t)}}\frac{(\Delta x)^2}{\left((\Delta x)^2+(2\sigma)^2\right)^2}+\frac{2\sigma|\Delta x|}{\left((\Delta x)^2+(2\sigma)^2\right)^2}\right].
\end{align*}\allowdisplaybreaks[0]
Note that $\|\partial_x\theta\|_{L^\infty_{\gamma(t)}}\leq\|\partial_xf\|_{L^\infty_{\gamma(t)}}+\|\partial_xg\|_{L^\infty_{\gamma(t)}}\leq w_0$. By Young's inequality, it follows that
\begin{equation}\begin{aligned}
&\left|\int_{\Gamma_{\pm}(t)}\int_{\Gamma_{\pm}(t)}(P_{12}-P_{21})(x,x_1)\partial_x^{k+1}\theta(x_1)\overline{\partial_x^kh(x)}dxdx_1\right|\\
=&\left|\int_{\Gamma_{\pm}(t)}\int_{\Gamma_{\pm}(t)}\partial_{x_1}(P_{12}-P_{21})(x,x_1)\partial_x^{k}\theta(x_1)\overline{\partial_x^kh(x)}dxdx_1\right|\\
\lesssim&\|\partial_xf+\partial_xg\|_{L^\infty_{\gamma(t)}}\int_{\Gamma_{\pm}(t)}\left[\int_{\Gamma_{\pm}(t)}\left(\frac{2\sigma|\Delta x|+\|\partial_x\theta\|_{L^\infty_{\gamma(t)}}|\Delta x|^2}{\left((\Delta x)^2+(2\sigma)^2\right)^2}\right)|\partial_x^k\theta(x_1)|dx_1\right]|\partial_x^kh(x)|dx\\
\lesssim&\|\partial_xf+\partial_xg\|_{L^\infty_{\gamma(t)}}(2\sigma)^{-1}\|\partial_x^k\theta\|_{L^2_{\gamma(t)}}\|\partial_x^kh\|_{L^2_{\gamma(t)}}
\end{aligned}\label{2.44}\end{equation}
where we used that $\left\|\frac{\alpha^2}{(\alpha^2+(2\sigma)^2)^2}\right\|_{L^1}\simeq\left\|\frac{2\sigma\alpha}{(\alpha^2+(2\sigma)^2)^2}\right\|_{L^1}\simeq(2\sigma)^{-1}.$
Likewise, for the terms involving $P_{12}-P_{21}$ in (\ref{2.11})(\ref{2.12}) we have
\begin{equation}\begin{aligned}
&\left|\int_{\Gamma_{\pm}(t)}\int_{\Gamma_{\pm}(t)}(P_{12}-P_{21})(x,x_1)\partial_x^{k+1}h(x_1)\overline{\partial_x^k\theta(x)}dxdx_1\right|\\
\lesssim&\|\partial_xf+\partial_xg\|_{L^\infty_{\gamma(t)}}(2\sigma)^{-1}\|\partial_x^k\theta\|_{L^2_{\gamma(t)}}\|\partial_x^kh\|_{L^2_{\gamma(t)}}.
\end{aligned}\label{2.45}\end{equation}
However, the bounds (\ref{2.44})(\ref{2.45}) are not uniform in $\sigma$ for $\sigma$ close to $0$. In order to obtain a uniform estimate, we write $\partial_x^{k+1}\theta=-\Lambda H\partial_x^k\theta$, where $H$ is the Hilbert transform, and $\Lambda:=H\partial_x.$ Note that 
$\Lambda v(x)=\frac{1}{\pi}P.V.\int_{\mathbb{R}}\frac{v(x)-v(x-\alpha)}{\alpha^2}d\alpha$ for any Schwartz class function $v$.
Using this identity, we get
$$\begin{aligned}
&\int_{\Gamma_{\pm}(t)}\int_{\Gamma_{\pm}(t)}(P_{12}-P_{21})(x,x_1)\partial_x^{k+1}\theta(x_1)\overline{\partial_x^kh(x)}dxdx_1\\
=&-\int_{\Gamma_{\pm}(t)}\int_{\Gamma_{\pm}(t)}(P_{12}-P_{21})(x,x_1)\Lambda H\partial_x^{k}\theta(x_1)\overline{\partial_x^kh(x)}dxdx_1\\
=&\frac{1}{2\pi}P.V.\int_{\Gamma_{\pm}(t)}\int_{\Gamma_{\pm}(t)}\int_{\mathbb{R}}(P_{12}-P_{21})(x,x_1)\frac{H\partial_x^k\theta(x_1-\alpha)-H\partial_x^k\theta(x_1)}{\alpha^2}\overline{\partial_x^kh(x)}d\alpha dxdx_1.
\end{aligned}$$
Then apply a change of variables by setting $\alpha=-\tilde{\alpha}$, $x_1=\tilde{x}_1-\tilde{\alpha}$, $x=\tilde{x}-\tilde{\alpha}$:
$$\begin{aligned}
&P.V.\int_{\Gamma_{\pm}(t)}\int_{\Gamma_{\pm}(t)}\int_{\mathbb{R}}(P_{12}-P_{21})(x,x_1)\frac{H\partial_x^k\theta(x_1-\alpha)-H\partial_x^k\theta(x_1)}{\alpha^2}\overline{\partial_x^kh(x)}d\alpha dxdx_1\\
=&P.V.\int_{\Gamma_{\pm}(t)}\int_{\Gamma_{\pm}(t)}\int_{\mathbb{R}}(P_{12}-P_{21})(\tilde{x}-\tilde{\alpha},\tilde{x}_1-\tilde{\alpha})\frac{H\partial_x^k\theta(\tilde{x}_1)-H\partial_x^k\theta(\tilde{x}_1-\tilde{\alpha})}{\tilde{\alpha}^2}\overline{\partial_x^kh(\tilde{x}-\tilde{\alpha})}d\tilde{\alpha}d\tilde{x}d\tilde{x_1}.
\end{aligned}$$
Hence
\begin{equation}\begin{aligned}
&\int_{\Gamma_{\pm}(t)}\int_{\Gamma_{\pm}(t)}(P_{12}-P_{21})(x,x_1)\partial_x^{k+1}\theta(x_1)\overline{\partial_x^kh(x)}dxdx_1\\
=&-\frac{1}{4\pi}P.V.\int_{\Gamma_{\pm}(t)}\int_{\Gamma_{\pm}(t)}\int_{\mathbb{R}}(P_{12}-P_{21})(x-\alpha,x_1-\alpha)\\
&\cdot\frac{(H\partial_x^k\theta(x_1)-H\partial_x^k\theta(x_1-\alpha))\overline{(\partial_x^kh(x)-\partial_x^kh(x-\alpha))}}{\alpha^2}d\alpha dxdx_1\\
&-\frac{1}{4\pi}P.V.\int_{\Gamma_{\pm}(t)}\int_{\Gamma_{\pm}(t)}\int_{\mathbb{R}}\frac{(P_{12}-P_{21})(x,x_1)-(P_{12}-P_{21})(x-\alpha,x_1-\alpha)}{\alpha}\\
&\cdot\frac{H\partial_x^k\theta(x_1)-H\partial_x^k\theta(x_1-\alpha)}{\alpha}\overline{\partial_x^kh(x)}d\alpha dxdx_1.
\end{aligned}\label{2.46}\end{equation}
For the first term on the right-hand side, we see from (\ref{2.17}) that
$$|P_{12}(x-\alpha,x_1-\alpha)-P_{21}(x-\alpha,x_1-\alpha)|\lesssim\|\partial_xf+\partial_xg\|_{L^\infty_{\gamma(t)}}\frac{2\sigma(\Delta x)^2}{\left((\Delta x)^2+(2\sigma)^2\right)^2}.$$
Hence by Young's inequality and the $L^2$ boundedness of the Hilbert transform, it follows
$$\begin{aligned}
&\left\|\int_{\Gamma_{\pm}(t)}(P_{12}-P_{21})(x-\alpha,x_1-\alpha)\frac{H\partial_x^k\theta(x_1)-H\partial_x^k\theta(x_1-\alpha)}{\alpha}dx_1\right\|_{L^2_{x,\alpha}(\Gamma_{\pm}(t)\times\mathbb{R})}\\
\lesssim&\|\partial_xf+\partial_xg\|_{L^\infty_{\gamma(t)}}\left\|\int_{\mathbb{R}}\frac{2\sigma(\Delta x)^2}{\left((\Delta x)^2+(2\sigma)^2\right)^2}\left|\frac{H\partial_x^k\theta(x_1)-H\partial_x^k\theta(x_1-\alpha)}{\alpha}\right|dx_1\right\|_{L^2_{x,\alpha}(\Gamma_{\pm}(t)\times\mathbb{R})}\\
\lesssim&\|\partial_xf+\partial_xg\|_{L^\infty_{\gamma(t)}}\left\|\frac{H\partial_x^k\theta(x)-H\partial_x^k\theta(x-\alpha)}{\alpha}\right\|_{L^2_{x,\alpha}(\Gamma_{\pm}(t)\times\mathbb{R})}\\
\lesssim&\|\partial_xf+\partial_xg\|_{L^\infty_{\gamma(t)}}\|\partial_x^k\theta\|_{\dot{H}^{\frac{1}{2}}_{\gamma(t)}}.
\end{aligned}$$
Then by H\"older's inequality we obtain
\begin{equation}\begin{aligned}
&\left|\int_{\Gamma_{\pm}(t)}\int_{\Gamma_{\pm}(t)}\int_{\mathbb{R}}(P_{12}-P_{21})(x-\alpha,x_1-\alpha)\right.\\
&\cdot\left.\frac{(H\partial_x^k\theta(x_1)-H\partial_x^k\theta(x_1-\alpha))\overline{(\partial_x^kh(x)-\partial_x^kh(x-\alpha))}}{\alpha^2}d\alpha dxdx_1\right|\\
\lesssim&\|\partial_xf+\partial_xg\|_{L^\infty_{\gamma(t)}}\|\partial_x^k\theta\|_{\dot{H}^{\frac{1}{2}}_{\gamma(t)}}\|\partial_x^kh\|_{{\dot{H}^{\frac{1}{2}}_{\gamma(t)}}}.
\end{aligned}\label{2.47}\end{equation}
To control $(P_{12}-P_{21})(x,x_1)-(P_{12}-P_{21})(x-\alpha,x_1-\alpha)$, we have the following estimates:
$$|(4\sigma+\theta(x)+\theta(x_1))-(4\sigma+\theta(x-\alpha)+\theta(x_1-\alpha))|\leq2\|\partial_x\theta\|_{L^\infty_{\gamma(t)}}|\alpha|,$$
$$\begin{aligned}
&\left|\left(f(x)-f(x_1)+g(x)-g(x_1)\right)-\left(f(x-\alpha)-f(x_1-\alpha)+g(x-\alpha)-g(x_1-\alpha\right)\right|\\
=&\left|\int_{x_1}^x\int_0^\alpha\left(\partial_x^2f+\partial_x^2g\right)(y-\beta)d\beta dy\right|\\
\leq&\|\partial_x^2f+\partial_x^2g\|_{L^\infty_{\gamma(t)}}|x-x_1||\alpha|,
\end{aligned}$$
$$\begin{aligned}
&\left|\alpha^2+\left(f(x)-f(x_1)+2\sigma+\theta(x_1)\right)^2-\alpha^2-\left(f(x-\alpha)-f(x_1-\alpha)+2\sigma+\theta(x_1-\alpha)\right)^2\right|\\
=&\left|f(x)+f(x-\alpha)-f(x_1)-f(x_1-\alpha)+4\sigma+\theta(x_1)+\theta(x_1-\alpha)\right|\\
&\cdot\left|f(x)-f(x-\alpha)-f(x_1)+f(x_1-\alpha)+\theta(x_1)-\theta(x_1-\alpha)\right|\\
=&\frac{1}{2}\left|f(x)+f(x-\alpha)-f(x_1)-f(x_1-\alpha)+4\sigma+\theta(x_1)+\theta(x_1-\alpha)\right|\\
&\cdot\left|\int_{x_1}^x\int_0^\alpha\left(\partial_x^2f+\partial_x^2g\right)(y-\beta)d\beta dy+ \theta(x)+\theta(x_1)-\theta(x-\alpha)-\theta(x_1-\alpha)\right|\\
\leq&\left(\|\partial_xf\|_{L^\infty_{\gamma(t)}}|x-x_1|+4\sigma+2\|\theta\|_{L^\infty_{\gamma(t)}}\right)\left(\|\partial_x^2f+\partial_x^2g\|_{L^\infty_{\gamma(t)}}|x-x_1|+\|\partial_x\theta\|_{L^\infty_{\gamma(t)}}\right)|\alpha|,
\end{aligned}$$
and likewise
$$\begin{aligned}
&\left|\alpha^2+\left(f(x_1)-f(x)+2\sigma+\theta(x)\right)^2-\alpha^2-\left(f(x_1-\alpha)-f(x-\alpha)+2\sigma+\theta(x-\alpha)\right)^2\right|\\
\leq&\left(\|\partial_xf\|_{L^\infty_{\gamma(t)}}|x-x_1|+4\sigma+2\|\theta\|_{L^\infty_{\gamma(t)}}\right)\left(\|\partial_x^2f+\partial_x^2g\|_{L^\infty_{\gamma(t)}}|x-x_1|+\|\partial_x\theta\|_{L^\infty_{\gamma(t)}}\right)|\alpha|.
\end{aligned}$$
Using Lemma \ref{lem2.2} and (\ref{2.17}), we obtain
\begin{equation}\begin{aligned}
&\left|(P_{12}-P_{21})(x,x_1)-(P_{12}-P_{21})(x-\alpha,x_1-\alpha)\right|\\
\lesssim&\left(\|\partial_x^2f+\partial_x^2g\|_{L^\infty_{\gamma(t)}}+\|\partial_xf+\partial_xg\|_{L^\infty_{\gamma(t)}}\left\|\frac{\partial_x\theta}{2\sigma}\right\|_{L^\infty_{\gamma(t)}}\right)\frac{(2\sigma)(\Delta x)^2|\alpha|}{\left((\Delta x)^2+(2\sigma)^2\right)^2}.
\end{aligned}\label{2.48}\end{equation}
To control the second term on the right-hand side of (\ref{2.46}) we decompose the integral into the region $|\alpha|\leq1$ and $|\alpha|>1$. In the region $|\alpha|\leq1$, we use
\begin{equation}\begin{aligned}
&\int_{\Gamma_{\pm}(t)}\int_{\Gamma_{\pm}(t)}\int_{|\alpha|\leq1}\frac{(2\sigma)(\Delta x)^2}{\left((\Delta x)^2+(2\sigma)^2\right)^2}\left|\frac{H\partial_x^k\theta(x_1)-H\partial_x^k\theta(x_1-\alpha)}{\alpha}\right||\partial_x^kh(x)|dxdx_1d\alpha\\
\leq&\left(\int_{\Gamma_{\pm}(t)}\int_{|\alpha|\leq1}\left|\frac{H\partial_x^k\theta(x_1)-H\partial_x^k\theta(x_1-\alpha)}{\alpha}\right|^2dx_1d\alpha\right)^\frac{1}{2}\\
&\cdot\left(\int_{\Gamma_{\pm}(t)}\int_{|\alpha|\leq1}\left(\int_{\Gamma_{\pm}(t)}\frac{(2\sigma)(\Delta x)^2}{\left((\Delta x)^2+(2\sigma)^2\right)^2}|\partial_x^kh(x)|dx\right)^2dx_1d\alpha\right)^\frac{1}{2}\\
\lesssim&\|\partial_x^k\theta\|_{\dot{H}^{\frac{1}{2}}_{\gamma(t)}}\|\partial_x^kh\|_{L^2_{\gamma(t)}}.
\end{aligned}\label{2.49}\end{equation}
In the region $|\alpha|>1$, we use
\begin{equation}\begin{aligned}
&\left|\frac{(P_{12}-P_{21})(x,x_1)-(P_{12}-P_{21})(x-\alpha,x_1-\alpha)}{\alpha}\frac{H\partial_x^k\theta(x_1)-H\partial_x^k\theta(x_1-\alpha)}{\alpha}\right|\\
\lesssim&\|\partial_xf+\partial_xg\|_{L^\infty_{\gamma(t)}}\frac{(2\sigma)(\Delta x)^2}{\alpha^2\left((\Delta x)^2+(2\sigma)^2\right)^2}\left(|H\partial_x^k\theta(x_1)|+|H\partial_x^k\theta(x_1-\alpha)|\right).
\end{aligned}\label{2.50}\end{equation}
Then by H\"older and Young's inequality
\begin{equation}\begin{aligned}
&\int_{\Gamma_{\pm}(t)}\int_{\Gamma_{\pm}(t)}\int_{|\alpha|>1}\frac{(2\sigma)(\Delta x)^2}{\alpha^2\left((\Delta x)^2+(2\sigma)^2\right)^2}|H\partial_x^k\theta(x_1)||\partial_x^kh(x)|dxdx_1d\alpha\\
\leq&\int_{\Gamma_{\pm}(t)}\left(\int_{\Gamma_{\pm}(t)}\frac{(2\sigma)(\Delta x)^2}{\left((\Delta x)^2+(2\sigma)^2\right)^2}|\partial_x^kh(x)|dx\right)|H\partial_x^k\theta(x_1)|dx_1\\
\lesssim&\|\partial_x^k\theta\|_{L^2_{\gamma(t)}}\|\partial_x^kh\|_{L^2_{\gamma(t)}},
\end{aligned}\label{2.51}\end{equation}
\begin{equation}\begin{aligned}
&\int_{\Gamma_{\pm}(t)}\int_{\Gamma_{\pm}(t)}\int_{|\alpha|>1}\frac{(2\sigma)(\Delta x)^2}{\alpha^2\left((\Delta x)^2+(2\sigma)^2\right)^2}|H\partial_x^k\theta(x_1-\alpha)||\partial_x^kh(x)|dxdx_1d\alpha\\
\leq&\int_{\Gamma_{\pm}(t)}\left(\int_{\Gamma_{\pm}(t)}\frac{(2\sigma)(\Delta x)^2}{\left((\Delta x)^2+(2\sigma)^2\right)^2}|\partial_x^kh(x)|dx\right)\left(\int_{|\alpha|>1}\frac{1}{\alpha^2}|H\partial_x^k\theta(x_1-\alpha)|d\alpha\right)dx_1\\
\lesssim&\|\partial_x^k\theta\|_{L^2_{\gamma(t)}}\|\partial_x^kh\|_{L^2_{\gamma(t)}}.
\end{aligned}\label{2.52}\end{equation}
Collecting the controls in (\ref{2.47}-\ref{2.52}), we conclude that
\begin{equation}\begin{aligned}
&\left|\int_{\Gamma_{\pm}(t)}\int_{\Gamma_{\pm}(t)}(P_{12}-P_{21})(x,x_1)\partial_x^{k+1}\theta(x_1)\overline{\partial_x^kh(x)}dxdx_1\right|\\
\lesssim&\|\partial_xf+\partial_xg\|_{L^\infty_{\gamma(t)}}\|\partial_x^k\theta\|_{H^\frac{1}{2}_{\gamma(t)}}\|\partial_x^kh\|_{H^\frac{1}{2}_{\gamma(t)}}\\
&+\left(\|\partial_x^2f+\partial_x^2g\|_{L^\infty_{\gamma(t)}}+\|\partial_xf+\partial_xg\|_{L^\infty_{\gamma(t)}}\left\|\frac{\partial_x\theta}{2\sigma}\right\|_{L^\infty_{\gamma(t)}}\right)\|\partial_x^k\theta\|_{\dot{H}^{\frac{1}{2}}_{\gamma(t)}}\|\partial_x^kh\|_{L^2_{\gamma(t)}}.
\end{aligned}\label{2.53}\end{equation}
As an analogue, the terms involving $P_{12}-P_{21}$ in (\ref{2.11})(\ref{2.12}) satisfy the control
\begin{equation}\begin{aligned}
&\left|\int_{\Gamma_{\pm}(t)}\int_{\Gamma_{\pm}(t)}(P_{12}-P_{21})(x,x_1)\partial_x^{k+1}h(x_1)\overline{\partial_x^k\theta(x)}dxdx_1\right|\\
\lesssim&\|\partial_xf+\partial_xg\|_{L^\infty_{\gamma(t)}}\|\partial_x^k\theta\|_{H^\frac{1}{2}_{\gamma(t)}}\|\partial_x^kh\|_{H^\frac{1}{2}_{\gamma(t)}}\\
&+\left(\|\partial_x^2f+\partial_x^2g\|_{L^\infty_{\gamma(t)}}+\|\partial_xf+\partial_xg\|_{L^\infty_{\gamma(t)}}\left\|\frac{\partial_x\theta}{2\sigma}\right\|_{L^\infty_{\gamma(t)}}\right)\|\partial_x^kh\|_{\dot{H}^{\frac{1}{2}}_{\gamma(t)}}\|\partial_x^k\theta\|_{L^2_{\gamma(t)}}.
\end{aligned}\label{2.54}\end{equation}
\end{proof}
\subsection{Transport terms}\label{transport}
The contribution of the transport terms in (\ref{2.1})(\ref{2.2}) to the energy estimate is as follows:
$$\begin{aligned}
&-\Re\int_{\Gamma_{\pm}(t)}(\mu_2u_++\mu_1u_-)\partial_x^{k+1}h\,\overline{\partial_x^kh}\,dx-\mu_1\mu_2\Re\int_{\Gamma_{\pm}(t)}(\mu_1u_++\mu_2u_-)\partial_x^{k+1}\theta\,\overline{\partial_x^k\theta}\,dx\\
&-\mu_1\mu_2\Re\int_{\Gamma_{\pm}(t)}(u_+-u_-)(\partial_x^{k+1}\theta\,\overline{\partial_x^kh}+\partial_x^{k+1}h\,\overline{\partial_x^k\theta})dx.
\end{aligned}$$
We remark that $u_+$ and $u_-$ are not necessarily real-valued, so integration by parts is not applicable. The aim of this section is to bound the transport terms by losing at most half a derivative in $h$ and $\theta$, see Lemma \ref{summary3.5}.
To this end, for $u=u_+$ or $u=u_-$ we write 
$$\begin{aligned}
&\left|\int_{\Gamma_{\pm}(t)}u\partial_x^{k+1}h\,\overline{\partial_x^kh}\,dx\right|=\left|\int_{\Gamma_{\pm}(t)}\ u\Lambda H\partial_x^kh\,\overline{\partial_x^kh}\,dx\right|=\left|\int_{\Gamma_{\pm}(t)}\Lambda^\frac{1}{2}(u\overline{\partial_x^kh})\cdot\Lambda^\frac{1}{2}H\partial_x^kh\,dx\right|,
\end{aligned}$$
$$\begin{aligned}
&\left|\int_{\Gamma_{\pm}(t)}u\partial_x^{k+1}\theta\,\overline{\partial_x^kh}\,dx\right|=\left|\int_{\Gamma_{\pm}(t)}\ u\Lambda H\partial_x^k\theta\,\overline{\partial_x^kh}\,dx\right|=\left|\int_{\Gamma_{\pm}(t)}\Lambda^\frac{1}{2}(u\overline{\partial_x^kh})\cdot\Lambda^\frac{1}{2}H\partial_x^k\theta\,dx\right|,
\end{aligned}$$
and subsequently
$$\left|\int_{\Gamma_{\pm}(t)}u\partial_x^{k+1}h\,\overline{\partial_x^kh}\,dx\right|\leq\|\Lambda^\frac{1}{2}(u\overline{\partial_x^kh})\|_{L^2_{\gamma(t)}}\|\partial_x^kh\|_{\dot{H}_{\gamma(t)}^\frac{1}{2}},$$
$$\left|\int_{\Gamma_{\pm}(t)}u\partial_x^{k+1}\theta\,\overline{\partial_x^kh}\,dx\right|\leq\|\Lambda^\frac{1}{2}(u\overline{\partial_x^kh})\|_{L^2_{\gamma(t)}}\|\partial_x^k\theta\|_{\dot{H}_{\gamma(t)}^\frac{1}{2}}.$$
The main issue is to find a control for $\|\Lambda^\frac{1}{2}(u\overline{\partial_x^kh})\|_{L^2_{\gamma(t)}}$. To start with, we have
$$\|\Lambda^\frac{1}{2}(u\overline{\partial_x^kh})\|_{L^2_{\gamma(t)}}\leq\|u\|_{L^\infty_{\gamma(t)}}\|\partial_x^kh\|_{\dot{H}^\frac{1}{2}_{\gamma(t)}}+\|\Lambda^\frac{1}{2}u\|_{L^6_{\gamma(t)}}\|\partial_x^kh\|_{L^3_{\gamma(t)}}.$$
To bound $\|\Lambda^\frac{1}{2}u\|_{L^6_{\gamma(t))}}$, we use the Gagliardo-Nirenberg interpolation inequality 
$\|\Lambda^\frac{1}{2}u\|_{L^6_{\gamma(t)}}\leq\|u\|_{L^{12}_{\gamma(t)}}^\frac{1}{2}\|\Lambda u\|_{L^4_{\gamma(t)}}^\frac{1}{2}$ and the following Lemma.
\begin{lem}
Suppose the condition (\ref{2.17}) holds, then for any $p>1$
$$\|u_+\|_{L^p_{\gamma(t)}}+\|u_-\|_{L^p_{\gamma(t)}}\lesssim \left(\|\partial_xf\|_{L^p_{\gamma(t)}}+\|\partial_xg\|_{L^p_{\gamma(t)}}\right)\left(1+\|\partial_xf\|_{H^1_{\gamma(t)}}+\|\partial_xg\|_{H^1_{\gamma(t)}}\right),$$
and for $p>2$
$$\begin{aligned}
&\|\partial_xu_+\|_{L^p_{\gamma(t)}}+\|\partial_xu_-\|_{L^p_{\gamma(t)}}\\
\lesssim&\left(\|\partial_xf\|_{L^\infty_{\gamma(t)}}+\|\partial_xg\|_{L^\infty_{\gamma(t)}}\right)\left(\|\partial_x^2f\|_{H^1_{\gamma(t)}}+\|\partial_x^2g\|_{H^1_{\gamma(t)}}+\sigma^{-1}\|\partial_x\theta\|_{H^1_{\gamma(t)}}\right)\\
&+\left(\|\partial_x^2f\|_{L^p_{\gamma(t)}}+\|\partial_x^2g\|_{L^p_{\gamma(t)}}\right)\left(1+\|\partial_x^2f\|_{L^2_{\gamma(t)}}+\|\partial_x^2g\|_{L^2_{\gamma(t)}}\right).
\end{aligned}$$
\label{lem 2.5}\end{lem}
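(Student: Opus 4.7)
The plan is to estimate $u_\pm$ and $\partial_x u_\pm$ by direct analysis of the defining principal-value integrals (\ref{1.9})--(\ref{1.10}), using symmetrization in $\alpha$ to eliminate the PV, followed by Young's inequality together with $L^p$-boundedness of the Hardy--Littlewood maximal function $M$.

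For the first bound I would split $u_+$ into its $P_{11}$ and $P_{12}$ contributions. For the $P_{11}$ part, the substitution $\alpha\mapsto-\alpha$ and subtraction removes the principal value and yields an integrand of the schematic form
\begin{align*}
\tfrac12\int \alpha\,\frac{(f(x+\alpha)-f(x-\alpha))\bigl(2f(x)-f(x+\alpha)-f(x-\alpha)\bigr)}{\bigl(\alpha^2+(f(x)-f(x-\alpha))^2\bigr)\bigl(\alpha^2+(f(x)-f(x+\alpha))^2\bigr)}\,d\alpha.
\end{align*}
The first-difference factor is controlled by $|\alpha|M[\partial_xf](x)$, the second-difference factor by $\alpha^2 M[\partial_x^2f](x)$ on $|\alpha|\le 1$ (and by $\|f\|_{L^\infty}$ on $|\alpha|>1$), and the denominator is bounded below by $\alpha^4$ thanks to (\ref{2.17}). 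Thus the integrand is dominated pointwise by an $L^1(\mathbb R_\alpha)$ weight times the product $M[\partial_xf](x)\cdot M[\partial_x^2f](x)$ on the near-diagonal region, and by $|\alpha|^{-3}$ decay at infinity. Taking the $L^p$ norm and applying $L^p$-boundedness of $M$ together with $H^1\hookrightarrow L^\infty$ in one dimension yields the claimed bound. The $P_{12}$ piece is handled identically, with the denominator now bounded below by $\alpha^2+c\sigma^2$, so that symmetrization is only needed for decay at infinity; the same estimate holds with both $\partial_xf$ and $\partial_xg$ norms appearing. The treatment of $u_-$ is identical after swapping the roles of $f$ and $g$.

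For the derivative bound, I differentiate under the integral. The new $\partial_x$ either falls on $f,g$ inside $\Delta f,\Delta g$ or on $\theta$ inside the $2\sigma+\theta(x)$ factor of $P_{12},P_{21}$, producing three types of terms. Type (i): the extra derivative is absorbed into a $\partial_x^2 f$ (or $\partial_x^2 g$) factor kept in $L^p$; after the same symmetrization/Young argument these contribute $(\|\partial_x^2f\|_{L^p}+\|\partial_x^2g\|_{L^p})(1+\|\partial_x^2f\|_{L^2}+\|\partial_x^2g\|_{L^2})$. Type (ii): a $\partial_xf$ or $\partial_xg$ factors out in $L^\infty$ and the remaining integrand carries two derivatives on $f$ or $g$, contributing $(\|\partial_xf\|_{L^\infty}+\|\partial_xg\|_{L^\infty})(\|\partial_x^2f\|_{H^1}+\|\partial_x^2g\|_{H^1})$. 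Type (iii): the derivative lands on $\theta$, producing a kernel of schematic form $\partial_x\theta(x)\,|\alpha|\big((\alpha^2+(2\sigma)^2)^{-2}\bigr)$ whose $L^1_\alpha$ norm is exactly $\sigma^{-1}$; pairing with an $L^\infty$ factor of $\partial_xf$ or $\partial_xg$ and the $L^p$ norm of $\partial_x\theta$ (upgraded to $H^1$ using $p>2$ and Sobolev embedding) gives the $\sigma^{-1}\|\partial_x\theta\|_{H^1}$ contribution stated in the lemma.

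The main technical obstacle is confining the $\sigma^{-1}$ singularity exclusively to the term paired with $\|\partial_x\theta\|_{H^1}$, which under (\ref{1.17}) is of size $O(\sigma)$ and hence keeps the full product $O(1)$ uniformly in $\sigma$. This requires careful bookkeeping: for every kernel, I must separate the derivatives falling on $\theta$ (via the $2\sigma+\theta$ pieces of $P_{12},P_{21}$) from those falling on $f,g$; any stray $\sigma^{-1}$ paired with a non-$\theta$ derivative would be uncontrolled as $\sigma\to 0$. A secondary point is that the whole argument lives on the complex contours $\Gamma_\pm(t)$, but because $x$ and $x_1$ are shifted identically by $\pm i\gamma(t)$, the relative variable $\alpha$ stays real, so the maximal-function and Young-inequality machinery extends verbatim along the contour once the analytic extensibility of the kernels -- granted by (\ref{2.17}) -- is verified.
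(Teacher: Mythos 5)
Your overall route is the same as the paper's (symmetrize in $\alpha$ to remove the principal value, control differences by Hardy--Littlewood maximal functions, finish with H\"older/Young and $L^p$-boundedness of $M$, working on $\Gamma_\pm(t)$ where the relative variable stays real), but two of your steps, as written, do not deliver the stated bounds. In the first estimate you bound the second difference by $\alpha^2 M[\partial_x^2 f](x)$, so after the $\alpha$-integration you are left with the pointwise product $M[\partial_xf](x)\,M[\partial_x^2f](x)$; to take its $L^p$ norm and land on $\|\partial_xf\|_{L^p}\bigl(1+\|\partial_xf\|_{H^1}\bigr)$ you would need $\|\partial_x^2f\|_{L^\infty}$, which is \emph{not} controlled by $\|\partial_xf\|_{H^1}$ (the embedding $H^1\hookrightarrow L^\infty$ applied to $\partial_x^2 f$ requires $\partial_x^2f\in H^1$), and the alternative split leaves $\|\partial_x^2f\|_{L^p}$, which is not on the right-hand side either. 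The paper avoids this by bounding $|f(x-\alpha)+f(x+\alpha)-2f(x)|\le \alpha^2\bigl(M[\partial_x^2f](x-\alpha)+M[\partial_x^2f](x+\alpha)\bigr)$ and integrating these shifted maximal functions out in $\alpha$ (Cauchy--Schwarz over $|\alpha|\le1$), so that only $M[\partial_xf](x)$ survives pointwise and the second-derivative factor exits as the constant $\|\partial_x^2f\|_{L^2}$. The same issue affects your tail: bounding the $|\alpha|>1$ region by $\|f\|_{L^\infty}$ both introduces a norm absent from the statement and, if it replaces all $x$-dependence, leaves a constant that is not in $L^p(\mathbb R)$; you must retain, e.g., $\alpha^{-1}M[\partial_xf](x\pm\alpha)$ and integrate it out to $\|\partial_xf\|_{L^2}$.

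In the derivative estimate your type (iii) count is off: $\bigl\| |\alpha|(\alpha^2+(2\sigma)^2)^{-2}\bigr\|_{L^1_\alpha}\simeq\sigma^{-2}$, not $\sigma^{-1}$. The $\sigma^{-1}$ together with the essential small prefactor $\|\partial_xf\|_{L^\infty}+\|\partial_xg\|_{L^\infty}$ appears only after pairing $\partial_x\theta(x)$ with a $\Delta g$ factor (so the kernel improves to $\alpha^2(\alpha^2+(2\sigma)^2)^{-2}$, of $L^1_\alpha$ size $\sigma^{-1}$); for the remaining piece $(2\sigma+\theta(x))\,\partial_x\theta(x)\,P.V.\!\int 2\alpha\bigl(\alpha^2+(2\sigma+f(x)-g(x-\alpha))^2\bigr)^{-2}d\alpha$ there is no such factor, and a crude bound only gives $\sigma^{-1}|\partial_x\theta(x)|$ with no smallness, which does not match the statement and would spoil the way the lemma is used (the $\sigma^{-1}\|\partial_x\theta\|_{H^1}$ block must carry the $L^\infty$ prefactor). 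The paper recovers the missing factor precisely by the $\alpha\mapsto-\alpha$ cancellation in this term, which brings in $2g(x)-g(x-\alpha)-g(x+\alpha)$ and hence $M[\partial_xg](x)$; you invoke symmetrization only to remove the principal value, but here it is needed to generate the smallness. Two minor points you should also make explicit: $p>2$ is exactly what places $|\alpha|^{-1}\mathrm{1}_{|\alpha|>1}$ in $L^{2p/(p+2)}$ for the Young step, and $\|\partial_x\theta\|_{L^p}\lesssim\|\partial_x\theta\|_{H^1}$ is the Sobolev embedding used to state the result with $H^1$.
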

\begin{proof}
Recall the expression of $u_+$ and $u_-$ (\ref{1.9})(\ref{1.10}). It suffices to consider only $u_+$ since the proof of the bound for $u_-$ is almost a repetition. We begin with the first term of (\ref{1.9}). Using the cancellation between the integral over the region $\alpha<0$ and the region $\alpha>0$, we write
$$\begin{aligned}
&P.V.\int_{\mathbb{R}}\frac{\alpha}{\alpha^2+(f(x)-f(x-\alpha))^2}d\alpha\\
=&\int_{\alpha>0}\frac{\alpha(f(x+\alpha)-f(x-\alpha))(f(x-\alpha)+f(x+\alpha)-2f(x))}{\left(\alpha^2+(f(x)-f(x-\alpha))^2\right)\left(\alpha^2+(f(x+\alpha)-f(x))^2\right)}d\alpha
\end{aligned}$$
For the part $\alpha>1$, it holds
\begin{equation}\begin{aligned}
&\left|\int_{\alpha>1}\frac{\alpha(f(x+\alpha)-f(x-\alpha))(f(x-\alpha)+f(x+\alpha)-2f(x))}{\left(\alpha^2+(f(x)-f(x-\alpha))^2\right)\left(\alpha^2+(f(x+\alpha)-f(x))^2\right)}d\alpha\right|\\
\lesssim&\int_{\alpha>1}\frac{1}{\alpha}\left|\frac{f(x+\alpha)-f(x-\alpha)}{\alpha}\right|\left|\frac{f(x+\alpha)-f(x)}{\alpha}-\frac{f(x)-f(x-\alpha)}{\alpha}\right|d\alpha\\
\lesssim&M[\partial_xf](x)\int_{\alpha>1}\frac{M[\partial_xf](x+\alpha)+M[\partial_xf](x-\alpha)}{\alpha}d\alpha\\
\lesssim&M[\partial_xf](x)\|\partial_xf\|_{L^2_{\gamma(t)}},
\end{aligned}\label{2.55}\end{equation}
where we used 
$$|f(x+\alpha)-f(x-\alpha)|\leq2\alpha M[\partial_xf](x),$$
$$ |f(x+\alpha)-f(x)|\leq\alpha M[\partial_xf](x+a),\quad |f(x-\alpha)-f(x)|\leq\alpha M[\partial_xf](x-\alpha).$$
For the part $0<\alpha<1$, to cancel the singularity $\alpha^{-1}$ in the integrand, we use the inequality 
$$\begin{aligned}
\left|f(x-\alpha)+f(x+\alpha)-2f(x)\right|=&\left|\int_{-\alpha}^\alpha(\alpha-|\beta|)\partial_x^2f(x+\beta)d\beta\right|\\
\leq&|\alpha|^2(M[\partial^2f](x-\alpha)+M[\partial_x^2f](x+\alpha)),
\end{aligned}$$
and thus
\begin{equation}\begin{aligned}
&\left|\int_{0<\alpha<1}\frac{\alpha(f(x+\alpha)-f(x-\alpha))(f(x-\alpha)+f(x+\alpha)-2f(x))}{\left(\alpha^2+(f(x)-f(x-\alpha))^2\right)\left(\alpha^2+(f(x+\alpha)-f(x))^2\right)}d\alpha\right|\\
\lesssim&\int_{0<\alpha<1}M[\partial_xf](x)(M[\partial_x^2f](x-\alpha)+M[\partial_x^2f](x+\alpha))d\alpha\\
\lesssim&M[\partial_xf](x)\|\partial_x^2f\|_{L^2_{\gamma(t)}}.
\end{aligned}\label{2.56}\end{equation}
Hence $\left|P.V.\int_{\mathbb{R}}\frac{\alpha}{\alpha^2+(f(x)-f(x-\alpha))^2}d\alpha\right|\leq C\|\partial_xf\|_{H^1_{\gamma(t)}}M[\partial_xf](x)$.
For the second term of (\ref{1.9}), 
\allowdisplaybreaks[4]\begin{align*}
&P.V.\int_{\mathbb{R}}\frac{\alpha}{\alpha^2+(f(x)-g(x-\alpha)+2\sigma)^2}d\alpha\\
=&P.V.\int_{\mathbb{R}}\frac{\alpha}{\alpha^2+(g(x)-g(x-\alpha)+2\sigma+\theta(x))^2}d\alpha\\
=&\int_{\alpha>0}\frac{\alpha\left(\left(g(x)-g(x+\alpha)^2+2\sigma+\theta(x)\right)^2-\left(g(x)-g(x-\alpha)+2\sigma+\theta(x)\right)^2\right)}{\left(\alpha^2+(g(x)-g(x-\alpha)+2\sigma+\theta(x))^2\right)\left(\alpha^2+(g(x)-g(x+\alpha)+2\sigma+\theta(x))^2\right)}d\alpha\\
=&\int_{\alpha>0}\frac{\alpha(g(x-\alpha)-g(x+\alpha))(2g(x)-g(x-\alpha)-g(x+\alpha))}{\left(\alpha^2+(g(x)-g(x-\alpha)+2\sigma+\theta(x))^2\right)\left(\alpha^2+(g(x)-g(x+\alpha)+2\sigma+\theta(x))^2\right)}d\alpha\\
&+\int_{\alpha>0}\frac{\alpha(4\sigma+2\theta(x))(g(x-\alpha)-g(x+\alpha))}{\left(\alpha^2+(g(x)-g(x-\alpha)+2\sigma+\theta(x))^2\right)\left(\alpha^2+(g(x)-g(x+\alpha)+2\sigma+\theta(x))^2\right)}d\alpha.
\end{align*}\allowdisplaybreaks[0]
Since $|\alpha^2+(g(x)-g(x\pm\alpha)+2\sigma+\theta(x))^2|^{-1}\leq C(\alpha^2+(2\sigma)^2)^{-1}\leq C\alpha^{-2}$, the first term on the right-hand side of the above identity can be bounded in absolute value by $CM[\partial_xg] (x)\|\partial_xg\|_{H^1_{\gamma(t)}}$ in the same way as in (\ref{2.55})(\ref{2.56}). For the second term, in view of the condition (\ref{2.17}), it holds
$$\begin{aligned}
&\left|\int_{\alpha>0}\frac{\alpha(4\sigma+2\theta(x))(g(x-\alpha)-g(x+\alpha))}{\left(\alpha^2+(g(x)-g(x-\alpha)+2\sigma+\theta(x))^2\right)\left(\alpha^2+(g(x)-g(x+\alpha)+2\sigma+\theta(x))^2\right)}d\alpha\right|\\
\lesssim&M[\partial_xg](x)\int_{\alpha>0}\frac{\alpha^2(2\sigma)}{(\alpha^2+(2\sigma)^2)^2}d\alpha\\
\lesssim&M[\partial_xg](x).
\end{aligned}$$
Taking the sum yields
$\left|P.V.\int_{\mathbb{R}}\frac{\alpha}{\alpha^2+(f(x)-g(x-\alpha)+2\sigma)^2}d\alpha\right|\lesssim(1+\|\partial_xg\|_{H^1_{\gamma(t)}})M[\partial_xg](x),$ and consequently
$$|u_+(x)|\lesssim(1+\|\partial_xg\|_{H^1_{\gamma(t)}})(M[\partial_xf](x)+M[\partial_xg](x)).$$
As an analogue, $u_-$ satisfies the bound
$$|u_-(x)|\lesssim (1+\|\partial_xf\|_{H^1_{\gamma(t)}})(M[\partial_xf](x)+M[\partial_xg](x)).$$
By the $L^p$ boundedness of the maximal function for $p>1$ we obtain the desired bound for $\|u_+\|_{L^p_{\gamma(t)}}+\|u_-\|_{L^p_{\gamma(t)}}$.
To obtain the estimate of $\partial_xu_{\pm}$, we take derivative in (\ref{1.9}) to get
\begin{equation}\begin{aligned}
\partial_xu_+
=&\mu_2\Delta\rho\,P.V\int_{\mathbb{R}}\frac{2\alpha\Delta f(x,x-\alpha)\Delta\partial_xf(x,x-\alpha)}{(\alpha^2+(\Delta f(x,x-\alpha))^2)^2}d\alpha\\&+\mu_1\Delta\rho\,P.V.\int_{\mathbb{R}}\frac{2\alpha(\Delta g(x,x-\alpha)+2\sigma+\theta(x))(\Delta\partial_xg(x,x-\alpha)+\partial_x\theta(x))}{(\alpha^2+(2\sigma+f(x)-g(x-\alpha))^2)^2}d\alpha.
\end{aligned}\label{2.57}\end{equation}
Again, we divide the integrals into the region $|\alpha|>1$ and the region $|\alpha|<1$. When $|\alpha|>1$, it holds
\begin{equation}\begin{aligned}
&\left|\int_{|\alpha|>1}\frac{2\alpha(f(x)-f(x-\alpha))(\partial_xf(x)-\partial_xf(x-\alpha))}{(\alpha^2+(\Delta f(x,x-\alpha))^2)^2}d\alpha\right|\\
\lesssim&\|\partial_xf\|_{L^\infty_{\gamma(t)}}\int_{|\alpha|>1}\frac{M[\partial_x^2f](x-\alpha)}{|\alpha|}d\alpha.
\end{aligned}\label{2.58}\end{equation}
Since $p>2$, Young's inequality gives
$$\left\|\int_{|\alpha|>1}\frac{1}{\alpha}M[\partial_x^2f](x-\alpha)d\alpha\right\|_{L^p_{\gamma(t)}}\lesssim\|\partial_x^2f\|_{L^2_{\gamma(t)}}\||\alpha|^{-1}\mathrm{1}_{|\alpha|>1}\|_{L^\frac{2p}{p+2}}\lesssim\|\partial_x^2f\|_{L^2_{\gamma(t)}}.$$
When $|\alpha|<1$, we write 
$$\begin{aligned}
&\left|\int_{|\alpha|<1}\frac{2\alpha(f(x)-f(x-\alpha))(\partial_xf(x)-\partial_xf(x-\alpha))}{(\alpha^2+(\Delta f(x,x-\alpha))^2)^2}d\alpha\right|\\
\leq&\int_{0<\alpha<1}\left|\frac{2\alpha(f(x)-f(x-\alpha))(\partial_xf(x)-\partial_xf(x-\alpha))}{(\alpha^2+(f(x)-f(x-\alpha))^2)^2}\right.\\
&\left.-\frac{2\alpha(f(x+\alpha)-f(x))(\partial_xf(x+\alpha)-\partial_xf(x))}{(\alpha^2+(f(x+\alpha)-f(x))^2)^2}\right|d\alpha.
\end{aligned}$$
Using Lemma (\ref{lem2.2}) with the inequalities
$$|(\partial_xf(x)-\partial_xf(x-\alpha))-(\partial_xf(x+\alpha)-\partial_xf(x))|\lesssim\alpha^2\left(M[\partial_x^3f](x-\alpha)+M[\partial_x^3f](x+\alpha)\right),$$
$$|(f(x)-f(x-\alpha))-(f(x+\alpha)-f(x))|\lesssim\alpha^2\left(M[\partial_x^2f](x-\alpha)+M[\partial_x^2f](x+\alpha)\right),$$
$$\begin{aligned}
&\left|\frac{1}{(\alpha^2+(f(x)-f(x-\alpha))^2)^2}-\frac{1}{(\alpha^2+(f(x)-f(x+\alpha))^2)^2}\right|\\
\lesssim&\frac{|2f(x)-f(x-\alpha)-f(x+\alpha)||f(x+\alpha)-f(x-\alpha)|}{\alpha^6}\\
\lesssim&\frac{M[\partial_x^2f](x)M[\partial_xf](x)}{|\alpha|^3},
\end{aligned}$$
we obtain 
$$\begin{aligned}
&\left|\frac{2\alpha(f(x)-f(x-\alpha))(\partial_xf(x)-\partial_xf(x-\alpha))}{(\alpha^2+(f(x)-f(x-\alpha))^2)^2}-\frac{2\alpha(f(x+\alpha)-f(x))(\partial_xf(x+\alpha)-\partial_xf(x))}{(\alpha^2+(f(x+\alpha)-f(x))^2)^2}\right|\\
\lesssim&M[\partial_x^2f](x)(M[\partial_x^2f](x+\alpha)+M[\partial_x^2f](x-\alpha))+M[\partial_xf](x)(M[\partial_x^3f](x+\alpha)+M[\partial_x^3f](x-\alpha))\\
&+M[\partial_xf](x)^2M[\partial_x^2f](x)(M[\partial_x^2f](x+\alpha)+M[\partial_x^2f](x-\alpha)).
\end{aligned}$$
Hence
\begin{equation}\begin{aligned}
&\left|\int_{|\alpha|<1}\frac{2\alpha(f(x)-f(x-\alpha))(\partial_xf(x)-\partial_xf(x-\alpha))}{(\alpha^2+(\Delta f(x,x-\alpha))^2)^2}d\alpha\right|\\
\lesssim&M[\partial_x^2f](x)\|\partial_x^2f\|_{L^2_{\gamma(t)}}(1+\|\partial_xf\|_{L^\infty_{\gamma(t)}}^2)\\
&+\|\partial_xf\|_{L^\infty_{\gamma(t)}}\int_{0<\alpha<1}\left(M[\partial_x^3f](x-\alpha)+M[\partial_x^3f](x+\alpha)\right)d\alpha.
\end{aligned}\label{2.59}\end{equation}
By Young's inequality again, we have 
$$\left\|\int_{0<\alpha<1}\left(M[\partial_x^3f](x-\alpha)+M[\partial_x^3f](x+\alpha)\right)d\alpha\right\|_{L^p_{\gamma(t)}}\lesssim\|\partial_x^3f\|_{L^2_{\gamma(t)}}\|\mathrm{1}_{0<\alpha<1}\|_{L^\frac{2p}{p+2}}\lesssim\|\partial_x^3f\|_{L^2_{\gamma(t)}}.$$
The second term in (\ref{2.57}) is the sum of four terms:
$$\begin{aligned}
&P.V.\int_{\mathbb{R}}\frac{2\alpha(g(x)-g(x-\alpha)+2\sigma+\theta(x))(\partial_xg(x)-\partial_xg(x-\alpha)+\partial_x\theta(x))}{(\alpha^2+(2\sigma+f(x)-g(x-\alpha))^2)^2}\\
=&P.V.\int_{\mathbb{R}}\frac{2\alpha(g(x)-g(x-\alpha))(\partial_xg(x)-\partial_xg(x-\alpha))}{(\alpha^2+(2\sigma+f(x)-g(x-\alpha))^2)^2}d\alpha\\
&+(2\sigma+\theta(x))P.V.\int_{\mathbb{R}}\frac{2\alpha(\partial_xg(x)-\partial_xg(x-\alpha))}{(\alpha^2+(2\sigma+f(x)-g(x-\alpha))^2)^2}d\alpha\\
&+\partial_x\theta(x)P.V.\int_{\mathbb{R}}\frac{2\alpha(g(x)-g(x-\alpha))}{(\alpha^2+(2\sigma+f(x)-g(x-\alpha))^2)^2}d\alpha\\
&+(2\sigma+\theta(x))\partial_x\theta(x)P.V.\int_{\mathbb{R}}\frac{2\alpha}{(\alpha^2+(2\sigma+f(x)-g(x-\alpha))^2)^2}d\alpha.
\end{aligned}$$
The estimate of the first term can be obtained in the same way as in (\ref{2.58})(\ref{2.59}) except that 
$$\begin{aligned}
&\left|\frac{1}{(\alpha^2+(2\sigma+f(x)-g(x-\alpha))^2)^2}-\frac{1}{(\alpha^2+(2\sigma+f(x)-g(x+\alpha))^2)^2}\right|\\
\lesssim&\frac{|4\sigma+2\theta(x)+2g(x)-g(x-\alpha)-g(x+\alpha)||g(x+\alpha)-g(x-\alpha)|}{(\alpha^2+(2\sigma)^2)^3}\\
\lesssim&\frac{(2\sigma)|\alpha|}{(\alpha^2+(2\sigma)^2)^3}M[\partial_xg](x)+\frac{|\alpha|^3M[\partial_x^2g](x)M[\partial_xg](x)}{(\alpha^2+(2\sigma)^2)^3}.
\end{aligned}$$
The additional term $\frac{(2\sigma)\alpha M[\partial_xg](x)}{(\alpha^2+(2\sigma)^2)^3}$ gives rise to $$\int_{\mathbb{R}}M[\partial_xg](x)^2M[\partial_x^2g](x)\frac{(2\sigma)\alpha^4}{(\alpha^2+(2\sigma)^2)^3}d\alpha\lesssim M[\partial_xg](x)^2M[\partial_x^2g](x).$$
Consequently,
$$\begin{aligned}
&\left\|\int_{\mathbb{R}}\frac{2\alpha(g(x)-g(x-\alpha))(\partial_xg(x)-\partial_xg(x-\alpha))}{(\alpha^2+(2\sigma+f(x)-g(x-\alpha))^2)^2}d\alpha\right\|_{L^p_{\gamma(t)}}\\
\lesssim&\|\partial_x^2g\|_{L^p_{\gamma(t)}}\left(\|\partial_x^2g\|_{L^2_{\gamma(t)}}(1+\|\partial_xg\|_{L^\infty_{\gamma(t)}}^2)+\|\partial_xg\|_{L^\infty_{\gamma(t)}}^2\right)+\|\partial_xg\|_{L^\infty_{\gamma(t)}}\|\partial_x^2g\|_{H^1_{\gamma(t)}}.
\end{aligned}$$
The second and third terms are controlled by
$$\begin{aligned}
&\left\|(2\sigma+\theta(x))P.V.\int_{\mathbb{R}}\frac{2\alpha(\partial_xg(x)-\partial_xg(x-\alpha))}{(\alpha^2+(2\sigma+f(x)-g(x-\alpha))^2)^2}d\alpha\right\|_{L^p_{\gamma(t)}}\\
\lesssim&\sigma\left\|M[\partial_x^2g](x)\int_{\mathbb{R}}\frac{\alpha^2}{(\alpha^2+(2\sigma)^2)^2}d\alpha\right\|_{L^p_{\gamma(t)}}\\
\lesssim&\|\partial_x^2g\|_{L^p_{\gamma(t)}},
\end{aligned}$$
$$\begin{aligned}
&\left\|\partial_x\theta(x)P.V.\int_{\mathbb{R}}\frac{2\alpha(g(x)-g(x-\alpha))}{(\alpha^2+(2\sigma+f(x)-g(x-\alpha))^2)^2}d\alpha\right\|_{L^p_{\gamma(t)}}\\
\lesssim&\left\|\partial_x\theta(x)M[\partial_xg](x)\int_{\mathbb{R}}\frac{\alpha^2}{(\alpha^2+(2\sigma)^2)^2}d\alpha\right\|_{L^p_{\gamma(t)}}\\
\lesssim&\sigma^{-1}\|\partial_x\theta\|_{L^p_{\gamma(t)}}\|\partial_xg\|_{L^\infty_{\gamma(t)}}.
\end{aligned}$$
For the last term, we take advantage of the cancellation between the part $\alpha<0$ and $\alpha>0$:
$$\begin{aligned}
&\left|(2\sigma+\theta(x))\partial_x\theta(x)P.V.\int_{\mathbb{R}}\frac{2\alpha}{(\alpha^2+(2\sigma+f(x)-g(x-\alpha))^2)^2}d\alpha\right|\\
=&|(2\sigma+\theta(x))\partial_x\theta(x)|\left|\int_{\alpha>0}\frac{2\alpha}{(\alpha^2+(2\sigma+f(x)-g(x-\alpha))^2)^2}-\frac{2\alpha}{(\alpha^2+(2\sigma+f(x)-g(x+\alpha))^2}d\alpha\right|\\
\lesssim&\sigma|\partial_x\theta(x)|\int_{\alpha>0}\frac{\alpha|4\sigma+2\theta(x)+2g(x)-g(x-\alpha)-g(x+\alpha)||g(x+\alpha)-g(x-\alpha)|}{(\alpha^2+(2\sigma)^2)^3}d\alpha\\
\lesssim&\sigma|\partial_x\theta(x)|M[\partial_xg](x)\int_{\alpha>0}\frac{\alpha^2(2\sigma+\|\partial_xg\|_{L^\infty_{\gamma(t)}}\alpha)}{(\alpha^2+(2\sigma)^2)^3}d\alpha\\
\lesssim&\sigma^{-1}|\partial_x\theta(x)|M[\partial_xg](x).
\end{aligned}$$
Hence the $L^p_{\gamma(t)}$ norm of this term is bounded by $C(2\sigma)^{-1}\|\partial_x\theta\|_{L^p_{\gamma(t)}}\|\partial_xg\|_{L^\infty_{\gamma(t)}}$.
Collecting the above bounds yields
$$\begin{aligned}
&\left\|P.V.\int_{\mathbb{R}}\frac{2\alpha(\Delta g(x,x-\alpha)+2\sigma+\theta(x))(\Delta\partial_xg(x,x-\alpha)+\partial_x\theta(x))}{(\alpha^2+(2\sigma+f(x)-g(x-\alpha))^2)^2}d\alpha\right\|_{L^p_{\gamma(t)}}\\
\lesssim&\|\partial_x^2g\|_{L^p_{\gamma(t)}}\left(1+\|\partial_x^2g\|_{L^2_{\gamma(t)}}\right)\left(1+\|\partial_xg\|_{L^\infty_{\gamma(t)}}\right)+\|\partial_xg\|_{L^\infty_{\gamma(t)}}\left(\|\partial_x^2g\|_{H^1_{\gamma(t)}}+\sigma^{-1}\|\partial_x\theta\|_{L^p_{\gamma(t)}}\right).
\end{aligned}$$
Then we conclude in view of the condition (\ref{2.17}) and the embedding $L^p\subset H^1$ that
$$\begin{aligned}
\|\partial_xu_+\|_{L^p_{\gamma(t)}}\lesssim&\left(\|\partial_x^2f\|_{L^p_{\gamma(t)}}+\|\partial_x^2g\|_{L^p_{\gamma(t)}}\right)\left(1+\|\partial_x^2f\|_{L^2_{\gamma(t)}}+\|\partial_x^2g\|_{L^2_{\gamma(t)}}\right)\\
&+\left(\|\partial_xf\|_{L^\infty_{\gamma(t)}}+\|\partial_xg\|_{L^\infty_{\gamma(t)}}\right)\left(\|\partial_x^2f\|_{H^1_{\gamma(t)}}+\|\partial_x^2g\|_{H^1_{\gamma(t)}}+\sigma^{-1}\|\partial_x\theta\|_{H^1_{\gamma(t)}}\right).
\end{aligned}$$
By repeating all the above arguments, the same bound also holds for $\partial_xu_-$.
\end{proof}
For simplicity, let $B_1(t)=1+\|\partial_xf\|_{H^1_{\gamma(t)}}+\|\partial_xg\|_{H^1_{\gamma(t)}}$ and $B_2(t)=\|\partial_x^2f\|_{H^1_{\gamma(t)}}+\|\partial_x^2g\|_{H^1_{\gamma(t)}}+\sigma^{-1}\|\partial_x\theta\|_{H^1_{\gamma(t)}}$ so that 
$$\begin{aligned}
&\|u\|_{L^{12}_{\gamma(t)}}^\frac{1}{2}\|\Lambda u\|_{L^4_{\gamma(t)}}^\frac{1}{2}\\
\lesssim&B_1^\frac{1}{2}B_2^\frac{1}{2}\left(\|\partial_xf\|_{L^{12}_{\gamma(t)}}+\|\partial_xg\|_{L^{12}_{\gamma(t)}}\right)^\frac{1}{2}\left(\|\partial_xf\|_{L^\infty_{\gamma(t)}}+\|\partial_xg\|_{L^\infty_{\gamma(t)}}\right)^\frac{1}{2}\\
&+B_1\left(\|\partial_xf\|_{L^{12}_{\gamma(t)}}+\|\partial_xg\|_{L^{12}_{\gamma(t)}}\right)^\frac{1}{2}\left(\|\partial_x^2f\|_{L^{4}_{\gamma(t)}}+\|\partial_x^2g\|_{L^{4}_{\gamma(t)}}\right)^\frac{1}{2}\\
\lesssim&B_1^\frac{1}{2}B_2^\frac{1}{2}\left(\|\partial_xf\|_{L^{\infty}_{\gamma(t)}}+\|\partial_xg\|_{L^{\infty}_{\gamma(t)}}\right)^\frac{11}{12}\left(\|\partial_xf\|_{L^{2}_{\gamma(t)}}+\|\partial_xg\|_{L^{2}_{\gamma(t)}}\right)^\frac{1}{12}\\
&+B_1\left(\|\partial_xf\|_{L^{\infty}_{\gamma(t)}}+\|\partial_xg\|_{L^{\infty}_{\gamma(t)}}\right)^\frac{2}{3}\left(\|\partial_xf\|_{L^{2}_{\gamma(t)}}+\|\partial_xg\|_{L^{2}_{\gamma(t)}}\right)^\frac{1}{12}\left(\|\partial_x^3f\|_{L^{2}_{\gamma(t)}}+\|\partial_x^3g\|_{L^{2}_{\gamma(t)}}\right)^\frac{1}{4},
\end{aligned}$$
where we used the inequalities
$$\|\partial_xf\|_{L^{12}_{\gamma(t)}}\leq\|\partial_xf\|_{L^\infty_{\gamma(t)}}^\frac{5}{6}\|\partial_xf\|_{L^2_{\gamma(t)}}^\frac{1}{6},\quad\|\partial_xf\|_{L^4_{\gamma(t)}}\leq\|\partial_xf\|_{L^\infty_{\gamma(t)}}^\frac{1}{2}\|\partial_xf\|_{L^2_{\gamma(t)}}^\frac{1}{2},$$
$$\|\partial_x^2f\|_{L^4_{\gamma(t)}}\leq\|\partial_xf\|_{L^\infty_{\gamma(t)}}^\frac{1}{2}\|\partial_x^3f\|_{L^2_{\gamma(t)}}^\frac{1}{2}.$$
By Young's inequality and using $\|\partial_x^kh\|_{L^3_{\gamma(t)}}\lesssim\|\partial_x^kh\|_{L^2_{\gamma(t)}}^\frac{2}{3}\|\partial_x^kh\|_{\dot{H}^\frac{1}{2}_{\gamma(t)}}^\frac{1}{3}$, we obtain
\begin{equation}\begin{aligned}
&\|\Lambda^\frac{1}{2}(u\overline{\partial_x^kh})\|_{L^2_{\gamma(t)}}\|\partial_x^kh\|_{\dot{H}^\frac{1}{2}_{\gamma(t)}}\\
\lesssim&\left[B_1\left(\|\partial_xf\|_{L^{\infty}_{\gamma(t)}}+\|\partial_xg\|_{L^{\infty}_{\gamma(t)}}\right)^\frac{2}{3}\left(\|\partial_xf\|_{L^{2}_{\gamma(t)}}+\|\partial_xg\|_{L^{2}_{\gamma(t)}}\right)^\frac{1}{12}\left(\|\partial_x^3f\|_{L^{2}_{\gamma(t)}}+\|\partial_x^3g\|_{L^{2}_{\gamma(t)}}\right)^\frac{1}{4}\right.\\
&\left.+B_1^\frac{1}{2}B_2^\frac{1}{2}\left(\|\partial_xf\|_{L^{\infty}_{\gamma(t)}}+\|\partial_xg\|_{L^{\infty}_{\gamma(t)}}\right)^\frac{11}{12}\left(\|\partial_xf\|_{L^{2}_{\gamma(t)}}+\|\partial_xg\|_{L^{2}_{\gamma(t)}}\right)^\frac{1}{12}\right]\|\partial_x^kh\|_{L^2_{\gamma(t)}}^\frac{2}{3}\|\partial_x^kh\|_{\dot{H}^\frac{1}{2}_{\gamma(t)}}^\frac{4}{3}\\
&+\left(\|\partial_xf\|_{L^\infty_{\gamma(t)}}+\|\partial_xg\|_{L^\infty_{\gamma(t)}}\right)\|\partial_x^kh\|_{\dot{H}^\frac{1}{2}_{\gamma(t)}}^2\\
\lesssim&B_3\left(\|\partial_xf\|_{L^\infty_{\gamma(t)}}+\|\partial_xg\|_{L^\infty_{\gamma(t)}}\right)\|\partial_x^kh\|_{\dot{H}^\frac{1}{2}_{\gamma(t)}}^2+B_4\|\partial_x^kh\|_{L^2_{\gamma(t)}}^2,
\end{aligned}\label{2.60}\end{equation}
where $B_3:=B_1+B_1^\frac{1}{2}B_2^\frac{1}{2}$, and
$$\begin{aligned}
B_4:=&B_1\left(\|\partial_xf\|_{L^2_{\gamma(t)}}+\|\partial_xg\|_{L^2_{\gamma(t)}}\right)^\frac{1}{4}\left(\|\partial_x^3f\|_{L^2_{\gamma(t)}}+\|\partial_x^3g\|_{L^2_{\gamma(t)}}\right)^\frac{3}{4}\\
&+B_1^\frac{1}{2}B_2^\frac{1}{2}\left(\|\partial_xf\|_{L^2_{\gamma(t)}}+\|\partial_xg\|_{L^2_{\gamma(t)}}\right)^\frac{1}{4}\left(\|\partial_xf\|_{L^\infty_{\gamma(t)}}+\|\partial_xg\|_{L^\infty_{\gamma(t)}}\right)^\frac{3}{4}.
\end{aligned}$$
Likewise, 
\begin{equation}\begin{aligned}
&\|\Lambda^\frac{1}{2}(u\overline{\partial_x^k\theta})\|_{L^2_{\gamma(t)}}\|\partial_x^k\theta\|_{\dot{H}^\frac{1}{2}_{\gamma(t)}} 
\lesssim B_3\left(\|\partial_xf\|_{L^\infty_{\gamma(t)}}+\|\partial_xg\|_{L^\infty_{\gamma(t)}}\right)\|\partial_x^k\theta\|_{\dot{H}^\frac{1}{2}_{\gamma(t)}}^2+B_4\|\partial_x^k\theta\|_{L^2_{\gamma(t)}}^2.
\end{aligned}\label{2.61}\end{equation}
Collecting the bounds in (\ref{2.60}-\ref{2.61}) we conclude with the control for all the transport terms in (\ref{2.1})(\ref{2.2}):
\begin{lem}\label{summary3.5}
Let $B_i$, $1\leq i\leq 4$ be given by
\begin{equation}\begin{aligned}
B_1(t):=&1+\|\partial_xf\|_{H^1_{\gamma(t)}}+\|\partial_xg\|_{H^1_{\gamma(t)}},\;B_2(t):=\|\partial_x^2f\|_{H^1_{\gamma(t)}}+\|\partial_x^2g\|_{H^1_{\gamma(t)}}+\sigma^{-1}\|\partial_x\theta\|_{H^1_{\gamma(t)}},\\
B_3(t):=&B_1+B_1^\frac{1}{2}B_2^\frac{1}{2},\\
B_4(t):=&B_1\left(\|\partial_xf\|_{L^2_{\gamma(t)}}+\|\partial_xg\|_{L^2_{\gamma(t)}}\right)^\frac{1}{4}\left(\|\partial_x^3f\|_{L^2_{\gamma(t)}}+\|\partial_x^3g\|_{L^2_{\gamma(t)}}\right)^\frac{3}{4}\\
&+B_1^\frac{1}{2}B_2^\frac{1}{2}\left(\|\partial_xf\|_{L^2_{\gamma(t)}}+\|\partial_xg\|_{L^2_{\gamma(t)}}\right)^\frac{1}{4}\left(\|\partial_xf\|_{L^\infty_{\gamma(t)}}+\|\partial_xg\|_{L^\infty_{\gamma(t)}}\right)^\frac{3}{4}.
\end{aligned}\label{B_i}\end{equation}
Then the transport terms can be controlled by
\begin{equation}\begin{aligned}
&\left|-\Re\int_{\Gamma_{\pm}(t)}(\mu_2u_++\mu_1u_-)\partial_x^{k+1}h\,\overline{\partial_x^kh}\,dx-\mu_1\mu_2\Re\int_{\Gamma_{\pm}(t)}(\mu_1u_++\mu_2u_-)\partial_x^{k+1}\theta\,\overline{\partial_x^k\theta}\,dx\right.\\
&\left.-\mu_1\mu_2\Re\int_{\Gamma_{\pm}(t)}(u_+-u_-)(\partial_x^{k+1}\theta\,\overline{\partial_x^kh}+\partial_x^{k+1}h\,\overline{\partial_x^k\theta})dx\right|\\
\lesssim&B_3\left(\|\partial_xf\|_{L^\infty_{\gamma(t)}}+\|\partial_xg\|_{L^\infty_{\gamma(t)}}\right)\left(\|\partial_x^kh\|_{\dot{H}^\frac{1}{2}_{\gamma(t)}}^2+\|\partial_x^k\theta\|_{\dot{H}^\frac{1}{2}_{\gamma(t)}}^2\right)\\
&+B_4\left(\|\partial_x^kh\|_{L^2_{\gamma(t)}}^2+\|\partial_x^k\theta\|_{L^2_{\gamma(t)}}^2\right).
\end{aligned}\label{2.63}\end{equation}
\end{lem}

\subsection{Commutators-Safe terms}\label{commutators1}
The following two sections are devoted to control the commutators. Actually, we will split the commutators into safe terms and easy terms based on the number of derivatives. In this section, we will handle the safe terms, and the main result is given in Lemma \ref{summary3.6}. To begin with, we observe that the sum of $N_1^h$ and $N_3^h$ has a nice structure:
$$\begin{aligned}
&(\Delta\rho)^{-1}(N^h_1+N^h_3)\\
=&P.V.\int_{\mathbb{R}}[\partial_x^k,(\mu_2P_{11}+\mu_1\mu_2P_{12}+\mu_1\mu_2P_{21}+\mu_1^2P_{22})(x,x-\alpha)](\partial_xh(x)-\partial_xh(x-\alpha))d\alpha,
\end{aligned}$$
Meanwhile, the sum of $N_2^h$and $N_4^h$ can be written as
$$\begin{aligned}
&(\Delta\rho)^{-1}(N_2^h+N_4^h)\\
=&\mu_1\mu_2\,P.V.\int_{\mathbb{R}}[\partial_x^k,(\mu_2(P_{11}-P_{12})-\mu_1(P_{22}-P_{21}))(x,x-\alpha)](\partial_x^k\theta(x)-\partial_x^k\theta(x-\alpha))d\alpha\\
&+\mu_1\mu_2\,P.V.\int_{\mathbb{R}}[\partial_x^k,(P_{12}-P_{21})(x,x-\alpha)]\partial_x\theta(x)d\alpha.
\end{aligned}$$
As an analogue, combining $N_1^\theta$ with $N_3^\theta$ and combining $N_2^\theta$ with $N_4^\theta$ yields
$$\begin{aligned}
&(\Delta\rho)^{-1}(N_1^\theta+N_3^\theta)\\
=&P.V.\int_{\mathbb{R}}[\partial_x^k,(\mu_2(P_{11}-P_{12})-\mu_1(P_{22}-P_{21}))(x,x-\alpha)](\partial_xh(x)-\partial_xh(x-\alpha))d\alpha,
\end{aligned}$$
$$\begin{aligned}
&(\Delta\rho)^{-1}(N_2^\theta+N_4^\theta)\\
=&\mu_1\mu_2\,P.V.\int_{\mathbb{R}}[\partial_x^k,(P_{11}+P_{22}-P_{12}-P_{21})(x,x-\alpha)](\partial_x\theta(x)-\partial_x\theta(x-\alpha))d\alpha\\
&+\,P.V.\int_{\mathbb{R}}[\partial_x^k,(\mu_2P_{21}+\mu_1P_{12})(x,x-\alpha)]\partial_x\theta(x)d\alpha.
\end{aligned}$$
Therefore, we see that the commutators are linear combinations of the following elements:
$$P.V.\int_{\mathbb{R}}[\partial_x^k, P_{ij}(x,x-\alpha)](\partial_xh(x)-\partial_xh(x-\alpha))d\alpha,\;(i,j)\in\left\{1,2\right\}^2,$$
$$P.V.\int_{\mathbb{R}}[\partial_x^k, P_{ij}(x,x-\alpha)](\partial_x\theta(x)-\partial_x\theta(x-\alpha))d\alpha,\;(i,j)\in\left\{1,2\right\}^2,$$
$$P.V.\int_{\mathbb{R}}[\partial_x^k, P_{12}(x,x-\alpha)]\partial_x\theta(x)d\alpha,\;P.V.\int_{\mathbb{R}}[\partial_x^k,P_{21}(x,x-\alpha)]\partial_x\theta(x)d\alpha.$$
For convenience, let $q(x):=\frac{1}{1+x^2}$ and $\tilde{\Delta}w(x,\alpha)=w(x)-w(x-\alpha)$ for any function $w$, so that $\frac{\alpha}{\alpha^2+(\tilde{\Delta}f)^2}=\frac{1}{\alpha}q\left(\frac{\tilde{\Delta} f}{\alpha}\right).$ Then we expand the commutator by Fa\`{a} di Bruno's formula, 
$$\begin{aligned}
&\int_{\mathbb{R}}[\partial_x^k, P_{11}(x,x-\alpha)](\partial_xh(x)-\partial_xh(x-\alpha))d\alpha,\\
=&\sum_{1\leq l\leq j\leq k}\sum_{S_{j,l}}A^{k,j,l}_{r_1,\cdots,r_j}\int_{\mathbb{R}}\alpha^{-1}q^{(l)}\left(\frac{\tilde{\Delta} f}{\alpha}\right)\prod_{a=1}^j\left(\frac{\tilde{\Delta}\partial_x^af}{\alpha}\right)^{r_a}\tilde{\Delta}\partial_x^{k+1-j}h\,d\alpha
\end{aligned}$$
where $A^{k,j,l}_{r_1,\cdots,r_j}$ are harmless constants, and the index set $S_{j,l}$ is defined by
$$S_{j,l}:=\left\{(r_1,r_2,\cdots,r_j)\in\mathbb{Z}^j\mid\sum_{1\leq a\leq j}ar_a=j,\,\sum_{1\leq a\leq j}r_a=l,\,r_a\geq0\right\}.$$
Likewise, the same identities hold if we replace $h$ by $\theta$, or replace $P_{11}$ by $P_{22}$ and replace $f$ by $g$.
To express $\partial_x^j(P_{12}(x,x-\alpha))$ and $\partial_x^j(P_{21}(x,x-\alpha))$, we write
$$P_{12}(x,x-\alpha)=\frac{\alpha}{\alpha^2+(2\sigma+\theta(x)+\tilde{\Delta} g)^2}=\alpha^{-1}q\left(\frac{\tilde{\Delta} g+2\sigma+\theta(x)}{\alpha}\right),$$
$$P_{21}(x,x-\alpha)=\frac{\alpha}{\alpha^2+(\tilde{\Delta} f-2\sigma-\theta(x))^2}=\alpha^{-1}q\left(\frac{\tilde{\Delta} f-2\sigma-\theta(x)}{\alpha}\right),$$
so that
$$\begin{aligned}
&\int_{\mathbb{R}}[\partial_x^k, P_{12}(x,x-\alpha)](\partial_xh(x)-\partial_xh(x-\alpha))d\alpha,\\\
=&\sum_{1\leq l\leq j\leq k}\sum_{S_{j,l}}A^{k,j,l}_{r_1,\cdots,r_j}\int_{\mathbb{R}}\alpha^{-1}q^{(l)}\left(\frac{\tilde{\Delta} g+2\sigma+\theta(x)}{\alpha}\right)\prod_{a=1}^j\left(\frac{\tilde{\Delta}\partial_x^ag+\partial_x^a\theta(x)}{\alpha}\right)^{r_a}\tilde{\Delta}\partial_x^{k+1-j}h\,d\alpha,
\end{aligned}$$
and the same identities hold if we replace $h$ by $\theta$, or replace $P_{12}$ by $P_{21}$ and replace $\tilde{\Delta} g+2\sigma+\theta(x)$ by $\tilde{\Delta} f-2\sigma-\theta(x)$. The rest two terms can be expressed in the same manner:
$$\begin{aligned}
&\int_{\mathbb{R}}[\partial_x^k, P_{12}(x,x-\alpha)]\partial_x\theta(x)d\alpha,\\\
=&\sum_{1\leq l\leq j\leq k}\sum_{S_{j,l}}A^{k,j,l}_{r_1,\cdots,r_j}\int_{\mathbb{R}}\alpha^{-1}q^{(l)}\left(\frac{\tilde{\Delta} g+2\sigma+\theta(x)}{\alpha}\right)\prod_{a=1}^j\left(\frac{\tilde{\Delta}\partial_x^ag+\partial_x^a\theta(x)}{\alpha}\right)^{r_a}\partial_x^{k+1-j}\theta(x)\,d\alpha,
\end{aligned}$$
$$\begin{aligned}
&\int_{\mathbb{R}}[\partial_x^k, P_{21}(x,x-\alpha)]\partial_x\theta(x)d\alpha,\\\
=&\sum_{1\leq l\leq j\leq k}\sum_{S_{j,l}}A^{k,j,l}_{r_1,\cdots,r_j}\int_{\mathbb{R}}\alpha^{-1}q^{(l)}\left(\frac{\tilde{\Delta} f-2\sigma-\theta(x)}{\alpha}\right)\prod_{a=1}^j\left(\frac{\tilde{\Delta}\partial_x^af-\partial_x^a\theta(x)}{\alpha}\right)^{r_a}\partial_x^{k+1-j}\theta(x)\,d\alpha.
\end{aligned}$$
To sum up, in order to obtain controls over the commutators, it suffices to consider the following three classes of terms.
\begin{itemize}
\item[(\romannumeral1)] 
$\int_{\mathbb{R}}\alpha^{-1}q^{(l)}\left(\frac{\tilde{\Delta} f}{\alpha}\right)\prod_{a=1}^j\left(\frac{\tilde{\Delta}\partial_x^af}{\alpha}\right)^{r_a}\tilde{\Delta}\partial_x^{k+1-j}h\,d\alpha$ for $r:=(r_1,r_2,\cdots,r_j)\in S_{j,l}$, $1\leq l\leq j\leq k$, and the terms with $f$ replaced $g$ or $h$ replaced by $\theta$.
\item[(\romannumeral2)]
$\int_{\mathbb{R}}\alpha^{-1}q^{(l)}\left(\frac{\tilde{\Delta} g+2\sigma+\theta(x)}{\alpha}\right)\prod_{a=1}^j\left(\frac{\tilde{\Delta}\partial_x^ag+\partial_x^a\theta(x)}{\alpha}\right)^{r_a}\tilde{\Delta}\partial_x^{k+1-j}h\,d\alpha$ for $r:=(r_1,r_2,\cdots,r_j)\in S_{j,l}$, $1\leq l\leq j\leq k$ and the terms with $h$ replaced by $\theta$ or $\tilde{\Delta} g+2\sigma+\theta(x)$ and its derivatives replaced by $\tilde{\Delta} f-2\sigma-\theta(x)$ and its derivatives, respectively.
\item[(\romannumeral3)]
$\int_{\mathbb{R}}\alpha^{-1}q^{(l)}\left(\frac{\tilde{\Delta} g+2\sigma+\theta(x)}{\alpha}\right)\prod_{a=1}^j\left(\frac{\tilde{\Delta}\partial_x^ag+\partial_x^a\theta(x)}{\alpha}\right)^{r_a}\partial_x^{k+1-j}\theta(x)\,d\alpha$ for $1\leq l\leq j\leq k$, $r\in S_{j,l}$ and the terms with $\tilde{\Delta} g+2\sigma+\theta(x)$ and its derivatives replaced by $\tilde{\Delta} f-2\sigma-\theta(x)$ and its derivatives, respectively.
\end{itemize}
Moreover, we distinguish the end point terms from each class, which are the terms involving the derivatives of $k^{\text{th}}$ order of $h$, $\theta$, $f$ or $g$. In the following, we will refer to these terms as safe terms and refer to the other terms as easy terms. To be specific, we list all the safe terms from each class in the following.
\begin{itemize}
\item Safe terms in class (\romannumeral1):\\
When $l=j=1$, $r=r_1=1$, we get
$$Safe^{11}_{h,1}(x):=\int_{\mathbb{R}}q^{(1)}\left(\frac{\tilde{\Delta} f}{\alpha}\right)\frac{\tilde{\Delta}\partial_x f}{\alpha}\frac{\tilde{\Delta}\partial_x^kh}{\alpha}d\alpha,\;
Safe^{22}_{h,1}(x):=\int_{\mathbb{R}}q^{(1)}\left(\frac{\tilde{\Delta} g}{\alpha}\right)\frac{\tilde{\Delta}\partial_x g}{\alpha}\frac{\tilde{\Delta}\partial_x^kh}{\alpha}d\alpha,$$
and $Safe^{11}_{\theta,1}$, $Safe^{22}_{\theta,1}$, which are defined by replacing $h$ by $\theta$ in the above expressions.\\
When $j=k$, $l=1$, $r=(0,0,\cdots,1)$, we get
$$Safe^{11}_{h,2}(x):=\int_{\mathbb{R}}q^{(1)}\left(\frac{\tilde{\Delta} f}{\alpha}\right)\frac{\tilde{\Delta}\partial_xh}{\alpha}\frac{\tilde{\Delta}\partial_x^kf}{\alpha}d\alpha,\;
Safe^{22}_{h,2}(x):=\int_{\mathbb{R}}q^{(1)}\left(\frac{\tilde{\Delta} g}{\alpha}\right)\frac{\tilde{\Delta}\partial_xh}{\alpha}\frac{\tilde{\Delta}\partial_x^kg}{\alpha}d\alpha,$$
and $Safe^{11}_{\theta,2}$, $Safe^{22}_{\theta,2}$, which are defined by replacing $h$ by $\theta$ in the above expressions.
\item Safe terms in class (\romannumeral2):\\
When $l=j=1$, $r=r_1=1$, we get
$$\int_{\mathbb{R}}q^{(1)}\left(\frac{\tilde{\Delta} g+2\sigma+\theta(x)}{\alpha}\right)\frac{\tilde{\Delta}\partial_x g+\partial_x\theta(x)}{\alpha}\frac{\tilde{\Delta}\partial_x^k h}{\alpha}d\alpha=Safe_{h,1}^{12}(x)+Safe_{h,2}^{12}(x),$$
where 
$$Safe_{h,1}^{12}(x):=\int_{\mathbb{R}}q^{(1)}\left(\frac{\tilde{\Delta} g+2\sigma+\theta(x)}{\alpha}\right)\frac{\tilde{\Delta}\partial_x g}{\alpha}\frac{\tilde{\Delta}\partial_x^k h}{\alpha}d\alpha,$$
$$Safe_{h,2}^{12}(x):=\int_{\mathbb{R}}q^{(1)}\left(\frac{\tilde{\Delta} g+2\sigma+\theta(x)}{\alpha}\right)\frac{\partial_x\theta(x)}{\alpha}\frac{\tilde{\Delta}\partial_x^k h}{\alpha}d\alpha.$$
Replacing $\tilde{\Delta} g+2\sigma+\theta(x)$ by $\tilde{\Delta} f-2\sigma-\theta(x)$ yields
$$Safe_{h,1}^{21}(x):=\int_{\mathbb{R}}q^{(1)}\left(\frac{\tilde{\Delta} f-2\sigma-\theta(x)}{\alpha}\right)\frac{\tilde{\Delta}\partial_x f}{\alpha}\frac{\tilde{\Delta}\partial_x^k h}{\alpha}d\alpha,$$
$$Safe_{h,2}^{21}(x):=-\int_{\mathbb{R}}q^{(1)}\left(\frac{\tilde{\Delta} f-2\sigma-\theta(x)}{\alpha}\right)\frac{\partial_x\theta(x)}{\alpha}\frac{\tilde{\Delta}\partial_x^k h}{\alpha}d\alpha.$$
When $j=k$, $l=1$, $r=(0,0,\cdots,1)$, we get
$$\int_{\mathbb{R}}q^{(1)}\left(\frac{\tilde{\Delta} g+2\sigma+\theta(x)}{\alpha}\right)\frac{\tilde{\Delta}\partial_xh}{\alpha}\frac{\tilde{\Delta}\partial_x^kg+\partial_x^k\theta(x)}{\alpha}d\alpha=Safe_{h,3}^{12}(x)+Safe_{h,4}^{12}(x),$$
where
$$Safe_{h,3}^{12}(x):=\int_{\mathbb{R}}q^{(1)}\left(\frac{\tilde{\Delta} g+2\sigma+\theta(x)}{\alpha}\right)\frac{\tilde{\Delta}\partial_xh}{\alpha}\frac{\tilde{\Delta}\partial_x^kg}{\alpha}d\alpha,$$
$$Safe_{h,4}^{12}(x):=\int_{\mathbb{R}}q^{(1)}\left(\frac{\tilde{\Delta} g+2\sigma+\theta(x)}{\alpha}\right)\frac{\tilde{\Delta}\partial_xh}{\alpha}\frac{\partial_x^k\theta(x)}{\alpha}d\alpha.$$
Replacing $\tilde{\Delta} g+2\sigma+\theta(x)$ by $\tilde{\Delta} f-2\sigma-\theta(x)$ gives
$$Safe_{h,3}^{21}(x):=\int_{\mathbb{R}}q^{(1)}\left(\frac{\tilde{\Delta} f-2\sigma-\theta(x)}{\alpha}\right)\frac{\tilde{\Delta}\partial_xh}{\alpha}\frac{\tilde{\Delta}\partial_x^kf}{\alpha}d\alpha,$$
$$Safe_{h,4}^{21}(x):=-\int_{\mathbb{R}}q^{(1)}\left(\frac{\tilde{\Delta} f-2\sigma-\theta(x)}{\alpha}\right)\frac{\tilde{\Delta}\partial_xh}{\alpha}\frac{\partial_x^k\theta(x)}{\alpha}d\alpha.$$
We also define $Safe_{\theta,1}^{12}$, $Safe_{\theta,2}^{12}$, $Safe_{\theta,1}^{21}$, $Safe_{\theta,2}^{21}$ by replacing $\frac{\tilde{\Delta}\partial_x^kh}{\alpha}$ by $\frac{\tilde{\Delta}\partial_x^k\theta}{\alpha}$ in the expressions of $Safe_{h,1}^{12}$, $Safe_{h,2}^{12}$, $Safe_{h,1}^{21}$, $Safe_{h,2}^{21}$, and define $Safe_{\theta,3}^{12}$, $Safe_{\theta,4}^{12}$, $Safe_{\theta,3}^{21}$, $Safe_{\theta,4}^{21}$ by replacing $\frac{\tilde{\Delta}\partial_xh}{\alpha}$ by $\frac{\tilde{\Delta}\partial_x\theta}{\alpha}$ in the expressions of $Safe_{h,3}^{12}$, $Safe_{h,4}^{12}$, $Safe_{h,3}^{21}$, $Safe_{h,4}^{21}$, respectively. Hence we have in total 16 safe terms in class (\romannumeral2).
\item Safe terms in class (\romannumeral3):
When $l=j=1$, $r=r_1=1$, we get
$$\int_{\mathbb{R}}q^{(1)}\left(\frac{\tilde{\Delta} g+2\sigma+\theta(x)}{\alpha}\right)\frac{\tilde{\Delta}\partial_xg+\partial_x\theta(x)}{\alpha}\frac{\partial_x^k\theta(x)}{\alpha}d\alpha=Safe_{\theta,5}^{12}(x)+Safe_{\theta,6}^{12}(x),$$
where
$$Safe_{\theta,5}^{12}(x):=\int_{\mathbb{R}}q^{(1)}\left(\frac{\tilde{\Delta} g+2\sigma+\theta(x)}{\alpha}\right)\frac{\tilde{\Delta}\partial_xg}{\alpha}\frac{\partial_x^k\theta(x)}{\alpha}d\alpha,$$
$$Safe_{\theta,6}^{12}(x):=\int_{\mathbb{R}}q^{(1)}\left(\frac{\tilde{\Delta} g+2\sigma+\theta(x)}{\alpha}\right)\frac{\partial_x\theta(x)}{\alpha}\frac{\partial_x^k\theta(x)}{\alpha}d\alpha.$$
When $j=k$, $l=1$, $r=(0,0,\cdots,1)$, we get
$$\int_{\mathbb{R}}q^{(1)}\left(\frac{\tilde{\Delta} g+2\sigma+\theta(x)}{\alpha}\right)\frac{\partial_x\theta(x)}{\alpha}\frac{\tilde{\Delta}\partial_x^kg+\partial_x^k\theta(x)}{\alpha}d\alpha=Safe_{\theta,7}^{12}(x)+Safe_{\theta,6}^{12}(x),$$
where $$Safe_{\theta,7}^{12}(x):=\int_{\mathbb{R}}q^{(1)}\left(\frac{\tilde{\Delta} g+2\sigma+\theta(x)}{\alpha}\right)\frac{\partial_x\theta(x)}{\alpha}\frac{\tilde{\Delta}\partial_x^kg}{\alpha}d\alpha.$$
Then we replace $\tilde{\Delta} g+2\sigma+\theta(x)$ by $\tilde{\Delta} f-2\sigma-\theta(x)$ and obtain $Safe^{21}_{\theta,5}$, $Safe^{21}_{\theta,6}$, $Safe^{21}_{\theta,7}$.
\end{itemize}
Now we derive the controls over each of the above safe terms.\\
\textbf{Controls for class (\romannumeral1)}. For $Safe_{h,1}^{11}$, we decompose it into three terms:
$$\begin{aligned}
&Safe_{h,1}^{11}(x)\\
=&q^{(1)}(\partial_xf(x))\partial_x^kh(x)P.V.\int_{\mathbb{R}}\frac{\tilde{\Delta}\partial_xf}{\alpha^2}d\alpha-q^{(1)}(\partial_xf(x))P.V.\int_{\mathbb{R}}\frac{\tilde{\Delta}\partial_xf}{\alpha^2}\partial_x^kh(x-\alpha)d\alpha\\
&+\int_{\mathbb{R}}\left(q^{(1)}\left(\frac{\tilde{\Delta} f}{\alpha}\right)-q^{(1)}(\partial_xf(x))\right)\frac{\tilde{\Delta}\partial_xf}{\alpha^2}\left(\partial_x^kh(x)-\partial_x^kh(x-\alpha)\right)d\alpha.
\end{aligned}$$
For the first term, we use $|q^{(1)}(\partial_xf(x))|\lesssim|\partial_xf(x)|$ and 
$$\begin{aligned}
\left|P.V.\int_{\mathbb{R}}\frac{\tilde{\Delta}\partial_xf}{\alpha^2}d\alpha\right|=&\left|\int_{|\alpha|>1}\frac{\tilde{\Delta}\partial_xf}{\alpha^2}d\alpha+\int_{0<\alpha\leq1}\frac{2\partial_xf(x)-\partial_xf(x-\alpha)-\partial_xf(x+\alpha)}{\alpha^2}d\alpha\right|\\
\lesssim&\int_{|\alpha|>1}\alpha^{-1}M[\partial_x^2f](x-\alpha)d\alpha+\int_{0<\alpha\leq1}\left(M[\partial_x^3f](x-\alpha)+M[\partial_x^3f](x+\alpha)\right)d\alpha\\
\lesssim&\|\partial_x^2f\|_{L^2_{\gamma(t)}}+\|\partial_x^3f\|_{L^2_{\gamma(t)}}
\end{aligned}$$
to obtain that
$$\begin{aligned}
\left\|q^{(1)}(\partial_xf(x))\partial_x^kh(x)P.V.\int_{\mathbb{R}}\frac{\tilde{\Delta}\partial_xf}{\alpha^2}d\alpha\right\|_{L^2_{\gamma(t)}}\lesssim\|\partial_xf\|_{L^\infty_\gamma(t)}\left(\|\partial_x^2f\|_{L^2_{\gamma(t)}}+\|\partial_x^3f\|_{L^2_\gamma(t)}\right)\|\partial_x^kh\|_{L^2_\gamma(t)}.
\end{aligned}$$
For the second term, we notice that $\int_{\mathbb{R}}\frac{\tilde{\Delta}\partial_xf}{\alpha^2}\partial_x^kh(x-\alpha)d\alpha$ is the first Calder\'{o}n commutator \cite[Page 254]{grafakos2014modern} applied on $\partial_x^kh$. Since the first Calder\'{o}n commutator is $L^2$ bounded with norm $C\|\partial_x^2f\|_{L^\infty}$, we have
$$\left\|q^{(1)}(\partial_xf(x))P.V.\int_{\mathbb{R}}\frac{\tilde{\Delta}\partial_xf}{\alpha^2}\partial_x^kh(x-\alpha)d\alpha\right\|_{L^2_{\gamma(t)}}\lesssim\|\partial_xf\|_{L^\infty_{\gamma(t)}}\|\partial_x^2f\|_{L^\infty_{\gamma(t)}}\|\partial_x^kh\|_{L^2_{\gamma(t)}}.$$
The third term is a regular integral. Notice that
$$\left|\alpha^{-1}\tilde{\Delta} f-\partial_xf(x)\right|\lesssim|\alpha|\|\partial_x^2f\|_{L^\infty_\gamma(t)}\mathrm{1}_{|\alpha|\leq1}+\|\partial_xf\|_{L^\infty_\gamma(t)}\mathrm{1}_{|\alpha|>1},$$
$$\left|\frac{\tilde{\Delta}\partial_xf}{\alpha}\right|\leq\|\partial_x^2f\|_{L^\infty_{\gamma(t)}}\mathrm{1}_{|\alpha|\leq1}+\alpha^{-1}\|\partial_xf\|_{L^\infty_{\gamma(t)}}\mathrm{1}_{|\alpha|>1}.$$
Hence we have
$$\left|\left(q^{(1)}\left(\frac{\tilde{\Delta} f}{\alpha}\right)-q^{(1)}(\partial_xf(x))\right)\frac{\tilde{\Delta}\partial_xf}{\alpha^2}\right|\lesssim \left(\|\partial_x^2f\|_{L^\infty_{\gamma(t)}}\mathrm{1}_{|\alpha|\leq1}+\alpha^{-1}\|\partial_xf\|_{L^\infty_{\gamma(t)}}\mathrm{1}_{|\alpha|>1}\right)^2.$$
Since the right-hand side of the above inequality is a $L^1$ smooth function of $\alpha$, and its $L^1$ norm is bounded by $C\left(\|\partial_xf\|_{L^\infty_{\gamma(t)}}+\|\partial_x^2f\|_{L^\infty_{\gamma(t)}}\right)^2$, using Young's inequality yields
$$\begin{aligned}
&\left\|\int_{\mathbb{R}}\left(q^{(1)}\left(\frac{\tilde{\Delta} f}{\alpha}\right)-q^{(1)}(\partial_xf(x))\right)\frac{\tilde{\Delta}\partial_xf}{\alpha^2}\left(\partial_x^kh(x)-\partial_x^kh(x-\alpha)\right)d\alpha\right\|_{L^2_{\gamma(t)}}\\
\lesssim&\left(\|\partial_xf\|_{L^\infty_{\gamma(t)}}+\|\partial_x^2f\|_{L^\infty_{\gamma(t)}}\right)^2\|\partial_x^kh\|_{L^2_{\gamma(t)}}.
\end{aligned}$$
Collecting the controls for the three terms above, we obtain
$$\begin{aligned}
&\|Safe_{h,1}^{11}\|_{L^2_{\gamma(t)}} \\
\lesssim&\left[\|\partial_xf\|_{L^\infty_{\gamma(t)}}\left(\|\partial_x^2f\|_{H^1_{\gamma(t)}}+\|\partial_x^2f\|_{L^\infty_{\gamma(t)}}+\|\partial_xf\|_{L^\infty_{\gamma(t)}}\right)+\|\partial_x^2f\|_{L^\infty_{\gamma(t)}}^2\right]\|\partial_x^kh\|_{L^2_{\gamma(t)}}\\
\lesssim&\left(\|\partial_xf\|_{L^\infty_{\gamma(t)}}+\|\partial_x^2f\|_{H^1_{\gamma(t)}}\right)^2\|\partial_x^kh\|_{L^2_{\gamma(t)}}.
\end{aligned}$$
Likewise, the other safe terms in class (\romannumeral1) can be controlled in the same way. To sum up, for $w=h$ or $w=\theta$ 
\begin{equation}\begin{aligned}
&\sum_{i\in\{1,2\}}\left(\|Safe_{w,i}^{11}\|_{L^2_{\gamma(t)}}+\|Safe_{w,i}^{22}\|_{L^2_{\gamma(t)}}\right)\\
\lesssim&\left(\|\partial_xf\|_{L^\infty_{\gamma(t)}}+\|\partial_x^2f\|_{H^1_{\gamma(t)}}+\|\partial_xg\|_{L^\infty_{\gamma(t)}}+\|\partial_x^2g\|_{H^1_{\gamma(t)}}\right)\\
&\cdot\left[\left(\|\partial_xf\|_{L^\infty_{\gamma(t)}}+\|\partial_x^2f\|_{H^1_{\gamma(t)}}+\|\partial_xg\|_{L^\infty_{\gamma(t)}}+\|\partial_x^2g\|_{H^1_{\gamma(t)}}\right)\|\partial_x^kw\|_{L^2_{\gamma(t)}}\right.\\
&\left.+\left(\|\partial_xw\|_{L^\infty_{\gamma(t)}}+\|\partial_x^2w\|_{H^1_{\gamma(t)}}\right)\left(\|\partial_x^kf\|_{L^2_{\gamma(t)}}+\|\partial_x^kg\|_{L^2_{\gamma(t)}}\right)\right].
\end{aligned}\label{2.64}\end{equation}
\textbf{Controls for class (\romannumeral2)}. First, we decompose $q^{(1)}\left(\frac{\tilde{\Delta} g+2\sigma+\theta(x)}{\alpha}\right)$ into odd and even parts:
$$\begin{aligned}
&q^{(1)}\left(\frac{\tilde{\Delta} g+2\sigma+\theta(x)}{\alpha}\right)=q^{(1)}_{odd}(x,x-\alpha)+q^{(1)}_{even}(x,x-\alpha),
\end{aligned}$$
where
$$\begin{aligned}
&q^{(1)}_{even}(x,x-\alpha)\\
:=&\frac{1}{2}q^{(1)}\left(\frac{\tilde{\Delta} g+2\sigma+\theta(x)}{\alpha}\right)+\frac{1}{2}q^{(1)}\left(\frac{\tilde{\Delta} g-2\sigma-\theta(x-\alpha)}{\alpha}\right)\\
=&-\frac{\alpha^3(\tilde{\Delta} f+\tilde{\Delta} g)}{2}\left[\left((\tilde{\Delta} g+2\sigma+\theta(x))^2+\alpha^2\right)^{-2}+\left((\tilde{\Delta} g-2\sigma-\theta(x-\alpha))^2+\alpha^2\right)^{-2}\right]\\
&-\frac{\alpha^3(4\sigma+\theta(x)+\theta(x-\alpha))}{2}\left[\left((\tilde{\Delta} g+2\sigma+\theta(x))^2+\alpha^2\right)^{-2}-\left((\tilde{\Delta} g-2\sigma-\theta(x-\alpha))^2+\alpha^2\right)^{-2}\right],
\end{aligned}$$
$$\begin{aligned}
&q^{(1)}_{odd}(x,x-\alpha)\\
:=&\frac{1}{2}q^{(1)}\left(\frac{\tilde{\Delta} g+2\sigma+\theta(x)}{\alpha}\right)-\frac{1}{2}q^{(1)}\left(\frac{\tilde{\Delta} g-2\sigma-\theta(x-\alpha)}{\alpha}\right)\\
=&-\frac{\alpha^3(\tilde{\Delta} f+\tilde{\Delta} g)}{2}\left[\left((\tilde{\Delta} g+2\sigma+\theta(x))^2+\alpha^2\right)^{-2}-\left((\tilde{\Delta} g-2\sigma-\theta(x-\alpha))^2+\alpha^2\right)^{-2}\right]\\
&-\frac{\alpha^3(4\sigma+\theta(x)+\theta(x-\alpha))}{2}\left[\left((\tilde{\Delta} g+2\sigma+\theta(x))^2+\alpha^2\right)^{-2}+\left((\tilde{\Delta} g-2\sigma-\theta(x-\alpha))^2+\alpha^2\right)^{-2}\right].
\end{aligned}$$
Let $K_{g}^{12}(x,x-\alpha):=q^{(1)}_{even}(x,x-\alpha)\frac{\tilde{\Delta}\partial_xg}{\alpha^2}$,  $L_{g}^{12}(x,x-\alpha):=q^{(1)}_{odd}(x,x-\alpha)\frac{\tilde{\Delta}\partial_xg}{\alpha^2},$ and define the operator $(T_{g}^{12}h)(x):=\int_{\mathbb{R}}K_{g}^{12}(x,x-\alpha)h(x-\alpha)d\alpha.$ To derive the control for $Safe_{h,1}^{12}$, we apply the $T1$ theorem \cite[Page 236-253]{grafakos2014modern} upon $T_{g}^{12}$. To this end, we check in the following lemma that the operator $T_{g}^{12}$ and its kernel $K_{g}^{12}$ satisfy the conditions required by the $T1$ theorem.
\begin{lem}
Suppose the solution satisfies the condition
\begin{equation}
\|\partial_xf\|_{L^\infty_{\gamma(t)}}+\|\partial_xg\|_{L^\infty_{\gamma(t)}}+\|\partial_x^2f\|_{H^1_{\gamma(t)}}+\|\partial_x^2g\|_{H^1_{\gamma(t)}}+\sigma^{-1}\|\partial_x\theta\|_{L^2_{\gamma(t)}}\leq\delta_1
\label{2.65}\end{equation} for a universal small constant $\delta_1>0$. Then $K_{g}^{12}$ is a standard antisymmetric kernel, and $T_{g}^{12}(1)$ is $L^\infty$.  
\label{lem2.6}\end{lem}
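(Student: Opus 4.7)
The plan is a three-pronged verification: antisymmetry of the kernel, standard Calder\'on--Zygmund size and smoothness estimates, and an $L^\infty$ bound on $T_g^{12}(1)$, all uniform in $\sigma$ under the smallness (\ref{2.65}). For antisymmetry, one tracks how each factor behaves under the swap $(x,y)\mapsto(y,x)$ with $y=x-\alpha$: the two arguments of $q^{(1)}$ in the definition of $q^{(1)}_{even}$ interchange, so $q^{(1)}_{even}$ is invariant, while $\Delta\partial_x g/\alpha^2$ changes sign, giving $K_g^{12}(y,x)=-K_g^{12}(x,y)$.

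The pointwise estimates on $K_g^{12}$ and its derivatives rest on a cancellation hidden in $q^{(1)}_{even}$. Writing $A=\bar{s}+\epsilon$, $B=-\bar{s}+\epsilon$ with $\bar{s}=2\sigma+\tfrac{1}{2}(\theta(x)+\theta(x_1))$ and $\epsilon=\tfrac{1}{2}(\Delta f+\Delta g)$, oddness of $q^{(1)}$ gives $q^{(1)}(\bar{s}/\alpha)+q^{(1)}(-\bar{s}/\alpha)=0$, so the leading ($\epsilon=0$) contribution cancels and $q^{(1)}_{even}$ carries an extra factor $|\Delta f+\Delta g|\lesssim|\alpha|\delta_1$. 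Applying Lemma \ref{lem2.2} to the rational-function form yields $|q^{(1)}_{even}(x,x-\alpha)|\lesssim\delta_1\,\alpha^{4}/(\alpha^{2}+\sigma^{2})^{2}$ and analogous bounds for $\partial_x,\partial_y$ of $q^{(1)}_{even}$ with an additional $(\alpha^2+\sigma^2)^{-1/2}$ loss. Combined with $|\Delta\partial_x g|/\alpha^2\lesssim M[\partial_x^2 g](x-\alpha)/|\alpha|$ near zero and $\lesssim\|\partial_x g\|_{L^\infty}/\alpha^2$ at infinity, these produce the standard kernel inequalities $|K_g^{12}(x,y)|\lesssim1/|x-y|$ and $|\nabla_{x,y}K_g^{12}(x,y)|\lesssim1/|x-y|^2$, uniformly in $\sigma$.

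For $T_g^{12}(1)\in L^\infty$ I would use the explicit decomposition of $q^{(1)}_{even}$ given in the statement as a sum of a piece proportional to $\Delta f+\Delta g$ and a piece proportional to $4\sigma+\theta(x)+\theta(x_1)$. The first piece, multiplied by $\Delta\partial_x g/\alpha^2$, produces an integrand whose potential $\sigma^{-1}$ mass coming from $\int\alpha^{2}/(\alpha^{2}+\sigma^{2})^{2}\,d\alpha$ is compensated by the extra $|\Delta f+\Delta g|\lesssim|\alpha|\delta_1$ together with the PV-regularization of $\Delta\partial_x g/\alpha^2$ bounded in $L^\infty$ via maximal-function estimates on $\partial_x^2 g$; the second piece is explicitly $O(\sigma)$, which exactly offsets the $\sigma^{-1}$ arising when one expands the difference $(\alpha^2+A^2)^{-2}-(\alpha^2+B^2)^{-2}$ and integrates against $\Delta\partial_x g/\alpha^2$. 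Together with a tail estimate for $|\alpha|>1$ obtained from the $L^2$ norms of second derivatives, this yields $\|T_g^{12}(1)\|_{L^\infty}\lesssim\delta_1$.

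The main obstacle will be the sharp tracking of the $\sigma$-dependence in the $L^\infty$ bound for $T_g^{12}(1)$: a naive application of the size estimate alone produces a $\log(1/\sigma)$ loss in the regime $\sigma\lesssim|\alpha|\lesssim 1$, incompatible with uniformity as $\sigma\to 0$. Overcoming this requires exploiting both cancellation structures inside $q^{(1)}_{even}$ simultaneously, namely the oddness-cancellation that produces the $(\Delta f+\Delta g)$ factor and the explicit $\sigma$-prefactor on the asymmetric term, so that every apparent $\sigma^{-1}$ is cancelled by an equal positive power of $\sigma$ or $|\alpha|$. Lemma \ref{lem2.2} is again the quantitative tool that converts algebraic-cancellation identities into the uniform pointwise kernel bounds required by the $T1$ theorem.
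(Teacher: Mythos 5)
Your antisymmetry argument and your verification of the standard-kernel bounds are fine and essentially the paper's: the swap $(x,y)\mapsto(y,x)$ interchanges the two $q^{(1)}$-arguments, so $q^{(1)}_{even}$ is symmetric and $K_g^{12}$ antisymmetric, and the estimate $|q^{(1)}_{even}(x,x-\alpha)|\lesssim(\|\partial_xf\|_{L^\infty_{\gamma(t)}}+\|\partial_xg\|_{L^\infty_{\gamma(t)}})\,\alpha^4(\alpha^2+(2\sigma)^2)^{-2}$, whether obtained from your oddness/mean-value cancellation in the variables $\bar s,\epsilon$ or from the paper's direct bound on the bracketed differences, yields the size and smoothness conditions uniformly in $\sigma$.

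The gap is in the $T_g^{12}(1)\in L^\infty$ step, which is the heart of the lemma. You propose to estimate the two pieces of $q^{(1)}_{even}$ separately against $\Delta\partial_xg/\alpha^2$. As you yourself observe, the $(\Delta f+\Delta g)$ piece then carries only the gain $|\Delta f+\Delta g|\lesssim|\alpha|\delta_1$, which leaves a $\log(1/\sigma)$ divergence over $\sigma\lesssim|\alpha|\lesssim1$, and the cure you invoke, namely ``PV-regularization of $\Delta\partial_xg/\alpha^2$ bounded in $L^\infty$ via maximal-function estimates on $\partial_x^2g$,'' is not a valid mechanism: $P.V.\int\alpha^{-2}\Delta\partial_xg\,d\alpha=\pi\Lambda\partial_xg(x)$ is not pointwise controlled by $M[\partial_x^2g]$, and the weight multiplying $\Delta\partial_xg/\alpha^2$ is not odd in $\alpha$, so any principal-value cancellation forces you to pair $\alpha$ with $-\alpha$ and to estimate $\alpha^{-1}\bigl(q^{(1)}_{even}(x,x+\alpha)-q^{(1)}_{even}(x,x-\alpha)\bigr)$ together with the second difference $2\partial_xg(x)-\partial_xg(x-\alpha)-\partial_xg(x+\alpha)$; that symmetrization is exactly the paper's argument, not a refinement of your splitting. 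Moreover, once you perform it, your guiding claim that ``every apparent $\sigma^{-1}$ is cancelled by an equal positive power of $\sigma$ or $|\alpha|$'' fails: because the two arguments of $q^{(1)}_{even}$ contain $\theta(x)$ and $\theta(x-\alpha)$ asymmetrically, the symmetrized kernel difference produces an irreducible contribution of size $\sigma^{-1}M[\partial_x\theta](x+\alpha)$ which no kernel factor removes; in the paper it is handled by Cauchy--Schwarz against $M[\partial_x^2g]$ and absorbed precisely because the hypothesis (\ref{2.65}) contains $\sigma^{-1}\|\partial_x\theta\|_{L^2_{\gamma(t)}}\leq\delta_1$. Your proposal never invokes this part of the hypothesis, and without it the uniform-in-$\sigma$ bound on $T_g^{12}(1)$ cannot be closed along the route you describe.
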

\begin{proof}
By the construction of $q^{(1)}_{even}$, we immediately have $q^{(1)}_{even}(x,y)=q^{(1)}_{even}(y,x)$, and thus $K_{g}^{12}(x,y)=-K_{g}^{12}(y,x)$, i.e. $K_{g}^{12}$ is antisymmetric. For the size condition, by an elementary computation using condition (\ref{2.65}), there is 
$$\begin{aligned}
&\left|\left((\tilde{\Delta} g+2\sigma+\theta(x))^2+\alpha^2\right)^{-2}-\left((\tilde{\Delta} g-2\sigma-\theta(x-\alpha))^2+\alpha^2\right)^{-2}\right|\\
\lesssim&\left(\|\partial_xf\|_{L^\infty_{\gamma(t)}}+\|\partial_xg\|_{L^\infty_{\gamma(t)}}\right)(\alpha^2+(2\sigma)^2)^{-2},
\end{aligned}$$
so that 
\begin{equation}|q^{(1)}_{even}(x,y)|\lesssim\left(\|\partial_xf\|_{L^\infty_{\gamma(t)}}+\|\partial_xg\|_{L^\infty_{\gamma(t)}}\right)\frac{|x-y|^4}{(|x-y|^2+(2\sigma)^2)^2},
\label{2.66}\end{equation}
$$\begin{aligned}|K_{g}^{12}(x,y)|
&\lesssim\left(\|\partial_xf\|_{L^\infty_{\gamma(t)}}+\|\partial_xg\|_{L^\infty_{\gamma(t)}}\right)\|\partial_x^2g\|_{L^\infty_{\gamma(t)}}|x-y|^{-1}.
\end{aligned}$$
For the regularity condition, suppose now $|y_1-y_2|\leq\frac{1}{2}\max\left\{|x-y_1|,|x-y_2|\right\}$ and consequently $|x-y_1|\simeq|x-y_2|$. Through a lengthy but elementary computation with the help of Lemma \ref{lem2.2}, it can be shown that 
$$\begin{aligned}
&|q^{(1)}_{even}(x,y_1)-q^{(1)}_{even}(x,y_2)|\\
\lesssim&\left(1+\sigma^{-1}\|\theta\|_{L^\infty_{\gamma(t)}}+\|\partial_xf\|_{L^\infty_{\gamma(t)}}+\|\partial_xg\|_{L^\infty_{\gamma(t)}}\right)\left(\|\partial_xf\|_{L^\infty_{\gamma(t)}}+\|\partial_xg\|_{L^\infty_{\gamma(t)}}\right)\\
&\cdot|y_1-y_2|\left(|x-y_1|^3+|x-y_2|^3\right)\left[(|x-y_1|^2+(2\sigma)^2)^{-2}+(|x-y_2|^2+(2\sigma)^2)^{-2}\right].
\end{aligned}$$
Meanwhile, we have
$|\partial_xg(y_1)-\partial_xg(y_2)|\leq\|\partial_x^2g\|_{L^\infty}|y_1-y_2|$ and $|x-y_1|^{-1}-|x-y_2|^{-1}\leq\frac{|y_1-y_2|}{|x-y_1||x-y_2|}$. By Lemma \ref{lem2.2} again, we conclude that
$$|K_{g}^{12}(x,y_1)-K_{g}^{12}(x,y_2)|\lesssim\|\partial_x^2g\|_{L^\infty_{\gamma(t)}}\left(\|\partial_xf\|_{L^\infty_{\gamma(t)}}+\|\partial_xg\|_{L^\infty_{\gamma(t)}}\right)|y_1-y_2|(|x-y_1|^{-2}+|x-y_2|^{-2}).$$
Notice that $K_{g}^{12}(x,y)$ is well-defined on the diagonal $x=y$ since $q^{(1)}_{even}$ provides the factor $\alpha^3$. We write
$$\begin{aligned}
&(T_{g}^{12}1)(x)\\
=&\int_{\mathbb{R}}q^{(1)}_{even}(x,x-\alpha)\frac{\tilde{\Delta}\partial_xg}{\alpha^2}d\alpha\\
=&\int_{\alpha>0}\alpha^{-2}\left(q^{(1)}_{even}(x,x-\alpha)(\partial_xg(x)-\partial_xg(x-\alpha))+q^{(1)}_{even}(x,x+\alpha)(\partial_xg(x)-\partial_xg(x+\alpha))\right)d\alpha\\
=&\int_{\alpha>0}q^{(1)}_{even}(x,x-\alpha)\frac{2\partial_xg(x)-\partial_xg(x-\alpha)-\partial_xg(x+\alpha)}{\alpha^2}d\alpha\\
&+\int_{\alpha>0}\frac{q^{(1)}_{even}(x,x+\alpha)-q^{(1)}_{even}(x,x-\alpha)}{\alpha}\frac{\partial_xg(x)-\partial_xg(x+\alpha)}{\alpha}d\alpha.
\end{aligned}$$
For the first term, we use (\ref{2.66}) and
$$\begin{aligned}
&\left|\alpha^{-2}(2\partial_xg(x)-\partial_xg(x-\alpha)-\partial_xg(x+\alpha))\right|\\
\lesssim&\left(M[\partial_x^3g](x-\alpha)+M[\partial_x^3g(x+\alpha)]\right)\mathrm{1}_{|\alpha|\leq1}+\alpha^{-1}\left(M[\partial_x^2g](x-\alpha)+M[\partial_x^2g](x+\alpha)\right)\mathrm{1}_{|\alpha|>1}
\end{aligned}$$
to obtain
$$\begin{aligned}
&\left|\int_{\alpha>0}q^{(1)}_{even}(x,x-\alpha)\frac{2\partial_xg(x)-\partial_xg(x-\alpha)-\partial_xg(x+\alpha)}{\alpha^2}d\alpha\right|\\
\lesssim&\left(\|\partial_xf\|_{L^\infty_{\gamma(t)}}+\|\partial_xg\|_{L^\infty_{\gamma(t)}}\right)\|\partial_x^2g\|_{H^1_{\gamma(t)}}.
\end{aligned}$$
For the second term, a direct computation with the help of Lemma \ref{lem2.2} gives
$$\begin{aligned}
&\left|(g(x)-g(x-\alpha)+2\sigma+\theta(x))-(g(x+\alpha)-g(x)+2\sigma+\theta(x+\alpha))\right|\\
=&\frac{1}{2}\left|\left(2f(x)+2g(x)-f(x-\alpha)-g(x-\alpha)-f(x+\alpha)-g(x+\alpha)\right)+(\theta(x-\alpha)-\theta(x+\alpha))\right|\\
\lesssim&\alpha^2\left(M[\partial_x^2f+\partial_x^2g](x+\alpha)+M[\partial_x^2f+\partial_x^2g](x-\alpha)\right)+|\alpha|M[\partial_x\theta](x+\alpha).
\end{aligned}$$ 
and likewise,
$$\begin{aligned}
&\left|(g(x)-g(x-\alpha)-2\sigma-\theta(x-\alpha))-(g(x+\alpha)-g(x)-2\sigma-\theta(x))\right|\\
\lesssim&\alpha^2\left(M[\partial_x^2f+\partial_x^2g](x+\alpha)+M[\partial_x^2f+\partial_x^2g](x-\alpha)\right)+|\alpha|M[\partial_x\theta](x+\alpha).
\end{aligned}$$
Then it follows by  Lemma \ref{lem2.2} that
$$\begin{aligned}
&\left|\alpha^{-1}\left(q^{(1)}_{even}(x,x+\alpha)-q^{(1)}_{even}(x,x-\alpha)\right)\right|\\
\lesssim
&\left(\|\partial_xf\|_{L^\infty_{\gamma(t)}}+\|\partial_xg\|_{L^\infty}\right)\left[\frac{\alpha^2}{\alpha^2+(2\sigma)^2}\left(M[\partial_x^2f+\partial_x^2g](x-\alpha)+M[\partial_x^2f+\partial_x^2g](x+\alpha)\right)\right.\\
&+\left.\frac{\alpha}{\alpha^2+(2\sigma)^2}M[\partial_x\theta](x+\alpha)\right]+M[\partial_x^2f+\partial_x^2g](x+\alpha)+M[\partial_x^2f+\partial_x^2g](x-\alpha)\\
\lesssim&(1+\|\partial_xf\|_{L^\infty_{\gamma(t)}}+\|\partial_xg\|_{L^\infty_{\gamma(t)}})\left(M[\partial_x^2f+\partial_x^2g](x-\alpha)+M[\partial_x^2f+\partial_x^2g](x+\alpha)\right)\\
&+(\|\partial_xf\|_{L^\infty_{\gamma(t)}}+\|\partial_xg\|_{L^\infty_{\gamma(t)}})\sigma^{-1}M[\partial_x\theta](x+\alpha).
\end{aligned}$$
Together with $\left|\alpha^{-1}(\partial_xg(x)-\partial_xg(x+\alpha))\right|\leq M[\partial_x^2g](x+\alpha)$, we obtain
$$\begin{aligned}
&\left|\int_{\alpha>0}\frac{q^{(1)}_{even}(x,x+\alpha)-q^{(1)}_{even}(x,x-\alpha)}{\alpha}\frac{\partial_xg(x)-\partial_xg(x+\alpha)}{\alpha}d\alpha\right|\\
\lesssim&\left(\|\partial_x^2f\|_{L^2_{\gamma(t)}}+\|\partial_x^2g\|_{L^2_{\gamma(t)}}\right)\|\partial_x^2g\|_{L^2_{\gamma(t)}}+\left(\|\partial_xf\|_{L^\infty_{\gamma(t)}}+\|\partial_xg\|_{L^\infty_{\gamma(t)}}\right)\sigma^{-1}\|\partial_x\theta\|_{L^2_{\gamma(t)}}\|\partial_x^2g\|_{L^2_{\gamma(t)}}.
\end{aligned}$$
Collecting the above bounds gives
$$\begin{aligned}|T_{g}^{12}1|\lesssim&\left[\left(\|\partial_xf\|_{L^\infty_{\gamma(t)}}+\|\partial_xg\|_{L^\infty_{\gamma(t)}}\right)\left(1+\sigma^{-1}\|\partial_x\theta\|_{L^2_{\gamma(t)}}\right)+\|\partial_x^2f\|_{L^2_{\gamma(t)}}+\|\partial_x^2g\|_{L^2_{\gamma(t)}}\right]\|\partial_x^2g\|_{H^1_{\gamma(t)}}\\
\lesssim&\left(\|\partial_xf\|_{L^\infty_{\gamma(t)}}+\|\partial_xg\|_{L^\infty_{\gamma(t)}}+\|\partial_x^2f\|_{H^1_{\gamma(t)}}+\|\partial_x^2g\|_{H^1_{\gamma(t)}}\right)\|\partial_x^2g\|_{H^1_{\gamma(t)}}.\end{aligned}$$
\end{proof}
Now applying the $T1$ theorem upon $T_{g}^{12}$ yields
$$\begin{aligned}
\|T_{g}^{12}\partial_x^kh\|_{L^2_{\gamma(t)}}
\lesssim&\left(\|\partial_xf\|_{L^\infty_{\gamma(t)}}+\|\partial_xg\|_{L^\infty_{\gamma(t)}}+\|\partial_x^2f\|_{L^2_{\gamma(t)}}+\|\partial_x^2g\|_{L^2_{\gamma(t)}}\right)\|\partial_x^2g\|_{H^1_{\gamma(t)}}\|\partial_x^kh\|_{L^2_{\gamma(t)}}.\end{aligned}$$
Meanwhile, since $T_{g}^{12}1$ is $L^\infty$, we also have $$\left\|\int_{\mathbb{R}}K_{g}^{12}(x,x-\alpha)\partial_x^kh(x)d\alpha\right\|_{L^2_{\gamma(t)}}\lesssim\|T_{g}^{12}1\|_{L^\infty_{\gamma(t)}}\|\partial_x^kh\|_{L^2_{\gamma(t)}},$$
and it follows
$$\begin{aligned}
&\left\|\int_{\mathbb{R}}q^{(1)}_{even}(x,x-\alpha)\frac{\tilde{\Delta}\partial_xg}{\alpha}\frac{\tilde{\Delta}\partial_x^kh}{\alpha}d\alpha\right\|_{L^2_{\gamma(t)}}\\
\lesssim&\left(\|\partial_xf\|_{L^\infty_{\gamma(t)}}+\|\partial_xg\|_{L^\infty_{\gamma(t)}}+\|\partial_x^2f\|_{L^2_{\gamma(t)}}+\|\partial_x^2g\|_{L^2_{\gamma(t)}}\right)\|\partial_x^2g\|_{H^1_{\gamma(t)}}\|\partial_x^kh\|_{L^2_{\gamma(t)}}.
\end{aligned}$$
In order to control $Safe_{h,1}^{12}$, it remains to bound $\int_{\mathbb{R}}L^{12}_{g}(x,x-\alpha)(\partial_x^kh(x)-\partial_x^kh(x-\alpha))d\alpha$. By observing that
\begin{equation}|q^{(1)}_{odd}(x,x-\alpha)|\lesssim\left(1+\|\partial_xf\|_{L^\infty_{\gamma(t)}}+\|\partial_xg\|_{L^\infty_{\gamma(t)}}\right)\frac{|\alpha^3|\sigma}{(\alpha^2+(2\sigma)^2)^2},\label{2.67}\end{equation}
Young's inequality then gives
$$\begin{aligned}
&\left\|\int_{\mathbb{R}}L^{12}_{g}(x,x-\alpha)\left(\partial_x^kh(x)-\partial_x^kh(x-\alpha)\right)d\alpha\right\|_{L^2_{\gamma(t)}}\\
\lesssim&\|\partial_x^2g\|_{L^\infty_{\gamma(t)}}\left\|\int_{\mathbb{R}}\frac{\alpha^2\sigma}{(\alpha^2+(2\sigma)^2)^2}\left(|\partial_x^kh(x)|+|\partial_x^kh(x-\alpha)|\right)d\alpha\right\|_{L^2_{\gamma(t)}}\\
\lesssim&\|\partial_x^2g\|_{L^\infty_{\gamma(t)}}\|\partial_x^kh\|_{L^2_{\gamma(t)}}.
\end{aligned}$$
Therefore, under the condition (\ref{2.65}), we conclude that
$$\begin{aligned}
\|Safe_{h,1}^{12}\|_{L^2_{\gamma(t)}}\lesssim\|\partial_x^2g\|_{H^1_{\gamma(t)}}\|\partial_x^kh\|_{L^2_{\gamma(t)}}.
\end{aligned}$$
Since $Safe_{w,i}^{12}$, $Safe_{w,i}^{21}$, $w\in\{h,\theta\}$, $i\in\{1,3\}$ share the same structure, repeating the above arguments yields for $w=h,\theta$ that
\begin{equation}\begin{aligned}
&\sum_{i\in\{1,3\}}\left(\|Safe_{w,i}^{12}\|_{L^2_{\gamma(t)}}+\|Safe^{21}_{w,i}\|_{L^2_{\gamma(t)}}\right)\\
\lesssim&\left(\|\partial_x^2f\|_{H^1_{\gamma(t)}}+\|\partial_x^2g\|_{H^1_{\gamma(t)}}\right)\|\partial_x^kw\|_{L^2_{\gamma(t)}}+\left(\|\partial_x^kf\|_{L^2_{\gamma(t)}}+\|\partial_x^kg\|_{L^2_{\gamma(t)}}\right)\|\partial_x^2w\|_{H^1_{\gamma(t)}}.
\end{aligned}\label{2.68}\end{equation}
Next, we are devoted to the control over $Safe_{h,4}^{12}$. Introduce the decomposition
$$q^{(1)}\left(\frac{\tilde{\Delta} g+2\sigma+\theta(x)}{\alpha}\right)\frac{\tilde{\Delta}\partial_xh}{\alpha^2}=K^{12}_h(x,x-\alpha)+L_h^{12}(x,x-\alpha),$$
$$K^{12}_h(x,x-\alpha)=q^{(1)}_{even}(x,x-\alpha)\frac{\tilde{\Delta}\partial_xh}{\alpha^2},\quad L^{12}_h(x,x-\alpha)=q^{(1)}_{odd}(x,x-\alpha)\frac{\tilde{\Delta}\partial_xh}{\alpha^2}.$$
By mimicking the estimate of $T_{g}^{12}1$ as in the proof of Lemma \ref{lem2.6}, it can be shown that
$$\left|\int_{\mathbb{R}}K^{12}_h(x,x-\alpha)d\alpha\right|\lesssim\left(\|\partial_xf\|_{L^\infty_{\gamma(t)}}+\|\partial_xg\|_{L^\infty_{\gamma(t)}}+\|\partial_x^2f\|_{L^2_{\gamma(t)}}+\|\partial_x^2g\|_{L^2_{\gamma(t)}}\right)\|\partial_x^2h\|_{H^1_{\gamma(t)}}.$$
Meanwhile, using (\ref{2.67}) again, we have
$$\left|\int_{\mathbb{R}}L^{12}_h(x,x-\alpha)d\alpha\right|\lesssim\|\partial_x^2h\|_{L^\infty_{\gamma(t)}}\int_{\mathbb{R}}\frac{\alpha^2\sigma}{(\alpha^2+(2\sigma)^2)^2}d\alpha\lesssim\|\partial_x^2h\|_{H^1_{\gamma(t)}}.$$
Taking the sum yields $\|Safe_{h,4}^{12}\|_{L^2_{\gamma(t)}}\lesssim\|\partial_x^2h\|_{H^1_{\gamma(t)}}\|\partial_x^k\theta\|_{L^2_{\gamma(t)}}.$ In the same manner, we can obtain the controls over $Safe_{h,4}^{21}$, $Safe_{\theta,4}^{12}$ and $Safe_{\theta,4}^{21}$. For $w=h,\theta$,
\begin{equation}\begin{aligned}
\|Safe_{w,4}^{12}\|_{L^2_{\gamma(t)}}+\|Safe_{w,4}^{21}\|_{L^2_{\gamma(t)}}\lesssim\|\partial_x^2w\|_{H^1_{\gamma(t)}}\|\partial_x^k\theta\|_{L^2_{\gamma(t)}}.
\end{aligned}\label{2.69}\end{equation}
We decompose $Safe_{h,2}^{12}(x)$ as
$$\begin{aligned}
&\int_{\mathbb{R}}q^{(1)}\left(\frac{\tilde{\Delta} g+2\sigma+\theta(x)}{\alpha}\right)\frac{\partial_x\theta(x)}{\alpha}\frac{\tilde{\Delta}\partial_x^kh}{\alpha}d\alpha\\
=&-2\int_{\mathbb{R}}\frac{\alpha\tilde{\Delta} g\cdot\partial_x\theta(x)}{(\alpha^2+(\tilde{\Delta} g+2\sigma+\theta(x))^2)^2}\tilde{\Delta}\partial_x^kh\,d\alpha-2\int_{\mathbb{R}}\frac{\alpha(2\sigma+\theta(x))\partial_x\theta(x)}{(\alpha^2+(\tilde{\Delta} g+2\sigma+\theta(x))^2)^2}\tilde{\Delta}\partial_x^kh\,d\alpha.
\end{aligned}$$
The first term can be controlled easily by
$$\left|\int_{\mathbb{R}}\frac{\alpha\tilde{\Delta} g\cdot\partial_x\theta(x)}{(\alpha^2+(\tilde{\Delta} g+2\sigma+\theta(x))^2)^2}\tilde{\Delta}\partial_x^kh\,d\alpha\right|\lesssim\sigma^{-1}\|\partial_x\theta\|_{L^\infty_{\gamma(t)}}\|\partial_xg\|_{L^\infty_{\gamma(t)}}\|\partial_x^kh\|_{L^2_{\gamma(t)}}.$$
To control the second term, let $\varphi\in L^2_{\gamma(t)}$ be arbitrary. By H\"{o}lder inequality, it holds
$$\begin{aligned}
&\left|\int_{\Gamma_{\pm}(t)}\left(\int_{\mathbb{R}}\frac{\alpha(2\sigma+\theta(x))\partial_x\theta(x)}{(\alpha^2+(\tilde{\Delta} g+2\sigma+\theta(x))^2)^2}\tilde{\Delta}\partial_x^kh\,d\alpha\right)\varphi(x)dx\right|\\
\lesssim&\sigma^{-1}\|\partial_x\theta\|_{L^\infty_{\gamma(t)}}\left(1+\sigma^{-1}\|\theta\|_{L^\infty_{\gamma(t)}}\right)\left(\int_{\Gamma_{\pm}(t)}\int_{\mathbb{R}}\frac{\sigma^2}{\alpha^2+(2\sigma)^2}\left|\frac{\tilde{\Delta}\partial_x^kh}{\alpha}\right|^2d\alpha dx\right)^\frac{1}{2}\\
&\cdot\left(\int_{\Gamma_{\pm}(t)}\int_{\mathbb{R}}\frac{\alpha^4\sigma^2}{(\alpha^2+(2\sigma)^2)^3}|\varphi(x)|^2d\alpha dx\right)^\frac{1}{2}\\
\lesssim&\sigma^{-\frac{1}{2}}\|\partial_x\theta\|_{L^\infty_{\gamma(t)}}\left(\int_{\Gamma_{\pm}(t)}\int_{\mathbb{R}}\frac{\sigma^2}{\alpha^2+(2\sigma)^2}\left|\frac{\tilde{\Delta}\partial_x^kh}{\alpha}\right|^2d\alpha dx\right)^\frac{1}{2}\|\varphi\|_{L^2_{\gamma(t)}}.
\end{aligned}$$
Hence
$$\begin{aligned}
&\left\|\int_{\mathbb{R}}\frac{\alpha(2\sigma+\theta(x))\partial_x\theta(x)}{(\alpha^2+(\tilde{\Delta} g+2\sigma+\theta(x))^2)^2}\tilde{\Delta}\partial_x^kh\,d\alpha\right\|_{L^2_{\gamma(t)}}\\
\lesssim&\sigma^{-\frac{1}{2}}\|\partial_x\theta\|_{L^\infty_{\gamma(t)}}\left(\int_{\Gamma_{\pm}(t)}\int_{\mathbb{R}}\frac{\sigma^2}{\alpha^2+(2\sigma)^2}\left|\frac{\tilde{\Delta}\partial_x^kh}{\alpha}\right|^2d\alpha dx\right)^\frac{1}{2}.
\end{aligned}$$
Recall from Proposition \ref{prop2.3} that $D_{22}^0(x,x-\alpha)\simeq\frac{1}{\alpha^2}\frac{(2\sigma)^2}{\alpha^2+(2\sigma)^2}$. Summing the above two bounds yields
$$\begin{aligned}
\|Safe_{h,2}^{12}\|_{L^2_{\gamma(t)}}\lesssim&\sigma^{-1}\|\partial_x\theta\|_{L^\infty_{\gamma(t)}}\|\partial_xg\|_{L^\infty_{\gamma(t)}}\|\partial_x^kh\|_{L^2_{\gamma(t)}}\\
&+\sigma^{-\frac{1}{2}}\|\partial_x\theta\|_{L^\infty_{\gamma(t)}}\left(\int_{\Gamma_{\pm}(t)}\int_{\mathbb{R}}D^0_{22}(x,x-\alpha)|\tilde{\Delta}\partial_x^kh|^2d\alpha dx\right)^\frac{1}{2}.
\end{aligned}$$
The controls over $Safe_{h,2}^{21}$, $Safe_{\theta,2}^{12}$ and $Safe_{\theta,2}^{21}$ can be obtained in the same way. To sum up, we have for $w=h,\theta$
\begin{equation}\begin{aligned}
&\|Safe_{w,2}^{12}\|_{L^2_{\gamma(t)}}+\|Safe_{w,2}^{21}\|_{L^2_{\gamma(t)}}\\
\lesssim&\sigma^{-1}\|\partial_x\theta\|_{L^\infty_{\gamma(t)}}\left(\|\partial_xg\|_{L^\infty_{\gamma(t)}}+\|\partial_xf\|_{L^\infty_{\gamma(t)}}\right)\|\partial_x^kw\|_{L^2_{\gamma(t)}}\\
&+\sigma^{-\frac{1}{2}}\|\partial_x\theta\|_{L^\infty_{\gamma(t)}}\left(\int_{\Gamma_{\pm}(t)}\int_{\mathbb{R}}D^0_{22}(x,x-\alpha)|\tilde{\Delta}\partial_x^kw|^2d\alpha dx\right)^\frac{1}{2}.\end{aligned}\label{2.70}\end{equation}
\textbf{Controls for class (\romannumeral3)}.
Note that $Safe_{\theta,5}^{12}$ has the same structure as $Safe_{h,4}^{12}$, and $Safe_{\theta,7}^{12}$ has the same structure as $Safe_{h,2}^{12}$. Hence by repeating the corresponding arguments, similar estimates can be obtained for $Safe_{\theta,5}^{12}$, $Safe_{\theta,7}^{12}$. Moreover, by replacing $g$ by $f$, we also have the estimates of $Safe_{\theta,5}^{21}$, $Safe_{\theta,7}^{21}$.
\begin{equation}\begin{aligned}
\|Safe_{\theta,5}^{12}\|_{L^2_{\gamma(t)}}+\|Safe_{\theta,5}^{21}\|_{L^2_{\gamma(t)}}\lesssim\left(\|\partial_x^2g\|_{H^1_{\gamma(t)}}+\|\partial_x^2f\|_{H^1_{\gamma(t)}}\right)\|\partial_x^k\theta\|_{L^2_{\gamma(t)}},
\end{aligned}\label{2.71}\end{equation}
\begin{equation}\begin{aligned}
&\|Safe_{\theta,7}^{12}\|_{L^2_{\gamma(t)}}+\|Safe_{\theta,7}^{21}\|_{L^2_{\gamma(t)}}\\
\lesssim& \sigma^{-1}\|\partial_x\theta\|_{L^\infty_{\gamma(t)}}\|\partial_xg\|_{L^\infty_{\gamma(t)}}\|\partial_x^kg\|_{L^2_{\gamma(t)}}+\sigma^{-1}\|\partial_x\theta\|_{L^\infty_{\gamma(t)}}\|\partial_xf\|_{L^\infty_{\gamma(t)}}\|\partial_x^kf\|_{L^2_{\gamma(t)}}\\
&+\sigma^{-\frac{1}{2}}\|\partial_x\theta\|_{L^\infty_{\gamma(t)}}\left(\int_{\Gamma_{\pm}(t)}\int_{\mathbb{R}}D_{22}^0(x,x-\alpha)|\tilde{\Delta}\partial_x^kf|^2d\alpha dx\right)^{\frac{1}{2}}\\
&+\sigma^{-\frac{1}{2}}\|\partial_x\theta\|_{L^\infty_{\gamma(t)}}\left(\int_{\Gamma_{\pm}(t)}\int_{\mathbb{R}}D_{22}^0(x,x-\alpha)|\tilde{\Delta}\partial_x^kg|^2d\alpha dx\right)^{\frac{1}{2}}.
\end{aligned}\label{2.72}\end{equation}
In order to control $Safe_{\theta,6}^{12}$ and $Safe_{\theta,6}^{21}$, it suffices to bound $\int_{\mathbb{R}}q^{(1)}\left(\frac{\tilde{\Delta} g+2\sigma+\theta(x)}{\alpha}\right)\alpha^{-2}d\alpha$. Since $q^{(1)}$ is an odd function, we have
$$\int_{\mathbb{R}}q^{(1)}\left(\frac{\tilde{\Delta} g+2\sigma+\theta(x)}{\alpha}\right)\alpha^{-2}d\alpha=\int_{\mathbb{R}}\left[q^{(1)}\left(\frac{\tilde{\Delta} g+2\sigma+\theta(x)}{\alpha}\right)-q^{(1)}\left(\frac{2\sigma+\theta(x)}{\alpha}\right)\right]\alpha^{-2}d\alpha.$$
In view of $\|\partial_xg\|_{L^\infty_{\gamma(t)}}\leq w_0\lesssim1$, it holds
$$\begin{aligned}
&\left|q^{(1)}\left(\frac{\tilde{\Delta} g+2\sigma+\theta(x)}{\alpha}\right)-q^{(1)}\left(\frac{2\sigma+\theta(x)}{\alpha}\right)\right|\\
\leq&\frac{|\tilde{\Delta} g||\alpha^3|}{\left(\alpha^2+(\tilde{\Delta} g+2\sigma+\theta(x))\right)^2}+2|2\sigma+\theta(x)||\alpha^3|\left|\frac{1}{\left(\alpha^2+(\tilde{\Delta} g+2\sigma+\theta(x))\right)^2}-\frac{1}{(\alpha^2+(2\sigma+\theta(x)))^2}\right|\\
\lesssim&\|\partial_xg\|_{L^\infty_{\gamma(t)}}\frac{\alpha^4}{(\alpha^2+(2\sigma)^2)^2}.
\end{aligned}$$
Hence $\left|\int_{\mathbb{R}}q^{(1)}\left(\frac{\tilde{\Delta} g+2\sigma+\theta(x)}{\alpha}\right)\alpha^{-2}d\alpha\right|\lesssim\sigma^{-1}\|\partial_xg\|_{L^\infty_{\gamma(t)}}$, and it follows that
$$\|Safe_{\theta,6}^{12}\|_{L^2_{\gamma(t)}}\lesssim\sigma^{-1}\|\partial_x\theta\|_{L^\infty_{\gamma(t)}}\|\partial_xg\|_{L^\infty_{\gamma(t)}}\|\partial_x^k\theta\|_{L^2_{\gamma(t)}}.$$ The same bound with $g$ replaced by $f$ holds for $\|Safe_{\theta,6}^{21}\|_{L^2_{\gamma(t)}}$. That is
\begin{equation}
\|Safe_{\theta,6}^{12}\|_{L^2_{\gamma(t)}}+\|Safe_{\theta,6}^{21}\|_{L^2_{\gamma(t)}}\lesssim\sigma^{-1}\|\partial_x\theta\|_{L^\infty_{\gamma(t)}}\left(\|\partial_xf\|_{L^\infty_{\gamma(t)}}+\|\partial_xg\|_{L^\infty_{\gamma(t)}}\right)\|\partial_x^k\theta\|_{L^2_{\gamma(t)}}.
\label{2.73}\end{equation}
Recall that $f=h+\mu_1\theta$, $g=h-\mu_2\theta$, and $\|\partial_x^jf\|_{X}+\|\partial_x^jg\|_{X}\lesssim\|\partial_x^jh\|_{X}+\|\partial_x^j\theta\|_X$ for all $j\geq0$ and $X=L^2_{\gamma(t)}, H^1_{\gamma(t)}$ or $L^\infty_{\gamma(t)}$. We collect (\ref{2.64}) and (\ref{2.68}-\ref{2.73}) to conclude with the lemma below.
\begin{lem}\label{summary3.6}
Suppose that the solution satisfies conditions (\ref{2.17}) and (\ref{2.65}). Then all the safe terms satisfy the bound
$$\begin{aligned}
&\sum_{w\in\{h,\theta\}}\sum_{i\in\{1,2\}}\left(\|Safe_{w,i}^{11}\|_{L^2_{\gamma(t)}}+\|Safe_{w,i}^{22}\|_{L^2_{\gamma(t)}}\right)+\sum_{5\leq i \leq7}\left(\|Safe_{\theta,i}^{12}\|_{L^2_{\gamma(t)}}+\|Safe_{\theta,i}^{21}\|_{L^2_{\gamma(t)}}\right)\\
&+\sum_{w\in\{h,\theta\}}\sum_{1\leq i\leq4}\left(\|Safe_{w,i}^{12}\|_{L^2_{\gamma(t)}}+\|Safe_{w,i}^{21}\|_{L^2_{\gamma(t)}}\right)\\
\lesssim&\left(1+\sigma^{-1}\|\partial_x\theta\|_{L^\infty_{\gamma(t)}}\right)\left(\sum_{w\in\{h,\theta\}}\|\partial_xw\|_{L^\infty_{\gamma(t)}}+\|\partial_x^2w\|_{H^1_{\gamma(t)}}\right)\left(\|\partial_x^kh\|_{L^2_{\gamma(t)}}+\|\partial_x^k\theta\|_{L^2_{\gamma(t)}}\right)\\
&+\sigma^{-\frac{1}{2}}\|\partial_x\theta\|_{L^\infty_{\gamma(t)}}\left(\int_{\Gamma_{\pm}(t)}\int_{\mathbb{R}}D_{22}^0(x,x-\alpha)|\tilde{\Delta}\partial_x^kh|^2d\alpha dx\right)^\frac{1}{2}\\
&+\sigma^{-\frac{1}{2}}\|\partial_x\theta\|_{L^\infty_{\gamma(t)}}\left(\int_{\Gamma_{\pm}(,l,rt)}\int_{\mathbb{R}}D_{22}^0(x,x-\alpha)|\tilde{\Delta}\partial_x^k\theta|^2d\alpha dx\right)^\frac{1}{2}
\end{aligned}$$
\end{lem}
\subsection{Commutators-Easy terms}\label{commutators2}
In the next, we are devoted to the controls over the easy terms, which correspond to the case $k-1\geq j\geq2$ or $j=k$, $l\geq2$. For convenience, denote the set of such index $(j,l)$ by $B_k:=\left\{(j,l)\mid 2\leq j\leq k,\,1\leq l\leq j,\,(j,l)\neq(k,1) \right\}.$  \\
\textbf{Controls for class (\romannumeral1)}.
First, in view of condition (\ref{2.17}) and the fact that $q^{(l)}$, $l\geq0$ are smooth functions, we always have $\left|q^{(l)}\left(\frac{\tilde{\Delta} f}{\alpha}\right)\right|+\left|q^{(l)}\left(\frac{\tilde{\Delta} g}{\alpha}\right)\right|\lesssim1$. Let $\tilde{k}:=\left\lfloor\frac{k+1}{2}\right\rfloor$. If $r_a=0$ for all $a>\tilde{k}$, we write
$$\begin{aligned}
&\left|\int_{\mathbb{R}}q^{(l)}\left(\frac{\tilde{\Delta} f}{\alpha}\right)\prod_{a=1}^j\left(\frac{\tilde{\Delta}\partial_x^af}{\alpha}\right)^{r_a}\frac{\tilde{\Delta}\partial_x^{k+1-j}h}{\alpha}d\alpha\right|\\
\lesssim&\int_{|\alpha|\leq1}\left|\prod_{a=1}^j\left(\frac{\tilde{\Delta}\partial_x^af}{\alpha}\right)^{r_a}\right|d\alpha\cdot M[\partial_x^{k+2-j}h](x)+\int_{|\alpha|>1}\left|\alpha^{-1}\prod_{a=1}^j\left(\frac{\tilde{\Delta}\partial_x^af}{\alpha}\right)^{r_a}(\tilde{\Delta}\partial_x^{k+1-j}h)\right|d\alpha.
\end{aligned}$$
By virtue of $\left|\frac{\tilde{\Delta}\partial_x^af}{\alpha}\right|\leq\|\partial_x^{a+1}f\|_{L^\infty_{\gamma(t)}}$, it holds
$$\begin{aligned}
\int_{|\alpha|\leq1}\left|\prod_{a=1}^j\left(\frac{\tilde{\Delta}\partial_x^af}{\alpha}\right)^{r_a}\right|d\alpha\lesssim\prod_{a=1}^j\|\partial_x^{a+1}f\|_{L^\infty_{\gamma(t)}}^{r_a}.  
\end{aligned}$$
Since $r_a=0$ for $a>\tilde{k}$ and $\sum_{a=1}^jr_a=l$, we have by Sobolev embedding that
$$\prod_{a=1}^j\|\partial_x^{a+1}f\|^{r_a}_{L^\infty_{\gamma(t)}}\lesssim\|\partial_x^2f\|_{H^{\tilde{k}}_{\gamma(t)}}^l.$$
Hence the $L^2_{\gamma(t)}$ norm of the part $|\alpha|\leq1$ can be controlled by $\|\partial_x^2f\|_{H^{\tilde{k}}_{\gamma(t)}}^l\|\partial_x^{k+2-j}h\|_{L^2_{\gamma(t)}}$.
For the part $|\alpha|>1$, we have
$$\left|\alpha^{-1}\prod_{a=1}^j\left(\frac{\tilde{\Delta}\partial_x^af}{\alpha}\right)^{r_a}\right|\lesssim|\alpha|^{-1-l}\prod_{a=1}^j\|\partial_x^af\|_{L^\infty_{\gamma(t)}}^{r_a}\lesssim|\alpha|^{-1-l}\left(\|\partial_xf\|_{L^\infty_{\gamma(t)}}+\|\partial_x^2f\|_{H^{\tilde{k}-1}_{\gamma(t)}}\right)^l.$$
Hence by Minkowski 
$$\left\|\int_{|\alpha|>1}\left|\alpha^{-1}\prod_{a=1}^j\left(\frac{\tilde{\Delta}\partial_x^af}{\alpha}\right)^{r_a}(\tilde{\Delta}\partial_x^{k+1-j}h)\right|d\alpha\right\|_{L^2_{\gamma(t)}}\lesssim\left(\|\partial_xf\|_{L^\infty_{\gamma(t)}}+\|\partial_x^2f\|_{H^{\tilde{k}-1}_{\gamma(t)}}\right)^l\|\partial_x^{k+1-j}h\|_{L^2_{\gamma(t)}}.$$
Since $2\leq k+2-j\leq k$, we remark that $\|\partial_x^{k+1-j}h\|_{L^2_{\gamma(t)}}+\|\partial_x^{k+2-j}h\|_{L^2_{\gamma(t)}}\leq
\|h\|_{H^k_{\gamma(t)}}.$
Now suppose otherwise that there exists $a_0>\tilde{k}$ such that $r_{a_0}\neq0$. Recall that $\sum_{a=1}^jar_a=j\leq k$ and $(j,l)\neq
(k,1)$. Hence we must have $a_0+1\leq k$, $j>\tilde
{k}$, $r_{a_0}=1$ and $r_a=0$ for all $a>\tilde{k}$ with $a\neq a_0$. Moreover, $a_0\geq5$ since we choose $k\geq 10$. In this case, we write
$$\begin{aligned}
&\int_{\mathbb{R}}\left|q^{(l)}\left(\frac{\tilde{\Delta} f}{\alpha}\right)\prod_{a=1}^j\left(\frac{\tilde{\Delta}\partial_x^af}{\alpha}\right)^{r_a}\frac{\tilde{\Delta}\partial_x^{k+1-j}h}{\alpha}\right|d\alpha\\
\lesssim&\int_{|\alpha|>1}\left|\alpha^{-1}\prod_{a\neq a_0}\left(\frac{\tilde{\Delta}\partial_x^af}{\alpha}\right)^{r_a}\frac{\tilde{\Delta}\partial_x^{k+1-j}h}{\alpha}(\tilde{\Delta}\partial_x^{a_0}f)\right|d\alpha\\
&+\int_{|\alpha|\leq1}\left|\prod_{a\neq a_0}\left(\frac{\tilde{\Delta}\partial_x^af}{\alpha}\right)^{r_a}\frac{\tilde{\Delta}\partial_x^{k+1-j}h}{\alpha}\right|d\alpha\cdot M[\partial_x^{a_0+1}f](x).
\end{aligned}$$
Since $|\alpha^{-1}\tilde{\Delta}\partial_x^{k+1-j}h|\leq\|\partial_x^{k+2-j}h\|_{L^\infty_{\gamma(t)}}$ and $\tilde{k}+1\geq k+2-j\geq2$, we have
$$\int_{|\alpha|\leq1}\left|\prod_{a\neq a_0}\left(\frac{\tilde{\Delta}\partial_x^af}{\alpha}\right)^{r_a}\frac{\tilde{\Delta}\partial_x^{k+1-j}h}{\alpha}\right|d\alpha\lesssim\|\partial_x^2f\|_{H^{\tilde{k}}_{\gamma(t)}}^{l-1}\|\partial_x^{k+2-j}h\|_{L^\infty_{\gamma(t)}}\lesssim\|\partial_x^2f\|_{H^{\tilde{k}}_{\gamma(t)}}^{l-1}\|\partial_x^2h\|_{H^{\tilde{k}}_{\gamma(t)}}.$$
Hence the $L^2_{\gamma(t)}$ norm of the part $|\alpha|\leq
1$ can be controlled by $\|\partial_x^2f\|_{H^{\tilde{k}}_{\gamma(t)}}^{l-1}\|\partial_x^2h\|_{H^{\tilde{k}}_{\gamma(t)}}\|\partial_x^{a_0+1}f\|_{L^2_{\gamma(t)}}.$ For the part $|\alpha|>1$, there is
$$\begin{aligned}
&\left|\alpha^{-1}\prod_{a\neq a_0}\left(\frac{\tilde{\Delta}\partial_x^af}{\alpha}\right)^{r_a}\frac{\tilde{\Delta}\partial_x^{k+1-j}h}{\alpha}\right|\\
\lesssim&|\alpha|^{-1-l}\prod_{a\neq a_0}\|\partial_x^af\|_{L^\infty_{\gamma(t)}}^{r_a}\|\partial_x^{k+1-j}h\|_{L^\infty_{\gamma(t)}}\\
\lesssim&|\alpha|^{-1-l}\left(\|\partial_xf\|_{L^\infty_{\gamma(t)}}+\|\partial_x^2f\|_{H^{\tilde{k}-1}_{\gamma(t)}}\right)^{l-1}\left(\|\partial_xh\|_{L^\infty_{\gamma(t)}}+\|\partial_x^2h\|_{H^{\tilde{k}-1}_{\gamma(t)}}\right).
\end{aligned}$$
Then using Minkowski and noticing $5\leq a_0\leq k-1$ yields
$$\begin{aligned}
&\left\|\int_{|\alpha|>1}\left|\alpha^{-1}\prod_{a\neq a_0}\left(\frac{\tilde{\Delta}\partial_x^af}{\alpha}\right)^{r_a}\frac{\tilde{\Delta}\partial_x^{k+1-j}h}{\alpha}(\tilde{\Delta}\partial_x^{a_0}f)\right|d\alpha\right\|_{L^2_{\gamma(t)}}\\
\lesssim&\left(\|\partial_xf\|_{L^\infty_{\gamma(t)}}+\|\partial_x^2f\|_{H^{\tilde{k}-1}_{\gamma(t)}}\right)^{l-1}\left(\|\partial_xh\|_{L^\infty_{\gamma(t)}}+\|\partial_x^2h\|_{H^{\tilde{k}-1}_{\gamma(t)}}\right)\|\partial_x^{a_0}f\|_{L^2_{\gamma(t)}}\\
\lesssim&\left(\|\partial_xf\|_{L^\infty_{\gamma(t)}}+\|\partial_x^2f\|_{H^{\tilde{k}-1}_{\gamma(t)}}\right)^{l-1}\left(\|\partial_xh\|_{L^\infty_{\gamma(t)}}+\|\partial_x^2h\|_{H^{\tilde{k}-1}_{\gamma(t)}}\right)\|\partial_x^{5}f\|_{H^{k-6}_{\gamma(t)}}.
\end{aligned}$$
We can also obtain similar controls for other easy terms in class (\romannumeral1) with $f$ replaced by $g$ or $h$ replaced by $\theta$. Using that $\|\partial_x^jf\|_{X}+\|\partial_x^jg\|_{X}\lesssim\|\partial_x^jh\|_X+\|\partial_x^j\theta\|_X$ for all $j\geq0$ and $X=L^\infty_{\gamma(t)}$ or $H^{m}_{\gamma(t)}$, $m\geq0$, we obtain the following control for easy terms of class (i):
\begin{lem}\label{summary3.7-1}
Suppose that condition (\ref{2.17}) holds. Then
\begin{equation}\begin{aligned}
&\sum_{B_k}\sum_{S_{j,l}}\sum_{w^\prime\in\{f,g\}}\left\|\int_{\mathbb{R}}q^{(l)}\left(\frac{\tilde{\Delta} w^\prime}{\alpha}\right)\prod_{a=1}^j\left(\frac{\tilde{\Delta}\partial_x^aw^\prime}{\alpha}\right)^{r_a}\frac{\tilde{\Delta}\partial_x^{k+1-j}w}{\alpha}d\alpha\right\|_{L^2_{\gamma(t)}}\\
\lesssim&(1+\delta_2)^{k-1}\left[\delta_2\|w\|_{H^k_{\gamma(t)}}+\left(\|\partial_x^2w\|_{H^{\tilde{{k}}}_{\gamma(t)}}+\|\partial_xw\|_{L^\infty_{\gamma(t)}}\right)\left(\|\partial_x^5f\|_{H^{k-6}_{\gamma(t)}}+\|\partial_x^5g\|_{H^{k-6}_{\gamma(t)}}\right)\right]\\
\lesssim&\delta_2(1+\delta_2)^{k-1}\left(\|h\|_{H^k_{\gamma(t)}}+\|\theta\|_{H^k_{\gamma(t)}}\right),
\label{2.74}\end{aligned}\end{equation}
where we denote for simplicity
$\delta_2(t):=\|\partial_xh\|_{L^\infty_{\gamma(t)}}+\|\partial_x\theta\|_{L^\infty_{\gamma(t)}}+\|\partial_x^2h\|_{H^{\tilde{k}}_{\gamma(t)}}+\|\partial_x^2\theta\|_{H^{\tilde{k}}_{\gamma(t)}}$.
\end{lem}
\noindent\textbf{Controls for class (\romannumeral2)}. To begin with, we observe that
$q^{(l)}(x)=\frac{l!}{2}\left(\frac{(-i)^l}{(1+ix)^{l+1}}+\frac{i^l}{(1-ix)^{l+1}}\right)$. 
Suppose first that $r_a=0$ for all $a>\tilde{k}$. In view of condition (\ref{2.17}), for any $a\in[1,\tilde{k}]$, it holds
$$\begin{aligned}
\left|\frac{\tilde{\Delta}\partial_x^ag+\partial_x^a\theta(x)}{\alpha+i(\tilde{\Delta} g+2\sigma+\theta(x))}\right|+\left|\frac{\tilde{\Delta}\partial_x^ag+\partial_x^a\theta(x)}{\alpha-i(\tilde{\Delta} g+2\sigma+\theta(x))}\right|\lesssim&\|\partial_x^{a+1}g\|_{L^\infty_{\gamma(t)}}+\sigma^{-1}\|\partial_x^a\theta\|_{L^\infty_{\gamma(t)}}\\
\lesssim&\|\partial_x^2g\|_{H^{\tilde{k}}_{\gamma(t)}}+\sigma^{-1}\|\partial_x\theta\|_{H^{\tilde{k}}_{\gamma(t)}}.    
\end{aligned}$$
Then using Lemma \ref{lem2.2} we obtain
$$\begin{aligned}
&\left|\prod_{a=1}^j\left(\frac{\tilde{\Delta}\partial_x^a g+\partial_x^a\theta(x)}{\alpha+i(\tilde{\Delta} g+2\sigma+\theta(x))}\right)^{r_a}-\prod_{a=1}^j\left(\frac{\partial_x^a\theta(x)}{\alpha+i(\tilde{\Delta} g+2\sigma+\theta(x))}\right)^{r_a}\right|\\
&+\left|\prod_{a=1}^j\left(\frac{\tilde{\Delta}\partial_x^a g+\partial_x^a\theta(x)}{\alpha-i(\tilde{\Delta} g+2\sigma+\theta(x))}\right)^{r_a}-\prod_{a=1}^j\left(\frac{\partial_x^a\theta(x)}{\alpha-i(\tilde{\Delta} g+2\sigma+\theta(x))}\right)^{r_a}\right|\\
\lesssim&\left(\|\partial_x^2g\|_{H^{\tilde{k}}_{\gamma(t)}}+\sigma^{-1}\|\partial_x\theta\|_{H^{\tilde{k}}_{\gamma(t)}}\right)^{l-1}[\alpha^2+(2\sigma)^2]^{-\frac{1}{2}}\sum_{r_a\neq 0}|\tilde{\Delta}\partial_x^ag|.
\end{aligned}$$
Since $|\alpha^{-1}|\geq[\alpha^2+(2\sigma)^2]^{-\frac{1}{2}}$ and $\left|\frac{\alpha}{\alpha\pm i(\tilde{\Delta} g+2\sigma+\theta(x))}\right|\lesssim1$, it follows that
\begin{equation}\begin{aligned}
&\left|\int_{\mathbb{R}}q^{(l)}\left(\frac{\tilde{\Delta} g+2\sigma+\theta(x)}{\alpha}\right)\prod_{a=1}^j\left(\frac{\tilde{\Delta}\partial_x^ag+\partial_x^a\theta(x)}{\alpha}\right)^{r_a}\frac{\tilde{\Delta}\partial_x^{k+1-j}h}{\alpha}d\alpha\right|\\
\lesssim&\left(\|\partial_x^2g\|_{H^{\tilde{k}}_{\gamma(t)}}+\sigma^{-1}\|\partial_x\theta\|_{H^{\tilde{k}}_{\gamma(t)}}\right)^{l-1}\sum_{r_a\neq0}\int_{\mathbb{R}}\left|\frac{\tilde{\Delta}\partial_x^{a}g}{\alpha}\frac{\tilde{\Delta}\partial_x^{k+1-j}h}{\alpha}\right|d\alpha.\\
&+\left|\int_{\mathbb{R}}\left[\frac{\alpha}{\alpha+i(\tilde{\Delta} g+2\sigma+\theta(x))}\prod_{a=1}^j\left(\frac{\partial_x^a\theta(x)}{(\alpha+i(\tilde{\Delta} g+2\sigma+\theta(x))}\right)^{r_a}\right.\right.\\
&\left.\left.+(-1)^l\frac{\alpha}{\alpha-i(\tilde{\Delta} g+2\sigma+\theta(x))}\prod_{a=1}^j\left(\frac{\partial_x^a\theta(x)}{(\alpha-i(\tilde{\Delta} g+2\sigma+\theta(x))}\right)^{r_a}\right]\frac{\tilde{\Delta}\partial_x^{k+1-j}h}{\alpha}d\alpha\right|
\end{aligned}\label{2.75}\end{equation}
To control the first term, by repeating the arguments in the controls for class (\romannumeral1), for each $a\leq\tilde{k}$ we have
\begin{equation}\begin{aligned}
\left\|\int_{\mathbb{R}}\left|\frac{\tilde{\Delta}\partial_x^{a}g}{\alpha}\frac{\tilde{\Delta}\partial_x^{k+1-j}h}{\alpha}\right|d\alpha\right\|_{L^2_{\gamma(t)}}
\lesssim&\left(\|\partial_xg\|_{L^\infty_{\gamma(t)}}+\|\partial_x^2g\|_{H^{\tilde{k}}_{\gamma(t)}}\right)\|\partial_x^{k+1-j}h\|_{H^1_{\gamma(t)}}\\
\lesssim&\left(\|\partial_xg\|_{L^\infty_{\gamma(t)}}+\|\partial_x^2g\|_{H^{\tilde{k}}_{\gamma(t)}}\right)\|h\|_{H^k_{\gamma(t)}}.    
\end{aligned}\label{2.76}\end{equation}
For the second term, if $l\geq2$, using H\"{o}lder's inequality with arbitrary $\varphi\in L^2_{\gamma(t)}$ gives
\allowdisplaybreaks[4]\begin{align*}
&\left|\int_{\Gamma_{\pm}}\left(\int_{\mathbb{R}}\frac{\alpha}{\alpha+i(\tilde{\Delta} g+2\sigma+\theta(x))}\prod_{a=1}^j\left(\frac{\partial_x^a\theta(x)}{\alpha+i(\tilde{\Delta} g+2\sigma+\theta(x))}\right)^{r_a}\frac{\tilde{\Delta}\partial_x^{k+1-j}h}{\alpha}d\alpha\right)\overline{\varphi(x)}dx\right|\\
\lesssim&\prod_{a=1}^j\left(\sigma^{-1}\|\partial_x^a\theta\|_{L^\infty_{\gamma(t)}}\right)^{r_a}\left(\int_{\Gamma_{\pm}(t)}\int_{\mathbb{R}}\frac{(2\sigma)^l}{[\alpha^2+(2\sigma)^2]^\frac{l}{2}}\left|\frac{\tilde{\Delta}\partial_x^{k+1-j}h}{\alpha}\right||\varphi(x)|d\alpha dx\right)\\
\lesssim&\prod_{a=1}^j\left(\sigma^{-1}\|\partial_x^a\theta\|_{L^\infty_{\gamma(t)}}\right)^{r_a}\left(\int_{\Gamma_{\pm}(t)}\int_{\mathbb{R}}D_{22}^0(x,x-\alpha)\left|\tilde{\Delta}\partial_x^{k+1-j}h\right|^2d\alpha dx\right)^\frac{1}{2}\\
&\cdot\left(\int_{\Gamma_{\pm}(t)}\int_{\mathbb{R}}\frac{(2\sigma)^{2l-2}}{(\alpha^2+(2\sigma)^2)^{l-1}}|\varphi(x)|^2dxd\alpha\right)^\frac{1}{2}\\
\lesssim&\sigma^{\frac{1}{2}}\prod_{a=1}^j\left(\sigma^{-1}\|\partial_x^a\theta\|_{L^\infty_{\gamma(t)}}\right)^{r_a}\left(\int_{\Gamma_{\pm}(t)}\int_{\mathbb{R}}D_{22}^0(x,x-\alpha)\left|\tilde{\Delta}\partial_x^{k+1-j}h\right|^2d\alpha dx\right)^\frac{1}{2}\|\varphi\|_{L^2_{\gamma(t)}},
\end{align*}\allowdisplaybreaks[0]
which, in view of $\|\partial_x^a\theta\|_{L^\infty_{\gamma(t)}}\lesssim\|\partial_x\theta\|_{H^{\tilde{k}}_{\gamma(t)}}$, implies that
\begin{equation}\begin{aligned}
&\left\|\int_{\mathbb{R}}\frac{\alpha}{\alpha+i(\tilde{\Delta} g+2\sigma+\theta(x))}\prod_{a=1}^j\left(\frac{\partial_x^a\theta(x)}{(\alpha+i(\tilde{\Delta} g+2\sigma+\theta(x))}\right)^{r_a}\frac{\tilde{\Delta}\partial_x^{k+1-j}h}{\alpha}d\alpha\right\|_{L^2_{\gamma(t)}}\\
\lesssim&\sigma^\frac{1}{2}\left(\sigma^{-1}\|\partial_x\theta\|_{H^{\tilde{k}}_{\gamma(t)}}\right)^l\left(\int_{\Gamma_{\pm}(t)}\int_{\mathbb{R}}D_{22}^0(x,x-\alpha)\left|\tilde{\Delta}\partial_x^{k+1-j}h\right|^2d\alpha dx\right)^\frac{1}{2}.
\end{aligned}\label{2.77}\end{equation}
Likewise,
\begin{equation}\begin{aligned}
&\left\|\int_{\mathbb{R}}\frac{\alpha}{\alpha-i(\tilde{\Delta} g+2\sigma+\theta(x))}\prod_{a=1}^j\left(\frac{\partial_x^a\theta(x)}{(\alpha-i(\tilde{\Delta} g+2\sigma+\theta(x))}\right)^{r_a}\frac{\tilde{\Delta}\partial_x^{k+1-j}h}{\alpha}d\alpha\right\|_{L^2_{\gamma(t)}}\\
\lesssim&\sigma^\frac{1}{2}\left(\sigma^{-1}\|\partial_x\theta\|_{H^{\tilde{k}}_{\gamma(t)}}\right)^l\left(\int_{\Gamma_{\pm}(t)}\int_{\mathbb{R}}D_{22}^0(x,x-\alpha)\left|\tilde{\Delta}\partial_x^{k+1-j}h\right|^2d\alpha dx\right)^\frac{1}{2}.
\end{aligned}\label{2.78}\end{equation}
If $l=1$, then $2\leq j\leq\tilde{k}$, $r_{j}=1$ and $r_a=0$, $1\leq a\leq j-1$. 
In this case, we use the identity
$$\begin{aligned}
&\frac{-1}{4i}\int_{\mathbb{R}}\left[\frac{\alpha\partial_x^j\theta(x)}{\left(\alpha+i(\tilde{\Delta} g+2\sigma+\theta(x))\right)^2}-\frac{\alpha\partial_x^j\theta(x)}{(\alpha-i(\tilde{\Delta} g+2\sigma+\theta(x)))^2}\right]\frac{\tilde{\Delta}\partial_x^{k+1-j}h}{\alpha}d\alpha\\
=&\int_{\mathbb{R}}\frac{\alpha^2\tilde{\Delta} g\cdot\partial_x^j\theta(x)}{(\alpha^2+(\tilde{\Delta} g+2\sigma+\theta(x))^2)^2}\frac{\tilde{\Delta}\partial_x^{k+1-j}h}{\alpha}d\alpha+\int_{\mathbb{R}}\frac{\alpha^2\partial_x^j\theta(x)(2\sigma+\theta(x))}{(\alpha^2+(\tilde{\Delta} g+2\sigma+\theta(x))^2)^2}\frac{\tilde{\Delta}\partial_x^{k+1-j}h}{\alpha}d\alpha.
\end{aligned}$$
The right-hand side of the above identity has the same structure as $Safe_{h,2}^{12}$ (see (\ref{2.70})), so by repetition of the proof for (\ref{2.70}), we obtain
\begin{equation}\begin{aligned}
&\left\|\int_{\mathbb{R}}\left[\frac{\alpha\partial_x^j\theta(x)}{\left(\alpha+i(\tilde{\Delta} g+2\sigma+\theta(x))\right)^2}-\frac{\alpha\partial_x^j\theta(x)}{(\alpha-i(\tilde{\Delta} g+2\sigma+\theta(x)))^2}\right]\frac{\tilde{\Delta}\partial_x^{k+1-j}h}{\alpha}d\alpha\right\|_{L^2_{\gamma(t)}}\\
\lesssim&\sigma^{-\frac{1}{2}}\|\partial_x^j\theta\|_{L^\infty_{\gamma(t)}}\left(\int_{\Gamma_{\pm}(t)}\int_{\mathbb{R}}D_{22}^0(x,x-\alpha)|\tilde{\Delta}\partial_x^{k+1-j}h|^2d\alpha dx\right)^\frac{1}{2}\\
&+\sigma^{-1}\|\partial_x^j\theta\|_{L^\infty_{\gamma(t)}}\|\partial_xg\|_{L^\infty_{\gamma(t)}}\|\partial_x^{k+1-j}h\|_{L^2_{\gamma(t)}}.
\end{aligned}\label{2.79}\end{equation}
Collecting the inequalities (\ref{2.75}-\ref{2.79}) and noticing that $\|\partial_x^j\theta\|_{L^\infty_{\gamma(t)}}\lesssim\|\partial_x\theta\|_{H^{\tilde{k}}_{\gamma(t)}}$ for $j\leq\tilde{k}$, we conclude that
\begin{equation}\begin{aligned}
&\left\|\int_{\mathbb{R}}q^{(l)}\left(\frac{\tilde{\Delta} g+2\sigma+\theta(x)}{\alpha}\right)\prod_{a=1}^j\left(\frac{\tilde{\Delta}\partial_x^ag+\partial_x^a\theta(x)}{\alpha}\right)^{r_a}\frac{\tilde{\Delta}\partial_x^{k+1-j}h}{\alpha}d\alpha\right\|_{L^2_{\gamma(t)}}\\
\lesssim&\left(1+\sigma^{-1}\|\partial_x\theta\|_{H^{\tilde{k}}_{\gamma(t)}}\right)\left(\|\partial_x^2g\|_{H^{\tilde{k}}_{\gamma(t)}}+\sigma^{-1}\|\partial_x\theta\|_{H^{\tilde{k}}_{\gamma(t)}}\right)^{l-1}\left(\|\partial_xg\|_{L^\infty_{\gamma(t)}}+\|\partial_x^2g\|_{H^{\tilde{k}}_{\gamma(t)}}\right)\|h\|_{H^k_{\gamma(t)}}\\
&+\sigma^{-\frac{1}{2}}\|\partial_x\theta\|_{H^{\tilde{k}}_{\gamma(t)}}\left[1+\sigma^{1-l}\|\partial_x\theta\|_{H^{\tilde{k}}_{\gamma(t)}}^{l-1}\right]\left(\int_{\Gamma_{\pm}(t)}\int_{\mathbb{R}}D_{22}^0(x,x-\alpha)\left|\tilde{\Delta}\partial_x^{k+1-j}h\right|^2d\alpha dx\right)^\frac{1}{2}.
\end{aligned}\label{2.80}\end{equation}
Now suppose otherwise that there exists $a_0>\tilde{k}$ such that $r_{a_0}\neq0$. Then $j>\tilde{k}$, $a_0+1\leq
k$, $r_{a_0}=1$ and $r_a=0$ for all $a>\tilde{k}$ with $a\neq a_0$. In this case, we use the decomposition
$$\begin{aligned}
&q^{(l)}\left(\frac{\tilde{\Delta} g+2\sigma+\theta(x)}{\alpha}\right)\prod_{a=1}^j\left(\frac{\tilde{\Delta}\partial_x^ag+\partial_x^a\theta(x)}{\alpha}\right)^{r_a}\frac{\tilde{\Delta}\partial_x^{k+1-j}h}{\alpha}\\  
=&q^{(l)}\left(\frac{\tilde{\Delta} g+2\sigma+\theta(x)}{\alpha}\right)\prod_{a\neq a_0}\left(\frac{\tilde{\Delta}\partial_x^ag+\partial_x^a\theta(x)}{\alpha}\right)^{r_a}\frac{\tilde{\Delta}\partial_x^{k+1-j}h}{\alpha}\frac{\tilde{\Delta}\partial_x^{a_0}g}{\alpha}\\
&+q^{(l)}\left(\frac{\tilde{\Delta} g+2\sigma+\theta(x)}{\alpha}\right)\prod_{a\neq a_0}\left(\frac{\tilde{\Delta}\partial_x^ag+\partial_x^a\theta(x)}{\alpha}\right)^{r_a}\frac{\tilde{\Delta}\partial_x^{k+1-j}h}{\alpha}\frac{\partial_x^{a_0}\theta(x)}{\alpha}
\end{aligned}$$
Note that $\left|q^{(l)}\left(\frac{\tilde{\Delta} g+2\sigma+\theta(x)}{\alpha}\right)\right|\lesssim\frac{|\alpha|^{l+1}}{(\alpha^2+(2\sigma)^2)^\frac{l+1}{2}}$, and an analogue of (\ref{2.76}) gives
$$\begin{aligned}
\left\|\int_{\mathbb{R}}\left|\frac{\tilde{\Delta}\partial_x^{k+1-j}h}{\alpha}\frac{\tilde{\Delta}\partial_x^{a_0}g}{\alpha}\right|d\alpha\right\|_{L^2_{\gamma(t)}}\lesssim\left(\|\partial_xh\|_{L^\infty_{\gamma(t)}}+\|\partial_x^2h\|_{H^{\tilde{k}}_{\gamma(t)}}\right)\|g\|_{H^k_{\gamma(t)}}.
\end{aligned}$$
Hence we get
\begin{equation}\begin{aligned}
&\left\|\int_{\mathbb{R}}q^{(l)}\left(\frac{\tilde{\Delta} g+2\sigma+\theta(x)}{\alpha}\right)\prod_{a\neq a_0}\left(\frac{\tilde{\Delta}\partial_x^ag+\partial_x^a\theta(x)}{\alpha}\right)^{r_a}\frac{\tilde{\Delta}\partial_x^{k+1-j}h}{\alpha}\frac{\tilde{\Delta}\partial_x^{a_0}g}{\alpha}d\alpha\right\|_{L^2_{\gamma(t)}}\\
\lesssim&\left(\|\partial_x^2g\|_{H^{\tilde{k}}_{\gamma(t)}}+\sigma^{-1}\|\partial_x\theta\|_{H^{\tilde{k}}_{\gamma(t)}}\right)^{l-1}\left(\|\partial_xh\|_{L^\infty_{\gamma(t)}}+\|\partial_x^2h\|_{H^{\tilde{k}}_{\gamma(t)}}\right)\|\partial_x^{a_0}g\|_{H^1_{\gamma(t)}}.
\end{aligned}\label{2.81}\end{equation}
For the other term, we further decompose it into two parts:
$$\begin{aligned}
&q^{(l)}\left(\frac{\tilde{\Delta} g+2\sigma+\theta(x)}{\alpha}\right)\prod_{a\neq a_0}\left(\frac{\tilde{\Delta}\partial_x^ag+\partial_x^a\theta(x)}{\alpha}\right)^{r_a}\frac{\tilde{\Delta}\partial_x^{k+1-j}h}{\alpha}\frac{\partial_x^{a_0}\theta(x)}{\alpha}\\  
=&\alpha^{-l}q^{(l)}\left(\frac{\tilde{\Delta} g+2\sigma+\theta(x)}{\alpha}\right)\left[\prod_{a\neq a_0}\left(\tilde{\Delta}\partial_x^ag+\partial_x^a\theta(x)\right)^{r_a}-\prod_{a\neq a_0}\left(\tilde{\Delta}\partial_x^ag\right)^{r_a}\right]\frac{\tilde{\Delta}\partial_x^{k+1-j}h}{\alpha}\cdot\partial_x^{a_0}\theta(x)\\
&+\alpha^{-1}q^{(l)}\left(\frac{\tilde{\Delta} g+2\sigma+\theta(x)}{\alpha}\right)\prod_{a\neq a_0}\left(\frac{\tilde{\Delta}\partial_x^ag}{\alpha}\right)^{r_a}\frac{\tilde{\Delta}\partial_x^{k+1-j}h}{\alpha}\cdot\partial_x^{a_0}\theta(x),
\end{aligned}$$
The first term on the right-hand side disappears if $l=1$. If otherwise $l>1$,  using Lemma \ref{lem2.2} and $\left\|\partial_x^a\theta\right\|_{L^\infty_{\gamma(t)}}\lesssim\|\partial_x\theta\|_{H^{\tilde{k}}_{\gamma(t)}}$, we get
\begin{equation}\begin{aligned}
&\left\|\int_{\mathbb{R}}\frac{\left[\prod_{a\neq a_0}(\tilde{\Delta}\partial_x^ag+\partial_x^a\theta(x))^{r_a}-\prod_{a\neq a_0}(\tilde{\Delta}\partial_x^ag)^{r_a}\right]\tilde{\Delta}\partial_x^{k+1-j}h}{(\alpha\pm i(\tilde{\Delta} g+2\sigma+\theta(x)))^{l+1}}d\alpha\cdot\partial_x^{a_0}\theta(x)\right\|_{L^2_{\gamma(t)}}\\
\lesssim&\left(\|\partial_x^2g\|_{H^{\tilde{k}}_{\gamma(t)}}+\sigma^{-1}\|\partial_x\theta\|_{H^{\tilde{k}}_{\gamma(t)}}\right)^{l-2}\|\partial_x\theta\|_{H^{\tilde{k}}_{\gamma(t)}}\|\partial_x^{k+2-j}h\|_{L^\infty_{\gamma(t)}}\int_{\mathbb{R}}\frac{1}{\alpha^2+(2\sigma)^2}d\alpha\|\partial_x^{a_0}\theta\|_{L^2_{\gamma(t)}}\\
\lesssim&\left(\|\partial_x^2g\|_{H^{\tilde{k}}_{\gamma(t)}}+\sigma^{-1}\|\partial_x\theta\|_{H^{\tilde{k}}_{\gamma(t)}}\right)^{l-2}\sigma^{-1}\|\partial_x\theta\|_{H^{\tilde{k}}_{\gamma(t)}}\|\partial_x^{k+2-j}h\|_{L^\infty_{\gamma(t)}}\|\partial_x^{a_0}\theta\|_{L^2_{\gamma(t)}}.
\end{aligned}\label{2.82}\end{equation}
To control the second term, it suffices to bound the $L^\infty_{\gamma(t)}$ norm of
$$\int_{\mathbb{R}}\alpha^{-1}q^{(l)}\left(\frac{\tilde{\Delta} g+2\sigma+\theta(x)}{\alpha}\right)\prod_{a\neq a_0}\left(\frac{\tilde{\Delta}\partial_x^ag}{\alpha}\right)^{r_a}\frac{\tilde{\Delta}\partial_x^{k+1-j}h}{\alpha}d\alpha.$$
To this end, we decompose the integral over $\alpha$ into the parts $|\alpha|\leq1$ and $|\alpha|>1$. Denote $R_l(x,\alpha;g):=\alpha^{-1}q^{(l)}\left(\frac{\tilde{\Delta} g+2\sigma+\theta(x)}{\alpha}\right)$ for simplicity. The part $|\alpha|>1$ can be controlled easily:
$$\begin{aligned}
\left|\int_{|\alpha|>1}\prod_{a\neq a_0}\left(\frac{\tilde{\Delta}\partial_x^{a}g}{\alpha}\right)^{r_a}\frac{\tilde{\Delta}\partial_x^{k+1-j}h}{\alpha}R_l(x,\alpha;g)d\alpha\right|
\lesssim&\|\partial_x^2g\|_{H^{\tilde{k}}_{\gamma(t)}}^{l-1}\int_{|\alpha|>1}\frac{1}{|\alpha|}M[\partial_x^{k+2-j}h](x-\alpha)d\alpha\\
\lesssim&\|\partial_x^2g\|_{H^{\tilde{k}}_{\gamma(t)}}^{l-1}\|\partial_x^{k+2-j}h\|_{L^2_{\gamma(t)}},
\end{aligned}$$
where we used the $L^2$ boundedness of the maximal functional, $2\leq k+2-j\leq1+\tilde{k}$ and that $|R_{l}(x,\alpha;g)|\lesssim|\alpha|^{-1}$. As to the part $|\alpha|\leq1$, we write
$$\begin{aligned}
&\int_{|\alpha|\leq1}\alpha^{-1}q^{(l)}\left(\frac{\tilde{\Delta} g+2\sigma+\theta(x)}{\alpha}\right)\prod_{a\neq a_0}\left(\frac{\tilde{\Delta}\partial_x^{a}g}{\alpha}\right)^{r_a}\frac{\tilde{\Delta}\partial_x^{k+1-j}h}{\alpha}d\alpha\\
=&\int_{|\alpha|\leq1}R_l(x,\alpha;g)\left[\prod_{a\neq a_0}\left(\frac{\tilde{\Delta}\partial_x^{a}g}{\alpha}\right)^{r_a}\frac{\tilde{\Delta}\partial_x^{k+1-j}h}{\alpha}-\prod_{a\neq a_0}(\partial_x^{a+1}g(x))^{r_a}\partial_x^{k+2-j}h(x)\right]d\alpha\\
&+\int_{|\alpha|\leq1}R_l(x,\alpha;g)d\alpha\prod_{a\neq a_0}(\partial_x^{a+1}g(x))^{r_a}\partial_x^{k+2-j}h(x).
\end{aligned}$$
Observe that $|R_l(x,\alpha;g)|\lesssim(\alpha^2+(2\sigma)^2)^{-\frac{1}{2}}$ and that $\left|\alpha^{-1}\tilde{\Delta}\partial_x^ag-\partial_x^{a+1}g(x)\right|\lesssim|\alpha|M[\partial_x^{a+2}g](x-\alpha)$, $\left|\alpha^{-1}\tilde{\Delta}\partial_x^{k+1-j}h-\partial_x^{k+2-j}h(x)\right|\lesssim|\alpha|M[\partial_x^{k+3-j}h](x-\alpha)$.
By Lemma \ref{lem2.2} and noticing $k+3-j\leq\tilde{k}+2$, it holds
\allowdisplaybreaks[4]\begin{align*}
&\left|\int_{|\alpha|\leq1}R_{l}(x,\alpha;g)\left[\prod_{a\neq a_0}\left(\frac{\tilde{\Delta}\partial_x^{a}g}{\alpha}\right)^{r_a}\frac{\tilde{\Delta}\partial_x^{k+1-j}h}{\alpha}-\prod_{a\neq a_0}(\partial_x^{a+1}g(x))^{r_a}\partial_x^{k+2-j}h(x)\right]d\alpha\right|\\
\lesssim&\mathrm{1}_{l\geq2}\|\partial_x^{k+2-j}h\|_{L^\infty_{\gamma(t)}}\|\partial_x^2g\|_{H^{\tilde{k}}_{\gamma(t)}}^{l-2}\int_{|\alpha|\leq1}\sum_{a\neq a_0}M[\partial_x^{a+2}g](x-\alpha)d\alpha\\
&+\|\partial_x^2g\|_{H^{\tilde{k}}_{\gamma(t)}}^{l-1}\int_{|\alpha|\leq1}M[\partial_x^{k+3-j}h](x-\alpha)d\alpha\\
\lesssim&\|\partial_x^2g\|^{l-1}_{H^{\tilde{k}}_{\gamma(t)}}\|\partial_x^{k+2-j}h\|_{H^1_{\gamma(t)}}.
\end{align*}\allowdisplaybreaks[0]
Meanwhile,  we write
$\int_{|\alpha|\leq1}R_l(x,\alpha;g)d\alpha=\int_{0\leq\alpha\leq1}\left(R_{l}(x,\alpha;g)+R_{l}(x,-\alpha;g)\right)d\alpha,$
and direct computation using condition (\ref{2.17}) shows
$$\begin{aligned}
&\left|R_{l}(x,\alpha;g)+R_{l}(x,-\alpha;g)\right|\\
=&\frac{l!}{2}\left|\frac{\alpha^l}{(\alpha+i(g(x)-g(x-\alpha)+2\sigma+\theta(x)))^{l+1}}-\frac{\alpha^l}{(\alpha-i(g(x)-g(x+\alpha)+2\sigma+\theta(x)))^{l+1}}\right.\\
&\left.+\frac{(-\alpha)^l}{(\alpha-i(g(x)-g(x-\alpha)+2\sigma+\theta(x)))^{l+1}}-\frac{(-\alpha)^l}{(\alpha+i(g(x)-g(x+\alpha)+2\sigma+\theta(x)))^{l+1}}\right|\\
\lesssim&M[\partial_x^2g](x-\alpha)+M[\partial_x^2g](x+\alpha)+\frac{\sigma}{\alpha^2+(2\sigma)^2}.
\end{aligned}$$
Using the above inequality, we obtain
$\left|\int_{|\alpha|\leq1}R_l(x,\alpha;g)d\alpha\right|\lesssim1+\|\partial_x^2g\|_{L^2_{\gamma(t)}}$, and consequently,
\begin{equation}\begin{aligned}
&\left\|\int_{|\alpha|\leq1}R_l(x,\alpha;g)d\alpha\prod_{a\neq a_0}(\partial_x^{a+1}g(x))^{r_a}\partial_x^{k+2-j}h(x)\right\|_{L^\infty_{\gamma(t)}}\\
\lesssim&\left(1+\|\partial_x^2g\|_{L^2_{\gamma(t)}}\right)\|\partial_x^2g\|_{H^{\tilde{k}}_{\gamma(t)}}^{l-1}\|\partial_x^{k+2-j}h\|_{H^1_{\gamma(t)}},
\end{aligned}\label{2.83}\end{equation}
\begin{equation}\begin{aligned}
&\left\|\int_{|\alpha|\leq1}\alpha^{-1}q^{(l)}\left(\frac{\tilde{\Delta} g+2\sigma+\theta(x)}{\alpha}\right)\prod_{a\neq a_0}\left(\frac{\tilde{\Delta}\partial_x^ag}{\alpha}\right)^{r_a}\frac{\tilde{\Delta}\partial_x^{k+1-j}h}{\alpha}d\alpha\cdot\partial_x^{a_0}\theta(x)\right\|_{L^2_{\gamma(t)}}\\
\lesssim&\left(1+\|\partial_x^2g\|_{L^2_{\gamma(t)}}\right)\|\partial_x^2g\|_{H^{\tilde{k}}_{\gamma(t)}}^{l-1}\|\partial_x^{k+2-j}h\|_{H^1_{\gamma(t)}}\|\partial_x^{a_0}\theta\|_{L^2_{\gamma(t)}}.
\end{aligned}\label{2.84}\end{equation}
Collecting (\ref{2.82}-\ref{2.84}) yields
\begin{equation}\begin{aligned}
&\left\|\int_{\mathbb{R}}q^{(l)}\left(\frac{\tilde{\Delta} g+2\sigma+\theta(x)}{\alpha}\right)\prod_{a\neq a_0}\left(\frac{\tilde{\Delta}\partial_x^ag+\partial_x^a\theta(x)}{\alpha}\right)^{r_a}\frac{\tilde{\Delta}\partial_x^{k+1-j}h}{\alpha}\frac{\partial_x^{a_0}\theta(x)}{\alpha}d\alpha\right\|_{L^2_{\gamma(t)}}\\
\lesssim&\left(1+\|\partial_x^2g\|_{L^2_{\gamma(t)}}\right)\left(\|\partial_x^2g\|_{H^{\tilde{k}}_{\gamma(t)}}+\sigma^{-1}\|\partial_x\theta\|_{H^{\tilde{k}}_{\gamma(t)}}\right)^{l-1}\|\partial_x^{k+2-j}h\|_{H^{1}_{\gamma(t)}}\|\partial_x^{a_0}\theta\|_{L^2_{\gamma(t)}}.
\end{aligned}\label{2.85}\end{equation}
Then  using $k+2-j\leq\tilde{k}+1$, $5\leq a_0\leq k-1$ and summing (\ref{2.81})(\ref{2.85}) gives
\begin{equation}\begin{aligned}
&\left\|\int_{\mathbb{R}}q^{(l)}\left(\frac{\tilde{\Delta} g+2\sigma+\theta(x)}{\alpha}\right)\prod_{a=1}^j\left(\frac{\tilde{\Delta}\partial_x^ag+\partial_x^a\theta(x)}{\alpha}\right)^{r_a}\frac{\tilde{\Delta}\partial_x^{k+1-j}h}{\alpha}d\alpha\right\|_{L^2_{\gamma(t)}}\\
\lesssim&\left(1+\|\partial_x^2g\|_{L^2_{\gamma(t)}}\right)\left(\|\partial_x^2g\|_{H^{\tilde{k}}_{\gamma(t)}}+\sigma^{-1}\|\partial_x\theta\|_{H^{\tilde{k}}_{\gamma(t)}}\right)^{l-1}\\
&\cdot\left(\|\partial_xh\|_{L^\infty_{\gamma(t)}}+\|\partial_x^2h\|_{H^{\tilde{k}}_{\gamma(t)}}\right)\left(\|\partial_x^5g\|_{H^{k-5}_{\gamma(t)}}+\|\partial_x^5\theta\|_{H^{k-5}_{\gamma(t)}}\right).
\end{aligned}\label{2.86}\end{equation}
Moreover, with $\tilde{\Delta} g+2\sigma+\theta(x)$ as well as its derivatives replaced by $\tilde{\Delta} f-2\sigma-\theta(x)$ and its derivatives respectively, or with $h$ replaced by $\theta$, similar controls as in (\ref{2.80}) and (\ref{2.86}) can be established for any other easy terms in class (\romannumeral2). Summing up all these controls and using $f=h+\mu_1\theta$, $g=h-\mu_2\theta$, we get
\begin{lem}\label{summary3.7-2}
Suppose that condition (\ref{2.17}) holds. Then for $w=h$ or $w=\theta$
\begin{equation}\begin{aligned}
&\sum_{B_k}\sum_{S_{j,l}}\left\|\int_{\mathbb{R}}q^{(l)}\left(\frac{\tilde{\Delta} g+2\sigma+\theta(x)}{\alpha}\right)\prod_{a=1}^j\left(\frac{\tilde{\Delta}\partial_x^ag+\partial_x^a\theta(x)}{\alpha}\right)^{r_a}\frac{\tilde{\Delta}\partial_x^{k+1-j}w}{\alpha}d\alpha\right\|_{L^2_{\gamma(t)}}\\
&+\sum_{B_k}\sum_{S_{j,l}}\left\|\int_{\mathbb{R}}q^{(l)}\left(\frac{\tilde{\Delta} f-2\sigma-\theta(x)}{\alpha}\right)\prod_{a=1}^j\left(\frac{\tilde{\Delta}\partial_x^af-\partial_x^a\theta(x)}{\alpha}\right)^{r_a}\frac{\tilde{\Delta}\partial_x^{k+1-j}w}{\alpha}d\alpha\right\|_{L^2_{\gamma(t)}}\\
\lesssim&\left(1+\|\partial_x^2h\|_{H^{\tilde{k}}_{\gamma(t)}}+\|\partial_x^2\theta\|_{H^{\tilde{k}}_{\gamma(t)}}+\sigma^{-1}\|\partial_x\theta\|_{H^{\tilde{k}}_{\gamma(t)}}\right)^{k}\\
&\cdot\left[\left(\|\partial_xw\|_{L^\infty_{\gamma(t)}}+\|\partial_x^2w\|_{H^{\tilde{k}}_{\gamma(t)}}\right)\left(\|\partial_x^5h\|_{H^{k-5}_{\gamma(t)}}+\|\partial_x^5\theta\|_{H^{k-5}_{\gamma(t)}}\right)\right.\\
&\left.+\left(\|\partial_xh\|_{L^\infty_{\gamma(t)}}+\|\partial_x^2h\|_{H^{\tilde{k}}_{\gamma(t)}}+\|\partial_x\theta\|_{L^\infty_{\gamma(t)}}+\|\partial_x^2\theta\|_{H^{\tilde{k}}_{\gamma(t)}}\right)\|w\|_{H^k_{\gamma(t)}}\right]\\
&+\sigma^{-\frac{1}{2}}\|\partial_x\theta\|_{H^{\tilde{k}}_{\gamma(t)}}\left(1+\sigma^{-1}\|\partial_x\theta\|_{H^{\tilde{k}}_{\gamma(t)}}\right)^{k-1}\\
&\cdot\sum_{j=2}^k\left(\int_{\Gamma_{\pm}(t)}\int_{\mathbb{R}}D_{22}^0(x,x-\alpha)\left|\tilde{\Delta}\partial_x^{k+1-j}w\right|^2d\alpha dx\right)^\frac{1}{2}.
\end{aligned}\label{2.87}\end{equation}
\end{lem}
\noindent\textbf{Controls for class (\romannumeral3)}. As before, we first look at the case $r_a=0$ for all $a>\tilde{k}.$ For any $(j,l)\in B_k$, $r\in S_{j,l}$, choose an arbitrary $a_1$ such that $r_{a_1}\neq0$. Then we use the decomposition
$$\begin{aligned}
&\int_{\mathbb{R}}\alpha^{-1}q^{(l)}\left(\frac{\tilde{\Delta} g+2\sigma+\theta(x)}{\alpha}\right)\prod_{a=1}^j\left(\frac{\tilde{\Delta}\partial_x^ag+\partial_x^a\theta(x)}{\alpha}\right)^{r_a}\partial_x^{k+1-j}\theta(x)\,d\alpha\\
=&\int_{\mathbb{R}}q^{(l)}\left(\frac{\tilde{\Delta} g+2\sigma+\theta(x)}{\alpha}\right)\prod_{a\neq a_1}\left(\frac{\tilde{\Delta}\partial_x^ag+\partial_x^a\theta(x)}{\alpha}\right)^{r_a}\frac{\tilde{\Delta}\partial_x^{a_1}g}{\alpha}\frac{\partial_x^{k+1-j}\theta(x)}{\alpha}\,d\alpha\\
&+\int_{\mathbb{R}}q^{(l)}\left(\frac{\tilde{\Delta} g+2\sigma+\theta(x)}{\alpha}\right)\prod_{a\neq a_1}\left(\frac{\tilde{\Delta}\partial_x^ag+\partial_x^a\theta(x)}{\alpha}\right)^{r_a}\frac{\partial_x^{a_1}\theta(x)}{\alpha}\frac{\partial_x^{k+1-j}\theta(x)}{\alpha}\,d\alpha.
\end{aligned}$$
Observing that the first term has the same structure as the left-hand side of (\ref{2.85}), by adjusting the proof of (\ref{2.85}), it can be shown that
\begin{equation}\begin{aligned}
&\left\|\int_{\mathbb{R}}q^{(l)}\left(\frac{\tilde{\Delta} g+2\sigma+\theta(x)}{\alpha}\right)\prod_{a\neq a_1}\left(\frac{\tilde{\Delta}\partial_x^ag+\partial_x^a\theta(x)}{\alpha}\right)^{r_a}\frac{\tilde{\Delta}\partial_x^{a_1}g}{\alpha}\frac{\partial_x^{k+1-j}\theta(x)}{\alpha}\,d\alpha\right\|_{L^2_{\gamma(t)}}\\
\lesssim&\left(1+\|\partial_x^2g\|_{L^2_{\gamma(t)}}\right)\left(\|\partial_x^2g\|_{H^{\tilde{k}}_{\gamma(t)}}+\sigma^{-1}\|\partial_x\theta\|_{H^{\tilde{k}}_{\gamma(t)}}\right)^{l-1}\left(\|\partial_xg\|_{L^\infty_{\gamma(t)}}+\|\partial_x^2g\|_{H^{\tilde{k}}_{\gamma(t)}}\right)\|\theta\|_{H^k_{\gamma(t)}}.
\end{aligned}\label{2.88}\end{equation}
We further decompose the second term into two parts $M_1$ and $M_2$:
$$M_1:=\int_{\mathbb{R}}\alpha^{-l-1}q^{(l)}\left(\frac{\tilde{\Delta} g+2\sigma+\theta(x)}{\alpha}\right)d\alpha\prod_{a=1}^j\left(\partial_x^{r_a}\theta(x)\right)^{r_a}\cdot\partial_x^{k+1-j}\theta(x),$$
$$\begin{aligned}
M_2:=&\int_{\mathbb{R}}\alpha^{-1}q^{(l)}\left(\frac{\tilde{\Delta} g+2\sigma+\theta(x)}{\alpha}\right)\left[\prod_{a\neq a_1}\left(\frac{\tilde{\Delta}\partial_x^ag+\partial_x^a\theta(x)}{\alpha}\right)^{r_a}-\prod_{a\neq a_1}\left(\frac{\partial_x^a\theta(x)}{\alpha}\right)^{r_a}\right]\\
&\cdot\frac{\partial_x^{a_1}\theta(x)}{\alpha}\,d\alpha\cdot\partial_x^{k+1-j}\theta(x).
\end{aligned}$$
If $l=1$, then $M_2$ disappears. If otherwise $l\geq1$, using Lemma \ref{lem2.2}, we obtain
\begin{equation}\begin{aligned}
&\|M_2\|_{L^2_{\gamma(t)}}\\
\lesssim&\|\partial_x\theta\|_{H^{\tilde{k}}_{\gamma(t)}}\left(\|\partial_x^2g\|_{H^{\tilde{k}}_{\gamma(t)}}+\sigma^{-1}\|\partial_x\theta\|_{H^{\tilde{k}}_{\gamma(t)}}\right)^{l-2}\|\partial_x^2g\|_{H^{\tilde{k}}_{\gamma(t)}}\int_{\mathbb{R}}\frac{d\alpha}{\alpha^2+(2\sigma)^2}\|\partial_x^{k+1-j}\theta\|_{L^2_{\gamma(t)}}\\
\lesssim&\sigma^{-1}\|\partial_x\theta\|_{H^{\tilde{k}}_{\gamma(t)}}\left(\|\partial_x^2g\|_{H^{\tilde{k}}_{\gamma(t)}}+\sigma^{-1}\|\partial_x\theta\|_{H^{\tilde{k}}_{\gamma(t)}}\right)^{l-2}\|\partial_x^2g\|_{H^{\tilde{k}}_{\gamma(t)}}\|\theta\|_{H^k_{\gamma(t)}}.
\end{aligned}\label{2.89}\end{equation}
To control $M_1$, it suffices to bound $\int_{\mathbb{R}}\alpha^{-l-1}q^{(l)}\left(\frac{\tilde{\Delta} g+2\sigma+\theta(x)}{\alpha}\right)d\alpha$. A key observation is that $\alpha^{-1-l}q^{(l)}\left(\frac{2\sigma+\theta(x)}{\alpha}\right)$ is an odd function of $\alpha$ for fixed $x$ and all $l\geq
1$. Meanwhile, a direct computation shows
$$\begin{aligned}
&\left|\alpha^{-l-1}q^{(l)}\left(\frac{\tilde{\Delta} g+2\sigma+\theta(x)}{\alpha}\right)-\alpha^{-l-1}q^{(l)}\left(\frac{2\sigma+\theta(x)}{\alpha}\right)\right|
\lesssim|\tilde{\Delta} g|\left(\alpha^2+(2\sigma)^2\right)^{-\frac{l+2}{2}}.
\end{aligned}$$
It follows that $\left|\int_{\mathbb{R}}\alpha^{-l-1}q^{(l)}\left(\frac{\tilde{\Delta} g+2\sigma+\theta(x)}{\alpha}\right)d\alpha\right|\lesssim\sigma^{-l}\|\partial_xg\|_{L^\infty_{\gamma(t)}}$, and thus
\begin{equation}\label{2.90}\begin{aligned}
\|M_1\|_{L^2_{\gamma(t)}}\lesssim\|\partial_xg\|_{L^\infty_{\gamma(t)}}\left(\sigma^{-1}\|\partial_x\theta\|_{H^{\tilde{k}}_{\gamma(t)}}\right)^l\|\theta\|_{H^k_{\gamma(t)}}.
\end{aligned}\end{equation}
Summing up (\ref{2.89})(\ref{2.90}) yields
\begin{equation}\begin{aligned}
&\left\|\int_{\mathbb{R}}q^{(l)}\left(\frac{\tilde{\Delta} g+2\sigma+\theta(x)}{\alpha}\right)\prod_{a\neq a_1}\left(\frac{\tilde{\Delta}\partial_x^ag+\partial_x^a\theta(x)}{\alpha}\right)^{r_a}\frac{\partial_x^{a_1}\theta(x)}{\alpha}\frac{\partial_x^{k+1-j}\theta(x)}{\alpha}\,d\alpha\right\|_{L^2_{\gamma(t)}}\\
\lesssim&\left(1+\sigma^{-1}\|\partial_x\theta\|_{H^{\tilde{k}}_{\gamma(t)}}\right)\left(\|\partial_xg\|_{L^\infty_{\gamma(t)}}+\|\partial_x^2g\|_{H^{\tilde{k}}_{\gamma(t)}}\right)\left(\|\partial_x^2g\|_{H^{\tilde{k}}_{\gamma(t)}}+\sigma^{-1}\|\partial_x\theta\|_{H^{\tilde{k}}_{\gamma(t)}}\right)^{l-1}\|\theta\|_{H^k_{\gamma(t)}},
\end{aligned}\label{2.91}\end{equation}
and together with (\ref{2.88}) we conclude
\begin{equation}\begin{aligned}
&\left\|\int_{\mathbb{R}}\alpha^{-1}q^{(l)}\left(\frac{\tilde{\Delta} g+2\sigma+\theta(x)}{\alpha}\right)\prod_{a=1}^j\left(\frac{\tilde{\Delta}\partial_x^ag+\partial_x^a\theta(x)}{\alpha}\right)^{r_a}\partial_x^{k+1-j}\theta(x)\,d\alpha\right\|_{L^2_{\gamma(t)}}\\
\lesssim&\left(1+\|\partial_x^2g\|_{L^2_{\gamma(t)}}+\sigma^{-1}\|\partial_x\theta\|_{H^{\tilde{k}}_{\gamma(t)}}\right)\left(\|\partial_xg\|_{L^\infty_{\gamma(t)}}+\|\partial_x^2g\|_{H^{\tilde{k}}_{\gamma(t)}}\right)\\&\cdot\left(\|\partial_x^2g\|_{H^{\tilde{k}}_{\gamma(t)}}+\sigma^{-1}\|\partial_x\theta\|_{H^{\tilde{k}}_{\gamma(t)}}\right)^{l-1}\|\theta\|_{H^k_{\gamma(t)}},
\end{aligned}\label{2.92}\end{equation}
Suppose now that there exists $a_0>\tilde{k}$ with $r_{a_0}\neq 0$ so that $r_{a_0}=1$, $a_0+1\leq k$, $j>\tilde{k}$ and $r_a=0$ for all $a>\tilde{k}$ with $a\neq a_0$. In this case, we write
$$\begin{aligned}
&\int_{\mathbb{R}}\alpha^{-1}q^{(l)}\left(\frac{\tilde{\Delta} g+2\sigma+\theta(x)}{\alpha}\right)\prod_{a=1}^j\left(\frac{\tilde{\Delta}\partial_x^ag+\partial_x^a\theta(x)}{\alpha}\right)^{r_a}\partial_x^{k+1-j}\theta(x)\,d\alpha\\
=&\int_{\mathbb{R}}q^{(l)}\left(\frac{\tilde{\Delta} g+2\sigma+\theta(x)}{\alpha}\right)\prod_{a\neq a_0}\left(\frac{\tilde{\Delta}\partial_x^ag+\partial_x^a\theta(x)}{\alpha}\right)^{r_a}\frac{\partial_x^{k+1-j}\theta(x)}{\alpha}\frac{\tilde{\Delta}\partial_x^{a_0}g}{\alpha}\,d\alpha\\
&+\int_{\mathbb{R}}q^{(l)}\left(\frac{\tilde{\Delta} g+2\sigma+\theta(x)}{\alpha}\right)\prod_{a\neq a_0}\left(\frac{\tilde{\Delta}\partial_x^ag+\partial_x^a\theta(x)}{\alpha}\right)^{r_a}\frac{\partial_x^{k+1-j}\theta(x)}{\alpha}\frac{\partial_x^{a_0}\theta(x)}{\alpha}\,d\alpha
\end{aligned}$$
Note that the second term has the same structure as the left-hand side of (\ref{2.91}). Hence we have
\begin{equation}\begin{aligned}
&\left\|\int_{\mathbb{R}}q^{(l)}\left(\frac{\tilde{\Delta} g+2\sigma+\theta(x)}{\alpha}\right)\prod_{a\neq a_0}\left(\frac{\tilde{\Delta}\partial_x^ag+\partial_x^a\theta(x)}{\alpha}\right)^{r_a}\frac{\partial_x^{k+1-j}\theta(x)}{\alpha}\frac{\partial_x^{a_0}\theta(x)}{\alpha}\,d\alpha\right\|_{L^2_{\gamma(t)}}\\
\lesssim&\left(1+\sigma^{-1}\|\partial_x\theta\|_{H^{\tilde{k}}_{\gamma(t)}}\right)\left(\|\partial_xg\|_{L^\infty_{\gamma(t)}}+\|\partial_x^2g\|_{H^{\tilde{k}}_{\gamma(t)}}\right)\left(\|\partial_x^2g\|_{H^{\tilde{k}}_{\gamma(t)}}+\sigma^{-1}\|\partial_x\theta\|_{H^{\tilde{k}}_{\gamma(t)}}\right)^{l-1}\|\theta\|_{H^k_{\gamma(t)}}.
\end{aligned}\label{2.93}\end{equation}
We further decompose the first term into two parts $M_3$ and $M_4$:
$$M_3:=\int_{\mathbb{R}}q^{(l)}\left(\frac{\tilde{\Delta} g+2\sigma+\theta(x)}{\alpha}\right)\prod_{a\neq a_0}\left(\frac{\partial_x^{a}\theta(x)}{\alpha}\right)^{r_a}\frac{\partial_{x}^{k+1-j}\theta(x)}{\alpha}\frac{\tilde{\Delta}\partial_x^{a_0}g}{\alpha}d\alpha,$$
$$\begin{aligned}M_4:=&\int_{\mathbb{R}}q^{(l)}\left(\frac{\tilde{\Delta} g+2\sigma+\theta(x)}{\alpha}\right)\left[\prod_{a\neq a_0}\left(\frac{\tilde{\Delta}\partial_x^ag+\partial_x^a\theta(x)}{\alpha}\right)^{r_a}-\prod_{a\neq a_0}\left(\frac{\partial_x^a\theta(x)}{\alpha}\right)^{r_a}\right]\\
&\cdot
\frac{\partial_x^{k+1-j}\theta(x)}{\alpha}\frac{\tilde{\Delta}\partial_x^{a_0}g}{\alpha}\,d\alpha.\end{aligned}$$
If $l=1$, $M_4$ disappears. If otherwise $l\geq2$, using Lemma \ref{lem2.2}, Minkowski and the fact $\|\partial_x^{a_0}g\|_{L^2_{\gamma(t)}}\lesssim\|g\|_{H^k_{\gamma(t)}}$ gives
\begin{equation}\begin{aligned}
\|M_4\|_{L^2_{\gamma(t)}}\lesssim&\|\partial_x\theta\|_{H^{\tilde{k}}_{\gamma(t)}}\left(\|\partial_x^2g\|_{H^{\tilde{k}}_{\gamma(t)}}+\sigma^{-1}\|\partial_x\theta\|_{H^{\tilde{k}}_{\gamma(t)}}\right)^{l-2}\|\partial_x^2g\|_{H^{\tilde{k}}_{\gamma(t)}}\left\|\int_{\mathbb{R}}\frac{|\tilde{\Delta}\partial_x^{a_0}g|d\alpha}{\alpha^2+(2\sigma)^2}\right\|_{L^2_{\gamma(t)}}\\ \lesssim&\sigma^{-1}\|\partial_x\theta\|_{H^{\tilde{k}}_{\gamma(t)}}\left(\|\partial_x^2g\|_{H^{\tilde{k}}_{\gamma(t)}}+\sigma^{-1}\|\partial_x\theta\|_{H^{\tilde{k}}_{\gamma(t)}}\right)^{l-2}\|\partial_x^2g\|_{H^{\tilde{k}}_{\gamma(t)}}\|g\|_{H^k_{\gamma(t)}}. 
\end{aligned}\label{2.94}\end{equation}
$M_3$ can be estimated in the same way as in (\ref{2.77})(\ref{2.78}) if $l\geq2$ or (\ref{2.79}) if $l=1$. To be specific, we have in both cases
\begin{equation}\begin{aligned}
\|M_3\|_{L^2_{\gamma(t)}}
\lesssim&\sigma^{-\frac{1}{2}}\|\partial_x\theta\|_{H^{\tilde{k}}_{\gamma(t)}}\left[1+\sigma^{1-l}\|\partial_x\theta\|_{H^{\tilde{k}}_{\gamma(t)}}^{l-1}\right]\left(\int_{\Gamma_{\pm}(t)}\int_{\mathbb{R}}D_{22}^0(x,x-\alpha)\left|\tilde{\Delta}\partial_x^{a_0}g\right|^2d\alpha dx\right)^\frac{1}{2}\\
&+\left(\sigma^{-1}\|\partial_x\theta\|_{H^{\tilde{k}}_{\gamma(t)}}\right)^l\|\partial_xg\|_{L^\infty_{\gamma(t)}}\|g\|_{H^k_{\gamma(t)}}.
\end{aligned}\label{2.95}\end{equation}
Collecting (\ref{2.93}-\ref{2.95}), we conclude that
\begin{equation}\begin{aligned}
&\left\|\int_{\mathbb{R}}\alpha^{-1}q^{(l)}\left(\frac{\tilde{\Delta} g+2\sigma+\theta(x)}{\alpha}\right)\prod_{a=1}^j\left(\frac{\tilde{\Delta}\partial_x^ag+\partial_x^a\theta(x)}{\alpha}\right)^{r_a}\partial_x^{k+1-j}\theta(x)\,d\alpha\right\|_{L^2_{\gamma(t)}}\\
\lesssim&\left(1+\sigma^{-1}\|\partial_x\theta\|_{H^{\tilde{k}}_{\gamma(t)}}\right)\left(\|\partial_x^2g\|_{H^{\tilde{k}}_{\gamma(t)}}+\sigma^{-1}\|\partial_x\theta\|_{H^{\tilde{k}}_{\gamma(t)}}\right)^{l-1}\left(\|\partial_x^2g\|_{H^{\tilde{k}}_{\gamma(t)}}+\|\partial_xg\|_{L^\infty_{\gamma(t)}}\right)\|g\|_{H^k_{\gamma(t)}}\\
&+\sigma^{-\frac{1}{2}}\|\partial_x\theta\|_{H^{\tilde{k}}_{\gamma(t)}}\left[1+\sigma^{1-l}\|\partial_x\theta\|_{H^{\tilde{k}}_{\gamma(t)}}^{l-1}\right]\left(\int_{\Gamma_{\pm}(t)}\int_{\mathbb{R}}D_{22}^0(x,x-\alpha)\left|\tilde{\Delta}\partial_x^{a_0}g\right|^2d\alpha dx\right)^\frac{1}{2}.
\end{aligned}\label{2.96}\end{equation}
With $\tilde{\Delta} g+2\sigma+\theta(x)$ and its derivatives replaced by $\tilde{\Delta} f-2\sigma-\theta(x)$ and its derivatives, respectively, similar controls as in (\ref{2.92}) or (\ref{2.96}) for other easy terms in class (\romannumeral3) can be established. In view of $f=h+\mu_1\theta$ and $g=h-\mu_2\theta$, we finally obtain

\begin{lem}\label{summary3.7-3}
Suppose that the solution satisfies conditions (\ref{2.17}). Then all the easy terms of class (iii) satisfy the bound
\begin{equation}\begin{aligned}
&\sum_{B_k}\sum_{S_{j,l}}\left\|\int_{\mathbb{R}}\alpha^{-1}q^{(l)}\left(\frac{\tilde{\Delta} g+2\sigma+\theta(x)}{\alpha}\right)\prod_{a=1}^j\left(\frac{\tilde{\Delta}\partial_x^ag+\partial_x^a\theta(x)}{\alpha}\right)^{r_a}\partial_x^{k+1-j}\theta(x)\,d\alpha\right\|_{L^2_{\gamma(t)}}\\
&+\sum_{B_k}\sum_{S_{j,l}}\left\|\int_{\mathbb{R}}\alpha^{-1}q^{(l)}\left(\frac{\tilde{\Delta} f-2\sigma-\theta(x)}{\alpha}\right)\prod_{a=1}^j\left(\frac{\tilde{\Delta}\partial_x^af-\partial_x^a\theta(x)}{\alpha}\right)^{r_a}\partial_x^{k+1-j}\theta(x)\,d\alpha\right\|_{L^2_{\gamma(t)}}\\
\lesssim&\left(\|\partial_xh\|_{L^\infty_{\gamma(t)}}+\|\partial_x^2h\|_{H^{\tilde{k}}_{\gamma(t)}}+\|\partial_x\theta\|_{L^\infty_{\gamma(t)}}+\|\partial_x^2\theta\|_{H^{\tilde{k}}_{\gamma(t)}}\right)\\
&\cdot\left(1+\|\partial_x^2h\|_{H^{\tilde{k}}_{\gamma(t)}}+\|\partial_x^2\theta\|_{H^{\tilde{k}}_{\gamma(t)}}+\sigma^{-1}\|\partial_x\theta\|_{H^{\tilde{k}}_{\gamma(t)}}\right)^{k}\left(\|h\|_{H^k_{\gamma(t)}}+\|\theta\|_{H^k_{\gamma(t)}}\right)\\
&+\sigma^{-\frac{1}{2}}\|\partial_x\theta\|_{H^{\tilde{k}}_{\gamma(t)}}\left(1+\sigma^{-1}\|\partial_x\theta\|_{H^{\tilde{k}}_{\gamma(t)}}\right)^{k-1}\\
&\cdot\sum_{a_0=\tilde{k}+1}^{k-1}\left(\int_{\Gamma_{\pm}(t)}\int_{\mathbb{R}}D_{22}^0(x,x-\alpha)\left(\left|\tilde{\Delta}\partial_x^{a_0}h\right|^2+\left|\tilde{\Delta}\partial_x^{a_0}\theta\right|^2\right)d\alpha dx\right)^\frac{1}{2}.
\end{aligned}\label{2.97}\end{equation}
\end{lem}

\subsection{Energy estimate uniform in $\sigma$}\label{energy1}
We hereby collect all the estimates obtained in section \ref{apriori} to conclude with the a priori energy estimate for the solution. For simplicity, we introduce the notations
$$\text{Diss}_k:=\left(\int_{\Gamma_{\pm}(t)}\int_{\Gamma_{\pm}(t)}D^0_{11}(x,x_1)|\Delta\partial_x^kh|^2dxdx_1+\mu_1^2\mu_2^2\int_{\Gamma_{\pm}(t)}\int_{\Gamma_{\pm}(t)}D_{22}^0(x,x_1)|\Delta\partial_x^k\theta|^2dxdx_1\right)^\frac{1}{2},$$
$$B_5(t):=\left(1+\delta_2+\sigma^{-1}\|\partial_x\theta\|_{H^{\tilde{k}}_{\gamma(t)}}\right)^k,$$
and recall the notation $B_i$ in (\ref{B_i}).
By symmetry, it holds
$$\begin{aligned}
\int_{\Gamma_{\pm}(t)}\int_{\mathbb{R}}\left|\frac{\Delta\partial_x^kh(x,x-\alpha)}{\alpha}\right|^2dxd\alpha=&2\int_{\Gamma_{\pm}(t)}P.V.\int_{\mathbb{R}}\frac{\partial_x^kh(x)-\partial_x^kh(x-\alpha)}{\alpha^2}d\alpha\cdot\overline{\partial_x^kh(x)}\,dx\\
=&2\pi\int_{\Gamma_{\pm}(t)}\Lambda\partial_x^kh\cdot\overline{\partial_x^kh}\,dx.
\end{aligned}$$
Hence $\int_{\Gamma_{\pm}(t)}\int_{\Gamma_{\pm}(t)}D_{11}^0(x,x_1)|\Delta\partial_x^kh|^2dxdx_1\simeq
\|h\|_{\dot{H}^{k+\frac{1}{2}}_{\gamma(t)}},$ and consequently 
$$\begin{aligned}
&\sum_{j=0}^k\int_{\Gamma_{\pm(t)}}\int_{\Gamma_{\pm(t)}}D_{11}^0(x,x_1)|\Delta\partial_x^jh|^2dxdx_1
\lesssim\int_{\Gamma_{\pm(t)}}\int_{\Gamma_{\pm(t)}}D_{11}^0(x,x_1)|\left(|\Delta\partial_x^kh|^2+|\Delta h|^2\right)dxdx_1.
\end{aligned}$$
As to the dissipation in $\theta$, we write
$$\begin{aligned}
&\int_{\Gamma_{\pm(t)}}\int_{\mathbb{R}}\frac{(2\sigma)^2}{\alpha^2+(2\sigma)^2}\left|\frac{\Delta\partial_x^k\theta(x,x-\alpha)}{\alpha}\right|^2dxd\alpha\\
=&2\int_{\Gamma_{\pm}(t)}\int_{\mathbb{R}}P.V.\frac{(2\sigma)^2}{\alpha^2+(2\sigma)^2}\frac{1}{\alpha^2}(\partial_x^k\theta(x)-\partial_x^k\theta(x-\alpha))d\alpha\cdot\overline{\partial_x^k\theta(x)}\,dx\\
=&2\int_{\Gamma_{\pm}(t)}P.V.\int_{\mathbb{R}}\left(\frac{1}{\alpha}-\frac{1}{2\sigma}\arctan\frac{2\sigma}{\alpha}\right)\partial_x^{k+1}\theta(x-\alpha)d\alpha\cdot\overline{\partial_x^k\theta(x)}\,dx.
\end{aligned}$$
The Fourier transform of $\frac{1}{\alpha}-\frac{1}{2\sigma}\arctan\frac{2\sigma}{\alpha}$ can be obtained through a simple computation:
$$\mathcal{F}\left(\frac{1}{\alpha}-\frac{1}{2\sigma}\arctan\frac{2\sigma}{\alpha}\right)(\xi)=-i\pi \text{sign}(\xi)\left(1-\frac{1-e^{-2\sigma|\xi|}}{2\sigma|\xi|}\right).$$
Therefore, Plancheral identity yields
\begin{equation}\begin{aligned}
&\int_{\Gamma_{\pm(t)}}\int_{\mathbb{R}}\frac{(2\sigma)^2}{\alpha^2+(2\sigma)^2}\left|\frac{\Delta\partial_x^k\theta(x,x-\alpha)}{\alpha}\right|^2dxd\alpha\\
=&4\pi\int_{\mathbb{R}}|\xi|^{2k}\left(|\xi|-\frac{1-e^{-2\sigma|\xi|}}{2\sigma}\right)|\hat{\theta}(\xi)|
^2\cosh\left(2\gamma(t)\xi\right)d\xi.
\end{aligned}\label{2.99}\end{equation}
The above inequality shows 
$$\begin{aligned}
&\sum_{j=0}^k\int_{\Gamma_{\pm(t)}}\int_{\Gamma_{\pm(t)}}D_{22}^0(x,x_1)|\Delta\partial_x^j\theta|^2dxdx_1
\lesssim\int_{\Gamma_{\pm(t)}}\int_{\Gamma_{\pm(t)}}D_{22}^0(x,x_1)|\left(|\Delta\partial_x^k\theta|^2+|\Delta\theta|^2\right)dxdx_1,
\end{aligned}$$
and the claim is proved. We also remark that (\ref{2.99}) indicates that we do not gain any derivatives of $\theta$ uniformly in $\sigma$ from the dissipation. Actually, if we suppose $|\xi|\leq\sigma^{-\frac{1}{3}}$ for example, then 
$|\xi|-\frac{1-e^{-2\sigma|\xi|}}{2\sigma}\leq\sigma|\xi|^2\leq\sigma^\frac{1}{3}.$
Therefore, as $\sigma$ approaches $0$, $|\xi|-\frac{1-e^{-2\sigma|\xi|}}{2\sigma}$ goes to $0$ for $\xi$ in the region $\left[-\sigma^{-\frac{1}{3}},\sigma^{-\frac{1}{3}}\right]$, which tends to $\mathbb{R}$.\\
\indent Now, gathering the controls given by Lemma \ref{summary3.2}, \ref{summary3.3}, \ref{summary3.4}, \ref{summary3.5}, \ref{summary3.6} and Lemma \ref{summary3.7-1}-\ref{summary3.7-3} in the equation resulted from
multiplying equation (\ref{2.1}) by $\overline{\partial^k_xh}$, multiplying equation (\ref{2.2}) by $\mu_1\mu_2\overline{\partial^k_x\theta}$, and integrating the real part of the sum along $x\in\Gamma_{\pm}(t)$ yields the following inequality as long as conditions (\ref{2.17}) and (\ref{2.65}) hold.
\begin{equation}\begin{aligned}
&\frac{d}{dt}\left(\|\partial_x^kh\|^2_{L^2_{\gamma(t)}}+\mu_1\mu_2\|\partial_x^k\theta\|^2_{L^2_{\gamma(t)}}\right)+c_0\text{Diss}_k^2\\
&-2\gamma^\prime(t)\tanh(2\gamma(t))\left(\|\Lambda^\frac{1}{2}\partial_x^kh\|_{L^2_{\gamma(t)}}^2+\mu_1\mu_2\|\Lambda^\frac{1}{2}\partial_x^k\theta\|_{L^2_{\gamma(t)}}^2\right)\\
\lesssim&(B_5+w_0)\left(\delta_2+\sigma^{\frac{1}{2}}+\|\partial_xf\|_{L^2_{\gamma(t)}}+\|\partial_xg\|_{L^2_{\gamma(t)}}\right)\left(\|h\|_{H^k_{\gamma(t)}}+\|\theta\|_{H^k_{\gamma(t)}}\right)\sum_{j=1}^k\text{Diss}_j\\
&+\left(B_4+B_5\delta_2-2\gamma^\prime(t)\tanh(2\gamma(t))\right)\left(\|h\|_{H^k_{\gamma(t)}}^2+\|\theta\|_{H^k_{\gamma(t)}}^2\right)\\
&+B_3\left(\|\partial_xf\|_{L^\infty_{\gamma(t)}}+\|\partial_xg\|_{L^\infty_{\gamma(t)}}\right)\left(\|\Lambda^\frac{1}{2}\partial_x^kh\|_{L^2_{\gamma(t)}}^2+\|\Lambda^\frac{1}{2}\partial_x^k\theta\|_{L^2_{\gamma(t)}}^2\right)\\
&+\left[\|\partial_x^2f\|_{L^\infty_{\gamma(t)}}+\|\partial_x^2g\|_{L^\infty_{\gamma(t)}}+\sigma^{-1}\|\partial_x\theta\|_{L^\infty_{\gamma(t)}}\left(\|\partial_xf\|_{L^\infty_{\gamma(t)}}+\|\partial_xg\|_{L^\infty_{\gamma(t)}}\right)\right]\\
&\cdot\left(\|\Lambda^\frac{1}{2}\partial_x^kh\|_{L^2_{\gamma(t)}}+\|\Lambda^\frac{1}{2}\partial_x^k\theta\|_{L^2_{\gamma(t)}}\right)\left(\|\partial_x^kh\|_{L^2_{\gamma(t)}}+\|\partial_x^k\theta\|_{L^2_{\gamma(t)}}\right),
\end{aligned}\label{2.100}\end{equation}
where $c_0$ is a universal constant. Meanwhile, if we set $k=0$ so that there is no commutator, then
\begin{equation}\begin{aligned}
&\frac{d}{dt}\left(\|h\|^2_{L^2_{\gamma(t)}}+\mu_1\mu_2\|\theta\|^2_{L^2_{\gamma(t)}}\right)+c_0\text{Diss}_0^2\\
&-2\gamma^\prime(t)\tanh(2\gamma(t))\left(\|\Lambda^\frac{1}{2}h\|_{L^2_{\gamma(t)}}^2+\mu_1\mu_2\|\Lambda^\frac{1}{2}\theta\|_{L^2_{\gamma(t)}}^2\right)\\
\lesssim&\left[w_0\left(\|\partial_x^2f\|_{L^2_{\gamma(t)}}+\|\partial_x^2g\|_{L^2_{\gamma(t)}}\right)+\sigma^{-1}\|\partial_x\theta\|_{L^\infty_{\gamma(t)}}\left(\sigma^\frac{1}{2}+\|\partial_xf\|_{L^2_{\gamma(t)}}+\|\partial_xg\|_{L^2_{\gamma(t)}}\right)\right]\\
&\cdot\left(\|h\|_{L^2_{\gamma(t)}}+\|\theta\|_{L^2_{\gamma(t)}}\right)\text{Diss}_0\\
&+\left[B_4+\left(1+\sigma^{-1}\|\partial_x\theta\|_{L^\infty_{\gamma(t)}}\right)\left(\|\partial_xf\|_{L^\infty_{\gamma(t)}}+\|\partial_xg\|_{L^\infty_{\gamma(t)}}\right)-2\gamma^\prime\tanh(2\gamma(t))\right]\\
&\cdot\left(\|h\|_{L^2_{\gamma(t)}}^2+\|\theta\|_{L^2_{\gamma(t)}}^2\right)+B_3\left(\|\partial_xf\|_{L^\infty_{\gamma(t)}}+\|\partial_xg\|_{L^\infty_{\gamma(t)}}\right)\left(\|\Lambda^\frac{1}{2}h\|^2_{L^2_{\gamma(t)}}+\|\Lambda^\frac{1}{2}\theta\|_{L^2_{\gamma(t)}}^2\right)\\
&+\left[\|\partial_x^2f\|_{L^\infty_{\gamma(t)}}+\|\partial_x^2g\|_{L^\infty_{\gamma(t)}}+\sigma^{-1}\|\partial_x\theta\|_{L^\infty_{\gamma(t)}}\left(\|\partial_xf\|_{L^\infty_{\gamma(t)}}+\|\partial_xg\|_{L^\infty_{\gamma(t)}}\right)\right]\\
&\cdot\left(\|\Lambda^\frac{1}{2}h\|_{L^2_{\gamma(t)}}+\|\Lambda^\frac{1}{2}\theta\|_{L^2_{\gamma(t)}}\right)\left(\|h\|_{L^2_{\gamma(t)}}+\|\theta\|_{L^2_{\gamma(t)}}\right).
\end{aligned}\label{2.101}\end{equation}
Use Cauchy-Schwarz inequality and (\ref{2.99}) for the first terms on the right-hand sides of (\ref{2.100}) and (\ref{2.101}) to get
\begin{equation}\begin{aligned}
&(B_5+w_0)\left(\delta_2+\sigma^{\frac{1}{2}}+\|\partial_xf\|_{L^2_{\gamma(t)}}+\|\partial_xg\|_{L^2_{\gamma(t)}}\right)\left(\|h\|_{H^k_{\gamma(t)}}+\|\theta\|_{H^k_{\gamma(t)}}\right)\sum_{j=1}^k\text{Diss}_j\\
&+\left[w_0\left(\|\partial_x^2f\|_{L^2_{\gamma(t)}}+\|\partial_x^2g\|_{L^2_{\gamma(t)}}\right)+\sigma^{-1}\|\partial_x\theta\|_{L^\infty_{\gamma(t)}}\left(\sigma^\frac{1}{2}+\|\partial_xf\|_{L^2_{\gamma(t)}}+\|\partial_xg\|_{L^2_{\gamma(t)}}\right)\right]\\
&\cdot\left(\|h\|_{L^2_{\gamma(t)}}+\|\theta\|_{L^2_{\gamma(t)}}\right)\text{Diss}_0\\
\leq&Cb_1^{-1}(B_5+w_0)^2\left(\delta_2^2+\sigma+\|\partial_xf\|^2_{L^2_{\gamma(t)}}+\|\partial_xg\|^2_{L^2_{\gamma(t)}}\right)\left(\|h\|^2_{H^k_{\gamma(t)}}+\|\theta\|^2_{H^k_{\gamma(t)}}\right)+b_1\left(\text{Diss}_0^2+\text{Diss}_k^2\right).
\end{aligned}\label{2.102}\end{equation}
Next, using the pointwise interpolation inequality  $\|\partial^2_xf\|_{L^\infty}\lesssim\|\partial_xf\|_{L^\infty}^\frac{1}{2}\|\partial_x^3f\|_{L^\infty}^\frac{1}{2}$ (see \cite[Theorem 1]{Mazya1999}) and Cauchy-Schwarz inequality for the last terms on the right-hand sides of (\ref{2.100})(\ref{2.101}) yields 
$$\begin{aligned}
&\left[\|\partial_x^2f\|_{L^\infty_{\gamma(t)}}+\|\partial_x^2g\|_{L^\infty_{\gamma(t)}}+\sigma^{-1}\|\partial_x\theta\|_{L^\infty_{\gamma(t)}}\left(\|\partial_xf\|_{L^\infty_{\gamma(t)}}+\|\partial_xg\|_{L^\infty_{\gamma(t)}}\right)\right]\\
&\cdot\left[\left(\|\Lambda^\frac{1}{2}\partial_x^kh\|_{L^2_{\gamma(t)}}+\|\Lambda^\frac{1}{2}\partial_x^k\theta\|_{L^2_{\gamma(t)}}\right)\left(\|\partial_x^kh\|_{L^2_{\gamma(t)}}+\|\partial_x^k\theta\|_{L^2_{\gamma(t)}}\right)\right.\\
&\left.+\left(\|\Lambda^\frac{1}{2}h\|_{L^2_{\gamma(t)}}+\|\Lambda^\frac{1}{2}\theta\|_{L^2_{\gamma(t)}}\right)\left(\|h\|_{L^2_{\gamma(t)}}+\|\theta\|_{L^2_{\gamma(t)}}\right)\right]\\
\lesssim&\left(\|\partial_xf\|_{L^\infty_{\gamma(t)}}+\|\partial_xg\|_{L^\infty_{\gamma(t)}}\right)\left(\|\Lambda^\frac{1}{2}h\|^2_{H^{k}_{\gamma(t)}}+\|\Lambda^\frac{1}{2}\theta\|^2_{H^k_{\gamma(t)}}\right)\\
&+\left[\|\partial_x^3f\|_{L^\infty_{\gamma(t)}}+\|\partial_x^3g\|_{L^\infty_{\gamma(t)}}+\sigma^{-2}\|\partial_x\theta\|_{L^\infty_{\gamma(t)}}^2\left(\|\partial_xf\|_{L^\infty_{\gamma(t)}}+\|\partial_xg\|_{L^\infty_{\gamma(t)}}\right)\right]\left(\|h\|^2_{H^k_{\gamma(t)}}+\|\theta\|_{H^k_{\gamma(t)}}^2\right).
\end{aligned}$$
Summing (\ref{2.100})(\ref{2.101}) and choosing $b_1$ small enough shows 
\begin{equation}\begin{aligned}
&\frac{d}{dt}\left(\|h\|^2_{H^k_{\gamma(t)}}+\mu_1\mu_2\|\theta\|^2_{H^k_{\gamma(t)}}\right)+\frac{c_0}{2}\left(\text{Diss}_0^2+\text{Diss}_k^2\right)\\
&-\left(2\gamma^\prime(t)\tanh(2\gamma(t))+C_3 B_7\right)\left(\|\Lambda^\frac{1}{2}h\|_{H^k_{\gamma(t)}}^2+\mu_1\mu_2\|\Lambda^\frac{1}{2}\theta\|_{H^k_{\gamma(t)}}^2\right)\\
\lesssim&\left(B_6-2\gamma^\prime(t)\tanh(2\gamma(t))\right)\left(\|h\|^2_{H^k_{\gamma(t)}}+\|\theta\|^2_{H^k_{\gamma(t)}}\right),
\end{aligned}\label{2.103}\end{equation}
where $C_3$ denotes the implicit constant and
$$\begin{aligned}
B_6(t):=&(B_5+w_0)^2\left(\delta_2^2+\sigma+\|\partial_xf\|^2_{L^2_{\gamma(t)}}+\|\partial_xg\|^2_{L^2_{\gamma(t)}}\right)+B_4+B_5\delta_2,
\end{aligned}$$
$$B_7(t):=B_3\left(\|\partial_xh\|_{L^\infty_{\gamma(t)}}+\|\partial_x\theta\|_{L^\infty_{\gamma(t)}}\right).$$
Here we also used $\|\partial_xf\|_{L^\infty_{\gamma(t)}}+\|\partial_xg\|_{L^\infty_{\gamma(t)}}\simeq\|\partial_xh\|_{L^\infty_{\gamma(t)}}+\|\partial_x\theta\|_{L^\infty_{\gamma(t)}}.$
\subsection{Energy estimate dependent on $\sigma$}\label{energy2}
In order to construct the solution, we also establish an energy estimate without losing analyticity. We treat the last terms on the right-hand sides of (\ref{2.100})(\ref{2.101}) in an alternative way:
\begin{equation}\begin{aligned}
&\left[\|\partial_x^2f\|_{L^\infty_{\gamma(t)}}+\|\partial_x^2g\|_{L^\infty_{\gamma(t)}}+\sigma^{-1}\|\partial_x\theta\|_{L^\infty_{\gamma(t)}}\left(\|\partial_xf\|_{L^\infty_{\gamma(t)}}+\|\partial_xg\|_{L^\infty_{\gamma(t)}}\right)\right]\\
&\cdot\left[\left(\|\Lambda^\frac{1}{2}\partial_x^kh\|_{L^2_{\gamma(t)}}+\|\Lambda^\frac{1}{2}\partial_x^k\theta\|_{L^2_{\gamma(t)}}\right)\left(\|\partial_x^kh\|_{L^2_{\gamma(t)}}+\|\partial_x^k\theta\|_{L^2_{\gamma(t)}}\right)\right.\\
&\left.+\left(\|\Lambda^\frac{1}{2}h\|_{L^2_{\gamma(t)}}+\|\Lambda^\frac{1}{2}\theta\|_{L^2_{\gamma(t)}}\right)\left(\|h\|_{L^2_{\gamma(t)}}+\|\theta\|_{L^2_{\gamma(t)}}\right)\right]\\
\leq
&Cb_2^{-1}\left[\|\partial_x^2f\|_{L^\infty_{\gamma(t)}}+\|\partial_x^2g\|_{L^\infty_{\gamma(t)}}+\sigma^{-1}\|\partial_x\theta\|_{L^\infty_{\gamma(t)}}\left(\|\partial_xf\|_{L^\infty_{\gamma(t)}}+\|\partial_xg\|_{L^\infty_{\gamma(t)}}\right)\right]^2\\
&\cdot\left(\|h\|^2_{H^k_{\gamma(t)}}+\|\theta\|_{H^k_{\gamma(t)}}^2\right)+b_2\left(\|\Lambda^\frac{1}{2}h\|^2_{H^{k}_{\gamma(t)}}+\|\Lambda^\frac{1}{2}\theta\|^2_{H^k_{\gamma(t)}}\right)\\
\leq&Cb_2^{-1}(B_5\delta_2)^2\left(\|h\|^2_{H^k_{\gamma(t)}}+\|\theta\|^2_{H^k_{\gamma(t)}}\right)+b_2\left(\|\Lambda^\frac{1}{2}h\|^2_{H^{k}_{\gamma(t)}}+\|\Lambda^\frac{1}{2}\theta\|^2_{H^k_{\gamma(t)}}\right).
\end{aligned}\label{2.104}\end{equation}
Meanwhile, in view of (\ref{2.99}), it holds
$$\begin{aligned}
&\int_{\Gamma_{\pm(t)}}\int_{\mathbb{R}}\frac{(2\sigma)^2}{\alpha^2+(2\sigma)^2}\left|\frac{\Delta\partial_x^k\theta(x,x-\alpha)}{\alpha}\right|^2dxd\alpha\geq 2\pi\|\Lambda^\frac{1}{2}\partial_x^k\theta\|_{L^2_{\gamma(t)}}^2-\pi\sigma^{-1}\|\partial_x^k\theta\|_{L^2_{\gamma(t)}}^2.
\end{aligned}$$
This implies
\begin{equation}\text{Diss}_0^2+\text{Diss}_k^2\gtrsim\|\Lambda^\frac{1}{2}h\|_{H^k_{\gamma(t)}}^2+\|\Lambda^\frac{1}{2}\theta\|_{H^k_{\gamma(t)}}^2-C\sigma^{-1}\|\theta\|^2_{H^k_{\gamma(t)}}.
\label{2.105}\end{equation}
Choosing $b_2$ small enough and inserting (\ref{2.104})(\ref{2.105}) into (\ref{2.100}) and (\ref{2.101}) yields that
$$\begin{aligned}
&\frac{d}{dt}\left(\|h\|^2_{H^k_{\gamma(t)}}+\mu_1\mu_2\|\theta\|^2_{H^k_{\gamma(t)}}\right)-2\gamma^\prime(t)\tanh(2\gamma(t))\left(\|\Lambda^\frac{1}{2}h\|_{H^k_{\gamma(t)}}^2+\mu_1\mu_2\|\Lambda^\frac{1}{2}\theta\|_{H^k_{\gamma(t)}}^2\right)\\
\lesssim&\left(B_6-2\gamma^\prime(t)\tanh(2\gamma(t))\right)\left(\|h\|^2_{H^k_{\gamma(t)}}+\|\theta\|^2_{H^k_{\gamma(t)}}\right)+\sigma^{-1}\|\theta\|^2_{H^k_{\gamma(t)}}\\
&+\left(B_7-\frac{c_0}{C_4}\right)\left(\|\Lambda^\frac{1}{2}h\|^2_{H^k_{\gamma(t)}}+\|\Lambda^\frac{1}{2}\theta\|^2_{H^k_{\gamma(t)}}\right),
\end{aligned}$$
where $C_4$ is a constant. If we let $\gamma(t)=\tilde{\gamma}$ be a positive constant, then
\begin{equation}\begin{aligned}
&\frac{d}{dt}\left(\|h\|^2_{H^k_{\tilde{\gamma}}}+\mu_1\mu_2\|\theta\|^2_{H^k_{\tilde{\gamma}}}\right)\\
\lesssim& B_6\left(\|h\|^2_{H^k_{\tilde{\gamma}}}+\|\theta\|^2_{H^k_{\tilde{\gamma}}}\right)+\sigma^{-1}\|\theta\|^2_{H^k_{\tilde{\gamma}}}+\left(B_7-\frac{c_0}{C_4}\right)\left(\|\Lambda^\frac{1}{2}h\|^2_{H^k_{\tilde{\gamma}}}+\|\Lambda^\frac{1}{2}\theta\|^2_{H^k_{\tilde{\gamma}}}\right).
\end{aligned}\label{2.106}\end{equation}

\section{Control for distance function}\label{distance}
As presented by the coefficients $B_6$ and $B_7$ of (\ref{2.103}), to close the energy estimate, we still need to find a control for $\sigma^{-1}\theta$. To this end, we set $\theta_1:=\sigma^{-1}\theta$ and let $k_1:=k-3$. Notice that $k_1\geq\tilde{k}+2$ since $k\geq10$.  Denote by ${N_i^\theta}^\prime$ the commutators obtained from replacing $k$ by $k_1$ in the expressions ${N_i^\theta}$. Then we replace $k$ by $k_1$ in the equation (\ref{2.2}) and multiply it by $\sigma^{-1}$ to obtain
\begin{equation}\begin{aligned}
&\partial_t\partial_x^{k_1}\theta_1+(\mu_1u_++\mu_2u_-)\partial_x^{k_1+1}\theta_1\\
=&\Delta\rho\,\partial_x^{k_1}\left[P.V.\int_{\mathbb{R}}\sigma^{-1}\left(\mu_2(P_{11}-P_{21})-\mu_1(P_{22}-P_{12})\right)(x,x-\alpha)\tilde{\Delta}\partial_xh \,d\alpha\right]\\
&-\mu_1\mu_2\Delta\rho\,P.V.\int_{\mathbb{R}}\left(P_{11}+P_{22}-P_{12}-P_{21}\right)(x,x-\alpha)\partial_x^{k_1+1}\theta_1(x-\alpha)d\alpha\\
&+\sigma^{-1}\left({N_2^{\theta}}^\prime+{N_4^{\theta}}^\prime\right).
\end{aligned}\label{3.1}\end{equation}
Here we note that
$$\begin{aligned}
&\Delta\rho\,\partial_x^{k_1}\left[P.V.\int_{\mathbb{R}}\left(\mu_2(P_{11}-P_{21})-\mu_1(P_{22}-P_{12})\right)(x,x-\alpha)\tilde{\Delta}\partial_xh \,d\alpha\right]\\
=&-\Delta\rho\,P.V.\int_{\mathbb{R}}\left(\mu_2(P_{11}-P_{21})-\mu_1(P_{22}-P_{12})\right)(x,x-\alpha)\partial_x^{k_1+1}h(x-\alpha)d\alpha\\
&-(u_+-u_-)\partial_x^{k_1+1}h+{N^{\theta}_{1}}^\prime+{N^{\theta}_3}^\prime.
\end{aligned}$$ 
Then we multiply (\ref{3.1}) by $\overline{\partial_x^{k_1}\theta_1}$ and integrate the real part over $\Gamma_{\pm(t)}$ to get the equation of $\frac{d}{dt}\|\partial_x^{k_1}\theta_1\|_{L^2_{\gamma(t)}}^2$.
In fact, the estimates for the contribution of the terms $(\mu_1u_++\mu_2u_-)\partial_x^{{k_1}+1}\theta_1$, $\sigma^{-1}\left({N_2^\theta}^\prime+{N_4^\theta}^\prime\right)$ and $-\mu_1\mu_2\Delta\rho\,P.V.\int_{\mathbb{R}}\left(P_{11}+P_{22}-P_{12}-P_{21}\right)(x,x-\alpha)\partial_x^{k_1+1}\theta_1(x-\alpha)d\alpha$  are already available. First, using (\ref{2.61}) with $u=u_+$ and $u=u_-$ immediately yields
\begin{equation}\begin{aligned}
&\left|\int_{\Gamma_{\pm}(t)}(\mu_1u_++\mu_2u_-)\partial_x^{{k_1}+1}\theta_1\overline{\partial_x^{k_1}\theta_1}\,dx\right|\\
\lesssim&B_3\left(\|\partial_xf\|_{L^\infty_{\gamma(t)}}+\|\partial_xg\|_{L^\infty_{\gamma(t)}}\right)\|\partial_x^{k_1}\theta_1\|_{\dot{H}^\frac{1}{2}_{\gamma(t)}}^2+B_4\|\partial_x^{k_1}\theta_1\|_{L^2_{\gamma(t)}}^2,
\end{aligned}\label{3.2}\end{equation}
In view of (\ref{2.6})(\ref{2.7})(\ref{2.8}), we have
\begin{equation}\begin{aligned}
&-\Re\int_{\Gamma_{\pm}(t)}P.V.\int_{\Gamma_{\pm}(t)}\left(P_{11}+P_{22}-P_{12}-P_{21}\right)(x,x_1)\partial_x^{k_1+1}\theta_1(x_1)dx_1\,\overline{\partial_x^{k_1}\theta_1(x)}dx\\
=&-\frac{1}{2}\int_{\Gamma_{\pm}(t)}\int_{\Gamma_{\pm}(t)}\Re\left(K_{11}-\frac{1}{2}K_{12}-\frac{1}{2}K_{21}\right)(x,x_1)|\Delta\partial_x^{k_1}\theta_1|^2dxdx_1,\\
&-\frac{1}{2}\int_{\Gamma_{\pm}(t)}\int_{\Gamma_{\pm}(t)}\Re\left(K_{22}-\frac{1}{2}\tilde{K}_{12}-\frac{1}{2}\tilde{K}_{21}\right)(x,x_1)|\Delta\partial_x^{k_1}\theta_1|^2dxdx_1.
\\
&-\Re\int_{\Gamma_{\pm}(t)}\int_{\Gamma_{\pm}(t)}\left(J_{11}+J_{22}\right)(x,x_1)\cdot\Delta\partial_x^{k_1}\theta_1\cdot\overline{\partial_x^{k_1}\theta_1(x)}dxdx_1.\\
&+\frac{1}{2}\Re\int_{\Gamma_{\pm}(t)}\int_{\Gamma_{\pm}(t)}(J_{12}+J_{21}+\tilde{J}_{12}+\tilde{J}_{21})(x,x_1)\cdot\Delta\partial_x^{k_1}\theta_1\cdot\overline{\partial_x^{k_1}\theta_1(x)}dxdx_1.
\end{aligned}\label{3.3}\end{equation}
From the proof of (\ref{2.34}), the first and second terms can be estimated as follows.
\allowdisplaybreaks[4]\begin{align*}
&\frac{1}{2}\int_{\Gamma_{\pm}(t)}\int_{\Gamma_{\pm}(t)}\Re\left(K_{11}-\frac{1}{2}K_{12}-\frac{1}{2}K_{21}\right)(x,x_1)|\Delta\partial_x^{k_1}\theta_1|^2dxdx_1,\\
&+\frac{1}{2}\int_{\Gamma_{\pm}(t)}\int_{\Gamma_{\pm}(t)}\Re\left(K_{22}-\frac{1}{2}\tilde{K}_{12}-\frac{1}{2}\tilde{K}_{21}\right)(x,x_1)|\Delta\partial_x^{k_1}\theta_1|^2dxdx_1\\
\geq&\frac{c_0}{2}\int_{\Gamma_{\pm}(t)}\int_{\Gamma_{\pm}(t)}D_{22}^0(x,x_1)|\Delta\partial_x^{k_1}\theta_1|^2dxdx_1\\
&-C\|\partial_x\theta_1\|_{L^\infty_{\gamma(t)}}\left(\|\partial_xf\|_{L^\infty_{\gamma(t)}}+\|\partial_xg\|_{L^\infty_{\gamma(t)}}\right)\|\partial_x^{k_1}\theta_1\|_{L^2_{\gamma(t)}}^2.
\end{align*}\allowdisplaybreaks[0]
By (\ref{2.38})(\ref{2.39}), the absolute value of the last two terms can be bounded by
$$\begin{aligned}
&\left[\left(\|\partial_xf\|_{L^\infty_{\gamma(t)}}+\|\partial_xg\|_{L^\infty_{\gamma(t)}}+\left\|\theta_1\right\|_{L^\infty_{\gamma(t)}}\right)\left(\|\partial_x^2f\|_{L^2_{\gamma(t)}}+\|\partial_x^2g\|_{L^2_{\gamma(t)}}\right)+\|\partial_x\theta_1\|_{L^\infty_{\gamma(t)}}\right.\\
&\left.\cdot\left(\|\partial_xf\|_{L^2_{\gamma(t)}}+\|\partial_xg\|_{L^2_{\gamma(t)}}+\sigma^{\frac{1}{2}}\right)\right]\|\partial_x^{k_1}\theta_1\|_{L^2_{\gamma(t)}}\left(\int_{\Gamma_{\pm}(t)}\int_{\Gamma_{\pm}(t)}D_{22}^0|\Delta\partial_x^{k_1}\theta_1|^2dxdx_1\right)^\frac{1}{2}.
\end{aligned}$$
Taking the sum yields
\begin{equation}\begin{aligned}
&-\Re\int_{\Gamma_{\pm}(t)}P.V.\int_{\Gamma_{\pm}(t)}\left(P_{11}+P_{22}-P_{12}-P_{21}\right)(x,x_1)\partial_x^{k_1+1}\theta_1(x_1)dx_1\,\overline{\partial_x^{k_1}\theta_1(x)}dx\\
&+\frac{c_0}{2}\int_{\Gamma_{\pm(t)}}\int_{\Gamma_{\pm(t)}}D_{22}^0(x,x_1)|\Delta\partial_x^{k_1}\theta_1|^2dxdx_1\\
\lesssim&\left[\left(\|\partial_xf\|_{L^\infty_{\gamma(t)}}+\|\partial_xg\|_{L^\infty_{\gamma(t)}}+\left\|\theta_1\right\|_{L^\infty_{\gamma(t)}}\right)\left(\|\partial_x^2f\|_{L^2_{\gamma(t)}}+\|\partial_x^2g\|_{L^2_{\gamma(t)}}\right)+\|\partial_x\theta_1\|_{L^\infty_{\gamma(t)}}\right.\\
&\left.\cdot\left(\|\partial_xf\|_{L^2_{\gamma(t)}}+\|\partial_xg\|_{L^2_{\gamma(t)}}+\sigma^{\frac{1}{2}}\right)\right]\|\partial_x^{k_1}\theta_1\|_{L^2_{\gamma(t)}}\left(\int_{\Gamma_{\pm}(t)}\int_{\Gamma_{\pm}(t)}D_{22}^0|\Delta\partial_x^{k_1}\theta_1|^2dxdx_1\right)^\frac{1}{2}\\
&+\|\partial_x\theta_1\|_{L^\infty_{\gamma(t)}}\left(\|\partial_xf\|_{L^\infty_{\gamma(t)}}+\|\partial_xg\|_{L^\infty_{\gamma(t)}}\right)\|\partial_x^{k_1}\theta_1\|_{L^2_{\gamma(t)}}^2.
\end{aligned}\label{3.4}\end{equation}
For ${N_2^\theta}^\prime+{N_4^\theta}^\prime$, recall that 
$$\begin{aligned}
&(\Delta\rho)^{-1}({N_2^\theta}^\prime+{N_4^\theta}^\prime)\\
=&\mu_1\mu_2\,P.V.\int_{\mathbb{R}}[\partial_x^{k_1},(P_{11}+P_{22}-P_{12}-P_{21})(x,x-\alpha)](\partial_x\theta(x)-\partial_x\theta(x-\alpha))d\alpha\\
&+\mu_1\mu_2\,P.V.\int_{\mathbb{R}}[\partial_x^{k_1},(\mu_2P_{21}+\mu_1P_{12})(x,x-\alpha)]\partial_x\theta(x)d\alpha.
\end{aligned}$$The first term on the right-hand side can be controlled as follows by completely repeating the argument for the safe and easy terms in class (\romannumeral1) and class (\romannumeral2), see (\ref{2.64})(\ref{2.68}-\ref{2.70})(\ref{2.74})(\ref{2.87}).
\begin{equation}\begin{aligned}
&\left\|P.V.\int_{\mathbb{R}}[\partial_x^{k_1},(P_{11}+P_{22}-P_{12}-P_{21})(x,x-\alpha)](\partial_x\theta(x)-\partial_x\theta(x-\alpha))d\alpha\right\|_{L^2_{\gamma(t)}}\\
\lesssim&B_5\delta_2\|\theta\|_{H^{k_1}_{\gamma(t)}}+B_5\sigma^\frac{1}{2}\sum_{j=0}^{k_1}\left(\int_{\Gamma_{\pm}(t)}\int_{\mathbb{R}}D_{22}^0(x,x-\alpha)|\tilde{\Delta}\partial_x^j\theta|^2d\alpha dx\right)^\frac{1}{2}\\
&+B_5\left(\|\partial_x^2\theta\|_{H^{\tilde{k}_1}_{\gamma(t)}}+\|\partial_x\theta\|_{L^\infty_{\gamma(t)}}\right)\left(\|\partial_x^5h\|_{H^{k_1-5}_{\gamma(t)}}+\|\partial_x^5\theta\|_{H^{k_1-5}_{\gamma(t)}}\right).
\end{aligned}\label{3.5}\end{equation}
Here $\tilde{k}_1:=\left\lfloor\frac{k_1+1}{2}\right\rfloor$ , and we used $\tilde{k}_1\leq\tilde{k}$.
The second term is a linear combination of 
$$\int_{\mathbb{R}}\alpha^{-1}q^{(l)}\left(\frac{\tilde{\Delta} g+2\sigma+\theta(x)}{\alpha}\right)\prod_{a=1}^j\left(\frac{\tilde{\Delta}\partial_x^ag+\partial_x^a\theta(x)}{\alpha}\right)^{r_a}d\alpha\cdot\partial_x^{k_1+1-j}\theta(x),$$
$$\int_{\mathbb{R}}\alpha^{-1}q^{(l)}\left(\frac{\tilde{\Delta} f-2\sigma-\theta(x)}{\alpha}\right)\prod_{a=1}^j\left(\frac{\tilde{\Delta}\partial_x^af-\partial_x^a\theta(x)}{\alpha}\right)^{r_a}d\alpha\cdot\partial_x^{k_1+1-j}\theta(x)$$
for $1\leq l\leq j\leq k_1$ and $r\in S_{j,l}$. Suppose first $r_a=0$ for all $a>\tilde{k}_1$. Completely repeating the argument for the safe and easy terms in class (\romannumeral3) in the case $r_a=0$ for all $a>\tilde{k}$  (see (\ref{2.71})(\ref{2.73})(\ref{2.92})), we already have
\begin{equation}\begin{aligned}
&\left\|\int_{\mathbb{R}}\alpha^{-1}q^{(l)}\left(\frac{\tilde{\Delta} g+2\sigma+\theta(x)}{\alpha}\right)\prod_{a=1}^j\left(\frac{\tilde{\Delta}\partial_x^ag+\partial_x^a\theta(x)}{\alpha}\right)^{r_a}\partial_x^{k_1+1-j}\theta(x)\,d\alpha\right\|_{L^2_{\gamma(t)}}\\
\lesssim&\left(1+\|\partial_x^2g\|_{L^2_{\gamma(t)}}+\sigma^{-1}\|\partial_x\theta\|_{H^{\tilde{k}_1}_{\gamma(t)}}\right)\left(\|\partial_xg\|_{L^\infty_{\gamma(t)}}+\|\partial_x^2g\|_{H^{\tilde{k}_1}_{\gamma(t)}}\right)\\&\cdot\left(\|\partial_x^2g\|_{H^{\tilde{k}_1}_{\gamma(t)}}+\sigma^{-1}\|\partial_x\theta\|_{H^{\tilde{k}_1}_{\gamma(t)}}\right)^{l-1}\|\theta\|_{H^{k_1}_{\gamma(t)}}.
\end{aligned}\label{3.6}\end{equation}
Otherwise, there exists $a_0>\tilde{k}_1$ such that $r_{a_0}\neq0$ and thus $j>\tilde{k}_1$, $r_{a_0}=1$, $r_a=0$ for all $a>\tilde{k}_1$ with $a\neq a_0$. In this case,
$$\begin{aligned}
&\int_{\mathbb{R}}\alpha^{-1}q^{(l)}\left(\frac{\tilde{\Delta} g+2\sigma+\theta(x)}{\alpha}\right)\prod_{a=1}^j\left(\frac{\tilde{\Delta}\partial_x^ag+\partial_x^a\theta(x)}{\alpha}\right)^{r_a}\partial_x^{k_1+1-j}\theta(x)\,d\alpha\\
=&\int_{\mathbb{R}}q^{(l)}\left(\frac{\tilde{\Delta} g+2\sigma+\theta(x)}{\alpha}\right)\prod_{a\neq a_0}\left(\frac{\tilde{\Delta}\partial_x^ag+\partial_x^a\theta(x)}{\alpha}\right)^{r_a}\frac{\partial_x^{k_1+1-j}\theta(x)}{\alpha}\frac{\tilde{\Delta}\partial_x^{a_0}g}{\alpha}\,d\alpha\\
&+\int_{\mathbb{R}}q^{(l)}\left(\frac{\tilde{\Delta} g+2\sigma+\theta(x)}{\alpha}\right)\prod_{a\neq a_0}\left(\frac{\tilde{\Delta}\partial_x^ag+\partial_x^a\theta(x)}{\alpha}\right)^{r_a}\frac{\partial_x^{k_1+1-j}\theta(x)}{\alpha}\frac{\partial_x^{a_0}\theta(x)}{\alpha}\,d\alpha.
\end{aligned}$$
The $L^2_{\gamma(t)}$ norm of the second term can be controlled in the same way as in (\ref{2.93}) or (\ref{2.91}) by
$$\left(1+\sigma^{-1}\|\partial_x\theta\|_{H^{\tilde{k}_1}_{\gamma(t)}}\right)\left(\|\partial_xg\|_{L^\infty_{\gamma(t)}}+\|\partial_x^2g\|_{H^{\tilde{k}_1}_{\gamma(t)}}\right)\left(\|\partial_x^2g\|_{H^{\tilde{k}_1}_{\gamma(t)}}+\sigma^{-1}\|\partial_x\theta\|_{H^{\tilde{k}_1}_{\gamma(t)}}\right)^{l-1}\|\theta\|_{H^{k_1}_{\gamma(t)}}.$$
For the first term, we will not follow (\ref{2.96}) since it cannot provide the factor $\|\theta\|_{H^{k_1}_{\gamma(t)}}$ in order to get a $\sigma-$independent estimate of $\theta_1$. Instead, we have to use the technique in (\ref{2.82}) and (\ref{2.85}). To be specific, we use the decomposition
$$\begin{aligned}
&q^{(l)}\left(\frac{\tilde{\Delta} g+2\sigma+\theta(x)}{\alpha}\right)\prod_{a\neq a_0}\left(\frac{\tilde{\Delta}\partial_x^ag+\partial_x^a\theta(x)}{\alpha}\right)^{r_a}\frac{\partial_x^{k_1+1-j}\theta(x)}{\alpha}\frac{\tilde{\Delta}\partial_x^{a_0}g}{\alpha}\\
=&\alpha^{-l}q^{(l)}\left(\frac{\tilde{\Delta} g+2\sigma+\theta(x)}{\alpha}\right)\left[\prod_{a\neq a_0}\left(\tilde{\Delta}\partial_x^ag+\partial_x^a\theta(x)\right)^{r_a}-\prod_{a\neq a_0}\left(\tilde{\Delta}\partial_x^ag\right)^{r_a}\right]\frac{\tilde{\Delta}\partial_x^{a_0}g}{\alpha}\cdot\partial_x^{k_1+1-j}\theta(x)\\
&+\alpha^{-1}q^{(l)}\left(\frac{\tilde{\Delta} g+2\sigma+\theta(x)}{\alpha}\right)\prod_{a\neq a_0}\left(\frac{\tilde{\Delta}\partial_x^ag}{\alpha}\right)^{r_a}\frac{\tilde{\Delta}\partial_x^{a_0}g}{\alpha}\cdot\partial_x^{k_1+1-j}\theta(x),
\end{aligned}$$
The first term disappears if $l=1$. Recall that
$$\begin{aligned}
&q^{(l)}\left(\frac{\tilde{\Delta} g+2\sigma+\theta(x)}{\alpha}\right)
=\frac{(-i)^ll!}{2}\left[\frac{\alpha^{l+1}}{(\alpha+i(\tilde{\Delta} g+2\sigma+\theta(x)))^{l+1}}+\frac{(-1)^l\alpha^{l+1}}{(\alpha-i(\tilde{\Delta} g+2\sigma+\theta(x)))^{l+1}}\right].
\end{aligned}$$
As an analogue of (\ref{2.82}), if $l\geq2$,
there is
\begin{equation}\begin{aligned}
&\left\|\int_{\mathbb{R}}\frac{\left[\prod_{a\neq a_0}\left(\tilde{\Delta}\partial_x^ag+\partial_x^a\theta(x)\right)^{r_a}-\prod_{a\neq a_0}\left(\tilde{\Delta}\partial_x^ag\right)^{r_a}\right]\tilde{\Delta}\partial_x^{a_0}g}{(\alpha\pm i(\tilde{\Delta} g+2\sigma+\theta(x)))^{l+1}}d\alpha\cdot\partial_x^{k_1+1-j}\theta(x)\right\|_{L^2_{\gamma(t)}}\\
\lesssim&\left(\|\partial_x^2g\|_{H^{\tilde{k}_1}_{\gamma(t)}}+\sigma^{-1}\|\partial_x\theta\|_{H^{\tilde{k}_1}_{\gamma(t)}}\right)^{l-2}\sigma^{-1}\|\partial_x\theta\|_{H^{\tilde{k}_1}_{\gamma(t)}}\|\partial_x^{a_0+1}g\|_{L^\infty_{\gamma(t)}}\|\partial_x^{k_1+1-j}\theta\|_{L^2_{\gamma(t)}},
\end{aligned}\label{3.7}\end{equation}
For the second term, replacing $\partial_x^{k+2-j}h$ by $\partial_x^{a_0+1}g$ and replacing $\partial_x^{a_0}\theta$ by $\partial_x^{k_1+1-j}\theta$ in (\ref{2.85}) gives
\begin{equation}\begin{aligned}
&\left\|\int_{|\alpha|\leq1}\alpha^{-1}q^{(l)}\left(\frac{\tilde{\Delta} g+2\sigma+\theta(x)}{\alpha}\right)\prod_{a\neq a_0}\left(\frac{\tilde{\Delta}\partial_x^ag}{\alpha}\right)^{r_a}\frac{\tilde{\Delta}\partial_x^{a_0}g}{\alpha}d\alpha\cdot\partial_x^{k_1+1-j}\theta(x)\right\|_{L^2_{\gamma(t)}}\\
\lesssim&\left(1+\|\partial_x^2g\|_{L^2_{\gamma(t)}}\right)\|\partial_x^2g\|_{H^{\tilde{k}_1}_{\gamma(t)}}^{l-1}\|\partial_x^{a_0+1}g\|_{H^1_{\gamma(t)}}\|\partial_x^{k_1+1-j}\theta\|_{L^2_{\gamma(t)}}.
\end{aligned}\label{3.8}\end{equation}
Note that $a_0$ reaches $k_1$ if $l=1$ and $j=k_1$. Collecting (\ref{3.6}-\ref{3.8}) yields
$$\begin{aligned}
&\left\|\int_{\mathbb{R}}\alpha^{-1}q^{(l)}\left(\frac{\tilde{\Delta} g+2\sigma+\theta(x)}{\alpha}\right)\prod_{a=1}^j\left(\frac{\tilde{\Delta}\partial_x^ag+\partial_x^a\theta(x)}{\alpha}\right)^{r_a}\partial_x^{k_1+1-j}\theta(x)\,d\alpha\right\|_{L^2_{\gamma(t)}}\\
\lesssim&\left(1+\|\partial_x^2g\|_{L^2_{\gamma(t)}}+\sigma^{-1}\|\partial_x\theta\|_{H^{\tilde{k}_1}_{\gamma(t)}}\right)\left(\|\partial_x^2g\|_{H^{\tilde{k}_1}_{\gamma(t)}}+\sigma^{-1}\|\partial_x\theta\|_{H^{\tilde{k}_1}_{\gamma(t)}}\right)^{l-1}\\
&\cdot\left(\|\partial_x^2g\|_{H^{k_1}_{\gamma(t)}}+\|\partial_xg\|_{L^\infty_{\gamma(t)}}\right)\|\theta\|_{H^{k_1}_{\gamma(t)}}.
\end{aligned}$$
The similar control holds for $\int_{\mathbb{R}}\alpha^{-1}q^{(l)}\left(\frac{\tilde{\Delta} f-2\sigma-\theta(x)}{\alpha}\right)\prod_{a=1}^j\left(\frac{\tilde{\Delta}\partial_x^af-\partial_x^a\theta(x)}{\alpha}\right)^{r_a}\partial_x^{k_1+1-j}\theta(x)\,d\alpha$ if we replace the derivatives of $g$ by the derivatives of $f$ on the right-hand side of the above inequality.
Then we conclude
\begin{equation}\begin{aligned}
&\left\|\int_{\mathbb{R}}[\partial_x^{k_1},(\mu_2P_{21}+\mu_1P_{12})(x,x-\alpha)]\partial_x\theta(x)d\alpha\right\|_{L^2_{\gamma(t)}}\\
\lesssim&B_5\left(\|\partial_xh\|_{L^\infty_{\gamma(t)}}+\|\partial_x\theta\|_{L^\infty_{\gamma(t)}}+\|\partial_x^2h\|_{H^{k_1}_{\gamma(t)}}+\|\partial_x^2\theta\|_{H^{k_1}_{\gamma(t)}}\right)\|\theta\|_{H^{k_1}_{\gamma(t)}}.
\end{aligned}\label{3.9}\end{equation}
To bound $\partial_x^{k_1}\left[P.V.\int_{\mathbb{R}}\sigma^{-1}\left(\mu_2(P_{11}-P_{21})-\mu_1(P_{22}-P_{12})\right)(x,x-\alpha)\tilde{\Delta}\partial_xh \,d\alpha\right]$, observe that
$$\left(P_{11}-P_{21}\right)(x,x-\alpha)=\frac{\alpha(2\sigma+\theta(x))^2-2\alpha(2\sigma+\theta(x))\tilde{\Delta} f}{(\alpha^2+(\tilde{\Delta} f)^2)(\alpha^2+(\tilde{\Delta} f-2\sigma-\theta(x))^2)},$$
$$\left(P_{22}-P_{12}\right)(x,x-\alpha)=\frac{\alpha(2\sigma+\theta(x))^2+2\alpha(2\sigma+\theta(x))\tilde{\Delta} g}{(\alpha^2+(\tilde{\Delta} g)^2)(\alpha^2+(\tilde{\Delta} g+2\sigma+\theta(x))^2)}.$$
We present the details of the $L^2_{\gamma(t)}$ bound over $\partial_x^{k_1}\left(\int_{\mathbb{R}}(P_{11}-P_{21})(x,x-\alpha)\tilde{\Delta}\partial_xh\,d\alpha\right)$, and the bound over $\partial_x^{k_1}\left(\int_{\mathbb{R}}(P_{22}-P_{12})(x,x-\alpha)\tilde{\Delta}\partial_xh\,d\alpha\right)$ can be obtained in a similar way. For simplicity, denote
$$Q_1(x,x-\alpha):=\frac{\alpha(2\sigma+\theta(x))^2}{(\alpha^2+(\tilde{\Delta} f)^2)(\alpha^2+(\tilde{\Delta} f-2\sigma-\theta(x))^2)},$$
$$Q_2(x,x-\alpha):=\frac{-2\alpha(2\sigma+\theta(x))\tilde{\Delta} f}{(\alpha^2+(\tilde{\Delta} f)^2)(\alpha^2+(\tilde{\Delta} f-2\sigma-\theta(x))^2)}.$$
We also note that 
$$Q_1(x,x-\alpha)=\alpha^{-3}(2\sigma+\theta(x))^2q\left(\frac{\tilde{\Delta} f}{\alpha}\right)q\left(\frac{\tilde{\Delta} f-2\sigma-\theta(x)}{\alpha}\right),$$
$$Q_2(x,x-\alpha)=-2\alpha^{-3}\tilde{\Delta} f(2\sigma+\theta(x))q\left(\frac{\tilde{\Delta} f}{\alpha}\right)q\left(\frac{\tilde{\Delta} f-2\sigma-\theta(x)}{\alpha}\right).$$
It suffices to bound the $L^2_{\gamma(t)}$ norm of $\int_{\mathbb{R}}\partial_x^{k_1}\left(Q_{i}(x,x-\alpha)\tilde{\Delta}\partial_xh\right)d\alpha$, $i=1,2$. First, we write
$$\partial_x^{k_1}(Q_i(x,x-\alpha)\tilde{\Delta}\partial_xh)=\sum_{j=0}^{k_1}\binom{k_1}{j}\partial_x^{j}(Q_i(x,x-\alpha))\tilde{\Delta}\partial_x^{k_1+1-j}h.$$
$\partial_x^j(Q_1(x,x-\alpha))$ is a linear combination of terms in the form
$$\begin{aligned}
\alpha^{-3}\partial_x^{j_1}\left[(2\sigma+\theta(x))^{2}\right]\partial_x^{j_2}\left[q\left(\frac{\tilde{\Delta} f}{\alpha}\right)\right]\partial_x^{j_3}\left[q\left(\frac{\tilde{\Delta} f-2\sigma-\theta(x)}{\alpha}\right)\right]
\end{aligned}$$
with $j_1+j_2+j_3=j$. Hence it suffices to bound
$$\begin{aligned}
Q_{1}^{k_1,j_1,j_2,j_3}(x):=&\int_{\mathbb{R}}\alpha^{-3}\partial_x^{j_1}\left[(2\sigma+\theta(x))^{2}\right]\partial_x^{j_2}\left[q\left(\frac{\tilde{\Delta} f}{\alpha}\right)\right]\partial_x^{j_3}\left[q\left(\frac{\tilde{\Delta} f-2\sigma-\theta(x)}{\alpha}\right)\right]\tilde{\Delta}\partial_x^{k_1+1-j}h\,d\alpha
\end{aligned}$$
for each possible index $(j_1,j_2,j_3)$. To start with, we prepare some inequalities. As before, assume that (\ref{2.17}) holds. 
A simple computation gives
$$\begin{aligned}
\partial_x^{j}\left[q\left(\frac{\tilde{\Delta} f-2\sigma-\theta(x)}{\alpha}\right)\right]
=&\sum_{1\leq l\leq j}\sum_{S_{j,l}}C_{j,l,r}\,q^{(l)}\left(\frac{\tilde{\Delta} f-2\sigma-\theta(x)}{\alpha}\right)\prod_{a=1}^j\left(\frac{\tilde{\Delta}\partial_x^af-\partial_x^a\theta(x)}{\alpha}\right)^{r_a}\\
=&\sum_{1\leq l\leq j}\sum_{S_{j,l}}C_{j,l,r}\,\frac{(-i)^ll!}{2}\prod_{a=1}^j\left(\tilde{\Delta}\partial_x^a f-\partial_x^a\theta(x)\right)^{r_a}\\
&\cdot\left(\frac{\alpha}{(\alpha+i(\tilde{\Delta} f-2\sigma-\theta(x))^{l+1}}+\frac{(-1)^l\alpha}{(\alpha-i(\tilde{\Delta} f-2\sigma-\theta(x))^{l+1}}\right).
\end{aligned}$$
$$\left|\frac{\alpha}{(\alpha+i(\tilde{\Delta} f-2\sigma-\theta(x))^{l+1}}+\frac{(-1)^l\alpha}{(\alpha-i(\tilde{\Delta} f-2\sigma-\theta(x))^{l+1}}\right|\lesssim\frac{\alpha^2}{\alpha^2+(2\sigma)^2}\frac{1}{(\alpha^2+(2\sigma)^2)^\frac{l}{2}}.$$
If $(j,l)\neq(k_1,1)$ and $j\geq1$, then $ j\leq k_1-1$ or $r_{k_1}=0$, and subsequently
\begin{equation}\begin{aligned}
&\left|\frac{1}{(\alpha^2+(2\sigma)^2)^\frac{l}{2}}\prod_{a=1}^j\left(\tilde{\Delta}\partial_x^a f-\partial_x^a\theta(x)\right)^{r_a}\right|\\
\lesssim&\left(\|\partial_x^2f\|_{H^{k_1-1}_{\gamma(t)}}+\sigma^{-1}\|\partial_x\theta\|_{H^{k_1-1}_{\gamma(t)}}\right)\left(1+\|\partial_x^2f\|_{H^{k_1-1}_{\gamma(t)}}+\sigma^{-1}\|\partial_x\theta\|_{H^{k_1-1}_{\gamma(t)}}\right)^{j-1}.
\end{aligned}\label{3.10}\end{equation}
If $(j,l)=(k_1,1)$, 
\begin{equation}\begin{aligned}
&\left|\frac{\tilde{\Delta}\partial_x^{k_1} f-\partial_x^{k_1}\theta(x)}{(\alpha^2+(2\sigma)^2)^\frac{l}{2}}\right|\lesssim\left|\frac{\tilde{\Delta}\partial_x^{k_1}f}{\alpha}\right|+\sigma^{-1}|\partial_x^{k_1}\theta(x)|
\lesssim\|\partial_x^2f\|_{H^{k_1}_{\gamma(t)}}+\sigma^{-1}|\partial_x^{k_1}\theta(x)|.
\end{aligned}\label{3.11}\end{equation}
For all $1\leq j\leq k_1$, using the embedding $\|\cdot\|_{L^\infty_{\gamma(t)}}\lesssim\|\cdot\|_{H^1_{\gamma(t)}}$, it can be shown that
\begin{equation}\left|\partial_x^{j}\left[q\left(\frac{\tilde{\Delta} f}{\alpha}\right)\right]\right|\lesssim\|\partial_x^2f\|_{H^j_{\gamma(t)}}\left(1+\|\partial_x^2f\|_{H^j_{\gamma(t)}}\right)^{j-1}.\label{3.12}\end{equation}
$\partial_x^{j}(2\sigma+\theta(x))^2$ can be controlled as follows:
\begin{equation}
\left|\partial_x^{j}(2\sigma+\theta(x))^2\right|\lesssim\|\partial_x\theta\|_{H^{j}_{\gamma(t)}}\left(\sigma+\|\partial_x\theta\|_{H^j_{\gamma(t)}}\right),\;  1\leq j\leq k_1-1.
\label{3.13}\end{equation}
\begin{equation}
\left|\partial_x^{j}(2\sigma+\theta(x))^2\right|\lesssim\|\partial_x\theta\|_{H^{k_1-1}_{\gamma(t)}}\left(\sigma+\|\partial_x\theta\|_{H^{k_1-1}_{\gamma(t)}}\right)+\sigma|\partial_x^{k_1}\theta(x)|,\quad j=k_1.
\label{3.14}\end{equation}
When $j=0$, we simply use $\left|q\left(\frac{\tilde{\Delta} f}{\alpha}\right)\right|\lesssim1$, $|2\sigma+\theta(x)|^2\lesssim\sigma^2$ and $\left|q\left(\frac{\tilde{\Delta} f-2\sigma-\theta(x)}{\alpha}\right)\right|\lesssim\frac{\alpha^2}{\alpha^2+(2\sigma)^2}$. Suppose now $j_3\neq k_1$, $j_1\neq k_1$. In this case, we use (\ref{3.10})(\ref{3.12})(\ref{3.13}) to obtain
$$\begin{aligned}
\|Q_1^{k_1,j_1,j_2,j_3}\|_{L^2_{\gamma(t)}}
\lesssim&\left(1+\sigma^{-1}\|\partial_x\theta\|_{H^{k_1-1}_{\gamma(t)}}\right)^2\left(1+\|\partial_x^2f\|_{H^{k_1-1}_{\gamma(t)}}+\sigma^{-1}\|\partial_x\theta\|_{H^{k_1-1}_{\gamma(t)}}\right)^{k_1}\\
&\cdot\left(1+\|\partial_x^2f\|_{H^{k_1}_{\gamma(t)}}\right)^{k_1}\int_{\mathbb{R}}\frac{\sigma^2}{\alpha^2+(2\sigma)^2}d\alpha\left\|M[\partial_x^{k_1+2-j}h]\right\|_{L^2_{\gamma(t)}}\\
\lesssim&\sigma B_8\|\partial_x^2h\|_{H^{k_1}_{\gamma(t)}},
\end{aligned}$$
where we note that  $\|\partial_x^2f\|_{H^{j}_{\gamma(t)}}+\|\partial_x^2g\|_{H^j_{\gamma(t)}}\lesssim\|\partial_x^2h\|_{H^j_{\gamma(t)}}+\|\partial_x^2\theta\|_{H^{j}_{\gamma(t)}}$ for $j\geq0$ and denote
$$\begin{aligned}
B_8(t):=&\left(1+\sigma^{-1}\|\partial_x\theta\|_{H^{k_1-1}_{\gamma(t)}}\right)^2\left(1+\|\partial_x^2h\|_{H^{k_1}_{\gamma(t)}}+\|\partial_x^2\theta\|_{H^{k_1}_{\gamma(t)}}\right)^{k_1}\\
&\cdot\left(1+\|\partial_x^2h\|_{H^{k_1-1}_{\gamma(t)}}+\|\partial_x^2\theta\|_{H^{k_1-1}_{\gamma(t)}}+\sigma^{-1}\|\partial_x\theta\|_{H^{k_1-1}_{\gamma(t)}}\right)^{k_1}.
\end{aligned}$$ 
In the case $j_3=k_1$, we must have $j_1=j_2=0$, $j=k_1$. Using the inequality
$$\begin{aligned}
&\left\|\int_{\mathbb{R}}\alpha^{-3}(2\sigma+\theta(x))^2q\left(\frac{\tilde{\Delta} f}{\alpha}\right)\frac{\alpha^2}{\alpha^2+(2\sigma)^2}\left(\left|\frac{\tilde{\Delta}\partial_x^{k_1}f}{\alpha}\right|+\sigma^{-1}|\partial_x^{k_1}\theta(x)|\right)\tilde{\Delta}\partial_xh\,d\alpha\right\|_{L^2_{\gamma(t)}}\\
\lesssim&\sigma\|\partial_x^2h\|_{L^\infty_{\gamma(t)}}\left(\|\partial_x^{k_1+1}f\|_{L^2_{\gamma(t)}}+\sigma^{-1}\|\partial_x^{k_1}\theta\|_{L^2_{\gamma(t)}}\right)\lesssim\sigma B_8\|\partial_x^2h\|_{L^\infty_{\gamma(t)}}
\end{aligned}$$
and (\ref{3.10})(\ref{3.11}) gives that
$\begin{aligned}
\|Q_1^{k_1,0,0,k_1}\|_{L^2_{\gamma(t)}}
\lesssim&\sigma B_8\left(\|\partial_x^2h\|_{L^2_{\gamma(t)}}+\|\partial_x^2h\|_{L^\infty_{\gamma(t)}}\right)
\lesssim\sigma B_8\|\partial_x^2h\|_{H^{k_1}_{\gamma(t)}}.\end{aligned}$
In the case $j_1=j=k_1$, we must have $j_2=j_3=0$. By (\ref{3.14}), it holds
$$\begin{aligned}
\|Q_1^{k_1,k_1,0,0}\|_{L^2_{\gamma(t)}}\lesssim&\left(1+\sigma^{-1}\|\partial_x\theta\|_{H^{k_1-1}_{\gamma(t)}}\right)\|\partial_x\theta\|_{H^{k_1-1}_{\gamma(t)}}\int_{\mathbb{R}}\frac{\sigma}{\alpha^2+(2\sigma)^2}d\alpha\|M[\partial_x^2h]\|_{L^2_{\gamma(t)}}\\
&+\left\|\int_{\mathbb{R}}\frac{\sigma|\partial_x^{k_1}\theta(x)|}{\alpha^2+(2\sigma)^2}\left|\frac{\tilde{\Delta}\partial_xh}{\alpha}\right|d\alpha\right\|_{L^2_{\gamma(t)}}\\
\lesssim&\left(1+\sigma^{-1}\|\partial_x\theta\|_{H^{k_1-1}_{\gamma(t)}}\right)\|\partial_x\theta\|_{H^{k_1-1}_{\gamma(t)}}\|\partial_x^2h\|_{L^2_{\gamma(t)}}+\|\partial_x^2h\|_{L^\infty_{\gamma(t)}}\|\partial_x^{k_1}\theta\|_{L^2_{\gamma(t)}}.
\end{aligned}$$
Taking the sum, we conclude that
\begin{equation}\begin{aligned}
&\left\|\int_{\mathbb{R}}\partial_x^{k_1}(Q_1(x,x-\alpha)\tilde{\Delta}\partial_xh)d\alpha\right\|_{L^2_{\gamma(t)}}\lesssim\sigma B_8\|\partial_x^2h\|_{H^{k_1}_{\gamma(t)}}.
\end{aligned}\label{3.15}\end{equation}
In addition, since $|Q_1(x,x-\alpha)|\lesssim\frac{1}{|\alpha|}\frac{\sigma^2}{\alpha^2+(2\sigma)^2}$, we also have
\begin{equation}\begin{aligned}
&\left\|\int_{\mathbb{R}}Q_1(x,x-\alpha)\tilde{\Delta}\partial_xh\,d\alpha\right\|_{L^2_{\gamma(t)}}\lesssim\sigma \|\partial_x^2h\|_{L^2_{\gamma(t)}}.
\end{aligned}\label{3.16}\end{equation}
Next, we are devoted to the control of $\left\|\int_{\mathbb{R}}\partial_x^{k_1}(Q_2(x,x-\alpha)\tilde{\Delta}\partial_xh)d\alpha\right\|_{L^2_{\gamma(t)}}$. $\partial_x^j(Q_2(x,x-\alpha))$ is a linear combination of terms in the form
$$q_{2}^{j_1,j_2,j_3,j_4}(x,x-\alpha):=\alpha^{-3}\tilde{\Delta}\partial_x^{j_4}f\cdot\partial_x^{j_1}(2\sigma+\theta(x))\partial_x^{j_2}\left[q\left(\frac{\tilde{\Delta} f}{\alpha}\right)\right]\partial_x^{j_3}\left[q\left(\frac{\tilde{\Delta} f-2\sigma-\theta(x)}{\alpha}\right)\right]$$
with $j_1+j_2+j_3+j_4=j$.
We decompose the integral over $\alpha$ into the parts $|\alpha|\leq1$ and $|\alpha|>1$. To be specific, we need to control
$$\begin{aligned}
Q_{2,|\alpha|\leq1}^{k_1,j_1,j_2,j_3,j_4}(x):=\int_{|\alpha|\leq1}q_2^{j_1,j_2,j_3,j_4}(x,x-\alpha)\tilde{\Delta}\partial_x^{k_1+1-j}h\,d\alpha,    
\end{aligned}$$
$$\begin{aligned}
Q_{2,|\alpha|>1}^{k_1,j_1,j_2,j_3,j_4}(x):=\int_{|\alpha|>1}q_2^{j_1,j_2,j_3,j_4}(x,x-\alpha)\tilde{\Delta}\partial_x^{k_1+1-j}h\,d\alpha.    
\end{aligned}$$
In the region $|\alpha|>1$, if $j_1\neq k_1$ and $j_3\neq k_1$, by (\ref{3.10})(\ref{3.12}) and 
$\left|\alpha^{-1}\tilde{\Delta}\partial_x^{j_4}f\right|\lesssim M[\partial_x^{j_4+1}f](x-\alpha)$, there is
$$\begin{aligned}
\left|q_2^{j_1,j_2,j_3,j_4}(x,x-\alpha)\right|
\lesssim&\left(\sigma+\|\partial_x\theta\|_{H^{k_1-1}_{\gamma(t)}}\right)\left(1+\|\partial_x^2f\|_{H^{k_1}_{\gamma(t)}}\right)^{k_1}\\
&\cdot\left(1+\|\partial_x^2f\|_{H^{k_1-1}_{\gamma(t)}}+\sigma^{-1}\|\partial_x\theta\|_{H^{k_1-1}_{\gamma(t)}}\right)^{k_1}\frac{M[\partial_x^{j_4+1}f](x-\alpha)}{\alpha^2+(2\sigma)^2},
\end{aligned}$$
$$\begin{aligned}
\|Q_{2,|\alpha|>1}^{k_1,j_1,j_2,j_3,j_4}\|_{L^2_{\gamma(t)}}
\lesssim\sigma B_8\left\|\int_{|\alpha|>1}\frac{|\alpha|}{\alpha^2+(2\sigma)^2}M[\partial_x^{j_4+1}f](x-\alpha)M[\partial_x^{k_1+2-j}h](x)d\alpha\right\|_{L^2_{\gamma(t)}}.
\end{aligned}$$
Since $0\leq j_4\leq j\leq k_1$, it follows
$\|Q_{2,|\alpha|>1}^{k_1,j_1,j_2,j_3,j_4}\|_{L^2_{\gamma(t)}}\lesssim\sigma B_8\|\partial_xf\|_{H^{k_1}_{\gamma(t)}}\|\partial_x^2h\|_{H^{k_1}_{\gamma(t)}}.$
If $j_3=j=k_1$, then $j_1=j_2=j_4=0$. Using (\ref{3.10})(\ref{3.11}) and that
$$\begin{aligned}
&\left\|\int_{|\alpha|>1}\alpha^{-3}\tilde{\Delta} f(2\sigma+\theta(x))q\left(\frac{\tilde{\Delta} f}{\alpha}\right)\frac{\alpha^2}{\alpha^2+(2\sigma)^2}\left(\left|\frac{\tilde{\Delta}\partial_x^{k_1}f}{\alpha}\right|+\sigma^{-1}|\partial_x^{k_1}\theta(x)|\right)\tilde{\Delta}\partial_xh\,d\alpha\right\|_{L^2_{\gamma(t)}}\\
\lesssim&\sigma\left\|\int_{|\alpha|>1}\frac{|\alpha|}{\alpha^2+(2\sigma)^2}M[\partial_xf](x-\alpha)M[\partial_x^2h](x-\alpha)\left(M[\partial_x^{k_1+1}f](x)+\sigma^{-1}|\partial_x^{k_1}\theta(x)|\right)d\alpha\right\|_{L^2_{\gamma(t)}}\\
\lesssim&\sigma\|\partial_xf\|_{L^\infty_{\gamma(t)}}\|\partial_x^2h\|_{L^2_{\gamma(t)}}\left(\|\partial_x^{k_1+1}f\|_{L^2_{\gamma(t)}}+\sigma^{-1}\|\partial_x^{k_1}\theta\|_{L^2_{\gamma(t)}}\right)\\
\lesssim&\sigma B_8\|\partial_xf\|_{L^\infty_{\gamma(t)}}\|\partial^2h\|_{L^2_{\gamma(t)}},
\end{aligned}$$
we can also obtain $\|Q_{2,|\alpha|>1}^{k_1,0,0,k_1,0}\|_{L^2_{\gamma(t)}}\lesssim\sigma B_8\|\partial_xf\|_{H^{k_1}_{\gamma(t)}}\|\partial_x^2h\|_{H^{k_1}_{\gamma(t)}}.$
If $j_1=j=k_1$, then $j_2=j_3=j_4=0$, and
$$\begin{aligned}
\|Q_{2,|\alpha|>1}^{k_1,k_1,0,0,0}\|_{L^2_{\gamma(t)}}
\lesssim&\left\|\int_{|\alpha|>1}\frac{|\alpha|}{\alpha^2+(2\sigma)^2}M[\partial_xf](x-\alpha)M[\partial_x^2h](x-\alpha)|\partial_x^{k_1}\theta(x)|d\alpha\right\|_{L^2_{\gamma(t)}}\\
\lesssim&\|\partial_xf\|_{L^\infty_{\gamma(t)}}\|\partial_x^2h\|_{L^2_{\gamma(t)}}\|\partial_x^{k_1}\theta\|_{L^2_{\gamma(t)}}.
\end{aligned}$$ 
Taking the sum yields
\begin{equation}
\left\|\int_{|\alpha|>1}\partial_x^{k_1}(Q_2(x,x-\alpha)\tilde{\Delta}\partial_xh)d\alpha\right\|_{L^2_{\gamma(t)}}\lesssim\sigma B_8\|\partial_xf\|_{H^{k_1}_{\gamma(t)}}\|\partial_x^2h\|_{H^{k_1}_{\gamma(t)}}.
\label{3.17}\end{equation}
In addition, it can be shown easily that
\begin{equation}\begin{aligned}
\left\|\int_{|\alpha|>1}Q_2(x,x-\alpha)\tilde{\Delta}\partial_xh\,d\alpha\right\|_{L^2_{\gamma(t)}}\lesssim\sigma \|\partial_xf\|_{L^2_{\gamma(t)}}\|\partial_x^2h\|_{L^2_{\gamma(t)}}.
\end{aligned}\label{3.18}\end{equation}
In the region $|\alpha|\leq1$, we use the decomposition 
$$\begin{aligned}
Q_{2,|\alpha|\leq1}^{k_1,j_1,j_2,j_3,j_4}(x)=&\int_{|\alpha|\leq1}q_2^{j_1,j_2,j_3,j_4}(x,x-\alpha)(\tilde{\Delta}\partial_x^{k_1+1-j}h-\alpha\partial_x^{k_1+2-j}h(x))\,d\alpha\\
&+\int_{|\alpha|\leq1}\alpha q_2^{j_1,j_2,j_3,j_4}(x,x-\alpha)d\alpha\cdot\partial_x^{k_1+2-j}h(x)\\
:=&Q_{2,1,|\alpha|\leq1}^{k_1,j_1,j_2,j_3,j_4}(x)+Q_{2,2,|\alpha|\leq1}^{k_1,j_1,j_2,j_3,j_4}(x).
\end{aligned}$$
For the first term, in the case $j_1\neq k_1$, $j_3\neq k_1$, (\ref{3.10}) and (\ref{3.12})  yield that 
$$\begin{aligned}
\|Q_{2,1,|\alpha|\leq1}^{k_1,j_1,j_2,j_3,j_4}\|_{L^2_{\gamma(t)}}
\lesssim&\left(1+\|\partial_x^2f\|_{H^{k_1-1}_{\gamma(t)}}+\sigma^{-1}\|\partial_x\theta\|_{H^{k_1-1}_{\gamma(t)}}\right)^{k_1}\left(1+\|\partial_x^2f\|_{H^{k_1}_{\gamma(t)}}\right)^{k_1}\\
&\cdot\left(\sigma+\|\partial_x\theta\|_{H^{k_1-1}_{\gamma(t)}}\right)\left\|\int_{|\alpha|\leq1}\frac{M[\partial_x^{j_4+1}f](x-\alpha)}{\alpha^2+(2\sigma)^2}\alpha^2M[\partial_x^{k_1+3-j}h](x)d\alpha\right\|_{L^2_{\gamma(t)}}\\
\lesssim&\sigma B_8\|\partial_xf\|_{H^{k_1}_{\gamma(t)}}\|\partial_x^3h\|_{H^{k_1}_{\gamma(t)}}
\end{aligned}$$
If $j_3=j=k_1$, then $j_1=j_2=j_4=0$. By (\ref{3.11}), it holds
$$\begin{aligned}
\|Q_{2,1,|\alpha|\leq1}^{k_1,0,0,k_1,0}\|_{L^2_{\gamma(t)}}
\lesssim&\sigma B_8\|\partial_xf\|_{L^2_{\gamma(t)}}\|\partial_x^3h\|_{L^2_{\gamma(t)}}\\&+\sigma\|\partial_xf\|_{L^2_{\gamma(t)}}\|\partial_x^3h\|_{L^\infty_{\gamma(t)}}\left(\|\partial_x^{k_1+1}f\|_{L^2_{\gamma(t)}}
+\sigma^{-1}\|\partial_x^{k_1}\theta\|_{L^2_{\gamma(t)}}\right)\\
\lesssim&\sigma B_8\|\partial_xf\|_{H^{k_1}_{\gamma(t)}}\|\partial_x^3h\|_{H^{k_1}_{\gamma(t)}}.
\end{aligned}$$
If $j_1=j=k_1$, then $j_2=j_3=j_4=0$, and
$$\begin{aligned}
\|Q_{2,1,|\alpha|\leq1}^{k_1,k_1,0,0,0}\|_{L^2_{\gamma(t)}}\lesssim&\left\|\int_{|\alpha|\leq1}\frac{\alpha^2|\partial_x^{k_1}\theta(x)|}{\alpha^2+(2\sigma)^2}M[\partial_xf](x-\alpha)M[\partial_x^3h](x)d\alpha\right\|_{L^2_{\gamma(t)}}\\
\lesssim&\|\partial_xf\|_{L^2_{\gamma(t)}}\|\partial_x^3h\|_{L^\infty_{\gamma(t)}}\|\partial_x^{k_1}\theta\|_{L^2_{\gamma(t)}}.
\end{aligned}$$
Summing the bounds for the above three cases gives
\begin{equation}\begin{aligned}
\left\|\int_{|\alpha|\leq1}\partial_x^{k_1}\left(Q_2(x,x-\alpha)\left(\tilde{\Delta}\partial_xh-\partial_x^2h(x)\right)\right)d\alpha\right\|_{L^2_{\gamma(t)}}\lesssim\sigma B_8\|\partial_xf\|_{H^{k_1}_{\gamma(t)}}\|\partial_x^3h\|_{H^{k_1}_{\gamma(t)}}.
\end{aligned}\label{3.19}\end{equation}
Meanwhile, we also have
\begin{equation}\begin{aligned}
\left\|\int_{|\alpha|\leq1}Q_2(x,x-\alpha)\left(\tilde{\Delta}\partial_xh-\partial_x^2h(x)\right)\,d\alpha\right\|_{L^2_{\gamma(t)}}\lesssim\sigma \|\partial_xf\|_{L^2_{\gamma(t)}}\|\partial_x^3h\|_{L^2_{\gamma(t)}}.
\end{aligned}\label{3.20}\end{equation}
To bound $Q_{2,2,|\alpha|\leq1}^{k_1,j_1,j_2,j_3,j_4}$ in $L^2_{\gamma(t)}$ norm, we write
$$\int_{|\alpha|\leq1}\alpha q_2^{j_1,j_2,j_3,j_4}(x,x-\alpha)d\alpha=\int_{0\leq\alpha\leq1}\alpha\left(q_2^{j_1,j_2,j_3,j_4}(x,x-\alpha)-q_2^{j_1,j_2,j_3,j_4}(x,x+\alpha)\right)d\alpha.$$
To use Lemma \ref{lem2.2}, we need the control over the difference between each factor of $q_2^{j_1,j_2,j_3,j_4}(x,x\pm\alpha)$.
\begin{lem}
Suppose condition (\ref{2.17}) holds. Then for all $1\leq j_2\leq k_1$,
\begin{equation}\begin{aligned}
&\left|\partial_x^{j_2}\left[q\left(\frac{f(x)-f(x-\alpha)}{\alpha}\right)\right]-\partial_x^{j_2}\left[q\left(\frac{f(x+\alpha)-f(x)}{\alpha}\right)\right]\right|\\
\lesssim&|\alpha|\left(1+\|\partial_x^2f\|_{H^{j_2}_{\gamma(t)}}\right)^{j_2}\sum_{a=0}^{j_2}\left(M[\partial_x^{a+2}f](x-\alpha)+M[\partial_x^{a+2}f](x+\alpha)\right).
\end{aligned}\label{3.21}\end{equation}
For $j_3=k_1$,
\begin{equation}\begin{aligned}
&\left|\partial_x^{k_1}\left[q\left(\frac{f(x)-f(x-\alpha)-2\sigma-\theta(x)}{\alpha}\right)\right]-\partial_x^{k_1}\left[q\left(\frac{f(x+\alpha)-f(x)+2\sigma+\theta(x)}{\alpha}\right)\right]\right|\\
\lesssim&\frac{|\alpha|}{\alpha^2+(2\sigma)^2}\left(1+\|\partial_x^2f\|_{H^{k_1-1}_{\gamma(t)}}+\sigma^{-1}\|\partial_x\theta\|_{H^{k_1-1}_{\gamma(t)}}\right)^{k_1}\\
&\cdot\left[\sigma+\alpha^2\sum_{a=0}^{k_1}\left(M[\partial_x^{a+2}f](x-\alpha)+M[\partial_x^{a+2}f](x+\alpha)\right)+\sum_{a=1}^{k_1}|\partial_x^a\theta(x)|\right]\\
&+\frac{|\alpha|}{\left(\alpha^2+(2\sigma)^2\right)^\frac{3}{2}}\left[\alpha^2\left(M[\partial_x^2f](x-\alpha)+M[\partial_x^2f](x+\alpha)\right)+\sigma\right]\\
&\cdot\left[|\alpha|\left(M[\partial_x^{k_1+1}f](x-\alpha)+M[\partial_x^{k_1+1}f](x+\alpha)\right)+|\partial_x^{k_1}\theta(x)|\right].
\end{aligned}\label{3.22}\end{equation}
For $1\leq j_3\leq k_1-1$,
\begin{equation}\begin{aligned}
&\left|\partial_x^{j_3}\left[q\left(\frac{f(x)-f(x-\alpha)-2\sigma-\theta(x)}{\alpha}\right)\right]-\partial_x^{j_3}\left[q\left(\frac{f(x+\alpha)-f(x)+2\sigma+\theta(x)}{\alpha}\right)\right]\right|\\
\lesssim&\frac{|\alpha|}{\alpha^2+(2\sigma)^2}\left(1+\|\partial_x^2f\|_{H^{k_1-1}_{\gamma(t)}}+\sigma^{-1}\|\partial_x\theta\|_{H^{k_1-1}_{\gamma(t)}}\right)^{j_3}\\
&\cdot\left[\sigma+\alpha^2\sum_{a=0}^{j_3}\left(M[\partial_x^{a+2}f](x-\alpha)+M[\partial_x^{a+2}f]](x+\alpha)\right)+\sum_{a=1}^{j_3}|\partial_x^a\theta(x)|\right].
\end{aligned}\label{3.23}\end{equation}
\label{lem3.1}\end{lem}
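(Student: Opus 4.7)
The three inequalities share a common skeleton, so my plan is to begin by expanding each $\partial_x^{j}[q(\cdot)]$ via Fa\`a di Bruno, exactly as was done in Sections~\ref{commutators1}--\ref{commutators2}, and then reduce the control of the $(x-\alpha)\leftrightarrow(x+\alpha)$ difference to the control of the difference of each individual factor, invoking Lemma~\ref{lem2.2}. Condition~(\ref{2.17}) guarantees that every factor appearing in these products is uniformly bounded (with the appropriate prefactor $\alpha^{l+1}/(\alpha^2+(2\sigma)^2)^{(l+1)/2}$ in the case involving $\pm(2\sigma+\theta(x))$), so the ratio estimate in Lemma~\ref{lem2.2} yields a clean Taylor-like bound.

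For (\ref{3.21}), the Fa\`a di Bruno expansion produces a sum indexed by $S_{j_2,l}$ of terms $q^{(l)}(\Delta_{\pm} f/\alpha)\prod_a(\Delta_{\pm}\partial_x^a f/\alpha)^{r_a}$, where $\Delta_{-}f:=f(x)-f(x-\alpha)$ and $\Delta_{+}f:=f(x+\alpha)-f(x)$. The only differences between the $\pm$ versions are the substitutions $\Delta_{-}\partial_x^a f\mapsto \Delta_{+}\partial_x^a f$, and $(f(x-\alpha)+f(x+\alpha)-2f(x))/\alpha$ is $O(|\alpha|\,M[\partial_x^2 f](x\pm\alpha))$ by the standard Taylor identity, and similarly for $a\ge 1$. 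Inserting these differences into Lemma~\ref{lem2.2} and bounding the unchanged factors by $(1+\|\partial_x^2 f\|_{H^{j_2}_{\gamma(t)}})^{j_2}$ under (\ref{2.17}) gives the target bound; this is the easy case.

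For (\ref{3.23}) (i.e.\ $1\le j_3\le k_1-1$) the arguments become $A_{-}:=(\Delta_{-}f-2\sigma-\theta(x))/\alpha$ versus $A_{+}:=(\Delta_{+}f+2\sigma+\theta(x))/\alpha$. The key new phenomenon is that the sign of $2\sigma+\theta(x)$ flips, so in the Fa\`a di Bruno factors the difference $\Delta_{+}\partial_x^a f+\partial_x^a\theta(x)$ versus $\Delta_{-}\partial_x^a f-\partial_x^a\theta(x)$ carries an extra $2\partial_x^a\theta(x)$. I plan to exploit the explicit representation $q^{(l)}(y)=\tfrac{l!}{2}\big((-i)^l(1+iy)^{-l-1}+i^l(1-iy)^{-l-1}\big)$: this gives $|q^{(l)}(A_{\pm})|\lesssim|\alpha|^{l+1}/(\alpha^2+(2\sigma)^2)^{(l+1)/2}$ and a matching bound on $q^{(l+1)}$, so that after Lemma~\ref{lem2.2} the symmetric part $\Delta_{-}f-\Delta_{+}f=O(\alpha^2 M[\partial_x^2 f])$, the sign-flipped $\theta$-terms of size $|\partial_x^a\theta(x)|$, and the mismatch $2(2\sigma+\theta(x))$ in the argument of $q^{(l)}$ combine to produce exactly the three contributions $\sigma$, $\alpha^2 M[\partial_x^{a+2} f]$, and $|\partial_x^a\theta(x)|$ in the bracket of (\ref{3.23}). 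The prefactor $|\alpha|/(\alpha^2+(2\sigma)^2)$ comes from pairing the $\alpha^{l+1}/(\alpha^2+(2\sigma)^2)^{(l+1)/2}$ bound on $q^{(l)}$ with $(l-1)$ factors of the product, each contributing at most $|\alpha|/(\alpha^2+(2\sigma)^2)^{1/2}$.

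The main obstacle is (\ref{3.22}), where $j_3=k_1$. Here the Fa\`a di Bruno expansion will contain, in the case $l=1$, the pair $r=(0,\ldots,0,1)$, so the top-derivative factor $(\Delta\partial_x^{k_1}f-\partial_x^{k_1}\theta(x))/\alpha$ is not controlled by any $H^{k_1-1}_{\gamma(t)}\hookrightarrow L^\infty_{\gamma(t)}$ embedding and must be isolated. My plan is to split the product into this distinguished top factor and the lower-order product, apply Lemma~\ref{lem2.2} only to the latter (which inherits the bound of type (\ref{3.23})), and treat the top factor by an independent size estimate: the difference
\[
\Bigl|\frac{\Delta_{+}\partial_x^{k_1}f+\partial_x^{k_1}\theta(x)}{\alpha}-\frac{\Delta_{-}\partial_x^{k_1}f-\partial_x^{k_1}\theta(x)}{\alpha}\Bigr|\lesssim |\alpha|\bigl(M[\partial_x^{k_1+1}f](x-\alpha)+M[\partial_x^{k_1+1}f](x+\alpha)\bigr)+|\partial_x^{k_1}\theta(x)|
\]
matches the second bracket appearing in (\ref{3.22}); paired with the $q^{(l)}$ prefactor $|\alpha|/(\alpha^2+(2\sigma)^2)^{3/2}$ and the remaining lower-order factor (bounded as in the first bracket by $(\alpha^2 M[\partial_x^2 f]+\sigma)/(\alpha^2+(2\sigma)^2)^{1/2}$) this yields the second line of (\ref{3.22}), while Lemma~\ref{lem2.2} applied to the complementary split produces the first line. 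Carefully bookkeeping the $q^{(l)}$ with $l\ge 2$ in the expansion and the case in which some $r_a\ne 0$ with $a<k_1$ reduces to the argument for (\ref{3.23}) plus a now harmless placement of the top-derivative factor.
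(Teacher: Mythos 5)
Your overall strategy coincides with the paper's: expand by Fa\`a di Bruno, compare the $(x-\alpha)$ and $(x+\alpha)$ expressions factor by factor via Lemma \ref{lem2.2}, use the explicit representation of $q^{(l)}$ to get the $(\alpha^2+(2\sigma)^2)$-weighted bounds, and in the endpoint case $(j_3,l)=(k_1,1)$ isolate the top-derivative factor and perform a product-rule split. Your treatment of (\ref{3.21}) and (\ref{3.23}) is exactly the paper's argument (the analogues of (\ref{3.24})--(\ref{3.26})).

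The one genuinely delicate step, the isolated case $(j_3,l)=(k_1,1)$ for (\ref{3.22}), is however miswired as written. First, your displayed estimate for the difference of the two top factors is false: the $\theta$-contribution to that difference is $2\partial_x^{k_1}\theta(x)/\alpha$, so it costs $|\partial_x^{k_1}\theta(x)|/|\alpha|$, not $|\partial_x^{k_1}\theta(x)|$; and to gain a factor $|\alpha|$ on the $f$-part you need the three-point (second-order) Taylor bound, which produces $M[\partial_x^{k_1+2}f](x\pm\alpha)$, not $M[\partial_x^{k_1+1}f]$. Second, the pairing is crossed. Writing $q^{(1)}(A_-)B_- - q^{(1)}(A_+)B_+ = q^{(1)}(A_-)\,(B_--B_+) + \bigl(q^{(1)}(A_-)-q^{(1)}(A_+)\bigr)B_+$, the difference $B_--B_+$ must be multiplied by the \emph{size} $|q^{(1)}(A_-)|\lesssim \alpha^2/(\alpha^2+(2\sigma)^2)$, which is what absorbs the $|\alpha|^{-1}$ above and yields the first line of (\ref{3.22}) (this is where the $M[\partial_x^{k_1+2}f]$ and $|\partial_x^{k_1}\theta(x)|$ contributions of the first bracket come from); the second line of (\ref{3.22}) comes instead from the \emph{difference} $|q^{(1)}(A_-)-q^{(1)}(A_+)|\lesssim \frac{\alpha^2}{(\alpha^2+(2\sigma)^2)^{3/2}}\bigl[\alpha^2\bigl(M[\partial_x^2f](x-\alpha)+M[\partial_x^2f](x+\alpha)\bigr)+\sigma\bigr]$ multiplied by the size $|B_+|\le M[\partial_x^{k_1+1}f](x+\alpha)+|\partial_x^{k_1}\theta(x)|/|\alpha|$. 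Your proposed pairing (difference of the top factor against the prefactor $|\alpha|/(\alpha^2+(2\sigma)^2)^{3/2}$ and a ``remaining lower-order factor'') gives the wrong power of $(\alpha^2+(2\sigma)^2)$, and when $l=1$, $r=(0,\dots,0,1)$ there is in fact no remaining lower-order factor: the bracket $[\alpha^2 M[\partial_x^2f]+\sigma]$ is the bound for the difference of the \emph{arguments} of $q^{(1)}$, not a product factor. Once the pairing is corrected as above, your plan closes and coincides with the paper's proof.
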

\begin{proof}
To prove (\ref{3.21}), first recall that 
$\partial_x^{j_2}\left[q\left(\frac{\tilde{\Delta} f}{\alpha}\right)\right]=\sum_{l=1}^{j_2}\sum_{S_{j_2,l}}q^{(l)}\left(\frac{\tilde{\Delta} f}{\alpha}\right)\prod_{a=1}^{j_2}\left(\frac{\tilde{\Delta}\partial_x^a f}{\alpha}\right)^{r_a}.$
Then using Lemma \ref{lem2.2}, the embedding $\|\cdot\|_{L^\infty_{\gamma(t)}}\lesssim\|\cdot\|_{H^{1}_{\gamma(t)}}$ and the inequalities $$\left|\frac{\partial_x^af(x)-\partial_x^af(x-\alpha)}{\alpha}-\frac{\partial_x^af(x+\alpha)-\partial_x^af(x)}{\alpha}\right|\lesssim|\alpha|\left(M[\partial_x^{a+2}f](x+\alpha)+M[\partial_x^{a+2}f](x-\alpha)\right),$$
$$\begin{aligned}
&\left|q^{(l)}\left(\frac{f(x)-f(x-\alpha)}{\alpha}\right)-q^{(l)}\left(\frac{f(x+\alpha)-f(x)}{\alpha}\right)\right|\lesssim|\alpha|\left(M[\partial_x^{2}f](x-\alpha)+M[\partial_x^{2}f](x-\alpha)\right),
\end{aligned}$$
we obtain
$$\begin{aligned}
&\left|\partial_x^{j_2}\left[q\left(\frac{f(x)-f(x-\alpha)}{\alpha}\right)\right]-\partial_x^{j_2}\left[q\left(\frac{f(x+\alpha)-f(x)}{\alpha}\right)\right]\right|
\\\lesssim&|\alpha|\left(1+\|\partial_x^2f\|_{H^{j_2}_{\gamma(t)}}\right)^{j_2}\sum_{a=0}^{j_2}\left(M[\partial_x^{a+2}f](x-\alpha)+M[\partial_x^{a+2}f](x+\alpha)\right).
\end{aligned}$$
To show (\ref{3.22}), we note that
$$\partial_x^{j_3}\left[q\left(\frac{\tilde{\Delta} f-2\sigma-\theta(x)}{\alpha}\right)\right]=\sum_{l=1}^{j_3}\sum_{S_{j_3,l}}q^{(l)}\left(\frac{\tilde{\Delta} f-2\sigma-\theta(x)}{\alpha}\right)\prod_{a=1}^{j_3}\left(\frac{\tilde{\Delta}\partial_x^af-2\sigma-\theta(x)}{\alpha}\right)^{r_a},$$
$$\begin{aligned}
&q^{(l)}\left(\frac{\tilde{\Delta} f-2\sigma-\theta(x)}{\alpha}\right)
=\frac{(-i)^ll!}{2}\left[\frac{\alpha^{l+1}}{(\alpha+i(\tilde{\Delta} f-2\sigma-\theta(x)))^{l+1}}+\frac{(-1)^l\alpha^{l+1}}{(\alpha-i(\tilde{\Delta} f-2\sigma-\theta(x)))^{l+1}}\right],
\end{aligned}$$
\begin{equation}\begin{aligned}
&\left|\frac{\partial_x^af(x)-\partial_x^af(x-\alpha)-\partial_x^a\theta(x)}{\alpha}-\frac{\partial_x^{a}f(x+\alpha)-\partial_x^af(x)+\partial_x^a\theta(x)}{\alpha}\right|\\
\lesssim&|\alpha|\left(M[\partial_x^{a+2}f](x-\alpha)+M[\partial_x^{a+2}f](x+\alpha)\right)+\left|\frac{\partial_x^a\theta(x)}{\alpha}\right|.
\end{aligned}\label{3.24}\end{equation}
Moreover, in order to bound $q^{(l)}\left(\frac{f(x)-f(x-\alpha)-2\sigma-\theta(x)}{\alpha}\right)-q^{(l)}\left(\frac{f(x+\alpha)-f(x)+2\sigma+\theta(x)}{\alpha}\right)$, we have
$$\begin{aligned}
&\left|\frac{\alpha^{l+1}}{(\alpha+i(f(x)-f(x-\alpha)-2\sigma-\theta(x)))^{l+1}}-\frac{\alpha^{l+1}}{(\alpha-i(f(x)-f(x+\alpha)-2\sigma-\theta(x)))^{l+1}}\right|\\
&+\left|\frac{(-1)^{l+1}\alpha^{l+1}}{(\alpha-i(f(x)-f(x-\alpha)-2\sigma-\theta(x)))^{l+1}}-\frac{(-1)^{l+1}\alpha^{l+1}}{(\alpha+i(f(x)-f(x+\alpha)-2\sigma-\theta(x)))^{l+1}}\right|\\
\lesssim&|\alpha|^{l+1}\left(\alpha^2+(2\sigma)^2\right)^{-\frac{l+2}{2}}\left|2f(x)-f(x-\alpha)-f(x+\alpha)-4\sigma-2\theta(x)\right|.
\end{aligned}$$
Therefore,
\begin{equation}\begin{aligned}
&\left|q^{(l)}\left(\frac{f(x)-f(x-\alpha)-2\sigma-\theta(x)}{\alpha}\right)-q^{(l)}\left(\frac{f(x+\alpha)-f(x)+2\sigma+\theta(x)}{\alpha}\right)\right|\\
\lesssim&|\alpha|^{l+1}\left(\alpha^2+(2\sigma)^2\right)^{-\frac{l+2}{2}}\left[|\alpha|^2\left(M[\partial_x^2f](x-\alpha)+M[\partial_x^2f](x+\alpha)\right)+\sigma\right].
\end{aligned}\label{3.25}\end{equation}
If $(j_3,l)\neq(k_1,1)$, then $j_3\leq k_1-1$ or $r_{k_1}=0$, using (\ref{3.24})(\ref{3.25}) and Lemma \ref{lem2.2}, we obtain
\begin{equation}\begin{aligned}
&\left|q^{(l)}\left(\frac{f(x)-f(x-\alpha)-2\sigma-\theta(x)}{\alpha}\right)\prod_{a=1}^{j_3}\left(\frac{\partial_x^af(x)-\partial_x^af(x-\alpha)-\partial_x^a\theta(x)}{\alpha}\right)^{r_a}\right.\\
&\left.-q^{(l)}\left(\frac{f(x+\alpha)-f(x)+2\sigma+\theta(x)}{\alpha}\right)\prod_{a=1}^{j_3}\left(\frac{\partial_x^af(x+\alpha)-\partial_x^af(x)+\partial_x^a\theta(x)}{\alpha}\right)^{r_a}\right|\\
\lesssim&\frac{|\alpha|}{\alpha^2+(2\sigma)^2}\left(\|\partial_x^2f\|_{H^{k_1-1}_{\gamma(t)}}+\sigma^{-1}\|\partial_x\theta\|_{H^{k_1-1}_{\gamma(t)}}\right)^{l}\left[\alpha^2\left(M[\partial_x^2f](x-\alpha)+M[\partial_x^2f](x+\alpha)\right)+\sigma\right]\\
&+\frac{|\alpha|}{\alpha^2+(2\sigma)^2}\left(\|\partial_x^2f\|_{H^{k_1-1}_{\gamma(t)}}+\sigma^{-1}\|\partial_x\theta\|_{H^{k_1-1}_{\gamma(t)}}\right)^{l-1}\\
&\cdot\sum_{r_a\neq0}\left[\alpha^2\left(M[\partial_x^{a+2}f](x-\alpha)+M[\partial_x^{a+2}f]](x+\alpha)\right)+|\partial_x^a\theta(x)|\right].
\end{aligned}\label{3.26}\end{equation}
In particular, when $1\leq j_3\leq k_1-1$, it follows
$$\begin{aligned}
&\left|\partial_x^{j_3}\left[q\left(\frac{f(x)-f(x-\alpha)-2\sigma-\theta(x)}{\alpha}\right)\right]-\partial_x^{j_3}\left[q\left(\frac{f(x+\alpha)-f(x)+2\sigma+\theta(x)}{\alpha}\right)\right]\right|\\
\lesssim&\frac{|\alpha|}{\alpha^2+(2\sigma)^2}\left(1+\|\partial_x^2f\|_{H^{k_1-1}_{\gamma(t)}}+\sigma^{-1}\|\partial_x\theta\|_{H^{k_1-1}_{\gamma(t)}}\right)^{j_3}\\
&\cdot\left[\sigma+\alpha^2\sum_{a=0}^{j_3}\left(M[\partial_x^{a+2}f](x-\alpha)+M[\partial_x^{a+2}f]](x+\alpha)\right)+\sum_{a=1}^{j_3}|\partial_x^a\theta(x)|\right].
\end{aligned}$$
If $(j_3,l)=(k_1,1)$, then $r_a=0$ for all $a\leq k_1-1$ and $r_{k_1}=1$,
$$\begin{aligned}
&\left|q^{(1)}\left(\frac{f(x)-f(x-\alpha)-2\sigma-\theta(x)}{\alpha}\right)\left(\frac{\partial_x^{k_1}f(x)-\partial_x^{k_1}f(x-\alpha)-\partial_x^{k_1}\theta(x)}{\alpha}\right)\right.\\
&\left.-q^{(1)}\left(\frac{f(x+\alpha)-f(x)+2\sigma+\theta(x)}{\alpha}\right)\left(\frac{\partial_x^{k_1}f(x+\alpha)-\partial_x^{k_1}f(x)+\partial_x^{k_1}\theta(x)}{\alpha}\right)\right|\\
\lesssim&\frac{|\alpha|}{\alpha^2+(2\sigma)^2}\left[\alpha^2\left(M[\partial_x^{k_1+2}f](x-\alpha)+M[\partial_x^{k_1+2}f](x+\alpha)\right)+|\partial_x^{k_1}\theta(x)|\right]\\
&+\frac{|\alpha|}{\left(\alpha^2+(2\sigma)^2\right)^\frac{3}{2}}\left[\alpha^2\left(M[\partial_x^2f](x-\alpha)+M[\partial_x^2f](x+\alpha)\right)+\sigma\right]\\
&\cdot\left[|\alpha|\left(M[\partial_x^{k_1+1}f](x-\alpha)+M[\partial_x^{k_1+1}f](x+\alpha)\right)+|\partial_x^{k_1}\theta(x)|\right].
\end{aligned}$$
Taking the sum with (\ref{3.26}) yields (\ref{3.22}).
\end{proof}
Now, using Lemma \ref{lem2.2}, Lemma \ref{lem3.1} together with the bound (\ref{3.10}-\ref{3.12}) and
$$\begin{aligned}
&\left|\frac{\partial_x^{j_4}f(x)-\partial_x^{j_4}f(x-\alpha)}{\alpha}-\frac{\partial_x^{j_4}f(x+\alpha)-\partial_x^{j_4}f(x)}{\alpha}\right|\\
\lesssim&|\alpha|\left(M[\partial_x^{j_4+2}f](x-\alpha)+M[\partial_x^{j_4+2}f](x+\alpha)\right),
\end{aligned}$$
we are able to obtain the control over $q_2^{j_1,j_2,j_3,j_4}(x,x-\alpha)-q_2^{j_1,j_2,j_3,j_4}(x,x+\alpha)$. In the case $j_1\neq k_1$, $j_3\neq k_1$, by the Sobolev embedding $|\partial_x^{j}\theta(x)|\lesssim\|\partial_x\theta\|_{H^{k_1-1}_{\gamma(t)}}$, $k_1-1\geq j\geq1$, there is
$$\begin{aligned}
&|\alpha|\left|q_2^{j_1,j_2,j_3,j_4}(x,x-\alpha)-q_2^{j_1,j_2,j_3,j_4}(x,x+\alpha)\right|\\
\lesssim&\sigma\left(1+\sigma^{-1}\|\partial_x\theta\|_{H^{j_1-1}_{\gamma(t)}}\right)\left(1+\|\partial_x^2f\|_{H^{j_2}_{\gamma(t)}}\right)^{j_2}\left(1+\|\partial_x^2f\|_{H^{j_2}_{\gamma(t)}}+\sigma^{-1}\|\partial_x\theta\|_{H^{j_3}_{\gamma(t)}}\right)^{j_3}\\
&\cdot\left\{\frac{\|\partial_xf\|_{H^{j_4+1}_{\gamma(t)}}}{\alpha^2+(2\sigma)^2}\alpha^2\sum_{a=0}^{j_2}\left(M[\partial_x^{a+2}f](x-\alpha)+M[\partial_x^{a+2}f](x+\alpha)\right)\right.\\
&\left.+\frac{\|\partial_xf\|_{H^{j_4+1}_{\gamma(t)}}}{\alpha^2+(2\sigma)^2}\left[\sigma+\alpha^2\sum_{a=0}^{j_3}\left(M[\partial_x^{a+2}f](x-\alpha)+M[\partial_x^{a+2}f]](x+\alpha)\right)+\sum_{a=1}^{j_3}|\partial_x^a\theta(x)|\right]\right.\\
&\left.+\frac{\alpha^2}{\alpha^2+(2\sigma)^2}\left(M[\partial_x^{j_4+2}f](x-\alpha)+M[\partial_x^{j_4+2}f](x+\alpha)\right)\right\},
\end{aligned}$$
and subsequently
\begin{equation}\begin{aligned}
\|Q_{2,2,|\alpha|\leq1}^{k_1,j_1,j_2,j_3,j_4}\|_{L^2_{\gamma(t)}}
\lesssim&\sigma B_8\|\partial_xf\|_{H^{j_4+1}_{\gamma(t)}}\|\partial_x^{k_1+2-j}h\|_{L^2_{\gamma(t)}}.
\end{aligned}\label{3.27}\end{equation}
In the case $j_1=j=k_1, j_2=j_3=j_4=0$, we have
$$\begin{aligned}
&|\alpha|\left|q_2^{k_1,0,0,0}(x,x-\alpha)-q_2^{k_1,0,0,0}(x,x+\alpha)\right|\\
\lesssim&|\partial_x^{k_1}\theta(x)|\left\{\frac{\|\partial_xf\|_{H^{1}_{\gamma(t)}}}{\alpha^2+(2\sigma)^2}\left[|\alpha|^2\left(M[\partial_x^2f](x-\alpha)+M[\partial_x^2f](x+\alpha)\right)+\sigma\right]\right.\\
&\left.+\frac{\alpha^2}{\alpha^2+(2\sigma)^2}\left(M[\partial_x^2f](x-\alpha)+M[\partial_x^2f](x+\alpha)\right)\right\},
\end{aligned}$$
\begin{equation}\begin{aligned}
\|Q_{2,2,|\alpha|\leq1}^{k_1,k_1,0,0,0}\|_{L^2_{\gamma(t)}}
\lesssim&\|\partial_x^{k_1}\theta\|_{L^2_{\gamma(t)}}\|\partial_x^2h\|_{L^\infty_{\gamma(t)}}\|\partial_xf\|_{H^{1}_{\gamma(t)}}\left(1+\|\partial_x^2f\|_{L^2_{\gamma(t)}}\right).
\end{aligned}\label{3.28}\end{equation}
In the case $j_3=j=k_1$, $j_1=j_2=j_4=0$, by (\ref{3.10})(\ref{3.11}) and (\ref{3.22}), 
\begin{align*}
&|\alpha|\left|q_{2}^{0,0,k_1,0}(x,x-\alpha)-q_2^{0,0,k_1,0}(x,x+\alpha)\right|\\
\lesssim&\sigma\left(1+\|\partial_x^2f\|_{H^{k_1-1}_{\gamma(t)}}+\sigma^{-1}\|\partial_x\theta\|_{H^{k_1-1}_{\gamma(t)}}\right)^{k_1}\frac{\|\partial_xf\|_{H^1_{\gamma(t)}}}{\alpha^2+(2\sigma)^2}\\
&\left\{\sigma+\alpha^2\sum_{a=0}^{k_1}\left(M[\partial_x^{a+2}f](x-\alpha)+M[\partial_x^{a+2}f](x+\alpha)\right)+\sum_{a=1}^{k_1}|\partial_x^a\theta(x)|\right.\\
&\left.+\left(\alpha^2+(2\sigma)^2\right)^{-\frac{1}{2}}\left[\alpha^2\left(M[\partial_x^2f](x-\alpha)+M[\partial_x^2f](x+\alpha)\right)+\sigma\right]\right.\\
&\left.\cdot\left[|\alpha|\left(M[\partial_x^{k_1+1}f](x-\alpha)+M[\partial_x^{k_1+1}f](x+\alpha)\right)+|\partial_x^{k_1}\theta(x)|\right].
\right\}\\
&+\frac{\sigma\alpha^2}{\alpha^2+(2\sigma)^2}\left[M[\partial_x^2f](x-\alpha)+M[\partial_x^2f](x+\alpha)\right]\\
&\cdot\left[\left(1+\|\partial_x^2f\|_{H^{k_1-1}_{\gamma(t)}}+\sigma^{-1}\|\partial_x\theta\|_{H^{k_1-1}_{\gamma(t)}}\right)^{k_1}+\left|\frac{\tilde{\Delta}\partial_x^{k_1}f}{\alpha}\right|+\sigma^{-1}|\partial_x^{k_1}\theta(x)|\right]
\end{align*}
Then we bound the contribution from each term on the right-hand side:
$$\begin{aligned}
&\left\|\int_{0\leq\alpha\leq1}\frac{1}{\alpha^2+(2\sigma)^2}\left[\sigma+\alpha^2\sum_{a=0}^{k_1}\left(M[\partial_x^{a+2}f](x-\alpha)+M[\partial_x^{a+2}f](x+\alpha)\right)\right]d\alpha\cdot\partial_x^2h(x)\right\|_{L^2_{\gamma(t)}}\\
\lesssim&\|\partial_x^2h\|_{L^2_{\gamma(t)}}\left(1+\|\partial_x^2f\|_{H^{k_1}_{\gamma(t)}}\right),
\end{aligned}$$
$$\begin{aligned}
&\left\|\int_{0\leq\alpha\leq1}\frac{\sigma}{\alpha^2+(2\sigma)^2}\sum_{a=1}^{k_1}|\partial_x^a\theta(x)|d\alpha\cdot\partial_x^2h(x)\right\|_{L^2_{\gamma(t)}}
\lesssim\|\partial_x^2h\|_{L^\infty_{\gamma(t)}}\|\partial_x\theta\|_{H^{k_1-1}_{\gamma(t)}},
\end{aligned}$$
$$\begin{aligned}
&\left\|\int_{0\leq\alpha\leq1}\left(\alpha^2+(2\sigma)^2\right)^{-\frac{3}{2}}\left[\alpha^2\left(M[\partial_x^2f](x-\alpha)+M[\partial_x^2f](x+\alpha)\right)+\sigma\right]\right.\\
&\left.\cdot\left[|\alpha|\left(M[\partial_x^{k_1+1}f](x-\alpha)+M[\partial_x^{k_1+1}f](x+\alpha)\right)+|\partial_x^{k_1}\theta(x)|\right]d\alpha\cdot\partial_x^2h(x)\right\|_{L^2_{\gamma(t)}}\\
\lesssim&\left(\|\partial_x^2f\|_{L^\infty_{\gamma(t)}}+1\right)\left(\|\partial_x^{k_1+1}f\|_{L^2_{\gamma(t)}}\|\partial_x^2h\|_{L^2_{\gamma(t)}}+\sigma^{-1}\|\partial_x^2h\|_{L^\infty_{\gamma(t)}}\|\partial_x^{k_1}\theta\|_{L^2_{\gamma(t)}}\right),
\end{aligned}$$
$$\begin{aligned}
&\left\|\int_{0\leq\alpha\leq1}\frac{\sigma\alpha^2\left[M[\partial_x^2f](x-\alpha)+M[\partial_x^2f](x+\alpha)\right]}{\alpha^2+(2\sigma)^2}d\alpha\partial_x^2h(x)\right\|_{L^2_{\gamma(t)}}
\lesssim\sigma\|\partial_x^2f\|_{L^2_{\gamma(t)}}\|\partial_x^2h\|_{L^2_{\gamma(t)}},
\end{aligned}$$
$$\begin{aligned}
&\left\|\int_{0\leq\alpha\leq1}\frac{\sigma\alpha^2\left[M[\partial_x^2f](x-\alpha)+M[\partial_x^2f](x+\alpha)\right]}{\alpha^2+(2\sigma)^2}\left(\left|\frac{\tilde{\Delta}\partial_x^{k_1}f}{\alpha}\right|+\sigma^{-1}|\partial_x^{k_1}\theta(x)|\right)d\alpha\cdot\partial_x^2h(x)\right\|_{L^2_{\gamma(t)}}\\
\lesssim&\sigma\|\partial_x^2f\|_{L^2_{\gamma(t)}}\|\partial_x^{k_1+1}f\|_{L^2_{\gamma(t)}}\|\partial_x^2h\|_{L^2_{\gamma(t)}}+\|\partial_x^2f\|_{L^2_{\gamma(t)}}\|\partial_x^2h\|_{L^\infty_{\gamma(t)}}\|\partial_x^{k_1}\theta\|_{L^2_{\gamma(t)}}.
\end{aligned}$$
Taking the sum and using Sobolev embedding yields
\begin{equation}\begin{aligned}
\|Q_{2,2,|\alpha|\leq1}^{k_1,0,0,k_1,0}\|_{L^2_{\gamma(t)}}
\lesssim&\sigma B_8\|\partial_xf\|_{H^1_{\gamma(t)}}\left(\|\partial_x^2h\|_{L^2_{\gamma(t)}}+\|\partial_x^2h\|_{L^\infty_{\gamma(t)}}\right).
\end{aligned}\label{3.29}\end{equation}
(\ref{3.27}-\ref{3.29}) imply that
$\left\|\int_{|\alpha|\leq1}\alpha\partial_x^{k_1}\left(Q_2(x,x-\alpha)\partial_x^2h(x)\right)d\alpha\right\|_{L^2_{\gamma(t)}}
\lesssim\sigma B_8\|\partial_xf\|_{H^{k_1+1}_{\gamma(t)}}\|\partial_x^2h\|_{H^{k_1}_{\gamma(t)}}$,
which together with (\ref{3.15}) and (\ref{3.19}) gives
\begin{equation}\begin{aligned}
\left\|\partial_x^{k_1}\left(\int_{\mathbb{R}}(P_{11}-P_{21})(x,x-\alpha)\tilde{\Delta}\partial_xh\,d\alpha\right)\right\|_{L^2_{\gamma(t)}}\lesssim\sigma B_8\left(1+\|\partial_xf\|_{H^{k_1+1}_{\gamma(t)}}\right)\|\partial_x^2h\|_{H^{k_1+1}_{\gamma(t)}}.
\end{aligned}\label{3.30}\end{equation}
Moreover, by Lemma \ref{lem2.2} again, we have
$$\begin{aligned}
&\left|\alpha||Q_2(x,x-\alpha)-Q_2(x,x+\alpha)\right|\\
\lesssim&\frac{\sigma}{\alpha^2+(2\sigma)^2}\left[\alpha^2\left(M[\partial_x^2f](x-\alpha)+M[\partial_x^2f](x+\alpha)\right)\left(1+\|\partial_xf\|_{L^\infty_{\gamma(t)}}\right)+\sigma\|\partial_xf\|_{L^\infty_{\gamma(t)}}\right],
\end{aligned}$$
and subsequently
\begin{equation}\begin{aligned}
&\left\|\int_{0\leq\alpha\leq1}\alpha\left(Q_2(x,x-\alpha)-Q_2(x,x+\alpha)\right)d\alpha\cdot\partial_x^2h(x)\right\|_{L^2_{\gamma(t)}}\\
\lesssim&\sigma\|\partial_x^2f\|_{L^2_{\gamma(t)}}\left(1+\|\partial_xf\|_{L^\infty_{\gamma(t)}}\right)\|\partial_x^2h\|_{L^2_{\gamma(t)}}+\sigma\|\partial_xf\|_{L^\infty_{\gamma(t)}}\|\partial_x^2h\|_{L^2_{\gamma(t)}}.
\end{aligned}\label{3.31}\end{equation}
Summing up (\ref{3.16})(\ref{3.18})(\ref{3.20})(\ref{3.31}) yields
\begin{equation}\begin{aligned}
&\left\|\int_{\mathbb{R}}\left(P_{11}-P_{21}\right)(x,x-\alpha)\tilde{\Delta}\partial_xh\,d\alpha\right\|_{L^2_{\gamma(t)}}
\lesssim\sigma\left(1+\|\partial_xf\|_{L^2_{\gamma(t)}}\right)\left(1+\|\partial_xf\|_{L^\infty_{\gamma(t)}}\right)\|\partial_x^2h\|_{H^1_{\gamma(t)}}.
\end{aligned}\label{3.32}\end{equation}
As an analogue, it can also be shown 
\begin{equation}\begin{aligned}
\left\|\partial_x^{k_1}\left(\int_{\mathbb{R}}(P_{22}-P_{12})(x,x-\alpha)\tilde{\Delta}\partial_xh\,d\alpha\right)\right\|_{L^2_{\gamma(t)}}\lesssim\sigma B_8\left(1+\|\partial_xg\|_{H^{k_1+1}_{\gamma(t)}}\right)\|\partial_x^2h\|_{H^{k_1+1}_{\gamma(t)}}.
\end{aligned}\label{3.33}\end{equation}
\begin{equation}\begin{aligned}
&\left\|\int_{\mathbb{R}}\left(P_{22}-P_{12}\right)(x,x-\alpha)\tilde{\Delta}\partial_xh\,d\alpha\right\|_{L^2_{\gamma(t)}}
\lesssim\sigma\left(1+\|\partial_xg\|_{L^2_{\gamma(t)}}\right)\left(1+\|\partial_xg\|_{L^\infty_{\gamma(t)}}\right)\|\partial_x^2h\|_{H^1_{\gamma(t)}}.
\end{aligned}\label{3.34}\end{equation}
Finally, we insert the controls (\ref{3.2})(\ref{3.4})(\ref{3.5})(\ref{3.9})(\ref{3.30})(\ref{3.33}) into (\ref{3.1}) and conclude with the estimate over $\frac{d}{dt}\|\partial_x^{k_1}\theta_1\|_{H^{k_1}_{\gamma(t)}}^2$:
$$\begin{aligned}
&\frac{d}{dt}\|\partial_x^{k_1}\theta_1\|_{L^2_{\gamma(t)}}^2+c_0\mu_1\mu_2\Delta\rho\int_{\Gamma_{\pm}(t)}\int_{\Gamma_{\pm}(t)}D_{22}^0(x,x_1)\left|\Delta\partial_x^{k_1}\theta_1\right|^2dxdx_1\\
&-2\gamma^\prime(t)\tanh(2\gamma(t))\|\partial_x^{k_1}\theta_1\|_{\dot{H}^\frac{1}{2}_{\gamma(t)}}^2\\
\lesssim&(B_5+w_0)\left(\delta_2+\sigma^\frac{1}{2}+\|\partial_xf\|_{L^2_{\gamma(t)}}+\|\partial_xg\|_{L^2_{\gamma(t)}}\right)\|\partial_x^{k_1}\theta_1\|_{L^2_{\gamma(t)}}\\
&\cdot\sum_{j=0}^{k_1}\left(\int_{\Gamma_{\pm}(t)}\int_{\Gamma_{\pm}(t)}D_{22}^0(x,x_1)|\Delta\partial_x^{k_1}\theta_1|^2dxdx_1\right)^\frac{1}{2}+B_7\|\partial_x^k\theta_1\|_{\dot{H}^\frac{1}{2}_{\gamma(t)}}^2\\
&
+B_9\left(\|\partial_x^2h\|_{H^{k_1+1}_{\gamma(t)}}+\|\partial_x^2\theta\|_{H^{k_1+1}_{\gamma(t)}}\right)\|\theta_1\|_{H^{k_1}_{\gamma(t)}}+(B_5\delta_2+B_4-2\gamma^\prime(t)\tanh(2\gamma(t)))\|\theta_1\|_{H^{k_1}_{\gamma(t)}}^2,
\end{aligned}$$
where 
$$B_9(t):=B_5\left(\|\partial_x^2\theta_1\|_{H^{\tilde{k}_1}_{\gamma(t)}}+\|\partial_x\theta_1\|_{L^\infty_{\gamma(t)}}+\|\theta_1\|_{H^{k_1}_{\gamma(t)}}\right)+B_8\left(1+\|\partial_xf\|_{H^{k_1+1}_{\gamma(t)}}+\|\partial_xg\|_{H^{k_1+1}_{\gamma(t)}}\right).$$
Meanwhile, we can also estimate $\frac{d}{dt}\|\theta_1\|_{L^2_{\gamma(t)}}^2$ through (\ref{3.32})(\ref{3.34}):
$$\begin{aligned}
&\frac{d}{dt}\|\theta_1\|_{L^2_{\gamma(t)}}+c_0\mu_1\mu_2\Delta\rho\int_{\Gamma_{\pm(t)}}\int_{\Gamma_{\pm}(t)}D_{22}^0(x,x_1)|\tilde{\Delta}\theta_1|^2dxdx_1\\
&-2\gamma^\prime(t)\tanh(2\gamma(t))\|\theta_1\|_{\dot{H}^\frac{1}{2}_{\gamma(t)}}^2\\
\lesssim&(B_5+w_0)\left(\delta_2+\sigma^\frac{1}{2}+\|\partial_xf\|_{L^2_{\gamma(t)}}+\|\partial_xg\|_{L^2_{\gamma(t)}}\right)\|\theta_1\|_{L^2_{\gamma(t)}}\\
&\cdot\left(\int_{\Gamma_{\pm}(t)}\int_{\Gamma_{\pm}(t)}D_{22}^0(x,x_1)|\tilde{\Delta}\partial_x^{k_1}\theta_1|^2dxdx_1\right)^\frac{1}{2}+B_7\|\theta_1\|_{\dot{H}^\frac{1}{2}_{\gamma(t)}}^2\\
&+\left(1+\|\partial_xf\|_{L^2_{\gamma(t)}}+\|\partial_xg\|_{L^2_{\gamma(t)}}\right)\left(1+\|\partial_xf\|_{L^\infty_{\gamma(t)}}+\|\partial_xg\|_{L^\infty_{\gamma(t)}}\right)\|\partial_x^2h\|_{H^1_{\gamma(t)}}\|\theta_1\|_{L^2_{\gamma(t)}}\\
&+(B_5\delta_2+B_4-2\gamma^\prime(t)\tanh(2\gamma(t)))\|\theta_1\|_{H^{k_1}_{\gamma(t)}}^2.
\end{aligned}$$
Summing the above two inequalities and using Cauchy-Schwarz inequality for the first terms on the right-hand sides like what we have done in (\ref{2.102}), we obtain
\begin{equation}\begin{aligned}
&\frac{d}{dt}\|\theta_1\|_{H^{k_1}_{\gamma(t)}}^2+\frac{c_0}{2}\mu_1\mu_2\Delta\rho\int_{\Gamma_{\pm(t)}}\int_{\Gamma_{\pm(t)}}D_{22}^0(x,x_1)\left(\Delta\partial_x^{k_1}\theta_1|^2+|\Delta\theta_1|^2\right)dxdx_1\\
&-\left(2\gamma^\prime(t)\tanh(2\gamma(t))+C_3 B_7\right)\|\Lambda^\frac{1}{2}\theta_1\|_{H^{k_1}_{\gamma(t)}}^2\\
\lesssim&\left(B_6-2\gamma^\prime(t)\tanh(2\gamma(t))\right)\|\theta_1\|_{H^{k_1}_{\gamma(t)}}^2+B_9\left(\|\partial_x^2h\|_{H^{k_1+1}_{\gamma(t)}}+\|\partial_x^2\theta\|_{H^{k_1+1}_{\gamma(t)}}\right)\|\theta_1\|_{H^{k_1}_{\gamma(t)}},
\end{aligned}\label{3.35}\end{equation}
In addition, if we let $\gamma(t)=\tilde{\gamma}$ be a positive constant, then by (\ref{2.105}) 
\begin{equation}\begin{aligned}
\frac{d}{dt}\|\theta_1\|_{H^{k_1}_{\gamma(t)}}^2
\lesssim&\left(B_6+\sigma^{-1}\right)\|\theta_1\|_{H^{k_1}_{\gamma(t)}}^2+\left(B_7-\frac{c_0}{C_4}\right)\|\Lambda^\frac{1}{2}\theta_1\|_{H^{k_1}_{\gamma(t)}}^2\\
&+B_9\left(\|\partial_x^2h\|_{H^{k_1+1}_{\gamma(t)}}+\|\partial_x^2\theta\|_{H^{k_1+1}_{\gamma(t)}}\right)\|\theta_1\|_{H^{k_1}_{\gamma(t)}},
\end{aligned}\label{3.36}\end{equation}

\section{Proof of theorem \ref{thm1.1}}\label{proof}
The proof of Theorem \ref{thm1.1} will be completed through a bootstrap argument encompassing conditions (\ref{1.18}) and (\ref{1.19}). For simplicity, denote $E(t):=\|h(t)\|_{H^{k}_{\gamma(t)}}^2+\mu_1\mu_2\|\theta(t)\|_{H^{k}_{\gamma(t)}}^2+\|\theta_1(t)\|_{H^{k_1}_{\gamma(t)}}^2$. First, since we choose $k\geq10$, it holds $k_1=k-3\geq\tilde{k}+2$ and consequently
$$
B_3\leq C_5\left(1+E^\frac{1}{2}\right),\; B_4\lesssim\left(1+E^\frac{1}{2}\right)E^\frac{1}{2},\;B_5\lesssim\left(1+E^\frac{1}{2}\right)^k,\;\delta_2\lesssim E^\frac{1}{2},\;  B_8\lesssim\left(1+E^\frac{1}{2}\right)^{2k_1+2},$$
where we used Sobolev embedding, and $C_5$ denotes the implicit constant. Again, by Sobolev embedding, it holds that $\|\partial_xh\|_{L^\infty_{\gamma(t)}}+\|\partial_x\theta\|_{L^\infty_{\gamma(t)}}\leq C_6E(t)^\frac{1}{2}$ with a constant $C_6$, from which it follows
\begin{equation}\begin{aligned}
B_6\lesssim&\left(1+E^\frac{1}{2}\right)^{2k}\left(\delta_2^2+\sigma+\|\partial_xf\|_{L^2_{\gamma(t)}}^2+\|\partial_xg\|_{L^2_{\gamma(t)}}^2\right)+B_4+\left(1+E^\frac{1}{2}\right)^k\delta_2\\
\lesssim&\left(1+E^\frac{1}{2}\right)^{2k+1}\left(E^\frac{1}{2}+\sigma\right).
\end{aligned}\label{4.1}\end{equation}
\begin{equation}
B_7\leq C_5\left(1+E^\frac{1}{2}\right)\left(\|\partial_xh\|_{L^\infty_{\gamma(t)}}+\|\partial_x\theta\|_{L^\infty_{\gamma(t)}}\right)\leq C_5C_6\left(1+E^\frac{1}{2}\right)E^\frac{1}{2},
\label{4.2}\end{equation}
\begin{equation}B_9\lesssim\left(1+E^\frac{1}{2}\right)^{2k-3}.\label{4.3}\end{equation}
In one hand, if we let $\gamma(t)=\gamma(0)$ be invariant in time and define $$\tilde{E}(t):=\|h(t)\|_{H^{k}_{\gamma(0)}}^2+\mu_1\mu_2\|\theta(t)\|_{H^{k}_{\gamma(0)}}^2+\|\theta_1(t)\|_{H^{k_1}_{\gamma(0)}}^2,$$
then by (\ref{2.106})(\ref{3.36}), as long as (\ref{2.17})(\ref{2.65}) hold with $\gamma(t)=\gamma(0)$, there is
$$\begin{aligned}
\frac{d\tilde{E}}{dt}\lesssim&\left(B_6+B_9+\sigma^{-1}\right)\tilde{E}+\left(B_7-\frac{c_0}{C_4}\right)\left(\|\Lambda^\frac{1}{2}h\|_{H^{k}_{\gamma(t)}}+\|\Lambda^\frac{1}{2}\theta\|_{H^{k}_{\gamma(t)}}+\|\Lambda^\frac{1}{2}\theta_1\|_{H^{k_1}_{\gamma(t)}}\right),
\end{aligned}$$
where $B_6$, $B_7$, $B_9$ satisfy the bounds (\ref{4.1}-\ref{4.3}) with $E$ replaced by $\tilde{E}$. Moreover, if we assume in addition that
\begin{equation}
B_7-\frac{c_0}{C_4}\leq0,
\label{4.4}\end{equation}
then it follows that
\begin{equation}
\frac{d\tilde{E}}{dt}\lesssim\left(\mathcal{C}(\tilde{E})+\sigma^{-1}\right)\tilde{E},
\label{4.5}\end{equation} 
where $\mathcal{C}(E)$ is a smooth function of $E$ with the bound $\mathcal{C}(E)\lesssim\left(1+E^\frac{1}{2}\right)^{2k+2}$. On the other hand, since $f=h+\mu_1\theta$, $g=h-\mu_2\theta$, there exists a constant $C_7$ such that $$\|\partial_xf\|_{L^\infty_{\gamma(t)}}+\|\partial_xg\|_{L^\infty_{\gamma(t)}}+\sigma^{-1}\|\theta\|_{L^\infty_{\gamma(t)}}+\|\partial_x^2f\|_{H^1_{\gamma(t)}}+\|\partial_x^2g\|_{H^1_{\gamma(t)}}+\sigma^{-1}\|\partial_x\theta\|_{L^2_{\gamma(t)}}\leq C_7E^\frac{1}{2}.
$$
In view of (\ref{1.16})(\ref{1.17}), by choosing $\epsilon_0$, $\epsilon_1$ small enough, we can guarantee that 
\begin{equation}
E(t)^\frac{1}{2}<c_1:=\min\left\{1,\frac{\delta_1}{C_7},\frac{w_0}{C_7},\frac{c_0}{2C_4C_5C_6}\right\}
\label{4.6}\end{equation}
holds for $t=0$, and thus (\ref{2.17})(\ref{2.65})(\ref{4.4}) hold at $t=0$. Then using the a priori estimate (\ref{4.5}), it can be shown in a standard way of iteration and compactness argument that there exists a unique solution $(h,\theta)\in H^{k}_{\gamma(0)}\times H^k_{\gamma(0)}$ on $[0,T_0]$ with $T_0\gtrsim\sigma$. In fact, if we consider the maximal interval $[0,T_0]$ on which (\ref{4.6}) holds, then for $t\in[0, T_0]$, we have $\mathcal{C}(E)\lesssim1$. Subsequently, for certain constant $C^\prime$
$$\tilde{E}(t)\leq\exp\left(C^\prime\int_0^t(1+\sigma^{-1})ds\right)E(0)\leq\exp(C^\prime(1+\sigma^{-1})t)E(0).$$
Choose $T_{\sigma,0}=\frac{\sigma}{3C^\prime}\log\left(\frac{c_1^2}{E(0)}\right)$, then $\tilde{E}(t)^\frac{1}{2}<c_1$ for all $t\in[0,T_{\sigma,0}]$, and thus $T_0\geq T_{\sigma,0}$. Such control can be used to construct the solution $(h,\theta)\in L^\infty\left([0,T_{\sigma,0}];H^k_{\gamma(0)}\right)\times L^\infty\left([0,T_{\sigma,0}];H^k_{\gamma(0)}\right)$.\\
\indent Next, observing that $(h,\theta)\in L^\infty\left([0,T_{\sigma,0}];H^k_{\gamma(t)}\right)\times L^\infty\left([0,T_{\sigma,0}];H^k_{\gamma(t)}\right)$ and
$E(t)\leq\tilde{E}(t)<c_1$ for any decreasing positive function $\gamma(t)$, we are able to define the function $\gamma(t)$ by solving (\ref{1.20}):
$$-\frac{d\log(\cosh(2\gamma(t))}{dt}=-2\gamma^\prime(t)\tanh(2\gamma(t))=C_2\left(\|\partial_xh\|_{L^\infty_{\gamma(t)}}+\|\partial_x\theta\|_{L^\infty_{\gamma(t)}}\right).$$
with $\gamma(0)=\gamma_0$. Here $C_2:=2C_3C_5$ so that $$C_3B_7\leq C_2\left(\|\partial_xh\|_{L^\infty_{\gamma(t)}}+\|\partial_x\theta\|_{L^\infty_{\gamma(t)}}\right)=-2\gamma^\prime(t)\tanh(2\gamma(t)).$$
Since $\|\partial_xh\|_{L^\infty_{\gamma(t)}}+\|\partial_x\theta\|_{L^\infty_{\gamma(t)}}\leq C_6E(t)^\frac{1}{2}\leq C_6\tilde{E}^\frac{1}{2}(t)<C_6$ on $[0,T_{\sigma,0}]$, such $\gamma$ can be defined at least on $[0,T_{\sigma,0}]\cap[0, (C_2C_6)^{-1}\log(\cosh(2\gamma_0))].$\\
\indent Now, we impose the bootstrap assumptions. Suppose that the solution and $\gamma(t)$ exist on $[0,\tilde{T}]$ with the following assumptions:
\begin{itemize}
\item [(\romannumeral1)] The function $\gamma(t)$ satisfies (\ref{1.20}):
$$2\gamma^\prime(t)\tanh(2\gamma(t))=-C_2\left(\|\partial_x h(t)\|_{L^\infty_{\gamma(t)}}+\|\partial_x\theta(t)\|_{L^\infty_{\gamma(t)}}\right).$$
\item [(\romannumeral2)] $E(t)$ satisfies the bound (\ref{4.6}).
\end{itemize}
By (\ref{2.103})(\ref{3.35}) and (\ref{1.20}), we have
\begin{equation}\begin{aligned}
\frac{d}{dt}\left(\|h\|_{H^k_{\gamma(t)}}^2+\mu_1\mu_2\|\theta\|^2_{H^k_{\gamma(t)}}\right)\lesssim\left(B_6+B_7\right)\left(\|h\|_{H^k_{\gamma(t)}}^2+\mu_1\mu_2\|\theta\|^2_{H^k_{\gamma(t)}}\right)
\end{aligned}\label{4.7}\end{equation}
\begin{equation}\begin{aligned}
\frac{d}{dt}\|\theta_1\|_{H^{k-3}_{\gamma(t)}}^2\lesssim\left(B_6+B_7+B_9\right)E(t).
\end{aligned}\label{4.8}\end{equation}
In view of (\ref{4.1}-\ref{4.3}) and assumption (\ref{4.6}), we have $B_6+B_7+B_9\lesssim1$ on $[0,\tilde{T}]$. Therefore, \begin{equation}\|h(t)\|_{H^k_{\gamma(t)}}^2+\mu_1\mu_2\|\theta(t)\|_{H^k_{\gamma(t)}}^2\leq e^{C_0t}\epsilon_0^2,\quad\|\theta_1(t)\|_{H^{k-3}_{\gamma(t)}}^2\leq E(t)\leq e^{C_1t} \left(\epsilon_0^2+\epsilon_1^2\right)
\label{4.9}\end{equation}
for $t\in[0,\tilde{T}]$ and large enough $C_0$, $C_1$. Now we let $T_1$ be such that  $e^{C_0T_1}\epsilon_0^2+e^{C_1T_1} \left(\epsilon_0^2+\epsilon_1^2\right)=\frac{c_1^2}{2}.$
If we already have $\tilde{T}\geq T_1$, then the proof is complete. Hence we assume without loss of generality that $\tilde{T}<T_1$. In this case, we have $E(t)\leq\frac{c_1^2}{2}$ for all $t\in[0,\tilde{T}]$. Now we let $\tilde{E}(t):=\|h(t)\|_{H^{k}_{\gamma(\tilde{T})}}^2+\mu_1\mu_2\|\theta(t)\|_{H^k_{\gamma(\tilde{T})}}^2+\|\theta_1(t)\|_{H^{k_1}_{\gamma(\tilde{T})}}^2$and invoke (\ref{2.106})(\ref{3.36}) and thus (\ref{4.5}). By repeating the argument we used to construct the solution on $[0,T_{\sigma,0}]$, we can extend the solution to $[0,\tilde{T}+T_{\sigma}]$ with $T_{\sigma}=\frac{\sigma}{3C^\prime}\log\left(\frac{c_1^2}{E(\tilde{T})}\right)\geq\frac{\sigma\log2}{3C^\prime}$. Moreover, it follows that for $t\in[\tilde{T},\tilde{T}+T_{\sigma}]$
$$\tilde{E}(t)\leq\exp\left(C^\prime\int_{\tilde{T}}^{\tilde{T}+T_\sigma}(1+\sigma^{-1})ds\right)E(\tilde{T})<c_1.$$
To extend $\gamma$, we integrate (\ref{1.20}) to get 
$$\begin{aligned}
\log\left(\cosh(2\gamma(\tilde{T}))\right)-\log\left(\cosh(2\gamma_0)\right)=&-\int_0^{\tilde{T}}C_2\left(\|\partial_xh\|_{L^\infty_{\gamma(t)}}+\|\partial_x\theta\|_{L^\infty_{\gamma(t)}}\right)dt\\
\geq&-C_2C_6\int_0^{\tilde{T}}
E^\frac{1}{2}dt\\
\geq&-C_2C_6c_1\tilde{T}.
\end{aligned}$$
Let $T_2>0$ be such that $C_2C_6c_1T_2=\frac{1}{2}\log\left(\cosh(2\gamma_0)\right)$. Without loss of generality, we assume $\tilde{T}<T_2$, and subsequently $\log\left(\cosh(2\gamma(\tilde{T}))\right)>\frac{1}{2}\log\left(\cosh(2\gamma_0)\right)$. As before, noting that
$\|\partial_xh\|_{L^\infty_{\gamma(t)}}+\|\partial_x\theta\|_{L^\infty_{\gamma(t)}}\leq C_6E(t)^\frac{1}{2}\leq C_6\tilde{E}^\frac{1}{2}(t)<C_6$ in $[\tilde{T},\tilde{T}+T_\sigma]$, we can then extend $\gamma(t)$ to $[\tilde{T},\tilde{T}+T_\sigma]\cap[\tilde{T},\tilde{T}+(2C_2C_6)^{-1}\log(\cosh(2\gamma_0))]$ by solving (\ref{1.20}) in this interval. So far, we have completed the extension of the bootstrap assumptions (\romannumeral1) and (\romannumeral2) to $[0,\tilde{T}+T_\sigma]\cap[0,\tilde{T}+(2C_2C_6)^{-1}\log(\cosh(2\gamma_0))].$ Therefore, we conclude that (\romannumeral1) and (\romannumeral2) hold until $\tilde{T}$ reaches $T:=\min\left\{T_1,T_2\right\}$. Moreover, (\ref{4.9}) holds for all $t\leq T$, which gives (\ref{1.18})(\ref{1.19}).\qed

\subsection*{Acknowledgments}
\'Angel Castro acknowledges financial support from  2023 Leonardo Grant for Researchers and Cultural Creators, BBVA Foundation. The BBVA Foundation
accepts no responsibility for the opinions, statements, and contents included in the project and/or the results thereof, which are entirely the responsibility of the authors. AC was partially supported by the Severo Ochoa Programme for Centers of Excellence Grant CEX2019-000904-S and CEX-2023-001347-S funded by MCIN/AEI/10.13039/501100011033 and by the MICINN through the grant PID2020-114703GB-100. 	

This work was realized during the stay that Liangchen Zou did in the ICMAT at Madrid. Liangchen thanks the institute for its hospitality. This stay was supported by the China Scholarship Council Program (Project ID 202306340142).

\bibliographystyle{amsplain}		
\bibliography{Muskat}

\end{document}